\definecolor{Red}{rgb}{1,0,0}
\definecolor{Blue}{rgb}{0,0,1}
\definecolor{Olive}{rgb}{0.41,0.55,0.13}
\definecolor{Yarok}{rgb}{0,0.5,0}
\definecolor{Green}{rgb}{0,1,0}
\definecolor{MGreen}{rgb}{0,0.8,0}
\definecolor{DGreen}{rgb}{0,0.55,0}
\definecolor{Yellow}{rgb}{1,1,0}
\definecolor{Cyan}{rgb}{0,1,1}
\definecolor{Magenta}{rgb}{1,0,1}
\definecolor{Orange}{rgb}{1,.5,0}
\definecolor{Violet}{rgb}{.5,0,.5}
\definecolor{Purple}{rgb}{.75,0,.25}
\definecolor{Brown}{rgb}{.75,.5,.25}
\definecolor{Grey}{rgb}{.5,.5,.5}
\newcommand{\ind}{\mathbbm{1}}
\newcommand{\G}{\mathbb{G}}
\newcommand{\Z}{\mathbb{Z}}
\newcommand{\R}{\mathbb{R}}
\newcommand{\N}{\mathbb{N}}
\renewcommand{\Z}{\mathbb{Z}}
\newcommand{\ip}[2]{\langle{#1},{#2}\rangle}
\renewcommand{\R}{\mathbb{R}}
\renewcommand{\Z}{\mathbb{Z}}
\renewcommand{\G}{\mathbb{G}}
\newcommand{\distr}{\stackrel{d}{=}}
\newcommand{\M}{{\bf \mathcal{M}}}
\newcommand{\cN}{{\bf \mathcal{N}}}
\newcommand{\Overlap}{\mathcal{\mathcal{O}}}
\newcommand{\ignore}[1]{\relax}
\newlength\myindent
\newtheorem{theorem}{Theorem}[section]
\newtheorem{remark}[theorem]{Remark}
\newtheorem{lemma}[theorem]{Lemma}
\newtheorem{conjecture}[theorem]{Conjecture}
\newtheorem{proposition}[theorem]{Proposition}
\newtheorem{definition}[theorem]{Definition}
\newcommand{\ER}{Erd{\"o}s-R\'{e}nyi }
\renewcommand{\ip}[2]{\left\langle#1,#2\right\rangle}
\newcounter{parentnumber}
\def\BState{\State\hskip-\ALG@thistlm}
\definecolor{Red}{rgb}{1,0,0}
\definecolor{Blue}{rgb}{0,0,1}
\definecolor{Olive}{rgb}{0.41,0.55,0.13}
\definecolor{Green}{rgb}{0,1,0}
\definecolor{MGreen}{rgb}{0,0.8,0}
\definecolor{DGreen}{rgb}{0,0.55,0}
\definecolor{Yellow}{rgb}{1,1,0}
\definecolor{Cyan}{rgb}{0,1,1}
\definecolor{Magenta}{rgb}{1,0,1}
\definecolor{Orange}{rgb}{1,.5,0}
\definecolor{Violet}{rgb}{.5,0,.5}
\definecolor{Purple}{rgb}{.75,0,.25}
\definecolor{Brown}{rgb}{.75,.5,.25}
\definecolor{Grey}{rgb}{.5,.5,.5}
\definecolor{Pink}{rgb}{1,0,1}
\definecolor{DBrown}{rgb}{.5,.34,.16}
\definecolor{Black}{rgb}{0,0,0}
\newcommand{\A}{\mathcal{A}}
\newcommand{\bs}{\boldsymbol{\sigma}}
\title{Planted Random Number Partitioning Problem}
\author{{\sf Eren C. K{\i}z{\i}lda\u{g}}\thanks{Department of Statistics, Columbia University. e-mail: {\tt eck2170@columbia.edu}.}}
\begin{document}
\maketitle
\begin{abstract}
We consider the random number partitioning problem (\texttt{NPP}): given a list $X\sim \cN(0,I_n)$ of numbers, find a partition $\bs\in\{-1,1\}^n$ with a small objective value $H(\bs)=\frac{1}{\sqrt{n}}\left|\ip{\bs}{X}\right|$. The \texttt{NPP} is widely studied in computer science; it also has many practical applications, including covariate balancing in the design of randomized controlled trials. In this paper, we propose a planted version of the \texttt{NPP}: fix a $\bs^*$ and generate $X\sim \cN(0,I_n)$ conditional on $H(\bs^*)\le 3^{-n}$. The \texttt{NPP} and its planted counterpart are  statistically distinguishable as the smallest objective value under the former is $\Theta(\sqrt{n}2^{-n})$ w.h.p.

Our first focus is on the values of $H(\bs)$. We show that, perhaps surprisingly, planting does not induce partitions with an objective value substantially smaller than $2^{-n}$: $\min_{\bs \ne \pm \bs^*}H(\bs) = \widetilde{\Theta}(2^{-n})$ w.h.p. Furthermore, we completely characterize the smallest $H(\bs)$ achieved at any fixed distance from $\bs^*$. Our second focus is on the algorithmic problem of efficiently finding a partition $\bs$, not necessarily equal to $\pm\bs^*$, with a small $H(\bs)$. We show that planted \texttt{NPP} exhibits an intricate geometrical property known as the multi Overlap Gap Property ($m$-OGP) for values $2^{-\Theta(n)}$. We then leverage the $m$-OGP to show that stable algorithms satisfying a certain anti-concentration property fail to find a $\bs$ with $H(\bs)=2^{-\Theta(n)}$. 

Our results are the first instance of the $m$-OGP being
 established and leveraged to rule out stable algorithms for a planted model. More importantly, they show that the $m$-OGP framework can also apply to planted models, if the algorithmic goal is to return a solution with a small objective value. Our results suggest that the \texttt{NPP} and its planted counterpart behave similarly. In light of this, we conjecture that while the two versions are statistically distinguishable, it is impossible to distinguish them in polynomial time with non-trivial error probability. We believe that these intriguing features make the planted \texttt{NPP} a useful model for further investigation. 
\end{abstract}
\newpage
\tableofcontents
\newpage
\section{Introduction}
Given a list of numbers $X=(X_1,\dots,X_n)\in\R^n$, the \emph{number partitioning problem} (\texttt{NPP}) seeks to find a partition $\bs \in\Sigma_n\triangleq \{-1,1\}^n$ which is \emph{balanced}. That is, $\left|\ip{\bs}{X}\right|\triangleq |\textstyle \sum_{i\le n}\bs(i)X_i|$ is small. In this paper, we propose a planted version of the \texttt{NPP}, called the p-\texttt{NPP}, and thoroughly investigate this model. We fix an unknown $\bs^*\in \Sigma_n$ and a $C>0$, and generate $X\sim \cN(0,I_n)$ conditional on $\left|\ip{\bs^*}{X}\right| \le \sqrt{n}C^{-n}$. Namely, we `plant' a partition $\bs^*$ with objective value at most $\sqrt{n}C^{-n}$. In the \emph{unplanted} case where $X\sim \cN(0,I_n)$ unconditionally,~\cite{karmarkar1986probabilistic}  established that $\min_{\bs\in\Sigma_n}\left|\ip{\bs}{X}\right|=\Theta(\sqrt{n}2^{-n})$ w.h.p. Throughout, we denote the \emph{unplanted} version of the \texttt{NPP} as u-\texttt{NPP}, whenever a distinction is necessary.

Our particular focus is on the p-\texttt{NPP} when $C=3$, i.e. $\left|\ip{\bs^*}{X}\right|\le \sqrt{n}3^{-n}$, a value that w.h.p.\,does not occur in the u-\texttt{NPP}. In particular, the p-\texttt{NPP} and the u-\texttt{NPP} are statistically distinguishable. This choice of $C$ is merely for convenience; our results below apply to any $C>2$.

The \texttt{NPP} is of great significance in statistics. In particular, it is closely related to covariate balancing in the design of randomized controlled trials, see below. Additionally, planted models have been studied both in computer science, in  particular to understand random constraint satisfaction problems, and in statistics. Inference problems with a planted structure are central to modern high-dimensional statistics, where the algorithmic goal is to recover a certain structure `hidden' in a much larger, noisy one. One popular example is the stochastic block model~\cite{abbe2017community}, which has a diverse set of applications, including the study of social networks~\cite{holland1983stochastic,wang1987stochastic,newman2002random,goldenberg2010survey}. Other examples include sparse principal component analysis, regression, and the hidden clique problem, see~\cite{wu2021statistical} for details. 

The main motivation of our paper is two-fold. First, in light of the importance of the \texttt{NPP} and of planted models in relevant domains, it is natural to study a planted version of the \texttt{NPP}. Secondly, and perhaps more importantly, we employ the p-\texttt{NPP} to demonstrate that a powerful framework developed for understanding the limits of efficient algorithms for unplanted models can in fact extend to planted models. As we detail below, a particularly active line of work aims to link intricate geometry to computational hardness in many random models using the Overlap Gap Property framework. This theory is quite developed for unplanted models, such as random graphs and spin glasses. In particular, it yields lower bounds against many powerful classes of algorithms, and these lower bounds are nearly sharp for many important models (see below). However, due to certain challenges, this theory is limited for planted models. More specifically, it only yields lower bounds against local Monte Carlo Markov Chain type algorithms. Given the significance of planted models in modern statistics, understanding their computational limits is essential. Our paper constitutes a step towards this goal. Furthermore, as we discuss in Section~\ref{sec:open-problems}, our ideas may help the study of the limits of efficient algorithms for certain popular models in high-dimensional statistics, such as sparse regression and the hidden clique problem. 

We now elaborate on some of the prior work on the \texttt{NPP} and planted models.
\subsubsection{Number Partitioning Problem}
The \texttt{NPP} is one-dimensional version of the \emph{vector balancing problem} (\texttt{VBP}): given $X_1,\dots,X_n\in\R^d$, find a partition $\bs\in\Sigma_n$ which is as `balanced' as possible. That is, the \texttt{VBP} seeks to solve
\begin{equation}\label{eq:vbp}
\min_{\bs\in \Sigma_n}\left\|\sum_{1\le i\le n}\bs(i)X_i\right\|_\infty,\quad\text{where}\quad \bs = (\bs(i):1\le i\le n)\in\Sigma_n.
\end{equation}
\paragraph{Statistical Applications} The \texttt{VBP} is closely related to the design of \emph{randomized controlled trials} (RCTs) in statistics literature, on which we now elaborate. RCTs seek to make accurate inference for an additive treatment effect, such as a vaccine or a new drug. In the design of RCTs, $n$ individuals with covariance information $X_i\in\R^d$, $1\le i\le n$, are given; the goal is to split them into two groups, dubbed as the  \emph{treatment} and the \emph{control} group, that are balanced in terms of covariates. In order to make accurate inference for the treatment effect under investigation, good covariate balance is essential. RCTs are often considered as gold standard for clinical trials, see~\cite{krieger2019nearly,harshaw2019balancing,turner2020balancing} for a more elaborate discussion. The study of the role of covariate balance and the randomness in the design of RCTs has a rich history in statistics literature; for pointers to relevant literature, see e.g.~\cite{efron1971forcing,bertsimas2015power,kallus2018optimal,kapelner2021harmonizing,kapelner2022optimal,wang2022rerandomization}. Other practical applications of \texttt{NPP}/\texttt{VBP} include multiprocessor scheduling, design of VLSI circuits and cryptography~\cite{tsai1992asymptotic,merkle1978hiding}, see~\cite{coffman1991probabilistic} for further applications.

\paragraph{Computer Science and Statistical Physics} In addition to its rich applications, the \texttt{NPP} is also a widely studied model in computer science, statistical physics, and discrepancy theory. The \texttt{NPP} is in the list of \emph{six `basic' NP-complete problems} by Garey and Johnson~\cite{gareyjohnsonbook}; it is in fact the only problem in that list involving numbers. Furthermore, it is one of the first NP-complete models exhibiting a phase transition, which was conjectured by Mertens~\cite{mertens1998phase} using very elegant yet non-rigorous tools of statistical mechanics, and was confirmed rigorously by Borgs, Chayes and Pittel~\cite{borgs2001phase}. In statistical physics, the \texttt{NPP} is the first model for which the local REM conjecture is rigorously verified~\cite{borgs2009proof,borgs2009proof2}, see~\cite{bauke2004universality} for details regarding this conjecture. 

\paragraph{Discrepancy Theory} The \texttt{NPP} and the \texttt{VBP} are particularly related to \emph{combinatorial discrepancy theory}~\cite{spencer1985six,matousek1999geometric,chazelle_2000}. Given a matrix $M\in\R^{d\times n}$, a key objective in discrepancy theory is computing or bounding its discrepancy:
\[
\mathcal{D}(M) \triangleq \min_{\bs \in \Sigma_n} \bigl\|M\bs\bigr\|_\infty.
\]
When $M$ consists of columns $X_1,\dots,X_n\in\R^d$, the \texttt{VBP}~\eqref{eq:vbp} indeed seeks to minimize $\mathcal{D}(M)$. In discrepancy literature, both \emph{worst-case} and \emph{average-case} settings were considered. A landmark result in the worst-case setting is due to Spencer~\cite{spencer1985six}, dubbed as ``six standard deviations suffice": $D(M)\le 6\sqrt{n}$ if $d=n$ and $\max_{1\le i\le n}\|X_i\|_\infty\le 1$. Spencer's result is non-constructive, see e.g.~\cite{bansal2010constructive,lovett2015constructive,levy2017deterministic,rothvoss2017constructive} for algorithmic guarantees. 
 
 In the average-case, the entries of $M\in\R^{d\times n}$ are random. When $M$ consists of i.i.d.\,$\cN(0,1)$ entries and $d=o(n)$, it turns out $\mathcal{D}(M)=\Theta(\sqrt{n}2^{-n/d})$ w.h.p. The case $d=1$ corresponds to the \texttt{NPP}; it is established using the \emph{second moment method} by Karmarkar, Karp, Lueker and Odlyzko~\cite{karmarkar1986probabilistic}. 
 As for the higher dimensions, the case $d=O(1)$ is due to Costello~\cite{costello2009balancing} and  $\omega(1)\le d\le o(n)$ is due to Turner, Meka, and Rigollet~\cite{turner2020balancing}. In the \emph{proportional} regime $d=\Theta(n)$, i.e. $d,n\to\infty$ while $d/n$ is held constant say at some $\alpha\in(0,\infty)$,  the model is very related to the symmetric binary perceptron (\texttt{SBP})~\cite{aubin2019storage}. In this case, Perkins and Xu~\cite{perkins2021frozen} and Abbe, Li, and Sly~\cite{abbe2021proof} established concurrently  that $\mathcal{D}(M)=\bigl(1+o(1)\bigr)f(\alpha)\sqrt{n}$ w.h.p.\,for some explicit $f(\cdot)$. For more on the connection between the \texttt{SBP} and discrepancy theory and algorithmic lower bounds, see~\cite{abbe2021binary,gamarnik2022algorithms,kizildag2022algorithms,gamarnik2023geometric}. For average-case discrepancy in other regimes, see~\cite{potukuchi2018discrepancy,hoberg2019fourier,altschuler2022discrepancy}. The discrepancy literature also seeks algorithmic guarantees: devise an efficient algorithm admitting $M$ as its input and returning a $\bs\in\Sigma_n$ for which $\|M\bs\|_\infty$ is small. For relevant liteature, see~\cite{bansal2010constructive,chandrasekaran2014integer,lovett2015constructive,bansalspenceronline,bansal2021online} and the references therein. 
 \paragraph{Algorithms for the \texttt{NPP} and the \texttt{VBP}} 
 Of particular interest to us are algorithms for the \texttt{NPP} and the \texttt{VBP}. The state-of-the-art algorithm for the \texttt{NPP} is due to Karmarkar and Karp's \emph{largest differencing method} (LDM)~\cite{karmarkar1982differencing}: given $X\sim \cN(0,I_n)$, the LDM returns in polynomial-time a $\bs_{\rm ALG}\in\Sigma_n$ such that $\left|\ip{\bs_{\rm ALG}}{X}\right|=2^{-\Theta(\log^2 n)}$ w.h.p.~\cite{yakir1996differencing}. Recalling that $\min_{\bs \in\Sigma_n}\left|\ip{\bs}{X}\right|=\Theta(\sqrt{n}2^{-n})$ w.h.p.~\cite{karmarkar1986probabilistic}, this highlights a striking gap between the existential guarantee and the best known algorithmic guarantee. This is an instance of a \emph{statistical-to-computational gap}, and we elaborate further on this below. Later,~\cite{turner2020balancing} extended this algorithm to the  \texttt{VBP}: for $M\in\R^{d\times n}$ and $d=O(\sqrt{\log n})$, their algorithm returns a $\bs_{\rm ALG}$ with $\|M\bs_{\rm ALG}\|_\infty = 2^{-\Omega(\log^2 n/d)}$ w.h.p. For a more elaborate discussion on algorithms for the \texttt{NPP}/\texttt{VBP}, see~\cite[Section~1]{gamarnik2021algorithmic}.
   \subsubsection{Planted Models}
In this paper, we study a \emph{planted version} of the \texttt{NPP}, where a partition $\bs^*\in\Sigma_n$ is fixed and an $X\sim \cN(0,I_n)$ is sampled conditional on $\left|\ip{\bs^*}{X}\right|\le \sqrt{n}3^{-n}$. This is in contrast with sampling  $X\sim \cN(0,I_n)$ first and studying $\left|\ip{\bs}{X}\right|,\bs\in\Sigma_n$ afterwards. We now elaborate further on the planting idea. As a case in point, consider the random $k$-SAT model. Following~\cite{achlioptas2003threshold}, define a $k$-clause to be a disjunction of $k$ Boolean variables. A random $k$-SAT formula, $\Phi_{k,n,M}$, is then constructed by sampling $M$ independent $k$-clauses $\mathcal{C}_1,\dots,\mathcal{C}_M$ uniformly at random and taking their conjunction: $\Phi_{k,n,M} = \mathcal{C}_1\wedge \mathcal{C}_2\wedge \cdots \wedge \mathcal{C}_M$. This model has been thoroughly investigated; prior work spans from the existence of a satisfying assignment (i.e. a $\bs\in\{0,1\}^n$ with $\Phi_{k,n,M}(\bs)=1$)~\cite{achlioptas2002asymptotic,achlioptas2003threshold,ding2015proof} and the geometry of solution space~\cite{achlioptas2006solution,achlioptas2008algorithmic} to the limits of efficient algorithms~\cite{hetterich2016analysing,coja2017walksat,gamarnik2017performance,huang2021tight}. Note that sampling a satisfying assignment corresponds to studying a probability distribution on the set of assignment-formula pairs $(\bs,\Phi_{k,n,M})$. In a breakthrough paper~\cite{achlioptas2008algorithmic}, Achlioptas and Coja-Oghlan proposed the idea of studying a different distribution on the same set of pairs, dubbed as the planted model: 
first sample a $\bs\in\{0,1\}^n$ uniformly at random and then generate a random $k$-SAT formula $\Phi_{k,n,M}$ consistent with $\bs$ afterwards. That is, sample clauses $\mathcal{C}_1,\dots,\mathcal{C}_M$ independently, conditional on $\Phi_{k,n,M}(\bs)=1$. It turns out that if the two distributions on pairs $(\bs,\Phi_{k,n,M})$ are `close' in a certain sense, then one can obtain results for the unplanted model by studying the planted version instead. Studying the latter is often much easier. By doing so, they showed that the onset of hardness for known efficient algorithms (for random $k$-SAT) roughly corresponds to a phase transition in the geometry of the solution space. This work fueled a still active line of research linking solution space geometry to algorithmic hardness, including the Overlap Gap Property framework employed in this paper. Since then, planted models were employed heavily to investigate the solution space geometry and the condensation threshold of random CSPs~\cite{achlioptas2011solution,montanari2011reconstruction,coja2012condensation,bapst2016condensation,coja2017information}, and to study symmetric Ising perceptron~\cite{perkins2021frozen}. Additionally, planted models also appear frequently in high-dimensional statistics; examples include  hidden clique~\cite{el2022densest}, regression~\cite{wu2021statistical}, stochastic block model~\cite{abbe2017community}, as well as the learning setting~\cite{zdeborova2016statistical,goldt2019dynamics}.

Despite the fact that both the \texttt{NPP} and planted models received much attention from various communities, we are unaware of any prior work attempting to form a link between the two. One of our main motivations in the present paper is to bridge this gap, propose a planted version of the \texttt{NPP} and show that this model exhibits rather interesting features.  
\subsection{Main Results}
Fix $X=(X_1,\dots,X_n)\in \R^n$. For any $\bs\in\Sigma_n\triangleq\{-1,1\}^n$, define its \emph{Hamiltonian} by
\begin{equation}\label{eq:hamiltonian}
H(\bs) = n^{-\frac12}\left|\ip{\bs}{X}\right|.    
\end{equation}
Throughout, all probabilities (unless stated otherwise) are taken w.r.t.\,the planted measure:
\begin{equation}\label{eq:PL-MEASURE}
    \mathbb{P}_{\rm pl}[\mathcal{E}] = \mathbb{P}_{X\sim \cN(0,I_n)}\bigl[\mathcal{E}\big\lvert H(\bs^*)\le 3^{-n}\bigr],\quad\text{where $\mathcal{E}$ is an arbitrary event.}
\end{equation}
\paragraph{Value of Hamiltonian $H(\bs)$ for $\bs\ne \pm \bs^*$} Note that $H(\bs^*)=H(-\bs^*)\le 3^{-n}$ and that planting such a $\bs^*$ affects the entire set of numbers $X_1,\dots,X_n$ to be partitioned. Thus, a very natural first question we ask is: does planting induce partitions $\bs\ne \pm\bs^*$ with Hamiltonian substantially smaller than $2^{-n}$, e.g.\,$H(\bs)\le 2^{-cn}$ for some $1<c<\log_2 3$? 

Our first result shows that, perhaps rather surprisingly, the answer is no.
\begin{theorem}[Informal, see Theorem~\ref{thm:ground-state}]\label{thm:gs-informal}
We have $\min_{\bs\in\Sigma_n\setminus\{\pm \bs^*\}}H(\bs)=\widetilde{\Theta}(2^{-n})$ w.h.p. 
\end{theorem}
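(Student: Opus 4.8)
The plan is to establish matching upper and lower bounds on $\min_{\bs \ne \pm\bs^*} H(\bs)$ under the planted measure $\mathbb{P}_{\rm pl}$, both of order $2^{-n}$ up to polynomial factors. The central tool will be a change of measure: since $\mathbb{P}_{\rm pl}[\mathcal E] = \mathbb{P}[\mathcal E \mid H(\bs^*) \le 3^{-n}]$ and $\mathbb{P}[H(\bs^*)\le 3^{-n}] = \Theta(3^{-n})$ (as $\ip{\bs^*}{X}/\sqrt n \sim \cN(0,1)$ under the ambient Gaussian measure), any ambient event of probability $o(3^{-n})$ remains negligible under $\mathbb{P}_{\rm pl}$. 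So the first step is to reduce planted statements to ambient statements with enough room to absorb the $3^n$ blow-up.

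\emph{Lower bound.} I would show $\mathbb{P}_{\rm pl}[\exists\, \bs \ne \pm\bs^*:\ H(\bs) \le \varepsilon_n] \to 0$ for $\varepsilon_n = 2^{-n}/\mathrm{poly}(n)$ (e.g.\ $\varepsilon_n = 2^{-n} n^{-2}$). By the change of measure it suffices to bound the ambient probability of the event $\{\exists\,\bs \ne \pm\bs^*: H(\bs)\le \varepsilon_n,\ H(\bs^*)\le 3^{-n}\}$ by $o(3^{-n})$. Fix $\bs \ne \pm\bs^*$ and let $k$ be its Hamming distance to $\bs^*$, so $1 \le k \le n-1$ and by symmetry we may take $k \le n/2$ (replacing $\bs$ by $-\bs$). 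Then $(\ip{\bs^*}{X}, \ip{\bs}{X})$ is a centered Gaussian vector whose covariance matrix has determinant $n^2 - (n-2k)^2 = 4k(n-k) = \Theta(kn)$; hence its density is bounded by $O(1/\sqrt{kn})$, and the probability that it lands in the box $[-\sqrt n\, 3^{-n}, \sqrt n\, 3^{-n}] \times [-\sqrt n\,\varepsilon_n, \sqrt n\,\varepsilon_n]$ is at most $O\!\big(\tfrac{n}{\sqrt{kn}}\, 3^{-n}\varepsilon_n\big)$. Summing over the $\binom nk \le 2^n$ choices of $\bs$ at distance $k$, and then over $k$, the union bound gives roughly $\sum_k 2^n \cdot \sqrt{n/k}\cdot 3^{-n}\varepsilon_n = O(2^n n\, 3^{-n}\varepsilon_n)$, which with $\varepsilon_n = 2^{-n}n^{-2}$ is $o(3^{-n})$ as required. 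The contribution from $k$ close to $0$ (where $\bs$ is near $\bs^*$) needs a slightly more careful treatment since the Gaussian pair is nearly degenerate, but the determinant bound $\Theta(kn)$ already handles $k \ge 1$ cleanly, and one should double-check that the planting does not force $\bs^*$ itself to have many near-antipodal competitors — it does not, by the same second-moment count.

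\emph{Upper bound.} Here I want to show that w.h.p.\ under $\mathbb{P}_{\rm pl}$ there exists $\bs \ne \pm\bs^*$ with $H(\bs) \le 2^{-n}\,\mathrm{poly}(n)$. The cleanest route is the second moment method applied to $Z = \#\{\bs \ne \pm\bs^*: H(\bs) \le \delta_n\}$ with $\delta_n = 2^{-n} n^{C}$ for suitable constant $C$, carried out \emph{conditionally} on $H(\bs^*)\le 3^{-n}$: compute $\mathbb{E}_{\rm pl}[Z]$ and $\mathbb{E}_{\rm pl}[Z^2]$, where the first-moment count over $\bs$ at distance $k$ from $\bs^*$ uses the conditional density of $\ip{\bs}{X}$ given $\ip{\bs^*}{X}\approx 0$ — which is Gaussian with variance $\Theta(kn)$, so $\mathbb{P}_{\rm pl}[H(\bs)\le\delta_n] = \Theta(\delta_n/\sqrt k)$ and $\mathbb{E}_{\rm pl}[Z] = \Theta(\sum_k \binom nk \delta_n/\sqrt k) = \Theta(2^n \delta_n) = \mathrm{poly}(n) \to \infty$. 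The second moment requires controlling, for pairs $(\bs,\bs')$, the joint small-ball probability of $(\ip{\bs}{X},\ip{\bs'}{X})$ conditioned on $\ip{\bs^*}{X}\approx 0$; this is a $3\times 3$ Gaussian computation and the usual concern is pairs with $\bs'$ highly correlated with $\bs$, but since we only need a $\mathrm{poly}(n)$-factor (not a constant) in $\delta_n$, a crude bound showing $\mathbb{E}_{\rm pl}[Z^2] = O(\mathbb{E}_{\rm pl}[Z]^2)$ suffices via Paley–Zygmund. Alternatively — and this is probably simpler — one can invoke the precise distributional results the paper announces just after this theorem ("we completely characterize the smallest $H(\bs)$ achieved at any fixed distance from $\bs^*$"): the minimum over the $\Theta(2^n/\sqrt n)$ partitions at distance $n/2$ of i.i.d.-like Gaussians of scale $\Theta(\sqrt n)$ is $\widetilde\Theta(2^{-n})$ by an extreme-value argument, giving the upper bound directly.

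\emph{Main obstacle.} The delicate point is the second-moment (or extreme-value) computation for the upper bound under the \emph{conditional} law: the planting correlates all the $\ip{\bs}{X}$ through the single linear constraint $\ip{\bs^*}{X}\approx 0$, so the family $\{\ip{\bs}{X}\}_{\bs}$ is not independent, and one must verify that this shared constraint does not create enough positive correlation to spoil concentration of $Z$. I expect this to be manageable because a single linear constraint in $n$ dimensions perturbs pairwise covariances only by $O(1/n)$ relative to the ambient case, so the ambient second-moment analysis of Karmarkar–Karp–Lueker–Odlyzko goes through with controlled modifications — but making that perturbation bookkeeping rigorous, uniformly over all distance classes $k$ (including the near-degenerate small-$k$ regime), is the part that needs real care rather than routine estimation.
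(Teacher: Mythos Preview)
Your lower-bound argument is essentially the paper's (Proposition~\ref{prop:first-moment}): first moment via bivariate Gaussian small-ball bounds, then Markov. For the upper bound, however, your primary route has a gap: a crude estimate $\mathbb{E}_{\rm pl}[Z^2]=O(\mathbb{E}_{\rm pl}[Z]^2)$ gives, via Paley--Zygmund, only $\mathbb{P}_{\rm pl}[Z\ge 1]\ge c$ for some constant $c<1$, not the w.h.p.\ conclusion. Allowing polynomial slack in $\delta_n$ does not improve the ratio; you need $\mathbb{E}_{\rm pl}[Z^2]/(\mathbb{E}_{\rm pl} Z)^2\to 1$.

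The paper's fix --- which is precisely your ``alternative'' stated in passing --- is to restrict the count from the start to $S_g(\epsilon)=\{\bs:H(\bs)\le g\cdot 2^{-n},\ |n^{-1}\ip{\bs}{\bs^*}|\le\epsilon\}$, i.e.\ partitions nearly orthogonal to $\bs^*$. With all three pairwise overlaps among $\bs_1,\bs_2,\bs^*$ in $[-\epsilon,\epsilon]$, Hoffman--Wielandt bounds the $3\times3$ covariance determinant below by $(1-\epsilon\sqrt6)^3$ uniformly, and the dominant contribution to the second moment comes from pairs $(\bs_1,\bs_2)$ that are also nearly orthogonal to each other. This yields $\mathbb{E}_{\rm pl}[|S_g(\epsilon)|^2]/(\mathbb{E}_{\rm pl}|S_g(\epsilon)|)^2\le (1-\epsilon\sqrt6)^{-3/2}+o_n(1)$; sending $\epsilon\to 0$ \emph{after} $n\to\infty$ then gives $\mathbb{P}_{\rm pl}[|S_g|\ge 1]\to 1$. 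So your alternative is not an alternative but the actual argument, and the two-step limit is what upgrades a constant lower bound to w.h.p.

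Your diagnosis of the obstacle is also slightly mis-aimed: the conditional law of $\ip{\bs}{X}$ given $\ip{\bs^*}{X}\approx 0$ has variance $1-\rho^2$ with $\rho=n^{-1}\ip{\bs}{\bs^*}$, so the dependence on planting is order $O(1)$ in $\rho$, not $O(1/n)$. The restriction $|\rho|\le\epsilon$ is what forces these to be close to ambient values, not a generic ``one linear constraint in $n$ dimensions'' heuristic.
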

Our argument reveals in fact that the number of such $\bs$ grows as $n\to\infty$, see Remark~\ref{remark:NUM-GD-STATE}. Our argument is crucially based on controlling the size of a certain set of partitions via the \emph{second moment method}. This set consists of $\bs\in\Sigma_n$ with $H(\bs) =\widetilde{\Theta}(2^{-n})$, under the additional constraint  that $n^{-1}\left|\ip{\bs}{\bs^*}\right|\le\epsilon$ for an $\epsilon>0$. Namely, it consists of $\bs$ nearly orthogonal to $\pm\bs^*$. 
\paragraph{Hamiltonian at Fixed Distance from $\bs^*$} Theorem~\ref{thm:gs-informal} is established by considering $\bs$ that are nearly orthogonal to $\bs^*$. So, a natural next question is the smallest $H(\bs)$ achieved by $\bs$ at any fixed distance $\rho n$ from $\pm \bs^*$. Answering this would provide a better understanding of p-\texttt{NPP}. To that end, fix a $\rho\in(0,1)$ and let \[
\zeta(\rho)=\min_{\bs:d_H(\bs,\bs^*)=\rho n}H(\bs).
\]Our next result settles the value of $\zeta(\rho)$ for any $\rho$. 
\begin{theorem}[Informal, see Theorem~\ref{thm:zeta-rho}]\label{zeta-rho-inf}
    Fix any $\rho\in(0,1)$. Then, $\zeta(\rho) = \widetilde{\Theta}(2^{-nh_b(\rho)})$ w.h.p.
\end{theorem}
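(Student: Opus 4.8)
The plan is to establish matching upper and lower bounds on $\zeta(\rho)$ by the second and first moment methods respectively, after reducing $H(\bs)$ for $\bs$ at Hamming distance $\rho n$ from $\bs^*$ to (a scalar multiple of) a single Gaussian. The key preliminary step is the decomposition $X = \tfrac{\ip{\bs^*}{X}}{n}\bs^* + X^\perp$, where $X^\perp$ is the projection of $X$ onto $(\bs^*)^\perp$: under $\mathbb{P}_{\rm pl}$ the scalar $\ip{\bs^*}{X}$ is conditioned to lie in $[-\sqrt n 3^{-n},\sqrt n 3^{-n}]$ while $X^\perp$ remains an unconditioned Gaussian on $(\bs^*)^\perp$. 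If $\bs$ is obtained from $\bs^*$ by flipping the coordinates in a set $F$ with $|F|=\rho n$, then $\ip{\bs}{X} = \ip{\bs^*}{X} - 2\sum_{i\in F}\bs^*(i)X_i$; the first term is at most $\sqrt n 3^{-n}$ in absolute value, which is $o\bigl(\sqrt n\,2^{-nh_b(\rho)}\bigr)$ since $h_b(\rho)<1<\log_2 3$, hence negligible at every scale relevant here. Consequently $H(\bs) = |W| + O(3^{-n})$ where, after projecting out $\bs^*$ (equivalently, after observing that the thin-slab conditioning turns the naive variance $\rho n$ of $\sum_{i\in F}\bs^*(i)X_i$ into $\rho(1-\rho)n$), one gets $W\sim\mathcal N\bigl(0,4\rho(1-\rho)\bigr)$. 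In particular $\mathbb{P}_{\rm pl}[H(\bs)\le t] = \bigl(1+o(1)\bigr)\tfrac{t}{\sqrt{2\pi\rho(1-\rho)}} = \Theta_\rho(t)$ for the exponentially small scales $t$ we care about.

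\emph{Lower bound.} Since there are $\binom{n}{\rho n} = \widetilde\Theta\bigl(2^{nh_b(\rho)}\bigr)$ partitions at distance $\rho n$ from $\bs^*$, the expected number of such $\bs$ with $H(\bs)\le t$ is $\widetilde\Theta\bigl(2^{nh_b(\rho)}\bigr)\cdot\Theta_\rho(t)$. Taking $t = 2^{-nh_b(\rho)}/\mathrm{poly}(n)$ makes this tend to $0$, so by Markov's inequality w.h.p.\ no $\bs$ with $d_H(\bs,\bs^*)=\rho n$ has $H(\bs)\le t$; that is, $\zeta(\rho) \ge \widetilde\Omega\bigl(2^{-nh_b(\rho)}\bigr)$.

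\emph{Upper bound.} I would run a second moment computation on $Z \triangleq \#\{\bs: d_H(\bs,\bs^*)=\rho n,\ H(\bs)\le t\}$ with $t = \mathrm{poly}(n)\cdot 2^{-nh_b(\rho)}$ chosen so that $\mathbb{E}[Z] = \mathrm{poly}(n)\to\infty$, where $\mathbb{E}[Z] = \binom{n}{\rho n}\Theta_\rho(t)$. Organize $\mathbb{E}[Z^2] = \sum_{\bs,\bs'}\mathbb{P}_{\rm pl}[H(\bs)\le t,\ H(\bs')\le t]$ according to the intersection size $m = |F\cap F'|$ of the two flip-sets. A covariance computation identical to the one above (again projecting out $\bs^*$) shows the relevant correlation of $\ip{\bs}{X},\ip{\bs'}{X}$ is $\alpha(m) = \tfrac{m/n-\rho^2}{\rho(1-\rho)}$, which vanishes at the typical intersection size $m\approx\rho^2 n$; hence a typical pair is asymptotically independent and the bulk of the $m$-sum equals $(1+o(1))\binom{n}{\rho n}^2\Theta_\rho(t^2) = (1+o(1))\mathbb{E}[Z]^2$. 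For the remaining $m$ I would bound the joint probability by $\Theta_\rho(t^2)/\sqrt{1-\alpha(m)^2}$ and the number of pairs with $|F\cap F'|=m$ by $\binom{n}{\rho n}\binom{\rho n}{m}\binom{n-\rho n}{\rho n-m}$ (these sum, by Vandermonde, to $\binom{n}{\rho n}^2$): for $m$ bounded away from $\rho n$ the inflation factor $1/\sqrt{1-\alpha(m)^2}$ is at most $\mathrm{poly}(n)$, while for $m$ macroscopically far from $\rho^2 n$ the pair count is smaller than $\binom{n}{\rho n}^2$ by an $e^{-\Theta(n)}$ entropy factor; and the pure diagonal $\bs=\bs'$ contributes exactly $\mathbb{E}[Z] = \mathrm{poly}(n) = o(\mathbb{E}[Z]^2)$. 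This gives $\mathbb{E}[Z^2] = (1+o(1))\mathbb{E}[Z]^2$, so the Paley--Zygmund inequality yields $\mathbb{P}[Z\ge 1] = 1-o(1)$, i.e.\ $\zeta(\rho)\le \widetilde O\bigl(2^{-nh_b(\rho)}\bigr)$ w.h.p.; combined with the lower bound this is the claim.

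\emph{Main obstacle.} The delicate part is the second-moment tail estimate: one must verify simultaneously that (i) the near-diagonal pairs, where the bivariate Gaussian $(\ip{\bs}{X},\ip{\bs'}{X})$ degenerates and the joint probability is inflated by $1/\sqrt{1-\alpha(m)^2}$, and (ii) the pairs whose overlap is macroscopically atypical, are both dominated by $\mathbb{E}[Z]^2$ --- the familiar tension between an entropic gain in the pair count and an anti-concentration loss, which forces one to pin down the conditional covariance precisely, with the correct variance $\rho(1-\rho)n$ obtained by carefully projecting out the planted direction under the slab conditioning. A minor but necessary point is confirming that conditioning on the slab $\{|\ip{\bs^*}{X}|\le\sqrt n 3^{-n}\}$ rather than on the exact hyperplane $\{\ip{\bs^*}{X}=0\}$ is harmless; this is precisely where the hypothesis $h_b(\rho)<\log_2 3$ (equivalently $C>2$) is used.
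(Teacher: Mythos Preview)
Your approach is correct and takes a genuinely different route from the paper's. The paper never projects out the planted direction; instead it keeps the full three-dimensional Gaussian $(H(\bs_1),H(\bs_2),H(\bs^*))$, removes the conditioning via Bayes' rule, and for the dominant overlap class bounds the resulting $3\times 3$ covariance determinant by writing $\Sigma=\Sigma'+E$ and invoking the Hoffman--Wielandt inequality (their Lemma~\ref{lemma:pho-Sigma3-pb}). Your decomposition $X=\tfrac{\ip{\bs^*}{X}}{n}\bs^*+X^\perp$ collapses this to an unconditioned two-dimensional problem: the pair $(W,W')$ is bivariate normal with correlation $\alpha(m)=\frac{m/n-\rho^2}{\rho(1-\rho)}$, and the ``typical intersection'' $m\approx\rho^2 n$ is exactly where $\alpha\approx 0$. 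Both proofs need the same combinatorial concentration (the paper's Lemma~\ref{lemma:sum-binom} is your Vandermonde-plus-large-deviations step, with the change of variable $k=\rho n-m$), and both must treat the diagonal/near-diagonal and the macroscopically atypical overlaps separately. What your route buys is that the Hoffman--Wielandt step and the $3\times 3$ determinant analysis disappear entirely; what the paper's route buys is that it never needs the ``slab $\approx$ hyperplane'' approximation, since the conditioning event is carried along exactly.

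Two small remarks. First, your phrasing ``for $m$ bounded away from $\rho n$ the inflation factor $1/\sqrt{1-\alpha(m)^2}$ is at most $\mathrm{poly}(n)$'' is slightly loose: for $m$ within $O(1)$ of $\rho n$ the inflation is $\Theta(\sqrt{n})$, not $O(1)$, but this is harmless because the pair count there is only polynomial in $n$ while $\binom{n}{\rho n}$ is exponential. Second, the parenthetical ``equivalently $C>2$'' is not quite the right equivalence: the point is simply that $h_b(\rho)\le 1<\log_2 3$ for every $\rho\in(0,1)$, so $3^{-n}=o(2^{-nh_b(\rho)})$ automatically; the planting threshold $C>2$ is what makes this hold uniformly in $\rho$, but no separate hypothesis on $\rho$ is needed.
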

Here, $h_b(\cdot)$ is the binary entropy function logarithm base 2. Note that $\zeta(1/2)$ is indeed of order $\widetilde{\Theta}(2^{-n})$, which is in agreement with the earlier discussion. 

\paragraph{Landscape of the p-\texttt{NPP} and Limits of Efficient Algorithms} For many planted models, the algorithmic task often entails either (a) finding the planted solution itself or (b) finding a solution that `largely agrees' with the planted solution. Note, though, that p-\texttt{NPP} is an optimization problem. Consequently, one can drop the rather ambitious goal of finding $\bs^*$ itself or approximating it, and focus instead on a more modest goal of finding a $\bs$ with a small $H(\bs)$. To explore the algorithmic tractability of this task, we investigate the landscape of p-\texttt{NPP} via the \emph{multi Overlap Gap Property} ($m$-OGP), an intricate geometrical property that has been leveraged to rule out powerful classes of algorithms for many random computational problems, see below. Informally, the $m$-OGP asserts the non-existence of $m$-tuples of nearly equidistant near-optimal solutions, see Definition~\ref{def:overlap-set} for a formal statement.

Our next result establishes the presence of the $m$-OGP below values $2^{-\Theta(n)}$.
\begin{theorem}[Informal, see Theorem~\ref{thm:m-ogp-planted-npp}]\label{thm:m-ogp-informal}
    Fix any $0<\epsilon<1$. The p-\texttt{NPP} exhibits $m$-OGP below $2^{-\epsilon n}$ for appropriately chosen parameters.
\end{theorem}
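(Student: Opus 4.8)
The plan is to run a first-moment (union bound) argument over $m$-tuples of partitions that are simultaneously near-optimal and pairwise nearly equidistant, and show the expected number of such tuples is exponentially small, so that w.h.p.\ no such tuple exists. Concretely, fix the energy threshold $E = 2^{-\epsilon n}$ (rescaled appropriately so that ``near-optimal'' means $H(\bs^{(i)}) \le E$), fix a target pairwise overlap $\beta \in (0,1)$ and a small window $\nu > 0$, and call a tuple $(\bs^{(1)},\dots,\bs^{(m)})$ \emph{bad} if (i) $H(\bs^{(i)}) \le E$ for every $i$, and (ii) $n^{-1}\ip{\bs^{(i)}}{\bs^{(j)}} \in [\beta - \nu, \beta + \nu]$ for every $i < j$. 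I would additionally impose that each $\bs^{(i)}$ has a prescribed overlap with the planted vector $\bs^*$ — or at least is nearly orthogonal to $\bs^*$, in the regime where Theorem~\ref{zeta-rho-inf} tells us the relevant solutions live — since the planted measure $\Pp_{\rm pl}$ is not product and we need to handle the conditioning on $H(\bs^*) \le 3^{-n}$. The count of bad tuples satisfies
\[
\E{\#\text{bad tuples}} \;\le\; \sum_{(\bs^{(1)},\dots,\bs^{(m)})} \Pp_{\rm pl}\Bigl[\,H(\bs^{(i)}) \le E \ \forall i \ \Big|\ \text{overlap constraints}\Bigr],
\]
and the number of tuples satisfying the overlap constraints is $2^{n(1 + (m-1)(1 - c(\beta)) + o(1))}$ for an explicit constant $c(\beta)$ coming from the entropy of the constrained tuple. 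The heart of the matter is the probability estimate: under $\Pp_{\rm pl}$, the random vector $(\ip{\bs^{(1)}}{X},\dots,\ip{\bs^{(m)}}{X})$ together with $\ip{\bs^*}{X}$ is jointly Gaussian (conditioned on the last coordinate being small), its covariance matrix is determined by the pairwise overlaps, and the probability that all $m$ coordinates fall into an interval of width $\sqrt{n}\,E$ around $0$ is, up to polynomial factors, $(\sqrt{n} E)^m / \sqrt{\det \Sigma}$ where $\Sigma$ is the (conditional) covariance. This gives $\Pp_{\rm pl}[\cdots] = 2^{-n(m\epsilon + o(1))} \cdot (\det \Sigma)^{-1/2}$, and one needs $\det\Sigma$ bounded away from $0$ (nondegeneracy of the overlap configuration), which holds precisely when $\beta$ is in a suitable range.

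Putting the two ingredients together, the expected number of bad tuples is at most $2^{n(1 + (m-1)(1-c(\beta)) - m\epsilon + o(1))}$, and the whole strategy rests on choosing $m$ large and $\beta$ appropriately so that the exponent is strictly negative: $(m-1)(1 - c(\beta)) < m\epsilon - 1$, which is achievable for every fixed $\epsilon > 0$ once $m = m(\epsilon)$ is a large enough constant, since the ``entropy gain'' $(m-1)(1-c(\beta))$ grows linearly in $m$ while the ``energy cost'' $m\epsilon$ also grows linearly in $m$ but with a gap that can be opened by shrinking the allowed overlap window or optimizing $\beta$. (This is exactly the mechanism by which $m$-OGP with $m$ a large constant beats the plain $2$-OGP, which fails for the \texttt{NPP} because the energy exponent is too small relative to the entropy.) One then upgrades the statement from ``no bad tuple for this particular $(\beta,\nu)$'' to ``no bad tuple for a whole range of overlaps'' by taking a union over a polynomial grid of $\beta$ values, and finally applies a standard interpolation/continuity argument: the near-optimal set cannot be split into well-separated clusters, since a connected path of partitions changing one coordinate at a time would have to pass through a forbidden equidistant configuration.

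The main obstacle I anticipate is controlling the conditional covariance structure and the determinant $\det\Sigma$ under the planted measure $\Pp_{\rm pl}$. Unlike the unplanted case, where $\ip{\bs^{(1)}}{X},\dots,\ip{\bs^{(m)}}{X}$ are jointly Gaussian with a clean overlap-determined covariance, here we condition on $\ip{\bs^*}{X} \approx 0$, which shifts and degrades the covariance in a way that depends on all the overlaps $n^{-1}\ip{\bs^{(i)}}{\bs^*}$. I would handle this by (a) working on the event that all $\bs^{(i)}$ are nearly orthogonal to $\bs^*$ — consistent with where the near-optimal solutions concentrate per Theorems~\ref{thm:gs-informal}--\ref{zeta-rho-inf} — so the conditioning has negligible effect on the covariance, and (b) writing the conditional density explicitly via the Gaussian conditioning formula and bounding the anti-concentration probability by a small-ball estimate for the resulting (slightly perturbed) Gaussian vector. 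A secondary technical point is making the entropy constant $c(\beta)$ and the feasibility of the equidistant configuration precise — i.e.\ checking that tuples with the prescribed pairwise overlaps $\beta$ and prescribed overlaps with $\bs^*$ actually exist in the required numbers — which is a routine but careful counting argument using the entropy of multi-dimensional type classes on the hypercube.
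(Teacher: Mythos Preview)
Your overall strategy—first moment over $m$-tuples with prescribed pairwise overlaps, Gaussian anti-concentration for the probability, choosing $\beta$ close to $1$ so the per-solution entropy $h_b\bigl((1-\beta)/2\bigr)$ falls below $\epsilon$, then taking $m$ large—is exactly the paper's approach.

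The one genuine gap is your handling of the planted conditioning. Restricting to tuples with $\bs^{(i)}$ nearly orthogonal to $\bs^*$ does not suffice: the $m$-OGP must rule out \emph{all} bad tuples, and by Theorem~\ref{zeta-rho-inf} there are exponentially many near-optimal $\bs$ at \emph{every} distance $\rho n$ from $\bs^*$ with $h_b(\rho)>\epsilon$, not just near $\rho=1/2$. The paper's route is simpler than either of your proposals and dispenses with tracking overlaps with $\bs^*$ altogether: write the conditional probability as a ratio, bound the numerator by treating $\bigl(\ip{\bs^{(1)}}{X},\dots,\ip{\bs^{(m)}}{X},\ip{\bs^*}{X}\bigr)$ as an $(m{+}1)$-dimensional Gaussian in a small box, and note that $|\Sigma|$ need \emph{not} be bounded away from zero by a constant---since $n\Sigma$ is an integer matrix of full rank (as $\bs^{(i)}\ne\pm\bs^{(j)}$ and $\bs^{(i)}\ne\pm\bs^*$), one has $|\Sigma|\ge n^{-(m+1)}$. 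This polynomial loss is absorbed as $O(m\log n)$ in the exponent and does not disturb the comparison of $m\epsilon$ against $1+(m{-}1)h_b\bigl((1-\beta)/2\bigr)$. (A secondary point: the formal Theorem~\ref{thm:m-ogp-planted-npp} is an \emph{ensemble} $m$-OGP over correlated instances $Y_i(\tau)=\cos(\tau)X_0+\sin(\tau)X_i$, which you do not address; the paper handles this by conditioning on $X_0$ and exploiting independence of the $X_i$'s---additional bookkeeping but no new ideas beyond the integrality trick above.)
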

Theorem~\ref{thm:m-ogp-informal} is the first $m$-OGP result for a planted model. 

We also show in Theorem~\ref{thm:ogp-planted} a certain geometrical property regarding the planted partition: for any $E=\Omega(\log n)$, there exists a value $d=\omega(1)$ such that $\min_{\bs:d_H(\bs,\bs^*)\le d}H(\bs)>2^{-E}$. 
This and Theorem~\ref{zeta-rho-inf} collectively reveal an intriguing property: planted $\bs^*$ and partitions with an `intermediate' Hamiltonian are separated by partitions having a substantially large Hamiltonian. 

Our final result leverages $m$-OGP established in Theorem~\ref{thm:m-ogp-informal} to rule out the class \emph{stable algorithms} with a certain \emph{anti-concentration} property. 
\begin{theorem}[Informal, see Theorem~\ref{thm:stable-hardness}]\label{thm:stable-hard-informal}
    Fix any $\epsilon>0$. Stable algorithms (satisfying a certain anti-concentration property) fail to return a $\bs$ with $H(\bs)\le 2^{-\epsilon n}$ w.p.\,greater than $1-e^{-\Theta(n)}$.  
\end{theorem}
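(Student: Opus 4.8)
The plan is to run the standard $m$-OGP-based lower bound argument, adapted to the planted measure $\mathbb{P}_{\rm pl}$, via an interpolation (or "ensemble") path coupled with a union bound over an $\epsilon$-net of nearly equidistant $m$-tuples. First I would set up the interpolated family of instances: starting from the planted instance $X^{(0)}=X$, I construct a path $\{X^{(t)}\}_{t\in[0,1]}$ that slowly re-randomizes the Gaussian vector while keeping each $X^{(t)}$ distributed according to $\mathbb{P}_{\rm pl}$ (e.g.\ a sequence of single-coordinate or small-block resamplings subject to the planting constraint $H(\bs^*)\le 3^{-n}$, so that the marginal law is preserved at every step). Feeding each $X^{(t)}$ into the (deterministic, w.l.o.g., after fixing randomness) stable algorithm $\mathcal{A}$ yields outputs $\bs^{(t)}=\mathcal{A}(X^{(t)})$. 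Stability guarantees that consecutive outputs change in at most $o(n)$ coordinates as $t$ advances in small increments, so the trajectory $t\mapsto \bs^{(t)}$ moves "continuously" through $\Sigma_n$ in Hamming distance.

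Next I would pick $m=O(1)$ times $0\le t_1<\cdots<t_m\le 1$ and argue that, if $\mathcal{A}$ succeeds with probability $>1-e^{-cn}$ at returning a $\bs$ with $H(\bs)\le 2^{-\epsilon n}$, then by a union bound all of $\bs^{(t_1)},\dots,\bs^{(t_m)}$ are simultaneously $2^{-\epsilon n}$-optimal (for their respective instances) with probability $1-me^{-cn}$. The anti-concentration hypothesis on $\mathcal{A}$ is used here: it rules out the degenerate possibility that the outputs are "frozen" in a way that trivially avoids the forbidden overlap structure (in particular it prevents the outputs from all collapsing onto $\pm\bs^*$, which Theorem~\ref{thm:ogp-planted} and the discussion after Theorem~\ref{thm:m-ogp-informal} show is the only other escape route, since $\bs^*$ is separated from intermediate-Hamiltonian solutions by a barrier). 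By stability and a careful choice of the interpolation schedule, I can ensure that the pairwise overlaps $n^{-1}\ip{\bs^{(t_i)}}{\bs^{(t_j)}}$ sweep through a whole interval, so that for some choice of the $t_i$'s they land in the "forbidden band" — i.e.\ the $m$-tuple $(\bs^{(t_1)},\dots,\bs^{(t_m)})$ is nearly equidistant with all pairwise overlaps in the prescribed window. This is exactly the configuration that Theorem~\ref{thm:m-ogp-informal} (formally Theorem~\ref{thm:m-ogp-planted-npp}) forbids, with probability $1-e^{-\Theta(n)}$, after taking a union bound over a sufficiently fine $\epsilon$-net of overlap profiles and of the finitely many interpolation times. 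Comparing the two probability estimates — success forces the forbidden structure to appear with probability $\ge 1-me^{-cn}$, while $m$-OGP says it appears with probability $\le e^{-\Theta(n)}$ — yields a contradiction once $c$ is taken smaller than the OGP exponent, proving the claimed failure bound.

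The main obstacle, and the step I would spend the most care on, is constructing an interpolation path that (i) exactly preserves the planted marginal $\mathbb{P}_{\rm pl}$ at every intermediate time — the conditioning on $H(\bs^*)\le 3^{-n}$ correlates all coordinates of $X$, so naive coordinate-wise resampling does not keep the constraint satisfied — and (ii) still moves slowly enough in total-variation/coupling distance for the stability bound to give $o(n)$-changes per step, while (iii) guaranteeing that the overlap trajectory genuinely passes through the forbidden window rather than jumping over it. One natural fix is to interpolate in a way that resamples a growing block of coordinates while re-solving for a small correction on a few reserved coordinates to restore the planting constraint exactly (possible since the constraint is a single soft inequality with an interval of feasible values of positive measure); the coupling cost of this correction must be controlled and shown to be negligible. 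A secondary technical point is verifying that the particular anti-concentration property assumed of $\mathcal{A}$ is precisely what is needed to exclude the $\pm\bs^*$ escape and to guarantee the outputs are not adversarially pinned at the band's endpoints; this should follow from the same reasoning used in the unplanted $m$-OGP literature (e.g.\ the symmetric binary perceptron and u-\texttt{NPP} lower bounds cited above), adapted to $\mathbb{P}_{\rm pl}$ using Theorem~\ref{zeta-rho-inf} and Theorem~\ref{thm:ogp-planted} to locate and bound the relevant barriers.
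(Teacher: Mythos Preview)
Your high-level outline (interpolation $+$ stability $+$ $m$-OGP $+$ contradiction) is correct, but the implementation diverges from the paper in two places, one of which is a genuine gap.

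\textbf{Single path versus many paths.} You run one interpolation path and sample $m$ times $t_1<\cdots<t_m$ on it. This cannot produce the forbidden structure of Definition~\ref{def:overlap-set} for $m\ge 3$: on a single path the overlap between $\bs^{(t_i)}$ and $\bs^{(t_j)}$ is (roughly) a monotone function of $|t_i-t_j|$, so you cannot make all $\binom{m}{2}$ overlaps land in a narrow window $[\beta-\eta,\beta]$ simultaneously --- nearly equidistant $m$-tuples do not sit on a line. The paper instead builds $T\gg m$ paths $Y_i(\tau)=\cos(\tau)X_0+\sin(\tau)X_i$, $1\le i\le T$, sharing a common start $X_0$ and ending at independent $X_i$. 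At $\tau=0$ all outputs coincide (overlap $1$); at $\tau=\pi/2$ a chaos lemma forces some pair of outputs to have overlap below $\beta-\eta$. Stability makes each overlap $\Overlap^{(ij)}(\tau_k)$ move by at most $\eta/5$ per discrete step, so every pair crosses the band at some time. The paper then colors the edge $(i,j)$ of $K_T$ by the first crossing time and applies Ramsey theory twice (Theorems~\ref{thm:2color-ramsey} and~\ref{thm:Ramsey-multicolor}) to extract a monochromatic $K_m$ --- i.e.\ $m$ indices whose pairwise overlaps are simultaneously in the band at a common time $\tau_t$. This Ramsey step is the missing idea in your proposal.

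\textbf{Handling the planted constraint.} You propose to keep $H(\bs^*,X^{(t)})\le 3^{-n}$ exactly along the path by resampling blocks and applying a small correction. The paper does not attempt this. Instead it observes that if $H(\bs^*,X_0)\le 3^{-n}$ and $H(\bs^*,X_i)\le 3^{-n}$ then automatically $H(\bs^*,Y_i(\tau))\le \sqrt{2}\cdot 3^{-n}$ (triangle inequality plus $\cos\tau+\sin\tau\le\sqrt{2}$), and this is why Definition~\ref{def:stable-algs} requires $\A$ to succeed and be stable under both thresholds $3^{-n}$ and $\sqrt{2}\cdot 3^{-n}$. To transfer $\A$'s guarantees to the interpolated instances, the paper uses a change-of-measure argument (Lemmas~\ref{lemma:inter-oracle} and~\ref{lemma:success}): the ratio of the relevant conditioning probabilities is $O(1)$ for stability but picks up a factor $\Theta(3^n)$ for success. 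This factor is precisely why the ruled-out algorithms must succeed with probability $1-e^{-\Theta(n)}$ rather than $1-O(1)$, which is the content of the theorem's probability bound. Your exact-constraint scheme, if it worked, might avoid this loss --- but the paper flags improving the success guarantee as an open problem, so a clean construction of such a path is not known.
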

Informally, an algorithm is stable if a small perturbation of its input changes its output (i.e.\,the partition it returns) only slightly. Stable algorithms capture powerful classes of algorithms, including approximate message passing (AMP)~\cite{gamarnikjagannath2021overlap}, low-degree polynomials~\cite{gamarnik2020low,wein2020optimal,bresler2021algorithmic} and Boolean circuits of low-depth~\cite{gamarnik2021circuit}. As for the anticoncentration property, the algorithms ruled out return the planted partition $\bs^*$ with probability at most a certain constant. As we detail in Section~\ref{sec:m-ogp-to-hardness}, this is a benign assumption.  

Theorem~\ref{thm:stable-hard-informal} is the first hardness result against the class of stable algorithms for a planted model. Its proof is based on the $m$-OGP result (Theorem~\ref{thm:m-ogp-informal}), a Ramsey-theoretic argument developed in~\cite{gamarnik2021algorithmic} as well as a certain change of measure argument required due to planting.
\subsection{Further Background and Related Work}
\paragraph{Statistical-to-Computational Gaps (\texttt{SCG}s)}
As we discussed earlier, the u-\texttt{NPP} exhibits an \texttt{SCG}. While $\min_{\bs\in\Sigma_n}H(\bs)=\Theta(2^{-n})$ w.h.p., the best known efficient algorithm performs strictly worse, only returning a $\bs$ with $H(\bs) = 2^{-\Theta(\log^2 n)}$ w.h.p. Observe that a $\bs$ with $H(\bs)=\Theta(2^{-n})$ can be found by brute force. So, this \texttt{SCG} suggests the presence of a regime, $\log^2 n\ll E\ll n$, for which partitions of Hamiltonian $2^{-E}$ exists but finding them may be computationally infeasible. 

Such gaps appear frequently in many algorithmic tasks in the \emph{average-case setting}, i.e.\,when the underlying model involves randomness. Examples include certain random combinatorial structures (e.g.\,random CSPs, random graphs, spin glasses) as well as models in high-dimensional statistics (e.g.\,planted clique, tensor decomposition), see~\cite{gamarnik2022disordered} for a survey. For average-cased models, standard complexity theory is often not helpful.\footnote{The standard complexity theory often deals with the worst-case hardness, see~\cite{ajtai1996generating,boix2021average,GK-SK-AAP} for some exceptions.} Nevertheless, a very fruitful line of research proposed various frameworks to obtain some rigorous evidence of formal hardness. These approaches include average-case reductions (most often from the planted clique~\cite{berthet2013computational,brennan2018reducibility,brennan2019optimal}) as well as unconditional lower bounds against restricted classes of algorithms such as low-degree polynomials~\cite{hopkins2018statistical,kunisky2022notes,wein2022average}, approximate message passing~\cite{zdeborova2016statistical,bandeira2018notes}, statistical query algorithms~\cite{diakonikolas2017statistical,feldman2017statistical,feldman2018complexity}, sum-of-squares hierarchy~\cite{hopkins2015tensor,hopkins2017power,raghavendra2018high,barak2019nearly}, and Monte Carlo Markov Chains (MCMC)~\cite{jerrum1992large,dyer2002counting}. Another such approach is based on the intricate geometry of the problem through the \emph{Overlap Gap Property}. 
\paragraph{Overlap Gap Property (OGP)} The OGP framework leverages intricate geometrical properties of solution space to rule out classes of algorithms. The OGP is a topological property; it informally states that (w.h.p.) any two near-optima are either `close' or `far': there exists $0<\nu_1<\nu_2<1$ such that $n^{-1}\ip{\bs}{\bs'}\in[0,\nu_1]\cup[\nu_2,1]$ for any pair of near-optima $\bs,\bs'\in\Sigma_n$. This property is a rigorous barrier against \emph{stable algorithms}, see below for details. See~\cite{gamarnik2021overlap} for a survey on OGP, and~\cite{mezard2005clustering,achlioptas2006solution,achlioptas2008algorithmic} for earlier works suggesting a potential link between intricate geometry and algorithmic hardness.
\paragraph{Algorithmic Implications of OGP}
The first instance of OGP ruling out algorithms is the problem of finding a large independent set in sparse \ER  random graphs on $n$ vertices with average degree $d$. This model exhibits an \texttt{SCG}: while the largest independent set is of size $2\frac{\log d}{d}n$~\cite{frieze1990independence,frieze1992independence,bayati2010combinatorial}, the best known efficient algorithm finds an independent set half the optimal size, $\frac{\log d}{d}n$~\cite{lauer2007large}. Gamarnik and Sudan~\cite{gamarnik2014limits,gamarnik2017} established that the set of independent sets of size larger than $(1+1/\sqrt{2})\frac{\log d}{d}n$ exhibits the OGP. Consequently, they showed that \emph{local algorithms} fail to find an independent set of size larger than $(1+1/\sqrt{2})\frac{\log d}{d}n$. Since then, the OGP framework has been proven useful for various models (e.g.\,random graphs~\cite{rahman2017local,gamarnik2020low,wein2020optimal}, spin glasses~\cite{chen2019suboptimality,gamarnikjagannath2021overlap,huang2021tight}, random CSPs~\cite{gamarnik2017performance,bresler2021algorithmic,gamarnik2022algorithms}) ruling out other powerful classes of algorithms (e.g.\,low-degree polynomials~\cite{gamarnik2020low,wein2020optimal,bresler2021algorithmic}, AMP~\cite{gamarnikjagannath2021overlap}, low-depth circuits~\cite{gamarnik2021circuit}, Langevin dynamics~\cite{gamarnik2020low,huang2021tight}, and most recently, online algorithms~\cite{gamarnik2023geometric,du2023algorithmic}). 
\paragraph{Multi OGP ($m$-OGP)} The work by Gamarnik and Sudan discussed above rules out local algorithms at value $(1+1/\sqrt{2})\frac{\log d}{d}n$, which is off from the best known algorithmic guarantee by an additional $1/\sqrt{2}$ factor. Subsequent work by Rahman and Vir{\'a}g~\cite{rahman2017local} removed this additional factor by considering a more intricate overlap pattern, involving many large independent sets. This approach is dubbed as the $m$-OGP. Yet another work by Gamarnik and Sudan~\cite{gamarnik2017performance} established the $m$-OGP for $m$-tuples of nearly equidistant solutions and subsequently obtained nearly tight hardness guarantees for the Not-All-Equal $k$-SAT problem. This forbidden overlap pattern is called the symmetric $m$-OGP. Symmetric $m$-OGP was also employed in~\cite{gamarnik2021algorithmic} for the u-\texttt{NPP}, giving some rigorous evidence of hardness below values $2^{-\omega(\sqrt{n\log n})}$; it is also our focus in the present paper. Further, symmetric $m$-OGP yielded nearly tight hardness guarantees for the symmetric Ising perceptron~\cite{gamarnik2022algorithms} and for the $p$-spin model with large $p$~\cite{gjk2023}. Furthermore,~\cite{kizildaug2023sharp} showed that for the Ising $p$-spin and random $k$-SAT models, the $m$-OGP exhibits a certain sharp phase transition.

Recent work put forth more intricate overlap patterns going beyond the symmetric $m$-OGP. These patterns consist of $m$-tuples, where the $i^{\rm th}$ solution has `intermediate' overlap with the first $i-1$ solutions, for any $2\le i\le m$. By doing so, tight hardness guarantees were obtained against low-degree polynomials in the context of (i) finding large independent sets in sparse random graphs~\cite{wein2020optimal} and (ii) finding a satisfying solution to the random $k$-SAT~\cite{bresler2021algorithmic}. Even more recently, Huang and Sellke proposed a very ingenious forbidden structure consisting of an ultrametric tree of solutions, dubbed as the \emph{Branching OGP}~\cite{huang2021tight}. By leveraging the Branching OGP, they obtained tight hardness guarantees for the mixed even $p$-spin model against the class of overlap concentrated algorithms. This class includes $O(1)$ iteration of AMP and gradient descent, as well as the Langevin dynamics running for $O(1)$ time. 
\paragraph{OGP for Planted Models} A variant of the OGP framework applies to planted models arising from statistical inference tasks, including sparse  
 linear regression~\cite{gamarnik2017high}, principal submatrix recovery~\cite{gamarnik2021overlapsubmatrix}, sparse PCA~\cite{arous2020free}, and planted clique~\cite{el2022densest,gamarnik2019landscape}; it gives lower bounds against MCMC and various local search heuristics. The details of this framework, however, are quite different from unplanted models. To set the stage, denote by $\bs^*$ the planted structure and let $\mathcal{L}(\bs)$ be any natural choice of \emph{loss function} (such as the log-likelihood in regression). Set
\[
\Gamma(\mathcal{O}) \triangleq \min_{\bs : \frac1n\ip{\bs}{\bs^*}=\mathcal{O}} \mathcal{L}(\bs). 
\]
That is, $\Gamma(\mathcal{O})$ is the smallest loss achievable by solutions $\bs$ having a fixed overlap $\mathcal{O}$ with the planted $\bs^*$. In this case, the OGP holds iff the function $\mathcal{O}\mapsto \Gamma(\mathcal{O})$, $\mathcal{O}\in(0,1)$, is non-monotonic, see~\cite{gamarnik2017high,arous2020free} for details. In particular, this version of the OGP concerns only with the pairs of solutions of form $(\bs,\bs^*)$, and it does not extend to $m$-tuples. We now highlight that the p-\texttt{NPP} investigated herein does not exhibit this version of OGP. Indeed, recall $\zeta(\cdot)$ from Theorem~\ref{thm:zeta-rho}---which is our choice of canonical loss function---and define
\[
\Gamma(\mathcal{O}) \triangleq \frac1n\log_2\left(\min_{\substack{\bs \in \Sigma_n :\frac1n\ip{\bs}{\bs^*} = \mathcal{O}}}\zeta(\mathcal{O})\right),\quad \mathcal{O}\in(0,1).
\]
Theorem~\ref{thm:zeta-rho} then yields that for any $\mathcal{O}\in(0,1)$, we have that $\Gamma(\mathcal{O}) = -h_b\left(\frac{1-\mathcal{O}}{2}\right)$ which is monotonic. On the other hand, Theorem~\ref{thm:m-ogp-planted-npp} shows that p-\texttt{NPP} does in fact exhibit the version of $m$-OGP discussed earlier. Consequently, one can rule out the class of stable algorithms. 
We find it quite interesting that p-\texttt{NPP} does not exhibit the OGP tailored for planted models, but it does exhibit the $m$-OGP tailored for unplanted models.
\subsection{Open Problems and Future Directions}\label{sec:open-problems}
\paragraph{Algorithmic Guarantees for the p-\texttt{NPP}} We established existential guarantees for the p-\texttt{NPP} and gave evidence of algorithmic hardness in a certain regime, without any corresponding positive algorithmic result. So, a very natural direction is devising efficient algorithms for the p-\texttt{NPP}. One candidate is the aforementioned algorithm by Karmarkar and Karp~\cite{karmarkar1982differencing} for the u-\texttt{NPP}; it would be interesting to explore its performance for the p-\texttt{NPP}. Due to conditioning on a `rare event' $H(\bs^*)\le 3^{-n}$, even a numerical simulation appears challenging. We leave this as an interesting direction for future work.
\paragraph{Hypothesis Testing} Yet another question is whether one can efficiently `distinguish' the p-\texttt{NPP} and the u-\texttt{NPP}. Observe that if $H(\bs^*)\le 2^{-cn}$ for a $c>1$, then the models are statistically distinguishable: values below $2^{-cn}$ (w.h.p.) do not occur in u-\texttt{NPP}. A very interesting question is the computational tractability of this task: is there a polynomial-time algorithm that distinguishes\footnote{Note that a random guessing distinguishes the planted and the unplanted model with probability $1/2$. So, any non-trivial distinguisher should have a success probability strictly greater than $1/2$.
A more challenging task is to distinguish w.p.\,$1-o(1)$.}  p-\texttt{NPP} and u-\texttt{NPP} for $c>1$ sufficiently large? To that end, Theorems~\ref{thm:ground-state} and \ref{thm:zeta-rho} collectively suggest that ignoring the planted $\pm\bs^*$, the p-\texttt{NPP} and the u-\texttt{NPP} behave somewhat similarly. In light of these, we make the following conjecture.
\begin{conjecture}\label{conj:distinguish-impossible}
Fix any $c>1$. There exists no polynomial-time algorithm that distinguishes the p-\texttt{NPP} from the u-\texttt{NPP} with probability greater than $\frac12$. \end{conjecture}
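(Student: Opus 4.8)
This conjecture asserts an \emph{unconditional} impossibility for all polynomial-time distinguishers; proving this outright seems beyond current techniques, so the realistic target — and what I would aim to establish — is the standard form of rigorous evidence for such a statement: a \emph{low-degree hardness} result, showing that the Hermite-degree-$\le D$ truncated likelihood ratio stays bounded for every $D$ up to a superlogarithmic (indeed much larger) threshold, so that, under the low-degree conjecture~\cite{hopkins2018statistical,kunisky2022notes}, no polynomial-time test distinguishes $\mathbb{P}_{\rm pl}$ from $\mathbb{P}_0=\cN(0,I_n)$ with non-vanishing advantage. The central object is the likelihood ratio $L_n=d\mathbb{P}_{\rm pl}/d\mathbb{P}_0$, where $\mathbb{P}_{\rm pl}$ now denotes the mixture of~\eqref{eq:PL-MEASURE} over a uniformly random plant $\bs^*$ (and the window $3^{-n}$ is replaced by $2^{-cn}$). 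Since the plant is the conditioning on $\{H(\bs^*)\le 2^{-cn}\}$, a short computation gives
\[
L_n(X)\;=\;\mathbb{E}_{\bs^*}\!\left[\phi\!\left(\tfrac{1}{\sqrt n}\ip{\bs^*}{X}\right)\right]\;=\;\frac{1}{p_n\,2^n}\,\bigl|\{\bs\in\Sigma_n:\ H(\bs)\le 2^{-cn}\}\bigr|,
\]
where $\phi$ is the normalized indicator of $[-2^{-cn},2^{-cn}]$ against $\cN(0,1)$ and $p_n=\mathbb{P}[\,|\cN(0,1)|\le 2^{-cn}\,]=\Theta(2^{-cn})$; thus $L_n$ is, up to scaling, the count of partitions with super-exponentially small Hamiltonian. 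The ``diagonal'' event $\bs^*=\pm\widetilde{\bs}^*$ (of probability $2^{1-n}$, $\widetilde{\bs}^*$ being an independent copy of $\bs^*$) alone forces $\mathbb{E}_{\mathbb{P}_0}[L_n^2]=\Theta(2^{(c-1)n})$: the $\chi^2$-divergence is exponentially large, exactly reflecting that the two models are statistically distinguishable. One must therefore bound instead $\|L_n^{\le D}\|_2$, the $L^2(\mathbb{P}_0)$-norm of the degree-$\le D$ projection.

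The decisive structural point is that the plant enters only through the single \emph{linear} statistic $\ip{\bs^*}{X}$, so $h_k\bigl(\ip{\bs^*}{X}/\sqrt n\bigr)$ — with $h_k$ the degree-$k$ normalized Hermite polynomial — is a pure-degree-$k$ element of the Wiener--Hermite chaos of $\cN(0,I_n)$; together with the classical identity $\mathbb{E}_X\!\left[h_k(\ip{u}{X})h_k(\ip{v}{X})\right]=\ip{u}{v}^{\,k}$ for unit vectors $u,v$, this yields the exact formula
\[
\|L_n^{\le D}\|_2^2\;=\;\sum_{k=0}^{D}\widehat{\phi}_k^{\,2}\;\mathbb{E}_{\bs^*,\widetilde{\bs}^*}\!\left[\Bigl(\tfrac{1}{n}\ip{\bs^*}{\widetilde{\bs}^*}\Bigr)^{k}\right],
\]
with $\widehat{\phi}_k$ the Hermite coefficients of $\phi$ and $\bs^*,\widetilde{\bs}^*$ independent uniform plants. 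Here $\widehat{\phi}_0=1$, $\widehat{\phi}_k=0$ for odd $k$, and $\widehat{\phi}_k^{\,2}=O(k^{-1/2})$ uniformly (crudely from $\sup_{[-1,1]}|h_k|=O(k^{-1/4})$; more precisely $\widehat{\phi}_k\to h_k(0)=\binom{k}{k/2}^{1/2}2^{-k/2}$ because the window is super-polynomially small, and $\sum_k h_k(0)^2 x^k=(1-x^2)^{-1/2}$, so the \emph{un}truncated sum recovers the divergent $\mathbb{E}[(1-\rho^2)^{-1/2}]$, $\rho=\tfrac1n\ip{\bs^*}{\widetilde{\bs}^*}$). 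It remains to control the overlap moments $\mathbb{E}[\rho^k]$. For even $k\le 2n$, Khintchine's inequality with the optimal Gaussian constant gives $\mathbb{E}[\rho^k]\le (k-1)!!\,n^{-k/2}$, so $\sum_{2\le k\le 2n}\widehat{\phi}_k^{\,2}\mathbb{E}[\rho^k]$ is dominated by its $k=2$ term and equals $O(1/n)$. For $k>2n$ one uses the pointwise bound $\rho^k\le\rho^{2n}$ together with a routine large-deviations estimate $\mathbb{E}[\rho^{2n}]=2^{-\Omega(n)}$ (via $\mathbb{E}[\rho^{2n}]=\mathbb{E}[(1-2J/n)^{2n}]$ for $J\sim\operatorname{Bi}(n,1/2)$), so $\sum_{2n<k\le D}\widehat{\phi}_k^{\,2}\mathbb{E}[\rho^k]=O(D\,2^{-\Omega(n)})$. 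Altogether $\|L_n^{\le D}\|_2=1+o(1)$ for every $D=2^{o(n)}$, uniformly in $c>1$; equivalently the low-degree advantage $\sqrt{\|L_n^{\le D}\|_2^2-1}$ vanishes, which is the desired low-degree hardness — far exceeding the degree-$(\log n)^{\omega(1)}$ regime relevant to polynomial time, the over-reach to near-exponential $D$ being the familiar ``needle-in-a-haystack'' phenomenon (and consistent with the trivial $2^n$-time brute-force distinguisher).

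The principal obstacle is conceptual, not computational. The conjecture concerns \emph{all} efficient algorithms, whereas the argument above rules out only low-degree tests; the bridge is the low-degree conjecture, which is itself unproven and, moreover, is usually invoked for ``signal-plus-noise'' planted models rather than for conditioning on a rare event with no accompanying noise, so one should verify — I expect affirmatively — that the method's predictions remain meaningful here, ideally corroborating the computation with a matching statistical-query or sum-of-squares lower bound, or an average-case reduction. On the technical side there is essentially no obstacle: the one mildly delicate point, estimating $\mathbb{E}[\rho^k]$ when $k$ is of order $n$, is sidestepped by the monotonicity $\rho^k\le\rho^{2n}$, after which everything reduces to an elementary Hermite estimate and a standard moment bound for $\operatorname{Bi}(n,1/2)$.
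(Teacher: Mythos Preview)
The statement is a \emph{conjecture}: the paper offers no proof and explicitly writes that ``one can employ, e.g., the low-degree framework \ldots\ to give some rigorous evidence towards Conjecture~\ref{conj:distinguish-impossible}. We leave this for future work.'' Your proposal is therefore not to be compared against a proof in the paper --- there is none --- but rather executes precisely the program the paper suggests.

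On that score, your sketch is sound. The key structural observation --- that the likelihood ratio factors through the single scalar $\ip{\bs^*}{X}/\sqrt{n}$, so that the low-degree norm collapses to $\sum_{k\le D}\widehat{\phi}_k^{\,2}\,\mathbb{E}[\rho^k]$ with $\rho=n^{-1}\ip{\bs^*}{\widetilde{\bs}^*}$ --- is correct and is what makes the computation tractable. The moment bounds (Khintchine for $k\le 2n$, the crude $|\rho|^k\le \rho^{2n}$ combined with $\mathbb{E}[\rho^{2n}]=\Theta(2^{-n})$ for $k>2n$) and the Hermite-coefficient asymptotics $\widehat{\phi}_{2m}^{\,2}\to\binom{2m}{m}4^{-m}$ are all valid, and they do yield $\|L_n^{\le D}\|_2^2=1+O(1/n)+O(D\,2^{-n})=1+o(1)$ for all $D=2^{o(n)}$, uniformly in $c>1$.

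You are also clear-eyed about what this does and does not establish: it is low-degree evidence, not a proof of the conjecture, and the bridge to ``all polynomial-time algorithms'' rests on the low-degree heuristic. One caveat worth flagging more explicitly is that the conjecture, as phrased, asks for advantage strictly greater than $\tfrac12$ --- i.e., rules out even \emph{weak} detection --- whereas the standard low-degree conjecture is usually stated for strong detection ($\|L^{\le D}\|=O(1)$ ruling out success probability $1-o(1)$). Your stronger conclusion $\|L^{\le D}\|=1+o(1)$ is the right input for the weak-detection version, but the heuristic is on somewhat less firm ground there; it would be worth noting this distinction and, as you suggest, seeking corroboration from SQ or SoS lower bounds.
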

One can employ, e.g.\,the low-degree framework~\cite{kunisky2022notes,schramm2022computational}, to give some rigorous evidence towards Conjecture~\ref{conj:distinguish-impossible}. We leave this for future work.

\paragraph{Different Versions of Planting} Another direction is to investigate different versions of planted \texttt{NPP}. As a concrete example, fix a $\bs^*\in\Sigma_n$ and generate $X_1,\dots,X_n\in\R$ as follows. Let $X_i\sim \cN(0,1)$ be i.i.d.\,for $1\le i\le \lfloor n/2\rfloor$ and generate $X_{\lfloor n/2\rfloor+1},\dots,X_n$  conditional on the event that $\left|\ip{\bs^*}{X}\right|\le \sqrt{n}3^{-n}$, where $X=(X_1,\dots,X_n)\in\R^n$. (We thank Cindy Rush for suggesting this version of planting.) Such an alternative version of planting may in fact facilitate low-degree calculations regarding the aforementioned hypothesis testing problem; we leave this as an interesting direction for future work. 

\paragraph{$m$-OGP for other Planted Models} Planted models such as the planted clique, sparse PCA, and sparse regression are often of estimation/recovery type: they entail either finding the planted structure itself or obtaining  a solution having a good overlap with the planted structure. For this reason, the OGP analysis for planted models was so far limited to pairwise OGP only, where one coordinate of the pairs under investigation is the planted structure. As a result, it only rules out the MCMC methods and certain local algorithms, such as gradient descent. The p-\texttt{NPP}, however, is an optimization problem, where finding a $\bs$ not necessarily equal to $\pm \bs^*$ with a small $H(\bs)$ is still of algorithmic interest. For this model, Theorems~\ref{thm:m-ogp-planted-npp} and~\ref{thm:stable-hardness} collectively yield some evidence of hardness against stable algorithms targeting to find solutions not necessarily equal to $\pm \bs^*$. In light of these, the $m$-OGP approach is still useful for planted models that are of `optimization type'. A very interesting question is whether an $m$-OGP approach can rule out stable algorithms in a much more challenging planted estimation/recovery setting. For such models, we believe our approach might be of help as we elaborate now. As a case in point, consider the \emph{sparse regression} problem: efficiently recover a sparse  $\beta^*\in\R^p$ from its noisy linear measurements $Y=X\beta^*+W\in\R^n$, where the measurement matrix $X\in\R^{n\times p}$ and the noise vector $W\in\R^n$ consist of, say, i.i.d.\,sub-Gaussian entries. When the sample size $n$ is below a certain value (determined by the number of features $p$, the sparsity level $\|\beta^*\|_0$ and noise strength $\mathbb{E}[W_1^2]$), this model exhibits the aforementioned version of the OGP tailored for planted models~\cite{gamarnik2017high} w.r.t.\,the canonical loss function $\mathcal{L}(\beta) = \|Y-X\beta\|_2^2$. Consequently, various local search algorithms fail to recover $\beta^*$. For the purposes of the optimization, it is natural to consider the sublevel sets of $\mathcal{L}$. Now, suppose that there exists a $c_0$ such that the set of such sparse $\beta$ with $\mathcal{L}(\beta)\le c_0$ exhibits the $m$-OGP  in the sense of Definition~\ref{def:overlap-set} for a suitable $m$. One would then be able to show that stable algorithms with a similar anti-concentration property fail to find a sparse $\beta_{\rm ALG}$ with  $\mathcal{L}(\beta_{\rm ALG})\le c_0$. In light of the results of~\cite{gamarnik2017high}, this would imply that the (normalized) overlap between any such $\beta_{\rm ALG}$ and the planted $\beta^*$ is bounded away from $1$. It appears challenging though to establish the  $m$-OGP for appropriate sublevel sets of $\mathcal{L}$. The  situation is even more subtle for the planted clique model, since even the choice of a canonical loss function is not clear to begin with. We leave these as very interesting future directions. 
\paragraph{$m$-OGP for Different Forbidden Structures} Theorems~\ref{thm:m-ogp-planted-npp} and~\ref{thm:stable-hardness} give some rigorous evidence of hardness for the algorithmic problem of efficiently finding a $\bs$ with $H(\bs)\le 2^{-\Theta(n)}$. While we do not know the best algorithmic guarantee, we anticipate that the p-\texttt{NPP} exhibits an \texttt{SCG} similar to the u-\texttt{NPP}. In light of these, we ask the following: is there an $f:\N\to\R^+$ with $f\in o(n)$ as $n\to\infty$ such that the set of $\bs$ with $H(\bs)\le 2^{-f(n)}$ exhibits $m$-OGP for a different forbidden structure? In fact, this direction was already undertaken in~\cite{gamarnik2021algorithmic}; the presence of $m$-OGP with growing $m$, $m=\omega(1)$, below $2^{-\omega(\sqrt{n\log n})}$ was established. While it is plausible that a similar analysis may show the presence of $m$-OGP also below $2^{-\omega(\sqrt{n\log n})}$, we do not pursue this for simplicity. A far more interesting question is whether one can `break the $\sqrt{n\log n}$ barrier' altogether, see~\cite[Section~4.1]{gamarnik2021algorithmic} for details. One approach is to employ a more intricate overlap pattern, such as~\cite{wein2020optimal,bresler2021algorithmic} or the \emph{Branching OGP}~\cite{huang2021tight}.
\paragraph{Improving the Success Guarantee}Theorem~\ref{thm:stable-hardness} leverages the $m$-OGP with $m=O(1)$ to rule out algorithms succeeding w.p.\,at least $1-e^{-\Theta(n)}$. This is an `artifact' of a certain change of measure argument required for planting; prior work leveraging $m$-OGP with constant $m$ ruled out algorithms succeeding with a constant probability~\cite{gamarnik2021algorithmic,gamarnik2022algorithms}. An interesting technical question is improving the guarantee in Theorem~\ref{thm:stable-hardness} to a constant probability of success.
 \subsubsection*{Paper Organization}
 The rest of the paper is organized as follows. In Section~\ref{sec:energy}, we provide our results regarding the value of Hamiltonian. See Section~\ref{sec:ground} for the results on the ground-state (other than $\pm \bs^*$) and Section~\ref{sec:hamilton-fixed} for the values of Hamiltonian at fixed distance from the planted solution. Section~\ref{sec:landscape} is devoted to our landscape results. See in particular Section~\ref{sec:planted-iso} for landscape near the planted solution, and Section~\ref{sec:m-ogp} for our $m$-OGP results. We present our algorithmic hardness results in Section~\ref{sec:hardness}. Finally, we provide complete proofs in Section~\ref{sec:proofs}.
\section{Energy Levels of p-\texttt{NPP}}\label{sec:energy}
We commence this section by recalling the \emph{planted} model: sample an $X\sim \cN(0,I_n)$ conditional on $H(\bs^*)\le 3^{-n}$, where $\bs^*\in\Sigma_n$ is fixed and $H(\cdot)$ is the Hamiltonian~\eqref{eq:hamiltonian}. Recall that in the absence of planting, i.e.\,$X_i\sim \cN(0,I_n)$ unconditionally, $\min_{\bs\in\Sigma_n}H(\bs)=\Theta(2^{-n})$ w.h.p.~\cite{karmarkar1986probabilistic}. In particular, under the planted model, the Hamiltonian for both $\bs^*$ and $-\bs^*$ (due to symmetry) is at most $3^{-n}$, and the value $3^{-n}$ w.h.p.\,does not occur in the unplanted model. 

In this section, we investigate the values of $H(\bs)$ for $\bs \ne \pm \bs^*$. Our first focus is on the \emph{ground-states} of p-\texttt{NPP} other than $\pm \bs^*$: 
\[
\min_{\bs \in\Sigma_n\setminus\{\pm \bs^*\}}H(\bs).
\]
\subsection{Ground-States of p-\texttt{NPP} beyond Planted Partition}\label{sec:ground}
In light of the preceding discussion, a very natural question is whether planting induces $\bs\ne \pm\bs^*$ with $H(\bs)$ substantially smaller than $2^{-n}$, i.e.\,the smallest value  achievable in the unplanted model. We show that, perhaps rather surprisingly, the answer to this question is {\bf negative}. 
\begin{theorem}\label{thm:ground-state}
    Let $f(n):\mathbb{N}\to\R^+$ and $g(n):\mathbb{N}\to\R^+$ be arbitrary functions with $f(n) = o_n(1)$ and $g(n)=\omega_n(1)$ as $n\to\infty$. Then, w.h.p.\,under the planted measure~\eqref{eq:PL-MEASURE},
    \[
    f(n)2^{-n}\le \min_{\bs\in\Sigma_n\setminus
    \{\pm\bs^*\}} H(\bs)\le g(n)2^{-n}.
    \]
\end{theorem}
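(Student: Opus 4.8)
The plan is to prove the two bounds separately via the first and second moment methods, handling the conditioning on the rare event $H(\bs^*)\le 3^{-n}$ by a change of measure.

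\textbf{Upper bound.} Fix $g(n)=\omega_n(1)$. I would show that w.h.p.\ there exists $\bs\ne\pm\bs^*$ with $H(\bs)\le g(n)2^{-n}$ using the second moment method on a carefully chosen subfamily. Following the heuristic in the introduction, I would restrict attention to partitions $\bs$ that are nearly orthogonal to $\bs^*$, i.e.\ $\tfrac1n|\ip{\bs}{\bs^*}|\le\epsilon$ for a small fixed $\epsilon>0$; the point of this restriction is that such $\bs$ are "insensitive" to the planting, so under $\mathbb{P}_{\rm pl}$ the random variable $\ip{\bs}{X}$ is still (approximately) $\cN(0,n)$, decorrelated from the conditioning event. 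Let $N=\sum_{\bs:\,\tfrac1n|\ip{\bs}{\bs^*}|\le\epsilon}\ind\{H(\bs)\le g(n)2^{-n}\}$. Under the \emph{unplanted} measure, $\E[N]=\Theta(2^n\cdot g(n)2^{-n}/\sqrt n)=\Theta(g(n)/\sqrt n)$ — which may not diverge, so I would instead take a slightly larger threshold $\tau(n)2^{-n}$ with $1\ll\tau(n)\ll g(n)$ to make $\E[N]\to\infty$, and at the end a union bound over dyadic thresholds recovers the statement for $g(n)$ itself. The main work is the second moment $\E[N^2]=\sum_{\bs,\bs'}\pr[H(\bs),H(\bs')\le\tau 2^{-n}]$: split by the overlap $\tfrac1n\ip{\bs}{\bs'}=r$, use that $(\ip{\bs}{X},\ip{\bs'}{X})$ is bivariate Gaussian with correlation $r$, and show the contribution of $|r|$ bounded away from $0,\pm1$ gives $\E[N^2]=(1+o(1))\E[N]^2$ while the near-$\pm1$ "diagonal" terms contribute only $O(\E[N])$. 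Then Paley–Zygmund gives $N>0$ w.h.p. Finally I must transfer this from the unplanted to the planted measure: since $\pr_{\rm pl}[H(\bs^*)\le 3^{-n}$-event$]=1$ and the unplanted probability of that event is $\Theta(3^{-n})$, a naive transfer loses a factor $3^{n}$, which is fatal. The correct route is to do the entire second-moment computation \emph{directly under} $\mathbb{P}_{\rm pl}$: condition on $X$ decomposed along $\bs^*$ and its orthogonal complement; the component of $X$ along $\bs^*$ is conditioned to be tiny, the orthogonal component is untouched $\cN(0,I_{n-1})$, and for $\bs$ nearly orthogonal to $\bs^*$ the quantity $\ip{\bs}{X}$ is dominated by the orthogonal component, so the Gaussian estimates go through with negligible corrections.

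\textbf{Lower bound.} Fix $f(n)=o_n(1)$. This is a first-moment/union bound: I want $\pr_{\rm pl}[\exists\,\bs\ne\pm\bs^*:\,H(\bs)\le f(n)2^{-n}]\to 0$. Under the \emph{unplanted} measure, for any fixed $\bs$ we have $\pr[H(\bs)\le f(n)2^{-n}]=\Theta(f(n)2^{-n}/\sqrt n)$, so $\E[\#\{\bs:H(\bs)\le f(n)2^{-n}\}]=\Theta(f(n)/\sqrt n)=o(1)$. To pass to $\mathbb{P}_{\rm pl}$, note $\pr_{\rm pl}[\mathcal E]=\pr[\mathcal E\cap\{H(\bs^*)\le 3^{-n}\}]/\pr[H(\bs^*)\le 3^{-n}]$ and $\pr[H(\bs^*)\le 3^{-n}]\ge c\,3^{-n}$, so a blunt bound gives a blowup of $3^n$ — again too lossy against a $2^{-n}$ threshold. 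Instead I would again decompose $X$ along $\bs^*$: write $X=\tfrac{\ip{\bs^*}{X}}{n}\bs^*+X^\perp$ where under $\mathbb{P}_{\rm pl}$ the scalar $a:=\tfrac1{\sqrt n}\ip{\bs^*}{X}$ has density $\propto e^{-a^2/2}\ind\{|a|\le 3^{-n}\}$ (hence $|a|\le 3^{-n}$ always) and $X^\perp\sim\cN(0,I_{n-1})$ independently. For $\bs$ with $\tfrac1n\ip{\bs}{\bs^*}=r$, $H(\bs)=\tfrac1{\sqrt n}|\,r\sqrt n\, a + \ip{\bs}{X^\perp}|$ where $\ip{\bs}{X^\perp}\sim\cN(0,n(1-r^2))$; since $|a|\le 3^{-n}$ the planted term is utterly negligible, so $\pr_{\rm pl}[H(\bs)\le f(n)2^{-n}]\le \pr[|\cN(0,1-r^2)|\le 2f(n)2^{-n}]\lesssim f(n)2^{-n}/\sqrt{1-r^2}$. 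Summing over $\bs\ne\pm\bs^*$ (for which $r$ is bounded away from $\pm1$, or more carefully one peels off the finitely many $\bs$ at Hamming distance $O(1)$ from $\pm\bs^*$ — but those have $r$ close to $\pm1$ and $H(\bs)$ is then close to $H(\bs^*)\le 3^{-n}$... wait, one must check these too) and handling the $r\to\pm1$ regime separately, the union bound is $\Theta(f(n))=o(1)$. A subtle point: for $\bs$ very close to $\pm\bs^*$, say $d_H(\bs,\bs^*)=k$ small, $\ip{\bs}{X}=\ip{\bs^*}{X}-2\sum_{i\in S}\bs^*(i)X_i$ for $|S|=k$, and the $X_i$'s are essentially unconditioned $\cN(0,1)$, so $H(\bs)\approx\tfrac2{\sqrt n}|\cN(0,k)|$, giving $\pr_{\rm pl}[H(\bs)\le f(n)2^{-n}]=\Theta(f(n)2^{-n}/\sqrt k)$; summing over the $\binom nk$ such $\bs$ and then over $k\ge1$ still gives a geometric-type series dominated by $k=1$, contributing $\Theta(nf(n)2^{-n}/\sqrt n)\cdot(\text{extra }1/\sqrt{k})$ terms — one checks $\sum_k\binom nk 2^{-n}k^{-1/2}$ is $\Theta(1)$ dominated away from the endpoints, and multiplied by $f(n)2^{-n}$-scale... actually the cleanest is to just bound the whole sum over all $\bs\ne\pm\bs^*$ by $2^n\cdot\sup_{r\ne\pm1,\,\bs\ne\pm\bs^*}\pr_{\rm pl}[H(\bs)\le f(n)2^{-n}]$ and argue the supremum is $O(f(n)2^{-n})$; the uniformity in $r$ is the only place care is needed.

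\textbf{Main obstacle.} The crux throughout is the \textbf{change of measure}: one cannot afford the trivial $\pr_{\rm pl}\le 3^n\pr$ transfer. The right idea — explicitly decomposing $X$ into its (conditioned, tiny) component along $\bs^*$ and its (untouched, Gaussian) orthogonal component, and observing that for the $\bs$'s that matter the planted component contributes negligibly to $\ip{\bs}{X}$ — is what makes both moments computable directly under $\mathbb{P}_{\rm pl}$ with only $1+o(1)$ corrections. The second-moment variance estimate (showing off-diagonal overlaps $r$ bounded away from the endpoints dominate and contribute $(1+o(1))\E[N]^2$) is the most calculation-heavy step but is standard once the Gaussian conditional structure is in hand; I expect it to mirror the classical Karmarkar–Karp–Lueker–Odlyzko argument for the u-\texttt{NPP} almost verbatim after the decomposition.
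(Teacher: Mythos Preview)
Your strategy matches the paper's: first moment for the lower bound, second moment restricted to $\bs$ nearly orthogonal to $\bs^*$ for the upper bound. Your device for handling the conditioning --- the orthogonal decomposition $X=\tfrac{a}{\sqrt n}\bs^*+X^\perp$ with $|a|\le 3^{-n}$ and $X^\perp$ unconditioned Gaussian --- is equivalent to what the paper does (it is exactly the Schur-complement form of the paper's direct trivariate Gaussian density estimate $\mathbb{P}[H(\bs_1),H(\bs_2)\le g2^{-n},\,H(\bs^*)\le 3^{-n}]/\mathbb{P}[H(\bs^*)\le 3^{-n}]$), so both routes work. Two places where the paper is cleaner than your sketch: for the first-moment \emph{lower} bound on $\E_{\rm pl}[N]$, the paper invokes the Gaussian Correlation Inequality (Royen) to get $\mathbb{P}_{\rm pl}[H(\bs)\le g2^{-n}]\ge \mathbb{P}[H(\bs)\le g2^{-n}]$ in one line, avoiding any decomposition; and for the second moment, the paper explicitly splits pairs $(\bs_1,\bs_2)$ by $d_H(\bs_1,\bs_2)$ into three classes and controls the middle class via the integer-lattice determinant bound $|\Sigma|\ge n^{-3}$ and the ``good'' class via Hoffman--Wielandt applied to the near-identity $3\times 3$ covariance, then sends $\epsilon\to 0$ after $n\to\infty$. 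Two slips in your writeup: (i) $\mathbb{P}[|\cN(0,1)|\le g2^{-n}]=\Theta(g2^{-n})$, not $\Theta(g2^{-n}/\sqrt n)$, so $\E[N]=\Theta(g)\to\infty$ already and no dyadic-threshold trick is needed; (ii) your ``cleanest'' lower-bound route via $2^n\cdot\sup_{r}\mathbb{P}_{\rm pl}[\cdot]$ fails because the sup (at $k=1$) is $\Theta(\sqrt n\,f(n)2^{-n})$, giving $\sqrt n\,f(n)$ which need not vanish --- you must split by $k$ near versus far from $n/2$ as the paper does (far $k$ has exponentially few $\bs$, near $k$ has bounded $1/\sqrt{1-r^2}$).
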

We prove Theorem~\ref{thm:ground-state} in Section~\ref{sec:pf-gs}. Several remarks are now in order.

Theorem~\ref{thm:ground-state} shows that $\min_{\bs \in\Sigma_n\setminus\{\pm \bs^*\}}H(\bs)=\widetilde{\Theta}(2^{-n})$ w.h.p. This is a somewhat surprising conclusion: `planting' a partition $\bs^*$ with a substantially smaller $H(\bs^*)$ does not induce partitions $\bs\ne \pm\bs^*$ with $H(\bs)$ much smaller than $2^{-n}$, which is the smallest value achievable in the unplanted model. Furthermore, the choice of $3^{-n}$ is merely for convenience: the same result holds when $H(\bs^*)\le 2^{-E}$ for any arbitrary $E$, however large.

\paragraph{Comparison with the Planted Clique Model} We now highlight a contrast between the p-\texttt{NPP} and the \emph{planted clique} model~\cite{jerrum1992large,berthet2013computational,deshpande2015improved,feldman2017statistical,barak2019nearly}, an important canonical model in the study of average-case complexity. The planted clique model is defined as follows.  Consider the \ER random graph $\mathbb{G}(n,\frac12)$ whose largest clique is of size $2\log_2 n$ w.h.p.\footnote{This result is a simple application of the moment method, see e.g.~\cite{alon2016probabilistic} for details.} Select a subset of $k$ vertices and plant all $\binom{k}{2}$ edges in between. That is, plant a clique of size $k$. Key questions regarding the planting clique model include (a) for what values of $k$ presence of a planted clique can be detected and (b) for what values of $k$, the task in ${\rm (a)}$ can be done efficiently. The regime of interest for this model is $2\log_2 n\ll k \ll \sqrt{n}$: the planted clique model is statistically distinguishable from $\mathbb{G}(n,\frac12)$ iff $k\gg 2\log_2 n$, whereas no efficient algorithm that can do so is known when $k\ll \sqrt{n}$, see the references above. Observe that planting a clique of size $k\gg 2\log_2 n$ does induce cliques of size much larger than $2\log_2 n$. Indeed, since any subset of the planted clique is a clique by itself, planting planting induces cliques of size $\ell$ for any $\ell\in\{2\log_2 n,2\log_2 n+1,\dots,k-1,k\}$. Interestingly though, per Theorem~\ref{thm:ground-state}, this is not the case for the p-\texttt{NPP}. This is quite surprising, since planting a partition with $H(\bs^*)$ is expected to impact the global structure of the model (whereas planting a clique of size say $\Omega(\log n)$ to an $n$-vertex graph is  a `local change').

\paragraph{Proof Technique for Theorem~\ref{thm:ground-state}} The proof is based on the \emph{first} and the \emph{second moment methods}. To that end, denote by $S_{\varphi}$ the set of all $\bs\in\Sigma_n\setminus\{\pm \bs^*\}$ such that $H(\bs)\le \varphi(n)2^{-n}$, where $\varphi:\N\to\R^+$ is arbitrary. We first show that for $f=o_n(1)$, $\mathbb{E}_{\rm pl}|S_f| = o_n(1)$. Consequently, Markov's inequality yields that $S_f=\varnothing$ w.h.p., establishing the lower bound. This technique is known as the \emph{first moment method}. The upper bound, on the other hand, is much more challenging and in particular based on the \emph{second moment method}: if $N$ is an integer-valued random variable that is almost surely non-negative, then $\mathbb{P}[N\ge 1]\ge \mathbb{E}[N]^2/\mathbb{E}[N^2]$. This fact is known as Paley-Zygmund inequality~\cite{alon2016probabilistic}\footnote{For a simple proof, see discussion following~\eqref{eq:2nd-mom-met}.}. In particular,
\[
\mathbb{E}[N^2]=(1+o_n(1))\mathbb{E}[N]^2 \implies   \mathbb{P}[N\ge 1]=1-o_n(1).
\]
Note that the upper bound in Theorem~\ref{thm:ground-state} is equivalent to showing $|S_g|\ge 1$ w.h.p. In order to apply the second moment method though, one has to control $\mathbb{E}_{\rm pl}[|S_g|^2]$, which unfortunately appears quite challenging. Controlling $\mathbb{E}_{\rm pl}[|S_g|^2]$ entails a certain sum over all $\bs_1,\bs_2\ne \pm \bs^*$, where one has to study all potential values of $n^{-1}\ip{\bs_i}{\bs_j}$, $1\le i<j\le 3$ for $\bs_3 = \bs^*$. This is required for bounding the determinant of a certain covariance matrix and controlling probability terms. In order to circumvent this issue, we identify a convenient subset of $S_g$ for which estimating the second moment of its cardinality is much easier. To that end, fix an $\epsilon>0$ and denote by $S_g(\epsilon)$ the set of all $\bs\in\Sigma_n$ with $H(\bs)\le g(n)2^{-n}$ under the additional constraint that $n^{-1}\ip{\bs}{\bs^*}\in [-\epsilon,\epsilon]$. Namely, $S_g(\epsilon)$ is the set of $\bs$ that are nearly orthogonal to $\bs^*$. Note that $S_g(\epsilon)\subseteq S_g$ and therefore $\mathbb{P}_{\rm pl}[|S_g|\ge 1]\ge \mathbb{P}_{\rm pl}[|S_g(\epsilon)|\ge 1]$. On the other hand, it turns out that estimating $\mathbb{E}_{\rm pl}[|S_g(\epsilon)|^2]$ is more tractable; the `dominant contribution' comes from pairs $(\bs_1,\bs_2)$ that are also nearly orthogonal, i.e.\,$n^{-1}\ip{\bs_1}{\bs_2}\in[-\epsilon,\epsilon]$. 
This calculation is based on certain ideas developed by Gamarnik and the author in~\cite{gamarnik2021algorithmic},
see Section~\ref{sec:pf-gs} for details.
\begin{remark}\label{remark:NUM-GD-STATE}
    We now highlight that the number of partitions $\bs$ with $H(\bs)$ of order $2^{-n}$ grows as $n\to\infty$. Take any random variable $N$ for which the second moment method is `tight': $\mathbb{E}[N^2] = \mathbb{E}[N]^2\bigl(1+o_n(1)\bigr)$. A more refined version of the Paley-Zygmund inequality~\cite{alon2016probabilistic} yields
\begin{equation}\label{eq:pz-refined}
    \mathbb{P}\bigl[N>\theta\mathbb{E}[N]\bigr]\ge (1-\theta)^2 \frac{\mathbb{E}[N]^2}{\mathbb{E}[N^2]},\quad \forall \theta\in[0,1].
\end{equation}
Letting $N=|S_g|$ and $\theta=f$ in~\eqref{eq:pz-refined}, where $f=o_n(1)$ is a function that grows arbitrarily slow, we obtain that w.h.p.\,$\log |S_g| = \log \mathbb{E}_{\rm pl}[|S_g|]\bigl(1+o_n(1)\bigr)$. So, $|S_g|=\omega(1)$.
\end{remark}
\subsection{Hamiltonian at Fixed Distance from Planted Partition}\label{sec:hamilton-fixed}
As we mentioned above, we prove Theorem~\ref{thm:ground-state} by showing that there exists in fact a $\bs$ with $d_H(\bs,\bs^*)\sim n/2$ such that $H(\bs) \le g(n)2^{-n}$. This prompts a quite natural question: what is the smallest $H(\bs)$ over all $\bs$ that are at a certain fixed distance from $\bs^*$? This would evidently provide a more detailed understanding of the random objective function $H(\bs)$ we study.

To that end, fix an arbitrary $\rho\in(0,1)$ and let
\begin{equation}\label{eq:zeta-rho}
   \zeta(\rho)\triangleq \min_{\substack{\bs\in\Sigma_n\setminus\{\pm \bs^*\}\\ d_H(\bs,\bs^*)=\rho n}} H(\bs).
\end{equation}
Our next result establishes the value of $\zeta(\rho)$ for any $\rho$. 
\begin{theorem}\label{thm:zeta-rho}
    Fix any $\rho\in(0,1)$, $f(n):\mathbb{N}\to\R^+$ and $g(n):\mathbb{N}\to\R^+$, where $f$ and $g$ are arbitrary with $f(n) = o_n(1)$ and $g(n)=\omega_n(1)$ as $n\to\infty$. Then,
    \[
    f(n)\sqrt{n}2^{-nh_b(\rho)}\le \zeta(\rho)\le g(n)\sqrt{n}2^{-nh_b(\rho)},
    \]
    w.h.p.\,under the planted measure~\eqref{eq:PL-MEASURE}, where $h_b$ is the binary entropy function logarithm base 2.
\end{theorem}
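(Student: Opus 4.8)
The plan is to mirror the argument for Theorem~\ref{thm:ground-state}, replacing the role of ``near-orthogonal to $\bs^*$'' with ``at Hamming distance exactly $\rho n$ from $\bs^*$'', and tracking how the entropy count changes. The lower bound is the easy direction: let $S_{\varphi}^{\rho}$ be the set of $\bs \in \Sigma_n\setminus\{\pm\bs^*\}$ with $d_H(\bs,\bs^*)=\rho n$ and $H(\bs)\le \varphi(n)\sqrt{n}2^{-nh_b(\rho)}$. By a first-moment computation under the planted measure, $|S^{\rho}_\varphi|$ has expectation of order $\binom{n}{\rho n}\cdot \varphi(n)2^{-nh_b(\rho)} = \Theta(\varphi(n))$ up to $\mathrm{poly}(n)$ factors; taking $\varphi = f = o_n(1)$ makes this $o_n(1)$, so Markov's inequality gives $S^{\rho}_f = \varnothing$ w.h.p. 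The one subtlety here is carrying out the conditional expectation: one conditions on $H(\bs^*)\le 3^{-n}$, and since $\bs^*$ is fixed and $\bs$ sits at distance $\rho n$, the pair $(\ip{\bs}{X},\ip{\bs^*}{X})$ is a non-degenerate bivariate Gaussian with correlation $1-2\rho$, so the conditional density of $\ip{\bs}{X}$ near $0$ given $\ip{\bs^*}{X}$ tiny is comparable to the unconditional one (bounded ratio). This is exactly the kind of change-of-measure bookkeeping already used for Theorem~\ref{thm:ground-state}.

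For the upper bound I would again use the second moment method via Paley--Zygmund, and the key obstacle is the same one the authors flag for Theorem~\ref{thm:ground-state}: controlling $\mathbb{E}_{\rm pl}[|S^{\rho}_g|^2]$ requires summing over pairs $(\bs_1,\bs_2)$ both at distance $\rho n$ from $\bs^*$ and analyzing all joint overlaps $n^{-1}\ip{\bs_i}{\bs_j}$ for $i<j\le 3$ with $\bs_3=\bs^*$, including the determinant of the $3\times 3$ Gram-type covariance matrix and the associated Gaussian density/volume factors. The plan is to restrict to a convenient sub-collection where the third overlap is pinned: fix $\epsilon>0$ and let $S^{\rho}_g(\epsilon)$ consist of those $\bs_1,\bs_2$ which additionally satisfy $n^{-1}\ip{\bs_1}{\bs_2}\in[-\epsilon,\epsilon]$ while both remain at distance $\rho n$ from $\bs^*$ (consistency of these constraints for $\rho$ bounded away from $0,1$ and $\epsilon$ small is a routine inclusion--exclusion check on the overlap region). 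On this restricted set the $3\times 3$ covariance is uniformly non-degenerate, so $\mathbb{E}_{\rm pl}[|S^{\rho}_g(\epsilon)|^2]$ factorizes as $(1+o_n(1))\mathbb{E}_{\rm pl}[|S^{\rho}_g(\epsilon)|]^2$ up to the entropy of choosing two such near-independent partitions, exactly as in the $\rho=1/2$ analysis from~\cite{gamarnik2021algorithmic}. Since $S^{\rho}_g(\epsilon)\subseteq S^{\rho}_g$, the Paley--Zygmund bound then yields $|S^{\rho}_g|\ge 1$ w.h.p.

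The main technical work, and where I expect to spend the most effort, is verifying that the ``diagonal'' contribution to the second moment — pairs $(\bs_1,\bs_2)$ with $n^{-1}\ip{\bs_1}{\bs_2}$ bounded away from $\{-\epsilon,\epsilon\}$ but not necessarily small — does not dominate; this is the step where one needs the large-deviation rate function for the overlap between two distance-$\rho n$ partitions to be strictly convex and minimized in the near-orthogonal regime, combined with the fact that the Gaussian probability term $\mathbb{P}_{\rm pl}[H(\bs_1)\le\cdot, H(\bs_2)\le\cdot]$ is maximized (in exponential order) precisely when $\bs_1,\bs_2$ are nearly independent given $\bs^*$. Once that tradeoff is pinned down, the remaining computations are the standard Stirling/Gaussian-volume estimates. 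Given Theorem~\ref{thm:ground-state} is proved by essentially this route at $\rho=1/2$, the general-$\rho$ case should follow with the $\binom{n}{\rho n}=2^{nh_b(\rho)(1+o(1))}$ counting factor threaded through; the extra $\sqrt{n}$ in the statement is the usual normalization from the local central limit / Gaussian-density scaling in $H(\bs)=n^{-1/2}|\ip{\bs}{X}|$.
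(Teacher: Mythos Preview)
Your lower-bound sketch is fine and matches the paper. The upper bound, however, has a genuine conceptual gap in two linked places.

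First, the ``restriction'' you propose does not type-check. In Theorem~\ref{thm:ground-state} the restriction was a constraint on \emph{single} partitions (namely $n^{-1}\ip{\bs}{\bs^*}\in[-\epsilon,\epsilon]$), producing a genuine subset $S_g(\epsilon)\subseteq S_g$ to which Paley--Zygmund applies. In the present theorem the overlap with $\bs^*$ is already pinned at $1-2\rho$ by the constraint $d_H(\bs,\bs^*)=\rho n$; there is no further single-$\bs$ constraint to impose. What you wrote instead restricts \emph{pairs} $(\bs_1,\bs_2)$ to have $n^{-1}\ip{\bs_1}{\bs_2}\in[-\epsilon,\epsilon]$, but a constraint on pairs is not a subset of $S_g^\rho$ and cannot be fed into Paley--Zygmund.

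Second, and more importantly, the overlap window you target is wrong for $\rho\neq 1/2$. Two partitions chosen ``independently'' at Hamming distance $\rho n$ from $\bs^*$ have typical mutual overlap $(1-2\rho)^2$, not $0$: if $\bs_2$ agrees with $\bs_1$ on $k$ coordinates inside $I=\{i:\bs_1(i)\neq\bs^*(i)\}$ and on $\ell$ coordinates inside $I^c$, the constraint $d_H(\bs_2,\bs^*)=\rho n$ forces $k=\ell$, and the count $\binom{\rho n}{k}\binom{(1-\rho)n}{k}$ is maximized at $k\approx\rho(1-\rho)n$, i.e.\ $d_H(\bs_1,\bs_2)\approx 2\rho(1-\rho)n$ and $n^{-1}\ip{\bs_1}{\bs_2}\approx(1-2\rho)^2$. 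This is precisely what the paper proves (Lemma~\ref{lemma:sum-binom}) and then uses: the second moment is computed over the \emph{full} set $S_{g,\rho}$, the pair sum is split according to whether $d_H(\bs_1,\bs_2)$ lies in $[2(\rho(1-\rho)-\epsilon)n,\,2(\rho(1-\rho)+\epsilon)n]$ or outside it, and in the dominant window the $3\times 3$ covariance has the explicit form $\Sigma'=\begin{psmallmatrix}1&\bar\rho^2&\bar\rho\\\bar\rho^2&1&\bar\rho\\\bar\rho&\bar\rho&1\end{psmallmatrix}$ with $\bar\rho=1-2\rho$, whose determinant $(1-\bar\rho^2)^2$ exactly matches the square of the first-moment normalization (Lemma~\ref{lemma:pho-Sigma3-pb}). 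Your phrase ``nearly independent given $\bs^*$'' is the right intuition, but that corresponds to overlap $(1-2\rho)^2$, not to the window $[-\epsilon,\epsilon]$ you wrote; for $\rho$ bounded away from $1/2$ your window would capture an exponentially small fraction of pairs and the argument would not close. The fix is to drop the restriction entirely, center the pair analysis at overlap $(1-2\rho)^2$, and supply the combinatorial concentration of $\sum_k\binom{\rho n}{k}\binom{(1-\rho)n}{k}$ around $k=\rho(1-\rho)n$.
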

See Section~\ref{sec:pf-zeta-rho} for the proof of Theorem~\ref{thm:zeta-rho}. Several remarks are now in order.

Theorem~\ref{thm:zeta-rho} establishes that $\zeta(\rho) = \widetilde{\Theta}(2^{-nh_b(\rho)})$ w.h.p. The $\sqrt{n}$ factor, which does not appear in Theorem~\ref{thm:ground-state}, is an artifact of the Stirling's approximation to binomial coefficients, 
see Lemma~\ref{lemma:binom-entropy}; it can easily be removed if one instead considers $\min_{\bs \in B(\rho,\gamma)}H(\bs)$ for a sufficiently small $\gamma>0$, where $B(\rho,\gamma)=\{\bs\in\Sigma_n\setminus\{\pm \bs^*\}:(\rho-\gamma)n \le d_H(\bs,\bs^*)\le \rho n\}$.

The proof of Theorem~\ref{thm:zeta-rho} is similar to that of Theorem~\ref{thm:ground-state}. Namely, the lower bound is established via the first moment method and the upper bound is established via the second moment method. In order the control the second moment, we show that the dominant contribution to it comes from pairs $(\bs_1,\bs_2)$ for which $n^{-1}\ip{\bs_1}{\bs_2}\approx \bigl(n^{-1}\ip{\bs_1}{\bs^*}\bigr)^2 = (1-2\rho)^2$. At a technical level, this amounts to controlling a certain sum involving product of binomial coefficients, see Lemma~\ref{lemma:sum-binom} for details.

\begin{remark}\label{remark:exp-many}
    Using Theorem~\ref{thm:zeta-rho}, Paley-Zygmund inequality~\eqref{eq:pz-refined}, and Markov's inequality, we can in fact establish that the number of $\bs$ with $H(\bs) \le 2^{-\epsilon n}$ is exponential in $n$ for any $\epsilon\in(0,1)$.
\end{remark}
   \section{Landscape of p-\texttt{NPP} and the Overlap Gap Property}\label{sec:landscape}
In this section, we investigate the `landscape' of the p-\texttt{NPP} and the geometry of the set of its near-optima. Our first focus is on the landscape near $\pm\bs^*$.
\subsection{Landscape Near the Planted Partition}\label{sec:planted-iso}
Notice that Theorems~\ref{thm:ground-state} and~\ref{thm:zeta-rho} collectively imply an intriguing property: for any $\epsilon>0$, there is a $d_\epsilon>0$ such that w.h.p.\,there exists no $\bs$ with $H(\bs)\le 2^{-\epsilon n}$ that is within $d_\epsilon n$ from $\bs^*$ in Hamming distance. Namely, partitions $\bs$ of $H(\bs)\le 2^{-\epsilon n}$ are `far' from the planted $\bs^*$. 

We next extend this observation to 
 a much broader range of exponents. Define
 \begin{equation}\label{eq:S-D-E}
         S^*(d,E) = \bigl\{\bs\ne\pm\bs^*:d_H(\bs,\bs^*)\le d,H(\bs)\le 2^{-E}\bigr\}.
 \end{equation}
 Note that if $S^*(d,E)=\varnothing$, then $H(\bs)>2^{-E}$ for any $\bs$ with $d_H(\bs,\bs^*)\le d$. Our next result is as follows.
 \begin{theorem}\label{thm:ogp-planted}
 For any  $\epsilon>0$, there exists a $\beta\in(0,\frac12]$ such that 
        \[
        \mathbb{P}\bigl[S^*(\beta n,\epsilon n)=\varnothing\bigr]\ge 1-2^{-\Theta(n)}.
        \]
More generally, for any $E=\Omega(\log n)$, there exists a $d=\omega(1)$ such that
        \[
        \mathbb{P}\bigl[S^*(d,E)=\varnothing\bigr]\ge 1-2^{-\Theta(E)}.
        \]
\end{theorem}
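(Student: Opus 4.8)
The plan is to prove both statements via a union bound (first moment method) over partitions $\bs$ near $\bs^*$, conditional on the planted event $H(\bs^*)\le 3^{-n}$. Fix a target distance $d$ and exponent $E$. A partition $\bs$ with $d_H(\bs,\bs^*) = \ell$ (so $1 \le \ell \le d$) is obtained by flipping $\ell$ coordinates of $\bs^*$; there are $\binom{n}{\ell}$ such partitions. For each such $\bs$, I will estimate $\mathbb{P}_{\rm pl}[H(\bs)\le 2^{-E}]$, then sum $\sum_{\ell=1}^{d}\binom{n}{\ell}\,\mathbb{P}_{\rm pl}[H(\bs)\le 2^{-E}]$ and show it is $2^{-\Theta(n)}$ (resp.\ $2^{-\Theta(E)}$) for an appropriate choice of $d$ depending on $\epsilon$ (resp.\ on $E$).

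First I would handle the change of measure forced by planting. Write $\mathbb{P}_{\rm pl}[\mathcal{E}] = \mathbb{P}[\mathcal{E}\mid H(\bs^*)\le 3^{-n}] = \mathbb{P}[\mathcal{E}\cap\{H(\bs^*)\le 3^{-n}\}]/\mathbb{P}[H(\bs^*)\le 3^{-n}]$. Since $\ip{\bs^*}{X}\sim\cN(0,n)$, the denominator $\mathbb{P}[H(\bs^*)\le 3^{-n}]$ is $\Theta(3^{-n})$ — an explicit, known constant-order quantity. So it suffices to bound the unconditional probability $\mathbb{P}[H(\bs)\le 2^{-E},\, H(\bs^*)\le 3^{-n}]$ and multiply by $3^{n}$ (up to constants). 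Now for $\bs$ at distance $\ell$ from $\bs^*$, the pair $(\ip{\bs}{X},\ip{\bs^*}{X})$ is jointly Gaussian with covariance matrix $\begin{pmatrix} n & n-2\ell \\ n-2\ell & n \end{pmatrix}$; its determinant is $n^2 - (n-2\ell)^2 = 4\ell(n-\ell)$, which is of order $\ell n$ for $\ell\le d = o(n)$. The joint density at the origin is therefore $\Theta\bigl(1/\sqrt{\ell n}\bigr)$, and since we are asking both $|\ip{\bs}{X}|\le\sqrt{n}\,2^{-E}$ and $|\ip{\bs^*}{X}|\le\sqrt{n}\,3^{-n}$, integrating the density over this small box gives
\[
\mathbb{P}\bigl[H(\bs)\le 2^{-E},\ H(\bs^*)\le 3^{-n}\bigr] \;\le\; O\!\left(\frac{n\cdot 2^{-E}\cdot 3^{-n}}{\sqrt{\ell n}}\right).
\]
Multiplying by $3^n$ (the change of measure), the per-$\bs$ conditional probability is $O\bigl(n^{1/2}\ell^{-1/2}2^{-E}\bigr)$.

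Then I would close the union bound. Summing over $\bs$ at distance $\ell$ and then over $\ell$,
\[
\sum_{\ell=1}^{d}\binom{n}{\ell}\cdot O\!\left(\sqrt{n/\ell}\;2^{-E}\right) \;\le\; O\!\bigl(\sqrt{n}\,2^{-E}\bigr)\sum_{\ell=1}^{d}\binom{n}{\ell}.
\]
For the first (strong) claim I set $E=\epsilon n$ and must choose $\beta$ so that $\sum_{\ell\le\beta n}\binom{n}{\ell}\le 2^{nh_b(\beta)}$ is beaten by $2^{-\epsilon n}$ up to polynomial factors: any $\beta$ with $h_b(\beta) < \epsilon$ works, giving total probability $2^{-(\epsilon - h_b(\beta))n + o(n)} = 2^{-\Theta(n)}$. (If $\epsilon \ge 1$ take $\beta = 1/2$; the whole space fails.) For the second (general) claim, with $E = \Omega(\log n)$ only, I cannot afford exponentially many $\bs$, so I take $d$ growing slowly, $d = \omega(1)$, and use the crude bound $\sum_{\ell\le d}\binom{n}{\ell}\le d\,n^{d} = 2^{O(d\log n)}$; choosing $d$ so that $d\log n = o(E)$ — e.g.\ $d = \lfloor E/(\log^2 n)\rfloor$ when $E \gg \log n$, or more carefully $d = \omega(1)$ with $d\log n \le E/2$ — makes the sum $2^{-E/2 + o(E)} = 2^{-\Theta(E)}$.

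The main obstacle I anticipate is controlling the degenerate geometry when $\ell$ is small: as $\ell\to 1$ the covariance matrix of $(\ip{\bs}{X},\ip{\bs^*}{X})$ becomes nearly singular (determinant $\sim 4\ell n$), so the Gaussian density near the origin blows up like $\ell^{-1/2}$ and one must verify this is still integrable against the small box and does not swamp the $2^{-E}$ gain — it does not, since $\sum_\ell \binom{n}{\ell}\ell^{-1/2}$ is still dominated by its largest term, but the bookkeeping (and the fact that $\bs = \bs^*$ itself, $\ell=0$, is excluded) needs care. A secondary point is making the constant in $\mathbb{P}[H(\bs^*)\le 3^{-n}] = \Theta(3^{-n})$ explicit enough that the $3^n$ factor cancels cleanly; this is a routine Gaussian tail computation. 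Everything else is a standard first-moment union bound.
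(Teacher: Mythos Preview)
Your proposal is correct and follows essentially the same route as the paper: first moment method under the planted measure, bounding $\mathbb{P}_{\rm pl}[H(\bs)\le 2^{-E}]$ via the bivariate Gaussian for $(\ip{\bs}{X},\ip{\bs^*}{X})$ and then union-bounding over the Hamming ball. The paper simplifies your per-$\bs$ bound one step further---rather than tracking the $\ell^{-1/2}$ factor, it uses $1-\rho^2\ge 2/n$ uniformly over $\bs\ne\pm\bs^*$ to get $\mathbb{P}_{\rm pl}[H(\bs)\le 2^{-E}]\le O(\sqrt{n}\,2^{-E})$ directly---and for part~(b) it picks the slightly sharper $d=cE/\log_2(n/E)$ (using $\log_2\binom{n}{d}\sim d\log_2(n/d)$) instead of your cruder $d\log n\le E/2$, but both choices yield $d=\omega(1)$ and a first moment of $2^{-\Theta(E)}$.
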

Theorem~\ref{thm:ogp-planted} is established via a simple application of the \emph{first moment method}, see Section~\ref{sec:ogp-planted} for its proof. Several remarks are now in order.

Per Theorem~\ref{thm:ogp-planted}, for every $E=\Omega(\log n)$, there exists a $d$ such that w.h.p.\,$H(\bs)\ge 2^{-E}$ for every $\bs$ with $1\le d_H(\bs,\bs^*)\le d$. Further, when $E=\Theta(n)$, $d$ is also linear in $n$, $d=\Theta(n)$. Now, observe that Theorems~\ref{thm:ground-state}-\ref{thm:zeta-rho} collectively yield that for any $\epsilon \in(0,1]$, there exists (w.h.p.) $\bs\in\Sigma_n$ with $H(\bs)=\widetilde{\Theta}(2^{-\epsilon n})$.  In light of Theorem~\ref{thm:ogp-planted}, we thus arrive at the following rather intriguing conclusion: there exists partitions $\bs$ of substantially large $H(\bs)$ separating the planted partition $\bs^*$ from partitions $\bs$ with `intermediate' $H(\bs)$. Namely, the planted partition is `isolated' in a certain sense, suggesting that finding it is likely to be algorithmically hard. 

\paragraph{Parameter Scaling} We now highlight the scaling of parameters. Our analysis reveals that if $\epsilon>0$, then any $\beta$ with $h_b(\beta)<\epsilon$ works. Observe that this is consistent with Theorem~\ref{thm:zeta-rho}. When $d=\Omega(\log n)$, it suffices to take $d=cE/\log_2(n/E)$, for a suitable $c\in(0,1)$. As a sanity check, when $E=\Theta(n)$, we indeed have $d=\Theta(E\cdot\log^{-1}(n/E)) = \Theta(n)$. Importantly, the probability guarantee in part ${\rm (b)}$ is no longer exponential in $n$, but rather exponential in $E$ itself. This is an artifact of our proof technique via the first moment method.

In light of Theorem~\ref{thm:ogp-planted}, finding the planted partition $\bs^*$ is likely challenging, as it is isolated with `energy barriers'. Notice though that the p-\texttt{NPP} is an optimization problem, thus one can focus on a more modest algorithmic goal of finding a $\bs$, not necessarily equal to $\pm \bs^*$, for which $H(\bs)$ is small. Our next focus is on the tractability of this problem via landscape analysis.

\subsection{Multi Overlap Gap Property for p-\texttt{NPP}}\label{sec:m-ogp}\label{sec:ogp}
In this section, we focus on the algorithmic problem of efficiently finding a partition $\bs$ with a small Hamiltonian $H(\bs)$, where $\bs$ is not necessarily equal to  $\pm\bs^*$. In order to understand the tractability of this algorithmic problem, we investigate the landscape of p-\texttt{NPP}. That is, we study the `geometry' of the set of near-optima of p-\texttt{NPP}. To that end, define 
\begin{equation}\label{eq:hamiltonian-with-disorder}
    H(\bs,X) \triangleq \frac{1}{\sqrt{n}}\left|\ip{\bs}{X}\right|,
\end{equation}
where the dependence of Hamiltonian $H$ to $X$ is made explicit. We first formalize the set of near-optimal tuples we investigate.
\begin{definition}\label{def:overlap-set}
   Fix a $\bs^*\in\Sigma_n$, an $m\in\mathbb{N}$, $0<\eta<\beta<1$, an $E:\N\to\R^+$, and a $\mathcal{I}\subset [0,\pi/2]$. Let $X_i\sim \cN(0,I_n)$, $0\le i\le m$ be i.i.d.\,conditional on the event that $\max_{0\le i\le m} H(\bs^*,X_i)\le 3^{-n}$. Denote by $\mathcal{S}(m,\beta,\eta,E,\mathcal{I})$ the set of all $\bs_1,\dots,\bs_m\in\Sigma_n\setminus\{\pm \bs^*\}$ satisfying the following. 
   \begin{itemize}
       \item {\bf (Pairwise Overlap Condition)} For any $1\le i<j\le m$, 
       \[
       \beta-\eta\le n^{-1}\ip{\bs_i}{\bs_j}\le \beta.
       \]
       \item {\bf (Near-Optimality)} There exists $\tau_i\in \mathcal{I}$, $1\le i\le m$, such that
       \[
       \max_{1\le i\le m}H\bigl(\bs_i,Y_i(\tau_i)\bigr)\le 2^{-E},\quad\text{where}\quad Y_i(\tau) = \cos(\tau)X_0 + \sin(\tau)X_i\in\R^n. 
       \]
       \end{itemize}
        Furthermore, let $\mathcal{S}^*(m,\beta,\eta,E,\mathcal{I})$ denotes the same set where $\bs_i\in \Sigma_n$ ($\bs_i = \pm\bs^*$ allowed).
\end{definition}
Several remarks are now in order. Definition~\ref{def:overlap-set} regards $m$-tuples $\bs_1,\dots,\bs_m$ such that (a) $\bs_i$ are near-optimal, i.e.\,$H(\bs_i)$ is at most $2^{-E}$ and (b) overlaps between $\bs_i$ and $\bs_j$ are constrained to the interval $[\beta-\eta,\beta]$. In what follows, we employ Definition~\ref{def:overlap-set} with $\beta\gg\eta$: $(\bs_1,\dots,\bs_m)$ is a nearly equidistant $m$-tuple of near-optima with pairwise Hamming distances around $n\frac{1-\beta}{2}$. Moreover, $\bs_i$ need not be near-optimal with respect to the same instance as $Y_i(\tau_i)$ are correlated. The set $\mathcal{I}$ is needed for describing the correlated instances. Lastly, we trivially have $\mathcal{S}(m,\beta,\eta,E,\mathcal{I})\subseteq \mathcal{S}^*(m,\beta,\eta,E,\mathcal{I})$.

Equipped with these, we establish that p-\texttt{NPP} exhibits the Ensemble multi Overlap Gap Property (Ensemble $m$-OGP). 

\begin{theorem}\label{thm:m-ogp-planted-npp}
Fix any $\epsilon>0$ and any $\delta<\epsilon$. There exists a $c>0$, an $m\in\mathbb{N}$, and $0<\eta<\beta<1$ such that the following holds. For $X_0,\dots,X_m$ generated as in Definition~\ref{def:overlap-set} and any $\mathcal{I}\subset [0,\pi/2]$ with $|\mathcal{I}|\le 2^{cn}$ and $\min_{x\in\mathcal{I}\setminus\{0\}}x\ge 2^{-\delta n}$, we have
\[
\mathbb{P}_{\rm pl}\Bigl[\mathcal{S}^*(m,\beta,\eta,\epsilon n,\mathcal{I}) = \varnothing\Bigr]\ge 1-2^{-\Omega(n)},\quad\text{and}\quad \mathbb{P}_{\rm pl}\Bigl[\mathcal{S}(m,\beta,\eta,\epsilon n,\mathcal{I}) = \varnothing\Bigr]\ge 1-2^{-\Omega(n)}.
\]
 \end{theorem}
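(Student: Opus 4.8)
The plan is to establish the Ensemble $m$-OGP via a first moment argument, controlling the expected number of forbidden $m$-tuples under the planted measure. Concretely, I would let $Z$ count the $m$-tuples $(\bs_1,\dots,\bs_m)$ satisfying the pairwise overlap condition of Definition~\ref{def:overlap-set} together with the near-optimality condition $\max_i H(\bs_i, Y_i(\tau_i)) \le 2^{-\epsilon n}$ for some $(\tau_1,\dots,\tau_m)\in\mathcal{I}^m$, and show $\mathbb{E}_{\rm pl}[Z] \le 2^{-\Omega(n)}$; Markov's inequality then finishes. The first issue to dispatch is the passage from the planted measure to the unconditional Gaussian measure: since $\mathbb{P}_{\rm pl}[\mathcal{E}] = \mathbb{P}[\mathcal{E} \mid \max_{0\le i\le m} H(\bs^*,X_i)\le 3^{-n}]$ and the conditioning event has probability $(\Theta(3^{-n}))^{m+1} = 2^{-\Theta(n)}$ (each $H(\bs^*,X_i)$ is a half-normal, so $\mathbb{P}[H(\bs^*,X_i)\le 3^{-n}] = \Theta(3^{-n})$), we pay only a factor $2^{O(n)}$ when we pass to the unconditional measure. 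So it suffices to show the unconditional expectation of the count is $2^{-Cn}$ with $C$ large enough to absorb this loss — this is exactly where the freedom to pick $\delta<\epsilon$ and $m$ large gets used.

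The core is then a union bound over the discrete choices — over $\mathcal{I}^m$ (at most $2^{cmn}$ terms, controlled by taking $c$ small) and over the $m$-tuples of partitions (at most $2^{mn}$ of them, or, respecting the pairwise-overlap constraint, a more refined count). For a fixed tuple $(\bs_1,\dots,\bs_m)$ with the prescribed pairwise overlaps around $\beta$, and fixed $(\tau_i)$, I would compute the Gaussian probability that $\max_i |\ip{\bs_i}{Y_i(\tau_i)}| \le \sqrt{n}2^{-\epsilon n}$. The vector $(\ip{\bs_i}{Y_i(\tau_i)})_{i=1}^m$ is jointly Gaussian with an explicit covariance matrix $\Sigma$ whose entries depend on $n^{-1}\ip{\bs_i}{\bs_j}$, on $\cos\tau_i,\sin\tau_i$, and on $n^{-1}\ip{\bs_i}{\bs^*}$; by the standard small-ball estimate for Gaussians this probability is at most $(2^{-\epsilon n})^m \cdot (\det\Sigma)^{-1/2}\cdot n^{O(m)}$, provided $\Sigma$ is nondegenerate. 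Multiplying the $\approx 2^{mn}$ tuple count by the $\approx 2^{-\epsilon mn}$ per-tuple probability gives $2^{-(\epsilon-1)mn}$ — and since $\epsilon$ may be small, this alone is not negative; the gain must come from the pairwise-overlap constraint, which restricts each $\bs_i$ (for $i\ge 2$) to a Hamming shell relative to $\bs_1,\dots,\bs_{i-1}$, cutting the tuple count from $2^{mn}$ down to roughly $2^{n(1+(m-1)(1-\kappa))}$ for some $\kappa=\kappa(\beta)>0$ coming from the entropy of near-equidistant codes. Choosing $m$ large makes the exponent $1 + (m-1)(1-\kappa) - \epsilon m$ negative, giving the desired $2^{-\Omega(n)}$ bound, and this surviving negative exponent also absorbs the $2^{cmn}$ and $2^{\Theta(n)}$ losses once $c$ is small enough.

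I expect the main obstacle to be the lower bound on $\det\Sigma$: one must rule out near-degeneracy of the $m$ Gaussians $\ip{\bs_i}{Y_i(\tau_i)}$ uniformly over the admissible $\tau_i$ and over admissible overlap profiles. The two dangerous regimes are (i) $\tau_i$ near $0$, where $Y_i(\tau_i)\approx X_0$ and all $m$ coordinates collapse onto the same Gaussian $\ip{\bs_i}{X_0}$ with near-identical $\bs_i$ — this is precisely why one imposes $\min_{x\in\mathcal{I}\setminus\{0\}}x\ge 2^{-\delta n}$ with $\delta<\epsilon$, so that the $\sin\tau_i$ contribution is at least $2^{-\delta n}$, keeping $\det\Sigma \ge 2^{-O(\delta m n)}$, a loss dominated by the $2^{-\epsilon m n}$ gain since $\delta<\epsilon$; and (ii) the need to also handle $\tau_i=0$ exactly (allowed in $\mathcal{I}$), in which case $\bs_i$ is near-optimal for $X_0$ itself — here one falls back on the (unconditional) ground-state lower bound $\min_{\bs} H(\bs,X_0) = \Omega(2^{-n})$, which forbids $H(\bs_i,X_0)\le 2^{-\epsilon n}$ for $\epsilon<1$ outright, so those $\tau_i=0$ branches contribute nothing and can simply be excluded. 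Handling the correlation structure between the planted direction $\bs^*$ and the $\bs_i$ requires one further observation: since we are on the event $\mathcal{S}^*$ (resp. $\mathcal{S}$) and we only ever pass to the unconditional measure after paying the $2^{\Theta(n)}$ factor, the conditioning on $\bs^*$ drops out and $\Sigma$ is computed purely from the overlaps $n^{-1}\ip{\bs_i}{\bs_j}$ and the angles — so the covariance analysis is identical to the unplanted Ensemble $m$-OGP of~\cite{gamarnik2021algorithmic}, and I would cite the interpolation/covariance lemmas there rather than redo them. The statement for $\mathcal{S}$ follows a fortiori from that for $\mathcal{S}^*$ since $\mathcal{S}\subseteq\mathcal{S}^*$.
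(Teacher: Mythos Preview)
Your overall plan---first moment under the planted measure, Markov's inequality, counting tuples with the prescribed overlap profile against a Gaussian small-ball probability---is the right one and matches the paper. But two of your steps would fail as written.

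\textbf{The crude change of measure is too lossy.} You propose to pass from $\mathbb{P}_{\rm pl}$ to the unconditional Gaussian at the cost of the factor $\mathbb{P}[\max_{0\le j\le m}H(\bs^*,X_j)\le 3^{-n}]^{-1}=\Theta(3^{n(m+1)})$, which you describe as ``only $2^{O(n)}$''. The hidden constant is $(m+1)\log_2 3$, so this cost is really $2^{\Theta(mn)}$. Your per-tuple gain from the small-ball bound is at most $2^{-\epsilon mn}$, and the overlap-constrained counting gives at most $2^{n+mnh_b((1-\beta+\eta)/2)}$, so after absorbing the change of measure the exponent is roughly $n\bigl[1+(m+1)\log_2 3 + m h_b((1-\beta+\eta)/2)-\epsilon m + cm\bigr]$; for any $\epsilon<\log_2 3\approx 1.585$ this diverges to $+\infty$ as $m\to\infty$, so no choice of $m,\beta,\eta,c$ rescues it. The paper avoids this by \emph{not} dropping the conditioning: it writes the numerator $\mathbb{P}[\max_i H(\bs_i,Y_i(\tau_i))\le 2^{-\epsilon n},\,\max_j H(\bs^*,X_j)\le 3^{-n}]$, conditions on $X_0$ so that the events indexed by $I^c=\{i:\tau_i\ne 0\}$ factor, and for each such $i$ bounds the \emph{bivariate} probability $\mathbb{P}[H(\bs_i,Y_i(\tau_i))\le 2^{-\epsilon n},\,H(\bs^*,X_i)\le 3^{-n}\mid X_0]\lesssim \sqrt{n}\,2^{-(\epsilon-\delta)n}\cdot 3^{-n}$ directly (this is also where $\sin\tau_i\ge 2^{-\delta n-1}$ is used, as the width of the conditional interval for $n^{-1/2}\ip{\bs_i}{X_i}$). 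The $3^{-n}$ factors then cancel against the denominator, leaving only a \emph{single} $3^n$ (from discarding the $H(\bs^*,X_0)\le 3^{-n}$ constraint in the numerator), which large $m$ easily absorbs.

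\textbf{Your handling of $\tau_i=0$ is backwards.} You claim the ground-state lower bound $\min_{\bs}H(\bs,X_0)=\Omega(2^{-n})$ ``forbids $H(\bs_i,X_0)\le 2^{-\epsilon n}$ for $\epsilon<1$''. The inequality goes the other way: for $\epsilon<1$ one has $2^{-\epsilon n}\gg 2^{-n}$, so there are exponentially many $\bs$ with $H(\bs,X_0)\le 2^{-\epsilon n}$, and this branch is not vacuous. The paper handles the set $I=\{i:\tau_i=0\}$ separately, bounding $\mathbb{P}[\max_{i\in I}|n^{-1/2}\ip{\bs_i}{X_0}|\le 2^{-\epsilon n}]$ via the $|I|$-dimensional small-ball bound with covariance $(n^{-1}\ip{\bs_i}{\bs_j})_{i,j\in I}$; this matrix is a small perturbation of $(1-\beta)I+\beta\boldsymbol{1}\boldsymbol{1}^T$, hence (for $\eta<(1-\beta)/m$) positive definite with integer entries after scaling by $n$, so its determinant is at least $n^{-|I|}$ and the bound is $2^{-\epsilon n|I|}$ up to polynomial factors. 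Your final remark that $\mathcal{S}\subseteq\mathcal{S}^*$ gives the second conclusion from the first is correct; the paper in fact proves $\mathcal{S}=\varnothing$ first and then handles $\mathcal{S}^*\setminus\mathcal{S}$ by a separate (easier) first-moment bound, but your direction is logically fine.
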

We prove Theorem~\ref{thm:m-ogp-planted-npp} in Section~\ref{sec:proof-m-ogp}. Certain remarks are now in order.

Theorem~\ref{thm:m-ogp-planted-npp} is the first $m$-OGP result for a planted model, it establishes that for any $\epsilon>0$, there exists an $m\in\N$ and $0<\eta<\beta<1$ such that (w.h.p.) there do not exist an $m$-tuple $\bs_1,\dots,\bs_m$ each with Hamiltonian at most $2^{-\epsilon n}$ and pairwise overlaps $n^{-1}\ip{\bs_i}{\bs_j}\in[\beta-\eta,\beta]$. Our analysis reveals $\beta\gg \eta$, so Theorem~\ref{thm:m-ogp-planted-npp} rules out nearly equidistant $m$-tuples. Moreover, $\bs_i$ need not be near-optimal w.r.t.\,the same instance of disorder: Definition~\ref{def:overlap-set} and Theorem~\ref{thm:m-ogp-planted-npp} allow correlated instances. This is known as the \emph{Ensemble} $m$-OGP; it will be instrumental in ruling out stable algorithms later in Theorem~\ref{thm:stable-hardness}. 

Our next remark regards the technical assumptions $|\mathcal{I}|\le 2^{cn}$ and $\min_{x\in\mathcal{I},x\ne 0}x \ge 2^{-\delta n}$. In order to rule out algorithms, we employ Theorem~\ref{thm:m-ogp-planted-npp} with $\mathcal{I} = \{\tau_0,\dots,\tau_Q\}$, where $0=\tau_0<\tau_1<\cdots<\tau_Q=\pi/2$ are equispaced and $Q$ is a large constant, see~\eqref{eq:some-params}. In particular, $|\mathcal{I}|=Q=O(1)$ and $\min_{0\ne x\in\mathcal{I}} x = \tau_1 = \Theta(Q^{-1}) = O(1)$, so the assumptions are satisfied automatically. 

\paragraph{Parameter Scaling} An inspection of our analysis reveals that $m=\Theta_\epsilon(1/\epsilon)$. In particular, it remains constant in $n$. In other words, by inspecting $m$-tuples, one can `rule out' energy levels $2^{-E}$ for $E=\Theta(n/m)$.  It is conceivable that by considering $m$-tuples with growing $m$, $m=\omega_n(1)$, one can rule out a more broad range of exponents. For the unplanted \texttt{NPP}, this was done in~\cite{gamarnik2021algorithmic}; we however do not pursue this improvement in the present paper for simplicity. 

\paragraph{Proof Technique} The proof of Theorem~\ref{thm:m-ogp-planted-npp} is based on the first moment method. Specifically, we fix an $\epsilon>0$ and show that $\mathbb{E}_{\rm pl}|\mathcal{S}^*(m,\beta,\eta,\epsilon n,\mathcal{I})| = o(1)$ for suitable $m,\beta$ and $\eta$. Markov's inequality then yields Theorem~\ref{thm:m-ogp-planted-npp}. However, the presence of planting and the need for studying correlated instances make it challenging to control the first moment. We bypass these obstacles by using a careful conditioning argument, see the proof in Section~\ref{sec:proof-m-ogp} for further details.

\section{Algorithmic Barriers in p-\texttt{NPP}}\label{sec:hardness}
In this section, we leverage our $m$-OGP result, Theorem~\ref{thm:m-ogp-planted-npp}, to establish a hardness result against the class of \emph{stable algorithms}. We begin by detailing the algorithmic setting we consider.
\subsection{Algorithmic Setting}
The setting we consider is almost identical to~\cite{gamarnik2021algorithmic}. An algorithm $\A$ is a map from $\R^n$ to $\Sigma_n$, where randomization is allowed. That is, we assume there exists a probability space $(\Omega,\mathbb{P}_\omega)$ such that $\A:\R^n\times \Omega\to\Sigma_n$, and that for $\omega\in\Omega$ and $X\in\R^n$, $\A$ returns a partition $\bs_{\rm ALG}\triangleq\A(X,\omega)\in\Sigma_n$. 

We now formalize the class of \emph{stable algorithms} that we focus. 
\begin{definition}\label{def:stable-algs}
Fix an $E:\N\to\R^+$. An algorithm $\A:\R^n\times \Omega\to\Sigma_n$ for the p-\texttt{NPP} is called 
\[
\bigl(E,p_f,p_{\rm st},p_\ell,\rho,f,L\bigr)\text{-stable}
\]
 if there exists an $N_0\in \N$ such that $\A$ satisfies the following for all $n\ge N_0$.
\begin{itemize}
    \item {\bf (Success)} Let $X\sim \cN(0,I_n)$. Then,
    \[
    \min_{C\in\{1,\sqrt{2}\}}\mathbb{P}_{X,\omega}\Bigl[H\bigl(\A(X,\omega),X\bigr)\le 2^{-E}\Big\lvert H(\bs^*,X)\le C\cdot 3^{-n}\Bigr]\ge 1-p_f.
    \]
    \item {\bf (Stability)} Let $X,Y\distr \cN(0,I_n)$ with $\mathbb{E}\bigl[XY^T\bigr] = \rho I_n$.\footnote{We treat $X$ and $Y$ as column vectors.} Then,
    \[
   \min_{C\in\{1,\sqrt{2}\}}\mathbb{P}_{X,Y,\omega}\Bigl[d_H\bigl(\A(X,\omega),\A(Y,\omega)\bigr)\le f+L\|X-Y\|_2^2\Big\lvert H(\bs^*,X),H(\bs^*,Y)\le C\cdot 3^{-n}\Bigr]\ge 1-p_{\rm st}.
    \]
    \item {\bf (Anti-Concentration)} Let $X\distr \cN(0,I_n)$. Then,
    \[
\mathbb{P}_{X,\omega}\bigl[\A(X,\omega)\ne \pm\bs^*\big\lvert H(\bs^*,X)\le 3^{-n}\bigr]\ge 1-p_\ell.
    \]
\end{itemize}
\end{definition}
Definition~\ref{def:stable-algs} is similar to those 
considered in~\cite[Definition~3.1]{gamarnik2022algorithms} and~\cite[Definition~3.1]{gamarnik2021algorithmic}; it applies also to deterministic algorithms. In that case, the sole source of randomness is the input $X$ and its `perturbation' $Y$, hence the probability terms depend only on $X,Y$. In what follows, we drop $\omega$ and simply refer to an $\A:\R^n\to \Sigma_n$  as a randomized algorithm. 

 We next elaborate on parameters in Definition~\ref{def:stable-algs}. Parameter $E$ is the target exponent and $p_f$ is the failure probability: w.p.\,at least $1-p_f$ under the planted measure, the algorithm returns a $\bs_{\rm ALG}\triangleq\A(X)\in\Sigma_n$ with $H(\bs_{\rm ALG},X)\le 2^{-E}$. Parameters $p_{\rm st},\rho,f$ and $L$ collectively control the stability guarantee. When $X$ and $Y$ are Gaussian with correlation $\rho$, Hamming distance between $\A(X)$ and $\A(Y)$ is controlled by an explicit quantity, with probability at least $1-p_{\rm st}$. The parameters $f$ and $L$ quantify the sensitivity of the output of $\A$ to the changes in its input. Lastly, we require our algorithm to satisfy a certain `anti-concentration' probability: $\A(X)=\pm \bs^*$ with probability at most $p_\ell$. Our result below rules out $\A$ with $p_\ell=O(1)$. As we elaborate later, this is a benign assumption.

The assumption that $\A$ operates conditional on the objective values both $3^{-n}$ and $\sqrt{2}\cdot 3^{-n}$ is required for technical reasons, and again quite benig since these values are of the same asymptotic order. That is, any $\A$ succeeding under one value would likely succeed under the other.
\subsection{Multi Overlap Gap Property Implies Failure of Stable Algorithms}\label{sec:m-ogp-to-hardness}
We now show that stable algorithms fail to return a $\bs$ with $H(\bs,X)\le 2^{-\Theta(n)}$ for the p-\texttt{NPP}. 
\begin{theorem}\label{thm:stable-hardness}
    Fix any $\epsilon>0$ and $L>0$. Let $m\in\mathbb{N}$, $0<\eta<\beta<1$ be the $m$-OGP parameters prescribed by Theorem~\ref{thm:m-ogp-planted-npp}. Set
    \begin{equation}\label{eq:some-params}
        C=\frac{\eta^2}{1600}, \quad Q=\frac{40C_2\pi\sqrt{L}}{\eta} ,\quad\text{and}\quad T=\exp_2\left(2^{4mQ\log_2 Q}\right),
    \end{equation}
    where $C_2>0$ is a sufficiently large constant. Then, there exists an $n_0\in\mathbb{N}$ such that the following holds. For every $n\ge n_0$, there exists no randomized algorithm $\A:\R^n\times \Omega\to\Sigma_n$ that is
    \[
    \left(\epsilon n,\frac{3^{-n}}{108QT\sqrt{\pi}},\frac{\pi}{648Q^2 T},\frac{1}{54T},\cos\left(\frac{\pi}{2Q}\right),Cn,L\right)-\text{stable}
    \]
    for the p-\texttt{NPP} in the sense of Definition~\ref{def:stable-algs}.
\end{theorem}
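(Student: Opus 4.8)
The plan is to derive Theorem~\ref{thm:stable-hardness} from the Ensemble $m$-OGP of Theorem~\ref{thm:m-ogp-planted-npp} by the standard ``interpolation + Ramsey'' argument of~\cite{gamarnik2021algorithmic}, with an extra change-of-measure layer to handle the planting. Suppose for contradiction that a stable algorithm $\A$ with the stated parameters exists. First I would set up an \emph{interpolation path}: let $X_0,\dots,X_m\sim\cN(0,I_n)$ be i.i.d.\ conditioned on $\max_i H(\bs^*,X_i)\le 3^{-n}$, and for each $i$ and each angle $\tau\in\mathcal{I}=\{\tau_0,\dots,\tau_Q\}$ (equispaced in $[0,\pi/2]$, $Q$ as in~\eqref{eq:some-params}) consider $Y_i(\tau)=\cos(\tau)X_0+\sin(\tau)X_i$. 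The key point is that $Y_i(\tau)$ is marginally $\cN(0,I_n)$, and consecutive interpolants $Y_i(\tau_k),Y_i(\tau_{k+1})$ have correlation $\rho=\cos(\pi/(2Q))$ and $\|Y_i(\tau_k)-Y_i(\tau_{k+1})\|_2^2 = O((\pi/Q)^2)\|X_i\|_2^2 = O(n/Q^2)$ w.h.p.; with $Q$ chosen so that $L\cdot O(n/Q^2)\le \tfrac{\eta}{8}n$ (which is exactly what $Q=40C_2\pi\sqrt L/\eta$ buys), the Stability hypothesis forces $d_H\big(\A(Y_i(\tau_k)),\A(Y_i(\tau_{k+1}))\big)\le \tfrac{\eta}{4}n$ except with probability $p_{\rm st}$. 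Along the whole path $\tau$ runs from $0$ (where $Y_i(0)=X_0$ is the \emph{same} for all $i$, so the outputs coincide) to $\pi/2$ (where the $Y_i(\pi/2)=X_i$ are independent). By Success, at $\tau=\pi/2$ each $\A(X_i)$ has Hamiltonian $\le 2^{-\epsilon n}$, and by Anti-Concentration each $\A(X_i)\ne\pm\bs^*$; union-bounding the failure events over the $O(mQ)$ path points keeps the total bad probability below $1$ by the numerics in~\eqref{eq:some-params}.

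Next I would run the Ramsey-theoretic step. Discretize the normalized overlap $n^{-1}\ip{\A(Y_i(\tau))}{\A(Y_j(\tau'))}$ into $O(1/\eta)$ buckets; color each unordered pair of ``profiles'' by the bucket of their overlap. Ramsey's theorem (in the quantitative form used in~\cite{gamarnik2021algorithmic}, which is why $T$ is a tower in $mQ\log Q$) guarantees that among $T$ independent runs there is a monochromatic $m$-subset — i.e.\ $m$ interpolation paths whose endpoints at a \emph{common} stage have pairwise overlap in a single width-$O(\eta)$ window. Because stability makes each single path move in steps of at most $\tfrac{\eta}{4}n$ in Hamming distance, the pairwise overlap between two paths can change by at most $\tfrac{\eta}{2}$ per stage; starting from overlap $1$ at $\tau=0$ and ending (for the monochromatic tuple) below $\beta-\eta$ at $\tau=\pi/2$, there must be a stage where all $\binom{m}{2}$ pairwise overlaps simultaneously lie in $[\beta-\eta,\beta]$. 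That produces an element of $\mathcal{S}(m,\beta,\eta,\epsilon n,\mathcal{I})$ — with the correlated instances $Y_i(\tau)$ exactly as in Definition~\ref{def:overlap-set}, which is why the \emph{Ensemble} form of the $m$-OGP is needed — contradicting Theorem~\ref{thm:m-ogp-planted-npp}, since $|\mathcal{I}|=Q=O(1)$ and $\min_{0\ne x\in\mathcal{I}}x=\tau_1=\Theta(1/Q)$ satisfy its hypotheses.

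The one genuinely new ingredient, and the step I expect to be the main obstacle, is reconciling the \emph{product planted measure} under which $\A$ is assumed to succeed with the \emph{joint} conditioning $\max_{0\le i\le m}H(\bs^*,X_i)\le 3^{-n}$ appearing in the $m$-OGP and in the interpolation construction. The interpolants $Y_i(\tau)=\cos\tau\,X_0+\sin\tau\,X_i$ do \emph{not} satisfy $H(\bs^*,Y_i(\tau))\le 3^{-n}$ exactly; rather $\ip{\bs^*}{Y_i(\tau)}=\cos\tau\ip{\bs^*}{X_0}+\sin\tau\ip{\bs^*}{X_i}$, so one only gets $H(\bs^*,Y_i(\tau))\le (\cos\tau+\sin\tau)3^{-n}\le\sqrt2\cdot 3^{-n}$. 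This is precisely why Definition~\ref{def:stable-algs} demands Success and Stability to hold conditionally on $H(\bs^*,X)\le C\cdot 3^{-n}$ for \emph{both} $C=1$ and $C=\sqrt2$: along the path every interpolant lies in the $\sqrt2\cdot 3^{-n}$ event, so the stability and success guarantees still apply. I would then quantify the Radon–Nikodym derivative between the law of $(Y_0(\tau),\dots)$ conditioned this way and the genuine planted product law, and absorb the resulting multiplicative factor — which is where the unusually strong success probability $p_f=3^{-n}/(108QT\sqrt\pi)$ and the $1-e^{-\Theta(n)}$ conclusion (rather than a constant) come from — into the union bound over the $O(mQ)$ path points and the $T$ Ramsey copies. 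Carrying the $3^{-n}$-scale conditioning cost through every union bound cleanly, and checking that the constants in~\eqref{eq:some-params} make all the bad events sum to strictly less than $1$, is the delicate bookkeeping; everything else is the by-now-standard OGP$\Rightarrow$hardness machinery.
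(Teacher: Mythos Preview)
Your high-level plan (interpolation paths, stability $\Rightarrow$ slowly varying overlaps, Ramsey, contradict $m$-OGP) and your change-of-measure analysis are both on target; in particular you correctly identify why Definition~\ref{def:stable-algs} is stated for both $C=1$ and $C=\sqrt{2}$, and why the success probability must absorb a $3^{n}$ factor. A minor slip: you need $X_0,\dots,X_T$ (not $X_0,\dots,X_m$) paths, and the union bounds are over $O(TQ)$ points, not $O(mQ)$; you seem to know this since you later invoke $T$.

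The genuine gap is the Ramsey step. Your coloring ``by the bucket of their overlap'' does not do the job, and the inference ``starting from overlap $1$ and ending below $\beta-\eta$, there must be a stage where all $\binom{m}{2}$ pairwise overlaps \emph{simultaneously} lie in $[\beta-\eta,\beta]$'' is a non-sequitur: slow movement of each pair's overlap guarantees that each pair individually crosses the window at \emph{some} time, but different pairs can cross at different times. You are also missing the mechanism that forces overlaps to be small at $\tau=\pi/2$ in the first place: anti-concentration only gives $\A(X_i)\ne\pm\bs^*$, not small pairwise overlaps. The paper handles both issues as follows. First a separate \emph{chaos} first-moment lemma shows that, w.h.p., in \emph{every} $m$-subset $A\subset[T]$ there is at least one pair $(i_A,j_A)$ with $n^{-1}\ip{\bs_{i_A}(\tau_Q)}{\bs_{j_A}(\tau_Q)}<\beta-\eta$; combined with $\Overlap^{(ij)}(\tau_0)=1$ and the $\eta/5$ step bound, this pair's overlap lands in $(\beta-\eta,\beta)$ at some $\tau_A$. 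This means the graph $\G$ on $[T]$ with an edge $(i,j)$ whenever $\Overlap^{(ij)}(\tau_t)\in(\beta-\eta,\beta)$ for some $t$ has independence number $\alpha(\G)\le m-1$. Two-color Ramsey with $T\ge R(M,M)$ then forces a clique $K_M$ in $\G$. Now color each edge of $K_M$ by the \emph{first time} $t\in\{1,\dots,Q\}$ at which its overlap enters the window---this is $Q$ colors, not $O(1/\eta)$---and multicolor Ramsey with $M\ge R_Q(m)$ extracts a monochromatic $K_m$: $m$ paths whose $\binom{m}{2}$ pairwise overlaps all lie in $(\beta-\eta,\beta)$ at the \emph{same} time $t$. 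This two-step Ramsey (independence-number bound $\Rightarrow$ big clique $\Rightarrow$ monochromatic sub-clique under a $Q$-coloring by time) is what makes $T=\exp_2\bigl(2^{4mQ\log_2 Q}\bigr)$ the right scale and is the piece your sketch needs to replace.
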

We prove Theorem~\ref{thm:stable-hardness} in Section~\ref{sec:stable-hardness}. Several remarks are now in order.

First, we note that there is no assumption on the running time of $\A$: Theorem~\ref{thm:stable-hardness} rules out any $\A$ that is stable in the sense of Definition~\ref{def:stable-algs} with suitable parameters. 

We next highlight the scaling of parameters. An inspection of Theorem~\ref{thm:m-ogp-planted-npp} reveals that as $n\to\infty$, $m,\beta$ and $\eta$ remain constant in $n$: $m,\beta,\eta=O(1)$. Consequently, $Q,T=O(1)$ and therefore $p_\ell,p_{\rm st}=O(1)$. On the other hand, $p_f=e^{-\Theta(n)}$: Theorem~\ref{thm:stable-hardness} rules out algorithms that succeed with exponentially large probability. This is in stark contrast with prior work~\cite{gamarnik2021algorithmic,gamarnik2022algorithms} where $m$-OGP with constant $m$ yields hardness guarantees against stable algorithms succeeding with constant probability for certain unplanted models. This is an artifact of our proof technique. Due to presence of planting, one needs a certain `change of measure' argument, see below for details. An interesting open question is whether the $m$-OGP can rule out stable algorithms with constant probability of success for planted models. 

Next, the algorithms we rule out satisfy 
\[
d_H\bigl(\A(X),\A(Y)\bigr)\le Cn + L\|X-Y\|_2^2,
\]
where unconditionally $X,Y\sim \cN(0,I_n)$ with $\mathbb{E}[XY^T]=\cos\frac{\pi}{2Q} I_n$. As $C=O(1)$ and $L\|X-Y\|_2^2=\Theta(n)$, $\A$ is still allowed to make $\Theta(n)$ flips even when the inputs $X$ and $Y$ are close.

\subsection{Success Probability of Ruled Out Algorithms}
The proof of Theorem~\ref{thm:stable-hardness} is similar to those of~\cite[Theorem~3.2]{gamarnik2021algorithmic} and~\cite[Theorem~3.2]{gamarnik2022algorithms}; it is in particular based on an interpolation argument to leverage $m$-OGP and Ramsey Theory from extremal combinatorics. On the other hand, additional technical care is necessary due to planting, on which we now elaborate. A key step in the proof is to construct many interpolation paths $Y_i(\tau)$, $\tau \in \mathcal{I}$, as in Definition~\ref{def:overlap-set} for a suitable finite $\mathcal{I}$, and to argue that $\A$ is successful and stable along each path. For unplanted models, this is done via a crude union bound, where one can leverage the $m$-OGP with constant $m$ 
and subsequently rule out stable algorithms succeeding with constant probability. For planted models, however, such a union bound argument is no longer valid. In order to leverage the success/stability properties of $\A$ per Definition~\ref{def:stable-algs} along the interpolation paths, it appears necessary to rely on a certain change of measure argument, see in particular Lemmas~\ref{lemma:inter-oracle} and~\ref{lemma:success}. The change of measure argument brings additional terms for the stability guarantee, all of which are $O(1)$. For the success guarantee, however, the argument works at the expense of a $3^n$ factor,  see Lemma~\ref{lemma:success}. For this reason, our argument falls short of ruling out stable algorithms that succeed with constant probability for p-\texttt{NPP}; we leave this as an interesting open problem for future work.
\subsection{Anti-Concentration Condition} 
The algorithms that we rule out satisfy a certain anti-concentration property per Definition~\ref{def:stable-algs}: $\A(X)=\pm \bs^*$ with probability at most $1/(54T)$, which is $O(1)$ as $T=O(1)$. This is required for technical reasons: in order to leverage $m$-OGP, one has to establish that certain pairs $\bs_i,\bs_j\in\Sigma_n$ outputted by $\A$ on conditionally independent inputs are nearly orthogonal. This property is known as \emph{chaos}, see Lemma~\ref{lemma:chaos} for further details. If $\A$ outputs the planted partition $\pm \bs^*$ with overwhelming probability, it appears not possible to establish chaos since all pairwise overlaps (between outputs of $\A$) are trivially unity w.h.p. For this reason, such an anti-concentration property is indeed necessary for establishing algorithmic hardness using our techniques. On the other hand, we now argue that this is a benign requirement. Fix any $\epsilon\in(0,1)$, let $N_\epsilon$ be the number of partitions $\bs$ with $H(\bs,X)\le 2^{-\epsilon n}$. Remark~\ref{remark:exp-many} now yields that $N_\epsilon\sim e^{\Theta(n)}$ w.h.p. In particular, the number of `correct' outputs $\A$ can return is exponential. Hence, it is indeed plausible that any `natural' $\A$ returns any particular partition not only with at most a constant probability, but in fact with at most a certain vanishing probability. For these reasons, the anti-concentration assumption is indeed mild. An interesting open question is whether an $m$-OGP argument can rule out algorithms that return the planted solution itself w.h.p.
\section{Proofs}\label{sec:proofs}
\subsection{Notation}
 For any $n\in\mathbb{N}$, $[n]\triangleq \{1,2,\dots,n\}$. For any set $A$, $|A|$ denotes its cardinality. Given any proposition $E$, $\ind\{E\}$ denotes its indicator. For $n\in\mathbb{N}$, $\Sigma_n\triangleq\{-1,1\}^n$. For any $v=(v(i):1\le i\le n)\in\R^n$ and $p>0$, $\|v\|_p=(\sum_{i\le n}|v(i)|^p)^{1/p}$ and $\|v\|_\infty  = \max_{1\le i\le n}|v(i)|$. For $v,v'\in\R^n$, $\ip{v}{v'}\triangleq \sum_{1\le i\le n}v(i)v'(i)$. For any $\bs,\bs'\in\Sigma_n$, $d_H(\bs,\bs') = \sum_{1\le i\le n}\ind\{\bs(i)\ne \bs'(i)\}$. Ffor any $r>0$, $\log_r(\cdot)$ and $\exp_r(\cdot)$ denote, respectively, the logarithm and exponential functions base $r$; when $r=e$, we omit the subscript. For $p\in[0,1]$, $h_b(p) = -p\log_2 p -(1-p)\log_2(1-p)$. Denote by $I_k$ the $k\times k$ identity matrix, and by $\boldsymbol{1}$ the vector of all ones whose dimension will be clear from the context. Given $\boldsymbol{\mu}\in\R^k$ and $\Sigma\in\R^{k\times k}$, denote by $\cN(\boldsymbol{\mu},\Sigma)$ the multivariate normal distribution on $\R^k$ with mean $\boldsymbol{\mu}$ and covariance $\Sigma$. For $X\sim \cN(0,I_n)$ and a fixed $\bs^*\in\Sigma_n$, we denote the planted measure by $\mathbb{P}_{\rm pl}$ where $\mathbb{P}_{\rm pl}[\mathcal{E}] = \mathbb{P}_{X\sim \cN(0,I_n)}[\mathcal{E}\mid \left|\ip{\bs^*}{X}\right|\le \sqrt{n}3^{-n}]$ for any event $\mathcal{E}$. We denote by $\mathbb{E}_{\rm pl}$ the expectation under the planted measure. Given a matrix $\M$; $\|\M\|_F$, $\|\M\|_2$, $\sigma(\M)$, $\sigma_{\min}(\M)$, $\sigma_{\max}(\M)$, $|\M|$ and ${\rm trace}(\M)$ denote, respectively, its Frobenius norm, spectral norm, spectrum (that is, the set of its eigenvalues), smallest singular value, largest singular value, determinant, and trace. 
 
 A graph $\mathbb{G}=(V,E)$ is a collection of vertices $V$ together with some edges $(v,v')\in E$ between $v,v'\in V$. Throughout, we consider simple graphs only, that is undirected graphs with no loops. A graph $\mathbb{G}=(V,E)$ is called a \emph{clique} if for every distinct $v,v'\in V$, $(v,v')\in E$. We denote by $K_m$ the clique on $m$ vertices. Given $\mathbb{G}=(V,E)$, a subset $S\subset V$ of its vertices is called an \emph{independent set} if for every distinct $v,v'\in V$, $(v,v')\notin E$. The largest cardinality of such a set is called the \emph{independence number} of $\mathbb{G}$, denoted $\alpha(\mathbb{G})$. A $q-$coloring of a graph $\mathbb{G}=(V,E)$ is a function $\varphi:E\to\{1,2,\dots,q\}$ assigning to each $e\in E$ one of $q$ available colors. 

We employ the standard Bachmann-Landau asymptotic notation throughout, e.g.\,$\Theta(\cdot),O(\cdot),o(\cdot)$, and $\Omega(\cdot)$, where the underlying asymptotics will often be clear from the context.  In certain cases
where a confusion is possible, we reflect the underlying asymptotics as a subscript.  All
floor/ceiling operators are omitted for the sake of simplicity.
\subsection{Auxiliary Results}
In this section, we collect several auxiliary results. We begin with several standard estimates on binomial coefficients.
\begin{lemma}\label{lemma:binom-entropy}
Denote by $h_b(\cdot)$ the binary entropy function logarithm base 2. For any $1\le k\le n-1$, 
    \[
    \sqrt{\frac{n}{8k(n-k)}}2^{nh_b(k/n)}\le \binom{n}{k}\le \sqrt{\frac{n}{\pi k(n-k)}}2^{nh_b(k/n)}.
    \]
    In particular, if $k=\rho n$ for $\rho=O(1)$, then
    \[
    \binom{n}{\rho n} =\exp_2\bigl(nh_b(\rho)+O(\log n)\bigr).
    \]
\end{lemma}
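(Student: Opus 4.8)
The statement to prove is Lemma~\ref{lemma:binom-entropy}, a pair of standard two-sided bounds on the binomial coefficient $\binom{n}{k}$ together with the entropic asymptotic.

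\textbf{Overall approach.} The plan is to prove the two-sided bound $\sqrt{\tfrac{n}{8k(n-k)}}\,2^{nh_b(k/n)}\le\binom{n}{k}\le\sqrt{\tfrac{n}{\pi k(n-k)}}\,2^{nh_b(k/n)}$ via a quantitative form of Stirling's formula, and then read off the $\exp_2(nh_b(\rho)+O(\log n))$ estimate as an immediate corollary. The key tool is the Robbins (non-asymptotic) version of Stirling's approximation: for every $m\ge 1$,
\[
\sqrt{2\pi m}\,\Bigl(\tfrac{m}{e}\Bigr)^m e^{\frac{1}{12m+1}}\le m!\le \sqrt{2\pi m}\,\Bigl(\tfrac{m}{e}\Bigr)^m e^{\frac{1}{12m}}.
\]
First I would write $\binom{n}{k}=\dfrac{n!}{k!\,(n-k)!}$ and substitute the upper bound for $n!$ in the numerator and the lower bounds for $k!$ and $(n-k)!$ in the denominator (and vice versa for the other direction). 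The factorial powers $\bigl(\tfrac{n}{e}\bigr)^n / \bigl(\bigl(\tfrac{k}{e}\bigr)^k\bigl(\tfrac{n-k}{e}\bigr)^{n-k}\bigr)$ collapse: the $e$'s cancel because $k+(n-k)=n$, and $n^n/(k^k(n-k)^{n-k})=\bigl(\tfrac{n}{k}\bigr)^k\bigl(\tfrac{n}{n-k}\bigr)^{n-k}=2^{n(-\frac{k}{n}\log_2\frac{k}{n}-\frac{n-k}{n}\log_2\frac{n-k}{n})}=2^{nh_b(k/n)}$. The $\sqrt{2\pi m}$ prefactors combine to $\sqrt{\tfrac{2\pi n}{(2\pi k)(2\pi(n-k))}}=\sqrt{\tfrac{n}{2\pi k(n-k)}}$.

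\textbf{Handling the error exponentials.} The remaining discrepancy from the clean constants $\tfrac{1}{\pi}$ and $\tfrac18$ comes entirely from the $e^{1/(12m)}$-type correction terms. For the upper bound on $\binom{n}{k}$ one gets a factor $e^{\frac{1}{12n}-\frac{1}{12k+1}-\frac{1}{12(n-k)+1}}\le 1$ (since we are subtracting positive quantities), so $\binom{n}{k}\le\sqrt{\tfrac{n}{2\pi k(n-k)}}\,2^{nh_b(k/n)}$, which is even a bit stronger than the stated $\sqrt{\tfrac{n}{\pi k(n-k)}}$ bound; I would then just weaken $\tfrac{1}{2\pi}\le\tfrac1\pi$ to match the statement. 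For the lower bound one picks up $e^{\frac{1}{12n+1}-\frac{1}{12k}-\frac{1}{12(n-k)}}$; here I would bound the (negative) exponent crudely: since $k,n-k\ge 1$, we have $\frac{1}{12k}+\frac{1}{12(n-k)}\le \frac{1}{12}+\frac{1}{12}=\frac16$, so the factor is at least $e^{-1/6}$, and I would check numerically that $e^{-1/6}\cdot\tfrac{1}{2\pi}\ge\tfrac18$, i.e. $\tfrac{1}{2\pi}\ge\tfrac18 e^{1/6}$, which holds since $\tfrac{1}{2\pi}\approx0.159$ and $\tfrac18 e^{1/6}\approx0.148$. This gives $\binom{n}{k}\ge\sqrt{\tfrac{n}{8k(n-k)}}\,2^{nh_b(k/n)}$.

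\textbf{The asymptotic corollary and anticipated obstacle.} For the ``in particular'' clause, set $k=\rho n$ with $\rho=O(1)$ bounded away from $0$ and $1$ (the natural reading, since otherwise $h_b(\rho)$ and the prefactor degenerate); then $\sqrt{\tfrac{n}{8k(n-k)}}=\sqrt{\tfrac{1}{8\rho(1-\rho)n}}$ and $\sqrt{\tfrac{n}{\pi k(n-k)}}=\sqrt{\tfrac{1}{\pi\rho(1-\rho)n}}$ are both $\exp_2(O(\log n))$ (indeed $\Theta(n^{-1/2})=2^{-\frac12\log_2 n+O(1)}$), so the two-sided bound immediately yields $\binom{n}{\rho n}=\exp_2(nh_b(\rho)+O(\log n))$. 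I do not expect a genuine obstacle here — this is a routine computation — but the one point requiring a little care is the numerical verification that the accumulated Stirling-correction constants still fit inside the advertised $\tfrac18$ and $\tfrac1\pi$; if a clean bound on $\frac{1}{12k}+\frac{1}{12(n-k)}$ turned out to be too lossy for the lower bound, the fallback is to treat the boundary cases $k\in\{1,n-1\}$ separately (where the claim can be checked directly) and use $k,n-k\ge 2$ to sharpen the correction to $e^{-1/12}$, which comfortably clears the constant.
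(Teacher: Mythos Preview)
Your proposal is correct. The paper does not actually supply a proof of this lemma but simply cites \cite[Section~17.5]{coverthomasbook}; your Robbins--Stirling argument is exactly the standard route taken there, and your numerical check $\tfrac{1}{2\pi}e^{-1/6}>\tfrac18$ is the right way to absorb the correction terms into the constant $8$.
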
 \noindent See~\cite[Section~17.5]{coverthomasbook} for a proof of Lemma~\ref{lemma:binom-entropy}.
\begin{lemma}\label{lemma:sum-of-bin}
    Fix $\alpha\le 1/2$. For all $n$,
    \[
    \sum_{i\le \alpha n}\binom{n}{i}\le 2^{nh_b(\alpha)}.
    \]
\end{lemma}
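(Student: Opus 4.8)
\textbf{Proof proposal for Lemma~\ref{lemma:sum-of-bin}.}

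The plan is to run the standard Chernoff-type argument for bounding a lower tail of a sum of binomial coefficients. First I would observe that since $\alpha \le 1/2$, for every index $i \le \alpha n$ we have $\alpha/(1-\alpha) \le 1$, hence $\bigl(\alpha/(1-\alpha)\bigr)^{i} \ge \bigl(\alpha/(1-\alpha)\bigr)^{\alpha n}$. Multiplying through by $(1-\alpha)^n$ gives the key pointwise inequality
\[
(1-\alpha)^n \ge (1-\alpha)^n \left(\frac{\alpha}{1-\alpha}\right)^{i}\cdot\left(\frac{1-\alpha}{\alpha}\right)^{\alpha n} = \alpha^{\alpha n}(1-\alpha)^{(1-\alpha)n}\cdot\left(\frac{\alpha}{1-\alpha}\right)^{i-\alpha n},
\]
but it is cleaner to phrase it as: for $i \le \alpha n$, $\alpha^i (1-\alpha)^{n-i} \ge \alpha^{\alpha n}(1-\alpha)^{(1-\alpha)n}$, which is immediate from $\alpha \le 1-\alpha$ and $i \le \alpha n$.

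Next I would use the binomial theorem in the trivial form $1 = \bigl(\alpha + (1-\alpha)\bigr)^n = \sum_{i=0}^{n}\binom{n}{i}\alpha^i(1-\alpha)^{n-i} \ge \sum_{i \le \alpha n}\binom{n}{i}\alpha^i(1-\alpha)^{n-i}$. Combining this with the pointwise bound from the previous step,
\[
1 \ge \sum_{i\le \alpha n}\binom{n}{i}\alpha^{\alpha n}(1-\alpha)^{(1-\alpha)n} = \alpha^{\alpha n}(1-\alpha)^{(1-\alpha)n}\sum_{i\le \alpha n}\binom{n}{i}.
\]
Rearranging yields $\sum_{i\le \alpha n}\binom{n}{i} \le \alpha^{-\alpha n}(1-\alpha)^{-(1-\alpha)n} = 2^{n\left(-\alpha\log_2\alpha - (1-\alpha)\log_2(1-\alpha)\right)} = 2^{nh_b(\alpha)}$, which is exactly the claim. (One should also dispatch the degenerate boundary case $\alpha = 0$, where the left side is $1 = 2^{nh_b(0)}$, separately, or simply note the bound is vacuous there.)

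I do not expect a genuine obstacle here; the only point requiring a moment's care is getting the direction of the inequality $\alpha^i(1-\alpha)^{n-i} \ge \alpha^{\alpha n}(1-\alpha)^{(1-\alpha)n}$ right, which hinges precisely on the hypothesis $\alpha \le 1/2$ (so that decreasing the exponent of the smaller base $\alpha$ and correspondingly increasing that of the larger base $1-\alpha$ can only increase the product). The rest is bookkeeping with logarithms to rewrite $\alpha^{-\alpha n}(1-\alpha)^{-(1-\alpha)n}$ as $2^{nh_b(\alpha)}$.
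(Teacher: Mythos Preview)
Your proof is correct; this is exactly the standard entropy/Chernoff argument. The paper does not actually give its own proof of this lemma but merely cites \cite{galvin2014three}, where essentially the same argument appears, so there is nothing further to compare.
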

\noindent See~\cite[Theorem~3.1]{galvin2014three} for the proof of Lemma~\ref{lemma:sum-of-bin}.

The next bound controls the binomial coefficient $\binom{n}{d}$ when $d=o(n)$. 
\begin{lemma}\label{lemma:binom-sublinear}
    Let $n,d\in\mathbb{N}$ with $d=o(n)$. Then, 
    \[
    \log_2 \binom{n}{d} = \bigl(1+o_n(1)\bigr)d\log_2 \frac{n}{d}.
    \]
\end{lemma}
Lemma~\ref{lemma:binom-sublinear} is folklore, see e.g.~\cite[Lemma~6.1]{gamarnik2021algorithmic} for a proof.

Our next auxiliary result lower bounds a certain determinant that occurs frequently in our calculations.
\begin{lemma}\label{lemma:det-bd}
    Let $\bs_1,\bs_2,\bs_3\in\Sigma_n$ with $\bs_i\ne \pm \bs_j$ for $1\le i<j\le 3$. Denote by $\Sigma\in \R^{3\times 3}$ the matrix with entries $\Sigma_{ij} = n^{-1}\ip{\bs_i}{\bs_j}$ for $1\le i,j\le 3$. Then,
    \[
    |\Sigma|\ge n^{-3}.
    \]
\end{lemma}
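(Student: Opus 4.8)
\textbf{Proof proposal for Lemma~\ref{lemma:det-bd}.}

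The plan is to exploit the fact that $\Sigma$ is (up to the $1/n$ scaling) a Gram matrix of integer vectors with integer inner products, so its determinant is a rational number with denominator $n^3$ — hence if it is nonzero it is at least $n^{-3}$ in absolute value. First I would write $\Sigma = \frac1n G$, where $G\in\Z^{3\times 3}$ is the Gram matrix with entries $G_{ij}=\ip{\bs_i}{\bs_j}$; note $G_{ii}=n$ and $G_{ij}\in\Z$ for all $i,j$. Then $|\Sigma| = n^{-3}|G|$, and since $G$ has integer entries, $|G|\in\Z$. The matrix $G$ is positive semidefinite (it is a Gram matrix), so $|G|\ge 0$. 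Therefore it suffices to show $|G|\ne 0$, i.e. $|G|\ge 1$, which forces $|\Sigma|\ge n^{-3}$.

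The remaining step is to argue $G$ is nonsingular, i.e. that $\bs_1,\bs_2,\bs_3\in\Sigma_n$ are linearly independent over $\R$, given the hypothesis $\bs_i\ne \pm\bs_j$ for $i\ne j$. Suppose for contradiction that $a_1\bs_1 + a_2\bs_2 + a_3\bs_3 = 0$ with $(a_1,a_2,a_3)\ne 0$. If exactly one $a_i$ is nonzero this forces $\bs_i=0$, impossible since $\bs_i\in\{-1,1\}^n$. If exactly two are nonzero, say $a_1\bs_1 + a_2\bs_2 = 0$, then $\bs_1 = -(a_2/a_1)\bs_2$; comparing coordinates (each entry is $\pm1$) forces $a_2/a_1\in\{-1,1\}$, i.e. $\bs_1 = \pm\bs_2$, contradicting the hypothesis. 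If all three are nonzero, then for each coordinate $k$ we have $a_1\bs_1(k)+a_2\bs_2(k)+a_3\bs_3(k)=0$ with each $\bs_i(k)\in\{-1,1\}$; fixing signs $\epsilon_i = \bs_i(k)\in\{\pm1\}$, this reads $\epsilon_1 a_1 + \epsilon_2 a_2 + \epsilon_3 a_3 = 0$. As $k$ ranges over $[n]$ only finitely many sign patterns $(\epsilon_1,\epsilon_2,\epsilon_3)$ occur; I would check that the relation $\epsilon_1 a_1+\epsilon_2 a_2+\epsilon_3 a_3=0$ can hold for at most one sign pattern up to global negation (two distinct patterns not related by a global flip would, by subtracting, give a two-term relation $2\epsilon_i a_i + 2\epsilon_j a_j = 0$ or $2\epsilon_i a_i=0$, hence $a_i=\pm a_j$ or $a_i=0$), so all coordinates of $(\bs_1,\bs_2,\bs_3)$ share essentially one sign pattern, again forcing $\bs_i = \pm\bs_j$ for some $i\ne j$ — contradiction. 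Hence $G$ is nonsingular, $|G|\ge 1$, and $|\Sigma| = n^{-3}|G|\ge n^{-3}$.

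The only mildly delicate point — and the one I'd expect to need the most care — is the three-nonzero-coefficients case: making rigorous the claim that a genuine linear dependence among three $\pm1$ vectors with all coefficients nonzero must collapse to a pairwise proportionality $\bs_i=\pm\bs_j$. The cleanest route is probably the ``at most one sign pattern up to negation'' argument sketched above, turning any second admissible pattern into a forbidden two-term relation; alternatively one can simply enumerate: since $\bs_1,\bs_2,\bs_3$ span a space of dimension $\le 2$, the $n$ columns of the $3\times n$ matrix $[\bs_1\,\bs_2\,\bs_3]^\top$ lie on a line or plane through the origin in $\R^3$ while also lying in $\{-1,1\}^3$ (8 points); a short check of which subsets of the cube $\{-1,1\}^3$ are coplanar with the origin shows every such subset has two coordinates that are equal or opposite on all of $\{-1,1\}^3$-points in it, again yielding $\bs_i=\pm\bs_j$. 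Either way the argument is elementary; the rest of the proof (reduction to integrality of $|G|$) is immediate.
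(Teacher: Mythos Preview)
Your proposal is correct and follows essentially the same approach as the paper: write $\Sigma=\frac1n G$ for the integer Gram matrix $G$, argue $\bs_1,\bs_2,\bs_3$ are linearly independent over $\R$ so $G$ is nonsingular, and conclude $|G|\ge 1$ by integrality. The paper handles the linear-independence step slightly more directly---it writes $\bs_3=\lambda_1\bs_1+\lambda_2\bs_2$, then picks coordinates where $\bs_1,\bs_2$ agree and where they disagree to force $\lambda_1\pm\lambda_2\in\{-1,1\}$, hence $\lambda_1=0$ or $\lambda_2=0$---which is a cleaner route than your sign-pattern case analysis, but the substance is the same.
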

\begin{proof}[Proof of Lemma~\ref{lemma:det-bd}]
    Let $\Xi \in \{-1,1\}^{3\times n}$ with rows $\bs_1,\bs_2,\bs_3$. Then, $\Sigma = \frac1n\Xi \Xi^T$.  We first show ${\rm rank}(\Xi)=3$. To that end, observe that $\bs_1$ and $\bs_2$ are linearly independent. Indeed, otherwise $\bs_2 = \lambda \bs_1$ for some $\lambda\in\mathbb{R}$. But then $\lambda\in\{-1,1\}$ since $\bs_2(1)=\lambda \bs_1(1)$ and $\bs_1(1),\bs_2(1)\in\{-1,1\}$. Since $\bs_2\ne \pm \bs_1$, this is a contradiction. 

    Next, we show $\bs_1,\bs_2,\bs_3$ are linearly independent, which will then yield ${\rm rank}(\Xi)=3$. Assume the contrary. Then there exists $\lambda_1,\lambda_2\in\R$ such that
    \[
    \bs_3 = \lambda_1 \bs_1 + \lambda_2 \bs_2.
    \]
    As $\bs_2\ne \pm \bs_1$, it is the case that there exists $1\le i,j\le n$ such that $\bs_1(i)=\bs_2(i)$ and $\bs_1(j)=-\bs_2(j)$. So, 
    \begin{align*}
        \bs_3(i) = \lambda_1 \bs_1(i) + \lambda_2 \bs_2(i) &\implies \lambda_1+\lambda_2\in\{-1,1\}. \\
        \bs_3(j) = \lambda_1 \bs_1(j) + \lambda_2 \bs_2(j) &\implies \lambda_1-\lambda_2\in\{-1,1\}.
    \end{align*}
    Hence, either $\lambda_1-\lambda_2 = \lambda_1+\lambda_2$ or $\lambda_1-\lambda_2 = -(\lambda_1+\lambda_2)$. The former case implies $\lambda_2=0$, so $\bs_3 = \pm \bs_1$, a contradiction. Likewise, if $\lambda_1-\lambda_2 = -\lambda_1-\lambda_2$, then $\lambda_1=0$, yielding $\bs_3=\pm \bs_2$, which is again a contradiction. Hence, $\bs_1,\bs_2,\bs_3$ are linearly independent, yielding ${\rm rank}(\Xi)=3$. In particular, ${\rm rank}(\Sigma)=3$. Now that $n\Sigma\in\Z^{3\times 3}$ is positive semidefinite, full rank, and has integer-valued entries, we obtain $|n\Sigma|\ge 1$. Hence, $|\Sigma|\ge n^{-3}$, as claimed.
\end{proof}
We next record several Gaussian inequalities. The first is the celebrated \emph{Gaussian Correlation Inequality} due to Royen~\cite{royen2014simple}.
\begin{theorem}\label{thm:royen}
    Let $K,L$ be symmetric\footnote{A set $K$ is symmetric if $x\in K\iff -x\in K$.} convex sets in $\R^d$ and $\mu$ be any centered Gaussian measure on $\R^d$. Then,
    \[
    \mu(K\cap L)\ge \mu(K)\mu(L).
    \]
    In particular, if $n=n_1+n_2$, $X$ is an $n$-dimensional multivariate Gaussian random vector, and $t_1,\dots,t_n\in\R^+$ are arbitrary then
    \begin{equation}
        \mathbb{P}[|X_1|<t_1,\dots,|X_n|\le t_n]\ge \mathbb{P}[|X_i|<t_i,1\le i\le n_1]\mathbb{P}[|X_{j}|<t_{j},n_1+1\le j\le n].\label{eq:Gaussian-corr}
        \end{equation}
\end{theorem}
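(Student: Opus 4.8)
The first assertion is precisely Royen's Gaussian Correlation Inequality, a deep result whose proof I would not attempt to reproduce; I would simply invoke~\cite{royen2014simple} (or one of the subsequent streamlined expositions). The genuine content to be supplied here is the deduction of the concrete consequence~\eqref{eq:Gaussian-corr} from the abstract statement, and that is a short argument.

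The plan is as follows. Write $n = n_1 + n_2$ and set
\[
K = \{x \in \R^n : |x_i| < t_i \text{ for all } 1 \le i \le n_1\}, \qquad
L = \{x \in \R^n : |x_j| < t_j \text{ for all } n_1 + 1 \le j \le n\}.
\]
Each of $K$ and $L$ is a finite intersection of symmetric slabs $\{x : |x_k| < t_k\}$, hence an (open) symmetric convex set, and $K \cap L = \{x : |x_k| < t_k,\ 1 \le k \le n\}$. Let $\mu$ be the law of $X$, a centered Gaussian measure on $\R^n$. Applying Theorem~\ref{thm:royen} (the general GCI) to $K$, $L$, $\mu$ yields
\[
\mathbb{P}\bigl[|X_k| < t_k,\ 1 \le k \le n\bigr] = \mu(K \cap L) \ge \mu(K)\,\mu(L) = \mathbb{P}\bigl[|X_i| < t_i,\ 1 \le i \le n_1\bigr]\,\mathbb{P}\bigl[|X_j| < t_j,\ n_1+1 \le j \le n\bigr],
\]
which is exactly~\eqref{eq:Gaussian-corr}. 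For the mixed strict/non-strict form written in the statement ($\le t_n$ on one coordinate), observe that the Gaussian measure of the hyperplane $\{x : |x_n| = t_n\}$ is zero, so replacing a strict inequality by a non-strict one on a single coordinate does not alter any of the probabilities involved; alternatively, apply the strict version with $t_n' \uparrow t_n$ and pass to the monotone limit.

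The only real obstacle is the general GCI itself, which is hard — but that is external input we are entitled to cite rather than prove. Everything remaining is bookkeeping: verifying that coordinate slabs and their intersections are symmetric and convex, matching the two index blocks $\{1,\dots,n_1\}$ and $\{n_1+1,\dots,n\}$ to $K$ and $L$, and handling the trivial boundary issue. No second moment or entropy estimates are needed for this particular statement.
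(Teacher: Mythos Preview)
Your proposal is correct and matches the paper's treatment: the paper does not prove this theorem either, simply citing Royen~\cite{royen2014simple} for the general inequality and~\cite{latala2017royen} for the equivalence with the coordinate-slab form~\eqref{eq:Gaussian-corr}. Your explicit deduction of~\eqref{eq:Gaussian-corr} via $K$ and $L$ as intersections of symmetric slabs is exactly the intended one-line argument, and your handling of the strict/non-strict boundary issue is fine since each coordinate $X_k$ is itself a one-dimensional centered Gaussian.
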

Theorem~\ref{thm:royen} is established in a breakthrough paper by Royen~\cite{royen2014simple}. The result recorded in~\eqref{eq:Gaussian-corr} is equivalent to the original version, but rather easier to work with, see~\cite{latala2017royen} for details.

The next result pertains certain Gaussian probabilities, partly reproduced from~\cite{turner2020balancing}. 
\begin{lemma}\label{lemma:gaussproblemma}
Let $Z\sim \cN(0,1)$, $\rho\in[0,1)$, and $Z_\rho \sim \cN(0,1)$ with $\mathbb{E}[ZZ_\rho]=\rho$. 
\begin{itemize}
    \item[(a)] There exists an absolute constant $c>0$ such that
    \[
   \sqrt{\frac{2}{\pi}}z\ge  \mathbb{P}[|Z|\le z]\ge \sqrt{\frac{2}{\pi}}z-cz^3
    \]
    for all $0<z<1$. In particular, if $z=o_n(1)$ then we obtain
    \[
    \mathbb{P}[|Z|\le z] = \sqrt{\frac{2}{\pi}}z\bigl(1+o_n(1)\bigr).
    \]
    \item[(b)] Let $z_1,z_2>0$ with $z_1,z_2=o_n(1)$. Then, 
    \[
   \frac{2z_1z_2}{\pi \sqrt{1-\rho^2}}\left(1-\frac{z_1^2+z_2^2}{\sqrt{1-\rho^2}}\right) \le \mathbb{P}[|Z|\le z_1,|Z_\rho|\le z_2] \le \frac{2z_1z_2}{\pi\sqrt{1-\rho^2}}.
    \]
    In particular, if $(z_1^2+z_2^2)/\sqrt{1-\rho^2}=o_n(1)$ then
    \[
    \mathbb{P}[|Z|\le z_1,|Z_\rho|\le z_2]  = \frac{2z_1z_2}{\pi\sqrt{1-\rho^2}}\bigl(1+o_n(1)\bigr).
    \]
\end{itemize}
\begin{proof}[Proof of Lemma~\ref{lemma:gaussproblemma}]
    Part ${\rm (a)}$ is reproduced from~\cite[Lemma~13]{turner2020balancing}, see the proof therein. We now establish part ${\rm (b)}$. Observe that for any $\rho\in[-1,1]$ and $x,y$, 
    \[
    x^2-2\rho xy+y^2 \ge 2|xy|-2\rho xy \ge 0.
    \]
    Hence, if
    \[
    f_{\rho}(x,y) = \exp\left(-\frac{x^2-2\rho xy+y^2}{2(1-\rho^2)}\right)
    \]
    then $f_\rho(x,y)\le 1$ for any $\rho\in(-1,1)$ and $x,y\in\R$, and that
    \[
    \mathbb{P}[|Z|\le z_1,|Z_\rho|\le z_2] = \frac{1}{2\pi\sqrt{1-\rho^2}}\int_{\substack{-z_1\le x\le z_1\\-z_2\le x\le z_2}}f_{\rho}(x,y)\;dx\;dy \le \frac{2z_1z_2}{\pi\sqrt{1-\rho^2}}.
    \]
    For the lower bound, we use the well-known inequality $e^{-x}\ge 1-x$ to get
    \[
    f_{\rho}(x,y)\ge 1-\frac{x^2-2\rho xy+y^2}{2(1-\rho^2)}.
    \]
    Clearly, 
    \[
    \sup_{|x|\le z_1,|y|\le z_2}|x^2-2\rho xy+y^2|\le (1+\rho)(x^2+y^2)\le 2(z_1^2+z_2^2)
    \]
    by triangle inequality and the simple inequality $2|xy|\le x^2+y^2$. So, 
    \[
   \mathbb{P}[|Z|\le z_1,|Z_\rho|\le z_2] \ge \frac{2z_1z_2}{\pi \sqrt{1-\rho^2}}\left(1-\frac{z_1^2+z_2^2}{\sqrt{1-\rho^2}}\right).\qedhere
   \]
\end{proof}
    
\end{lemma}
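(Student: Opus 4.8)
The plan is to establish the two parts separately; each reduces to an elementary estimate for a one- or two-dimensional Gaussian integral over a small box around the origin, obtained by sandwiching $e^{-u}$ between the bounds $1-u\le e^{-u}\le 1$.

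Part (a) is exactly \cite[Lemma~13]{turner2020balancing}; for completeness the argument is the following. Write $\mathbb{P}[|Z|\le z]=\sqrt{2/\pi}\int_0^z e^{-t^2/2}\,dt$. Since $0\le e^{-t^2/2}\le 1$ this is at most $\sqrt{2/\pi}\,z$, while $e^{-t^2/2}\ge 1-t^2/2$ gives $\int_0^z e^{-t^2/2}\,dt\ge z-z^3/6$, hence $\mathbb{P}[|Z|\le z]\ge \sqrt{2/\pi}\,z-cz^3$ with $c=\tfrac16\sqrt{2/\pi}$. When $z=o_n(1)$ the cubic term is $o_n(z)$, which is the asymptotic statement.

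For part (b) I would write out the bivariate normal density of $(Z,Z_\rho)$, namely $\phi_\rho(x,y)=\frac{1}{2\pi\sqrt{1-\rho^2}}\exp\!\big(-\tfrac{x^2-2\rho xy+y^2}{2(1-\rho^2)}\big)$, so that $\mathbb{P}[|Z|\le z_1,|Z_\rho|\le z_2]=\int_{-z_1}^{z_1}\!\int_{-z_2}^{z_2}\phi_\rho(x,y)\,dy\,dx$. The upper bound is immediate: the exponent is nonnegative (complete the square, $x^2-2\rho xy+y^2=(x-\rho y)^2+(1-\rho^2)y^2$, or use $2|xy|\ge 2\rho xy$), so $\phi_\rho\le \frac{1}{2\pi\sqrt{1-\rho^2}}$ and integration over the box of area $4z_1z_2$ gives $\frac{2z_1z_2}{\pi\sqrt{1-\rho^2}}$. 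For the lower bound, apply $e^{-u}\ge 1-u$ to the exponential and control the quadratic form uniformly over the box: for $|x|\le z_1$, $|y|\le z_2$, $|x^2-2\rho xy+y^2|\le (1+|\rho|)(x^2+y^2)\le 2(z_1^2+z_2^2)$ by $2|xy|\le x^2+y^2$; hence $\phi_\rho(x,y)\ge \frac{1}{2\pi\sqrt{1-\rho^2}}\big(1-\tfrac{z_1^2+z_2^2}{1-\rho^2}\big)$ on the box, and integrating yields a lower bound of the claimed form, from which the asymptotic statement follows upon letting the correction term tend to $0$. An essentially equivalent route is to use the decomposition $Z_\rho=\rho Z+\sqrt{1-\rho^2}\,Z'$ with $Z'\perp Z$, condition on $Z=x$ with $|x|\le z_1$, observe that $\{|Z_\rho|\le z_2\}$ then asks $Z'$ to lie in an interval of length $2z_2/\sqrt{1-\rho^2}$, and estimate that probability using part (a) together with the near-constancy of the Gaussian density on a small interval.

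The whole computation is routine; the only point that warrants care is the second-order term in the lower bound of part (b)---one has to track how the error depends on $1-\rho^2$, since the crude box bound on the quadratic form costs a factor $(1-\rho^2)^{-1}$, and sharpening it to the exact dependence stated in the lemma requires the slightly tighter completed-square estimate $(x-\rho y)^2+(1-\rho^2)y^2$ over the box. Everything else is a direct evaluation of Gaussian integrals via the two elementary inequalities above.
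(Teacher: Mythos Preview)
Your approach is essentially identical to the paper's: part~(a) is cited from \cite{turner2020balancing}, and for part~(b) both you and the paper bound the bivariate density above by its value at the origin and below via $e^{-u}\ge 1-u$ together with the crude box bound $|x^2-2\rho xy+y^2|\le 2(z_1^2+z_2^2)$.

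One remark: you correctly observe that this crude bound yields a correction term $(z_1^2+z_2^2)/(1-\rho^2)$ rather than the $(z_1^2+z_2^2)/\sqrt{1-\rho^2}$ written in the lemma statement. The paper's own proof has precisely the same gap---its displayed steps produce $1-\rho^2$ in the denominator, but the final line writes $\sqrt{1-\rho^2}$ without further justification. Your suggestion that the completed-square form $(x-\rho y)^2+(1-\rho^2)y^2$ recovers the sharper exponent does not actually work (bounding $(x-\rho y)^2$ over the box still costs a full factor independent of $1-\rho^2$), so the honest bound from this method is the one with $1-\rho^2$; this is harmless for every application in the paper, where $\rho$ is bounded away from $\pm 1$.
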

The next auxiliary result is a simple Gaussian anti-concentration lemma.
\begin{lemma}\label{lemma:anti-con}
    Let $\boldsymbol{Z}=(Z_1,\dots,Z_m)\in\R^m$ be a multivariate normal random vector with covariance matrix $\Sigma$ and $V\subset \R^m$. Then,
    \[
    \mathbb{P}[\boldsymbol{Z}\in V]\le (2\pi)^{-\frac{m}{2}}|\Sigma|^{-\frac12}{\rm Vol}(V),
    \]
    where ${\rm Vol}(V)$ is the Lebesgue measure of $V$.
\end{lemma}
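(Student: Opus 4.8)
The plan is to establish Lemma~\ref{lemma:anti-con} by a direct pointwise bound on the Gaussian density. First I would recall that the density of a centered multivariate normal $\boldsymbol{Z}\sim\cN(0,\Sigma)$ on $\R^m$ (assuming $\Sigma$ is nonsingular; otherwise the statement is vacuous or trivial since $|\Sigma|=0$ and the right-hand side is $+\infty$) is
\[
\phi_\Sigma(z) = (2\pi)^{-m/2}|\Sigma|^{-1/2}\exp\!\left(-\tfrac12 z^T\Sigma^{-1}z\right),\qquad z\in\R^m.
\]
Since $\Sigma^{-1}$ is positive semidefinite, the quadratic form $z^T\Sigma^{-1}z$ is nonnegative, hence $\exp(-\tfrac12 z^T\Sigma^{-1}z)\le 1$ for every $z\in\R^m$. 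Therefore $\phi_\Sigma(z)\le (2\pi)^{-m/2}|\Sigma|^{-1/2}$ uniformly in $z$.

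The remaining step is a one-line integration: for any (measurable) set $V\subset\R^m$,
\[
\mathbb{P}[\boldsymbol{Z}\in V] = \int_V \phi_\Sigma(z)\,dz \le (2\pi)^{-m/2}|\Sigma|^{-1/2}\int_V dz = (2\pi)^{-m/2}|\Sigma|^{-1/2}\,{\rm Vol}(V),
\]
which is exactly the claimed inequality. (If $\boldsymbol{Z}$ has a nonzero mean $\boldsymbol{\mu}$, the same argument applies verbatim since the density is still bounded by $(2\pi)^{-m/2}|\Sigma|^{-1/2}$; but as stated the lemma only needs the centered case.) The degenerate case where $\Sigma$ is singular requires no argument: then $|\Sigma|^{-1/2}=+\infty$ and the bound holds trivially.

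There is essentially no obstacle here — the only thing to be slightly careful about is the edge case of a singular covariance matrix, which I would dispatch in a sentence, and the observation that the bound on the density is what carries all the content (the supremum of a centered Gaussian density is attained at the origin and equals $(2\pi)^{-m/2}|\Sigma|^{-1/2}$). This lemma is a convenient packaging of that fact for later use when $V$ is a small box or slab cut out by the near-optimality/overlap constraints, so the proof should be kept short and self-contained.
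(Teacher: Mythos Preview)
Your proof is correct and is essentially identical to the paper's: both bound the Gaussian density by $(2\pi)^{-m/2}|\Sigma|^{-1/2}$ via $\exp(-\tfrac12 z^T\Sigma^{-1}z)\le 1$ and then integrate over $V$. Your extra remarks on the singular-$\Sigma$ and non-centered cases are fine but not needed for the paper's applications.
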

\begin{proof}[Proof of Lemma~\ref{lemma:anti-con}]
    We have
    \[
\mathbb{P}[\boldsymbol{Z}\in V] = (2\pi)^{-\frac{m}{2}}|\Sigma|^{-\frac12}\int_{\boldsymbol{z}\in V}\exp\left(-\frac{\boldsymbol{z}^T\Sigma^{-1}\boldsymbol{z}}{2}\right)\;d\boldsymbol{z} \le (2\pi)^{-\frac{m}{2}}|\Sigma|^{-\frac12}{\rm Vol}(V),
    \]
    as $\exp(-\frac{\boldsymbol{z}^T\Sigma^{-1}\boldsymbol{z}}{2})\le 1$ holds trivially for any positive semi-definite $\Sigma$.
\end{proof}

The next auxiliary result will be useful to bound the change of the spectrum of a matrix in presence of an additive perturbation.
\begin{theorem}\label{thm:wielandt-hoffman}{\bf (Hoffman-Wielandt)}
    Let $n\in\mathbb{N}$ and $A$ and $A+E$ be $n\times n$ symmetric matrices with eigenvalues $\lambda_1\ge \cdots \ge \lambda_n$ and $\mu_1\ge \cdots \ge \mu_n$, respectively. Then, 
    \[
    \sum_{1\le i\le n}(\lambda_i-\mu_i)^2 \le \|E\|_F^2.
    \]
\end{theorem}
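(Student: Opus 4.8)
The plan is to reduce the inequality to a statement about the Birkhoff polytope of doubly stochastic matrices and then close it with the rearrangement inequality. First I would fix orthogonal spectral decompositions $A = U\Lambda U^T$ and $A+E = VMV^T$ with $\Lambda = {\rm diag}(\lambda_1,\dots,\lambda_n)$ and $M = {\rm diag}(\mu_1,\dots,\mu_n)$, so that $E = VMV^T - U\Lambda U^T$. Because $\|\cdot\|_F$ is invariant under multiplication by orthogonal matrices on either side, conjugating by $V$ gives
\[
\|E\|_F^2 = \bigl\|M - W\Lambda W^T\bigr\|_F^2, \qquad W\triangleq V^T U \text{ (orthogonal)}.
\]

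Next I would expand: $\|M - W\Lambda W^T\|_F^2 = \sum_i \mu_i^2 + \sum_j \lambda_j^2 - 2\,{\rm tr}\!\bigl(MW\Lambda W^T\bigr)$, and a direct computation gives ${\rm tr}(MW\Lambda W^T) = \sum_{i,j}\mu_i\lambda_j W_{ij}^2$. Setting $D_{ij}\triangleq W_{ij}^2$, orthogonality of $W$ makes $D = (D_{ij})_{i,j}$ doubly stochastic (nonnegative, with all row and column sums equal to $1$). Thus $\|E\|_F^2 = \sum_i\mu_i^2 + \sum_j\lambda_j^2 - 2\sum_{i,j}D_{ij}\mu_i\lambda_j$, whereas $\sum_i(\lambda_i-\mu_i)^2 = \sum_i\lambda_i^2 + \sum_i\mu_i^2 - 2\sum_i\lambda_i\mu_i$. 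Hence it suffices to prove
\[
\sum_{i,j} D_{ij}\,\mu_i\lambda_j \;\le\; \sum_i \lambda_i\mu_i \qquad \text{for every doubly stochastic } D.
\]

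The final step is where the common ordering $\lambda_1\ge\cdots\ge\lambda_n$, $\mu_1\ge\cdots\ge\mu_n$ enters. The map $D\mapsto \sum_{i,j}D_{ij}\mu_i\lambda_j$ is linear, and by the Birkhoff--von Neumann theorem the doubly stochastic matrices form a compact convex polytope whose vertices are the permutation matrices; hence the maximum is attained at some $P_\sigma$, with value $\sum_i \mu_i\lambda_{\sigma(i)}$. The rearrangement inequality, applied to the two similarly-sorted sequences, yields $\sum_i\mu_i\lambda_{\sigma(i)} \le \sum_i\mu_i\lambda_i$ for all permutations $\sigma$, which is exactly the desired bound. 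I expect this optimization over $D$ to be the only real content; the orthogonal invariance of $\|\cdot\|_F$, the trace identity, and double stochasticity of $(W_{ij}^2)$ are routine. One could also bypass the two classical facts by a direct exchange argument---showing that for any $D$ that is not the identity there is a local transport move on its entries that does not decrease the objective---but quoting Birkhoff--von Neumann and the rearrangement inequality is the cleanest route.
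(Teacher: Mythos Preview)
Your argument is correct and is in fact the classical proof of Hoffman--Wielandt: reduce via orthogonal invariance of the Frobenius norm to the bilinear form $\sum_{i,j}D_{ij}\mu_i\lambda_j$ with $D$ doubly stochastic, then maximize over the Birkhoff polytope and finish with the rearrangement inequality. Each step you list is sound.

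The paper, however, does not supply a proof of this theorem at all. It is recorded as an auxiliary result with the line ``Theorem~\ref{thm:wielandt-hoffman} is due to Hoffman and Wielandt. See~\cite[Corollary~6.3.8]{horn2012matrix} for a proof and~\cite{hoffman1953variation} for the original paper.'' The proof in Horn--Johnson is essentially the one you wrote, so your proposal matches the standard reference the paper points to.
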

Theorem~\ref{thm:wielandt-hoffman} is due to Hoffman and Wielandt. See~\cite[Corollary~6.3.8]{horn2012matrix} for a proof and~\cite{hoffman1953variation} for the original paper.

The last auxiliary results regards trigonometric functions.
\begin{lemma}\label{lemma:trigo}
    \begin{itemize}
        \item[(a)] $\sin(x)>\frac{x}{2}$ for any $x\in[0,\pi/2]$. 
        \item[(b)] For any $x,y$, 
        \[
        |\sin(x)-\sin(y)|\le |x-y|\quad\text{and}\quad |\cos(x)-\cos(y)|\le |x-y|.
        \]
    \end{itemize}
\end{lemma}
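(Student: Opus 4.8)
The plan is to dispatch both parts by elementary calculus and trigonometry. For part (a), I would use concavity of $\sin$ on $[0,\pi/2]$: since $\sin''(t)=-\sin(t)\le 0$ on $[0,\pi]$, the function $\sin$ is concave there, so its graph on $[0,\pi/2]$ lies above the chord joining $(0,\sin 0)=(0,0)$ and $(\pi/2,\sin(\pi/2))=(\pi/2,1)$. Writing $x=\lambda\cdot\tfrac{\pi}{2}$ with $\lambda\in[0,1]$, concavity gives $\sin(x)\ge\lambda=\tfrac{2x}{\pi}$, i.e.\ Jordan's inequality $\sin(x)\ge\tfrac{2}{\pi}x$ on $[0,\pi/2]$. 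Since $\pi<4$ we have $\tfrac{2}{\pi}>\tfrac12$, hence $\sin(x)\ge\tfrac{2}{\pi}x>\tfrac{x}{2}$ for every $x\in(0,\pi/2]$, the stated strict inequality being vacuous only at the degenerate endpoint $x=0$ where $\sin 0=0$. An alternative is to set $g(x)=\sin x-\tfrac{x}{2}$, observe $g(0)=0$ and $g'(x)=\cos x-\tfrac12$, deduce $g$ increases on $[0,\pi/3]$ and decreases on $[\pi/3,\pi/2]$, and check $g(\pi/2)=1-\tfrac{\pi}{4}>0$; I would prefer the concavity route since it avoids an explicit endpoint evaluation.

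For part (b), the assertion is precisely that $\sin$ and $\cos$ are $1$-Lipschitz on $\R$. I would prove it via the sum-to-product identities $\sin x-\sin y=2\cos\!\bigl(\tfrac{x+y}{2}\bigr)\sin\!\bigl(\tfrac{x-y}{2}\bigr)$ and $\cos x-\cos y=-2\sin\!\bigl(\tfrac{x+y}{2}\bigr)\sin\!\bigl(\tfrac{x-y}{2}\bigr)$, combined with the elementary bounds $|\cos t|\le 1$, $|\sin t|\le 1$, and $|\sin t|\le|t|$ for all real $t$ (the last following since $t\mapsto t-\sin t$ vanishes at $0$ and has nonnegative derivative $1-\cos t$ for $t\ge 0$, then extending by oddness). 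This yields $|\sin x-\sin y|\le 2\bigl|\sin\tfrac{x-y}{2}\bigr|\le 2\cdot\tfrac{|x-y|}{2}=|x-y|$, and identically for $\cos$. Equivalently one may invoke the Mean Value Theorem: $\sin x-\sin y=\cos(\xi)(x-y)$ and $\cos x-\cos y=-\sin(\xi')(x-y)$ for suitable $\xi,\xi'$ between $x$ and $y$, with $|\cos(\xi)|,|\sin(\xi')|\le 1$.

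There is no genuine obstacle in this lemma; it is a short assembly of standard inequalities, and the only point requiring a word of care is the degenerate endpoint $x=0$ in part (a), which the concavity argument makes transparent. The whole proof is a few lines.
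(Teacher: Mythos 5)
Your proposal is correct and essentially matches the paper's proof: part (a) is the same concavity/Jordan's-inequality argument ($\sin x \ge 2x/\pi > x/2$), and part (b) is the standard 1-Lipschitz bound, which the paper obtains via the Mean Value Theorem exactly as in your stated alternative. Your sum-to-product route and your remark about the degenerate endpoint $x=0$ are fine but add nothing beyond the paper's short argument.
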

\begin{proof}[Proof of Lemma~\ref{lemma:trigo}]
    For part ${\rm (a)}$, it suffices to observe that $\sin(x)$ is concave on $[0,\pi/2]$, so the line $y=2x/\pi$ always remains below the graph, hence $\sin(x)\ge 2x/\pi>x/2$.

    For part ${\rm (b)}$, we have by using Mean Value Theorem,
    \[
    |\sin(x)-\sin(y)| = |x-y|\cdot |\cos(c)|
    \]
    for some $\min\{x,y\}<c<\max\{x,y\}$. As $|\cos(c)|\le 1$ trivially, the conclusion follows. The same argument applies also to $\cos$, concluding the proof of Lemma~\ref{lemma:trigo}.
\end{proof}
\subsection{Proof of Theorem~\ref{thm:ground-state}}\label{sec:pf-gs}
We begin by reminding the reader that
\[
H(\bs) = \frac{1}{\sqrt{n}}\left|\ip{\bs}{X}\right|.
\]
We establish that both the lower bound and the upper bounds hold individually w.h.p. The result then follows via a union bound.
\subsubsection*{Proof of Lower Bound}
Fix an arbitrary $f = o_n(1)$. We first establish that w.h.p.\,under the planted measure,
\[
\min_{\bs \in\Sigma_n\setminus\{\pm \bs^*\}} H(\bs)\ge f(n)2^{-n}.
\]
To that end, we establish the following proposition.
\begin{proposition}\label{prop:first-moment}
    Let $f:\N\to\R^+$ with $f(n)=o_n(1)$ and
    \[
    S_f=\bigl\{\bs\in\Sigma_n:\bs\ne\pm\bs^*,H(\bs)\le f(n)2^{-n}\bigr\}.
    \]
    Then, $\mathbb{E}_{\rm pl}|S_f|= o(1)$ as $n\to\infty$.
\end{proposition}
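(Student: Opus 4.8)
The plan is to bound $\mathbb{E}_{\rm pl}|S_f|$ directly. Since the planted measure is a conditioning, we have for any event $\mathcal{E}$ that $\mathbb{P}_{\rm pl}[\mathcal{E}] = \mathbb{P}[\mathcal{E}\cap\{H(\bs^*)\le 3^{-n}\}]/\mathbb{P}[H(\bs^*)\le 3^{-n}]$. By Lemma~\ref{lemma:gaussproblemma}(a), since $\ip{\bs^*}{X}/\sqrt{n}\sim\cN(0,1)$, the normalizing constant is $\mathbb{P}[H(\bs^*)\le 3^{-n}] = \mathbb{P}[|Z|\le 3^{-n}] = \Theta(3^{-n})$ (indeed $=\sqrt{2/\pi}\,3^{-n}(1+o(1))$). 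Then by linearity,
\[
\mathbb{E}_{\rm pl}|S_f| = \sum_{\bs\ne\pm\bs^*}\mathbb{P}_{\rm pl}\bigl[H(\bs)\le f(n)2^{-n}\bigr] = \frac{1}{\mathbb{P}[H(\bs^*)\le 3^{-n}]}\sum_{\bs\ne\pm\bs^*}\mathbb{P}\bigl[H(\bs)\le f(n)2^{-n},\,H(\bs^*)\le 3^{-n}\bigr].
\]
So it suffices to show $\sum_{\bs\ne\pm\bs^*}\mathbb{P}[H(\bs)\le f(n)2^{-n},\,H(\bs^*)\le 3^{-n}] = o(3^{-n})$.

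The key step is estimating the joint probability $\mathbb{P}[H(\bs)\le f(n)2^{-n},\,H(\bs^*)\le 3^{-n}]$ for a fixed $\bs\ne\pm\bs^*$. The pair $\bigl(\ip{\bs}{X}/\sqrt{n},\,\ip{\bs^*}{X}/\sqrt{n}\bigr)$ is a centered bivariate Gaussian with unit variances and correlation $\rho = n^{-1}\ip{\bs}{\bs^*}\in[-1,1]$. Since $\bs\ne\pm\bs^*$, we have $|\rho|\le 1-2/n<1$, so the covariance is nondegenerate. Applying Lemma~\ref{lemma:gaussproblemma}(b) with $z_1 = f(n)2^{-n}$, $z_2 = 3^{-n}$ (both $o_n(1)$, and the ratio condition holds trivially) gives
\[
\mathbb{P}\bigl[H(\bs)\le f(n)2^{-n},\,H(\bs^*)\le 3^{-n}\bigr] \le \frac{2\cdot f(n)2^{-n}\cdot 3^{-n}}{\pi\sqrt{1-\rho^2}} \le \frac{2\,f(n)2^{-n}3^{-n}}{\pi}\cdot\sqrt{\frac{n}{2}},
\]
using $1-\rho^2 = (1-\rho)(1+\rho)\ge (2/n)\cdot 1 = 2/n$ when $|\rho|\le 1-2/n$ — this is exactly the kind of bound that makes the determinant of the $2\times 2$ covariance not too small, analogous to Lemma~\ref{lemma:det-bd}. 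Summing over the at most $2^n$ choices of $\bs$, we obtain $\sum_{\bs\ne\pm\bs^*}\mathbb{P}[\cdots]\le 2^n\cdot\frac{2f(n)}{\pi}\sqrt{n/2}\,2^{-n}3^{-n} = O\bigl(f(n)\sqrt{n}\,3^{-n}\bigr)$, which is $o(3^{-n})$ since $f(n)\sqrt{n}$ need not vanish — hmm.

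Here is the one genuine subtlety, and it is where I expect the real work: $f(n)\sqrt{n}$ does \emph{not} go to zero for, say, $f(n) = n^{-1/4}$, so the crude union bound over all $2^n$ partitions loses a spurious $\sqrt{n}$. The fix is to be more careful with the probability estimate: instead of the worst-case bound $1-\rho^2\ge 2/n$, use that the number of $\bs$ with $n^{-1}\ip{\bs}{\bs^*} = 1-2k/n$ (i.e. at Hamming distance $k$ from $\bs^*$) is $\binom{n}{k}$, and for such $\bs$ one has $1-\rho^2 = \frac{2k}{n}(2-\frac{2k}{n})\ge \frac{2k}{n}$. Thus
\[
\sum_{\bs\ne\pm\bs^*}\frac{1}{\sqrt{1-\rho^2}} \le 2\sum_{k=1}^{n/2}\binom{n}{k}\sqrt{\frac{n}{2k}},
\]
and one checks via Stirling / Lemma~\ref{lemma:binom-entropy} that this sum is $(1+o(1))2^n$ up to a subpolynomial factor — the $\sqrt{n/(2k)}$ weight only matters for $k = o(\sqrt n)$ where $\binom{n}{k}$ is subexponential anyway, so the sum is $2^n \cdot n^{O(1)}$, and crucially the dominant $k\sim n/2$ terms carry $\sqrt{n/(2k)} = \Theta(1)$. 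A cleaner route, and the one I would actually present: restrict first to $\bs$ with $n^{-1}\ip{\bs}{\bs^*}\in[-\epsilon,\epsilon]$ exactly as in the theorem's proof sketch for the upper bound, handle those via the clean bound $1-\rho^2\ge 1-\epsilon^2 = \Theta(1)$ giving contribution $O(2^n f(n)2^{-n}3^{-n}) = O(f(n)3^{-n}) = o(3^{-n})$, and handle the (subexponentially many, by Lemma~\ref{lemma:sum-of-bin}, since $\sum_{k\le \alpha n}\binom{n}{k}\le 2^{nh_b(\alpha)}$ with $h_b(\alpha)<1$ for the relevant $\alpha$) remaining $\bs$ with a separate, coarser estimate where even the $2/n$ bound suffices because the count is $2^{(1-\Omega(1))n}$, absorbing any $\mathrm{poly}(n)$ loss. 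Either way, the conclusion is $\mathbb{E}_{\rm pl}|S_f| = o(1)$, and then Markov's inequality gives $S_f = \varnothing$ w.h.p., which is the lower bound of Theorem~\ref{thm:ground-state}.
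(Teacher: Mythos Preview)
Your proposal is correct and, after the self-correction, lands on essentially the same argument as the paper: organize by $k=d_H(\bs,\bs^*)$, bound the bivariate Gaussian probability by $\Theta\bigl(f(n)2^{-n}/\sqrt{(k/n)(1-k/n)}\bigr)$, then split the sum into $k\in[n(\tfrac12-\epsilon),n(\tfrac12+\epsilon)]$ (where $(k/n)(1-k/n)\ge\tfrac14-\epsilon^2$ and the binomial sum is $\le 2^n$, giving $O(f(n))$) and the complementary range (where the crude $\ge 1/n$ bound loses only $\sqrt{n}$ but the count is $\le 2\cdot 2^{nh_b(1/2-\epsilon)}$, giving $2^{-\Omega(n)}$). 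Your ``cleaner route'' is exactly this $\epsilon$-split; the paper does not attempt the direct estimate of $\sum_k\binom{n}{k}\sqrt{n/k}$ you sketch first, but the two are equivalent once you observe that the dominant $k\sim n/2$ terms carry a bounded weight.
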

\begin{proof}[Proof of Proposition~\ref{prop:first-moment}]
    Note that for $\bs\ne \pm\bs^*$, $1\le d_H(\bs,\bs^*)\le n-1$ and if $k=d_H(\bs,\bs^*)$, then
    \begin{equation}\label{eq:overlap}
        \frac1n \ip{\bs}{\bs^*} = \frac{n-2d_H(\bs,\bs^*)}{n} = 1-\frac{2k}{n}.
    \end{equation}
    Furthermore, 
    \begin{align}
        \mathbb{P}\bigl[H(\bs)\le f(n)2^{-n}\lvert H(\bs^*)\le 3^{-n}\bigr]&=\frac{\mathbb{P}\bigl[H(\bs)\le f(n)2^{-n},H(\bs^*)\le 3^{-n}\bigr]}{\mathbb{P}\bigl[H(\bs^*)\le 3^{-n}\bigr]} \label{eq:bayes} \\
        &\le\frac{1}{\sqrt{2\pi\frac{k}{n}\left(1-\frac{k}{n}\right)}}f(n)2^{-n}(1+o_n(1))\label{eq:prob-first-mom}.
    \end{align}
    We next justify the steps above.~\eqref{eq:bayes} follows from removing the conditioning. On the other hand,
  after removing the conditioning, $\bigl(H(\bs),H(\bs^*)\bigr)$ forms a bivariate normal with parameters
    \[
    \begin{pmatrix}
        H(\bs) \\ H(\bs^*)
    \end{pmatrix} \sim \cN\left(\boldsymbol{0},\begin{pmatrix}1 & 1-\frac{2k}{n} \\1-\frac{2k}{n} &1\end{pmatrix}\right).
    \]
   Hence,
    \begin{equation}\label{eq:auxil2}
        \mathbb{P}\bigl[H(\bs)\le f(n)2^{-n},H(\bs^*)\le 3^{-n}\bigr] \le \frac{1}{2\pi \sqrt{1-\left(1-\frac{2k}{n}\right)^2}}(2\cdot f(n)2^{-n})(2\cdot 3^{-n}).
    \end{equation}
    Further, using Lemma~\ref{lemma:gaussproblemma}
    \begin{equation}\label{eq:auxil3}
        \mathbb{P}\bigl[H(\bs^*)\le 3^{-n}\bigr] = \frac{1}{\sqrt{2\pi}}(2\cdot 3^{-n})(1+o_n(1)),
        \end{equation}
        where the $o_n(1)$ term is clearly independent of $k=d_H(\bs,\bs^*)$.
        Combining~\eqref{eq:bayes},~\eqref{eq:auxil2} and~\eqref{eq:auxil3} yields~\eqref{eq:prob-first-mom}. We now estimate $\mathbb{E}_{\rm pl}|S_f|$. Fix an $\epsilon\in(0,1/2)$ and let
        \[
        I(\epsilon) = \left[1,n\left(\frac12-\epsilon\right)\right] \cup \left[n\left(\frac12+\epsilon\right),n-1\right],\quad\text{and}\quad I'(\epsilon) = \left[n\left(\frac12-\epsilon\right),n\left(\frac12+\epsilon\right)\right]
        \]
        \begin{align*}
            \mathbb{E}_{\rm pl}|S_f| &=\sum_{1\le k\le n-1}\binom{n}{k}  \mathbb{P}\bigl[H(\bs)\le f(n)2^{-n}\lvert H(\bs^*)\le 3^{-n}\bigr] \\
            &=(1+o_n(1))\sum_{1\le k\le n-1}\binom{n}{k} \frac{1}{\sqrt{2\pi \frac{k}{n}\left(1-\frac{k}{n}\right)}}f(n)2^{-n}\\
            &\le (1+o_n(1))\left(\sum_{k\in I(\epsilon) \cap \Z}\binom{n}{k} \frac{1}{\sqrt{2\pi \frac{k}{n}\left(1-\frac{k}{n}\right)}}f(n)2^{-n} + \sum_{k\in I'(\epsilon) \cap \Z}\binom{n}{k} \frac{1}{\sqrt{2\pi \frac{k}{n}\left(1-\frac{k}{n}\right)}}f(n)2^{-n}\right).
        \end{align*}
        Note that $k(n-k)\ge n-1$ for $1\le k\le n-1$ and $n\ge 2$. Hence, hence $\sqrt{(k/n)(1-k/n)}\ge 1/\sqrt{2n}$ for $n,k$ in the same range. So,
        \begin{align*}
        \sum_{k\in I(\epsilon) \cap \Z}\binom{n}{k} \frac{1}{\sqrt{2\pi \frac{k}{n}\left(1-\frac{k}{n}\right)}}f(n)2^{-n}&\le f(n)2^{-n}\sqrt{\frac{n}{\pi}}\sum_{k\in I(\epsilon) \cap \Z}\binom{n}{k} \\
        &\le O\left(f(n)\sqrt{n}2^{-n\left(1-h(1/2-\epsilon)\right)}\right)\\
        &=2^{-\Omega(n)},
            \end{align*}
            where we applied Lemma~\ref{lemma:binom-entropy} and used the fact $1-h(1/2-\epsilon)>0$. On the other hand if $k\in I'(\epsilon)$, then 
            \[
            \frac{k}{n}\left(1-\frac{k}{n}\right)\ge c_\epsilon = \frac14 -\epsilon^2>0.
            \]
            So, 
            \begin{align*}
            \sum_{k\in I'(\epsilon)\cap \mathbb{Z}}\binom{n}{k} \frac{1}{\sqrt{2\pi \frac{k}{n}\left(1-\frac{k}{n}\right)}}f(n)2^{-n}&\le \frac{f(n)2^{-n}}{\sqrt{2\pi c_\epsilon}} \sum_{k\in I'(\epsilon)\cap\mathbb{Z}}\binom{n}{k} = O\bigl(f(n)\bigr) = o_n(1),
            \end{align*}
            where we used the fact
            \[
            \sum_{k\in I'(\epsilon)\cap \mathbb{Z}}\binom{n}{k}\le 2^n.
            \]                 
    Combining these, we immediately establish
                        \[
                        \mathbb{E}_{\rm pl}|S_f|\le 2^{-\Omega(n)} + O(f(n)) = o_n(1).
                        \]
This completes the proof of Proposition~\ref{prop:first-moment}.
\end{proof}
Note that using Proposition~\ref{prop:first-moment}, we have by Markov's inequality
\[
\mathbb{P}_{\rm pl}[|S_f|\ge 1]\le \mathbb{E}_{\rm pl}|S_f| = o_n(1).
\]
So,  w.h.p.
\[
\min_{\bs \in\Sigma_n\setminus\{\pm \bs^*\}} H(\bs)\ge f(n)2^{-n},
\]
establishing the lower bound in Theorem~\ref{thm:ground-state}.
\subsubsection*{Proof of Upper Bound}
\begin{proposition}\label{prop:2nd-mom}
    Fix an $\epsilon>0$, a function $g=\omega_n(1)$ and define
    \[
    S_g(\epsilon) = \bigl\{\bs\in\Sigma_n:H(\bs)\le g2^{-n},n(1-\epsilon)/2\le d_H\bigl(\bs,\bs^*\bigr)\le n(1+\epsilon)/2\bigr\}.
    \]
    Then, the following holds.
    \begin{itemize}
        \item[(a)] $\mathbb{E}_{\rm pl}|S_g(\epsilon)|\ge \frac{2g}{\sqrt{2\pi}}(1+o_n(1))$.
        \item[(b)] $\displaystyle \frac{(\mathbb{E}_{\rm pl}|S_g(\epsilon)|)^2}{\mathbb{E}_{\rm pl}[|S_g(\epsilon)|^2]}\ge \frac{1}{o_n(1)+(1+o_n(1))(1-\epsilon\sqrt{6})^{-3/2}}$.
    \end{itemize}
\end{proposition}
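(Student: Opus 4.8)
The plan is to prove (a) and (b) by a first/second moment argument, recycling the Gaussian estimates assembled for Proposition~\ref{prop:first-moment}; throughout I would assume without loss of generality that $g(n)=o(2^n)$, so that $g(n)2^{-n}=o_n(1)$ (this is the only regime of interest, and the one the applications use).

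For part (a) I would write $\mathbb{E}_{\rm pl}|S_g(\epsilon)| = \sum_{n(1-\epsilon)/2\le k\le n(1+\epsilon)/2}\binom{n}{k}\,\mathbb{P}_{\rm pl}[H(\bs)\le g2^{-n}]$, where $\bs$ is any fixed partition with $d_H(\bs,\bs^*)=k$. Running the Bayes computation~\eqref{eq:bayes}--\eqref{eq:prob-first-mom} but invoking instead the \emph{lower} bounds in Lemma~\ref{lemma:gaussproblemma} (its part (b) for the numerator $\mathbb{P}[H(\bs)\le g2^{-n},\,H(\bs^*)\le 3^{-n}]$, its part (a) for the denominator) gives $\mathbb{P}_{\rm pl}[H(\bs)\le g2^{-n}] = \frac{g2^{-n}}{\sqrt{2\pi (k/n)(1-k/n)}}(1+o_n(1))$, with the error uniform over the window (there $(k/n)(1-k/n)$ stays bounded away from $0$). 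Since $(k/n)(1-k/n)\le \tfrac14$, each term is $\ge \frac{2g2^{-n}}{\sqrt{2\pi}}(1-o_n(1))$; and since $\mathrm{Bin}(n,\tfrac12)$ concentrates (equivalently, using Lemma~\ref{lemma:sum-of-bin}) one has $\sum_{n(1-\epsilon)/2\le k\le n(1+\epsilon)/2}\binom{n}{k}2^{-n}=1-o_n(1)$. Summing over $k$ then yields part (a).

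For part (b) I would expand $\mathbb{E}_{\rm pl}[|S_g(\epsilon)|^2]=\sum_{\bs_1,\bs_2}\mathbb{P}_{\rm pl}[\bs_1,\bs_2\in S_g(\epsilon)]$ over ordered pairs with $\bs_1,\bs_2$ in the distance window, and peel off the diagonal $\bs_1=\bs_2$: it equals $\mathbb{E}_{\rm pl}|S_g(\epsilon)|$, so after dividing by $(\mathbb{E}_{\rm pl}|S_g(\epsilon)|)^2$ it contributes $1/\mathbb{E}_{\rm pl}|S_g(\epsilon)|=o_n(1)$, using $g\to\infty$ and part (a). For an off-diagonal pair, after removing the conditioning the triple $\bigl(\tfrac{1}{\sqrt n}\ip{\bs_1}{X},\tfrac{1}{\sqrt n}\ip{\bs_2}{X},\tfrac{1}{\sqrt n}\ip{\bs^*}{X}\bigr)$ is centered Gaussian with covariance $\Sigma$, $\Sigma_{ij}=\tfrac1n\ip{\bs_i}{\bs_j}$ (with $\bs_3:=\bs^*$). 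Bounding the numerator probability by Lemma~\ref{lemma:anti-con} (the event is a box of volume $(2g2^{-n})^2(2\cdot 3^{-n})$) and using $\mathbb{P}[H(\bs^*)\le 3^{-n}]\ge\sqrt{2/\pi}\,3^{-n}(1-o_n(1))$ from Lemma~\ref{lemma:gaussproblemma}(a), Bayes' rule yields $\mathbb{P}_{\rm pl}[\bs_1,\bs_2\in S_g(\epsilon)]\le \frac{2g^2 4^{-n}}{\pi\sqrt{|\Sigma|}}(1+o_n(1))$ uniformly, hence (by part (a)) $\mathbb{P}_{\rm pl}[\bs_1,\bs_2\in S_g(\epsilon)]/(\mathbb{E}_{\rm pl}|S_g(\epsilon)|)^2\le \frac{4^{-n}}{\sqrt{|\Sigma|}}(1+o_n(1))$.

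It then remains to sum $4^{-n}/\sqrt{|\Sigma|}$ over off-diagonal pairs, which I would split according to the overlap $\gamma=\tfrac1n\ip{\bs_1}{\bs_2}$. Writing $\alpha_i=\tfrac1n\ip{\bs_i}{\bs^*}$ (so $|\alpha_1|,|\alpha_2|\le\epsilon$ on the distance window) and $\Sigma=I_3+E$, one has $\|E\|_F=\sqrt{2(\alpha_1^2+\alpha_2^2+\gamma^2)}$. On the ``bulk'' $|\gamma|\le\epsilon$ this gives $\|E\|_F\le\sqrt6\,\epsilon$, so Hoffman--Wielandt (Theorem~\ref{thm:wielandt-hoffman}, applied to $I_3$ and $\Sigma$) forces every eigenvalue of $\Sigma$ to be $\ge 1-\sqrt6\,\epsilon$, whence $|\Sigma|\ge(1-\sqrt6\,\epsilon)^3$ and $1/\sqrt{|\Sigma|}\le(1-\sqrt6\,\epsilon)^{-3/2}$; as the number of bulk pairs is at most $(2^n)^2=4^n$, the bulk sum is $\le(1-\sqrt6\,\epsilon)^{-3/2}(1+o_n(1))$. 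On the ``tail'' $|\gamma|>\epsilon$ I would use instead the crude bound $|\Sigma|\ge n^{-3}$ from Lemma~\ref{lemma:det-bd}, together with Lemma~\ref{lemma:sum-of-bin} to bound the number of tail pairs by $2^{n+1}2^{n h_b((1-\epsilon)/2)}$; since $h_b((1-\epsilon)/2)<1$, the tail sum is $O\bigl(n^{3/2}2^{-n(1-h_b((1-\epsilon)/2))}\bigr)=o_n(1)$. Combining the diagonal, bulk, and tail contributions gives $\mathbb{E}_{\rm pl}[|S_g(\epsilon)|^2]\le\bigl(o_n(1)+(1+o_n(1))(1-\sqrt6\,\epsilon)^{-3/2}\bigr)(\mathbb{E}_{\rm pl}|S_g(\epsilon)|)^2$, which is part (b). The main obstacle is controlling $|\Sigma|$ uniformly over the relevant pairs: the clean $(1-\sqrt6\,\epsilon)^{-3/2}$ factor is precisely what Hoffman--Wielandt delivers on the bulk, while on the tail one must settle for $|\Sigma|\ge n^{-3}$ and lean on the exponential smallness of the corresponding binomial sum; the Gaussian probability estimates and the remaining binomial bookkeeping are routine given the auxiliary results above.
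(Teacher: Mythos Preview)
Your approach is essentially the paper's: same three-way split of the pair sum by the overlap $\gamma=n^{-1}\ip{\bs_1}{\bs_2}$, same Hoffman--Wielandt bound on the bulk $|\gamma|\le\epsilon$ to extract the factor $(1-\sqrt6\,\epsilon)^{-3/2}$, and same crude $|\Sigma|\ge n^{-3}$ on the tail combined with the exponential deficit $h_b((1-\epsilon)/2)<1$. One cosmetic difference in part~(a): you compute the bivariate probability directly via the lower bound in Lemma~\ref{lemma:gaussproblemma}(b), whereas the paper short-circuits this step with Royen's Gaussian Correlation Inequality (Theorem~\ref{thm:royen}), obtaining $\mathbb{P}_{\rm pl}[H(\bs)\le g2^{-n}]\ge\mathbb{P}[H(\bs)\le g2^{-n}]$ in one line. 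Both routes are fine.

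There is, however, one small gap in part~(b). Your ``diagonal'' consists only of $\bs_2=\bs_1$, so the antipodal pairs $\bs_2=-\bs_1$ land in your tail (they have $\gamma=-1$, and if $\bs_1$ is in the distance window then so is $-\bs_1$). But Lemma~\ref{lemma:det-bd} explicitly requires $\bs_i\ne\pm\bs_j$ for every pair; when $\bs_2=-\bs_1$ the covariance $\Sigma$ is singular and the bound $|\Sigma|\ge n^{-3}$ is false. The fix is immediate---peel off $\bs_2=-\bs_1$ together with $\bs_2=\bs_1$ (this is precisely the paper's class $\mathcal{T}_1$); since $H(-\bs_1)=H(\bs_1)$, the antipodal contribution is another copy of $\mathbb{E}_{\rm pl}|S_g(\epsilon)|$ and hence another $o_n(1)$ after dividing by the square of the first moment---but as written your tail bound does not go through.
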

We first show how Proposition~\ref{prop:2nd-mom} yields the upper bound in Theorem~\ref{thm:ground-state}. To that end, we use the \emph{second moment method}: for any non-negative integer valued random variable $N$,
\begin{equation}\label{eq:2nd-mom-met}
    \mathbb{P}[N\ge 1]\ge \frac{\mathbb{E}[N]^2}{\mathbb{E}[N^2]}.
    \end{equation}
    The proof of~\eqref{eq:2nd-mom-met} is based on a simple application of Cauchy-Schwarz inequality, which we provide for completeness. Let $I=\ind\{N\ge 1\}$ and observe that $\mathbb{E}[N]=\mathbb{E}[NI]$, since $N\ind\{N\le 0\}=0$ almost surely as $N$ takes non-negative integer values. So, Cauchy-Schwarz inequality implies \[
    \mathbb{P}[N\ge 1]\mathbb{E}[N^2]=\mathbb{E}[I^2]\mathbb{E}[N^2]\ge \mathbb{E}[N\ind\{N\ge 1\}]^2=\mathbb{E}[N]^2.
    \]
   Next, assume Proposition~\ref{prop:2nd-mom} holds and define $S_g = \{\bs\in\Sigma_n\setminus \{\pm \sigma^*\}:H(\bs)\le g2^{-n}\}$. Note that for any $\epsilon>0$, $S_g(\epsilon)\subseteq S_g$. So, 
    \[
    \mathbb{P}_{\rm pl}[|S_g|\ge 1]\ge \mathbb{P}_{\rm pl}[|S_g(\epsilon)\ge 1]\ge \frac{(\mathbb{E}_{\rm pl}|S_g(\epsilon)|)^2}{\mathbb{E}_{\rm pl}[|S_g(\epsilon)|^2]}\ge \frac{1}{o_n(1)+(1+o_n(1))(1-\epsilon\sqrt{6})^{-3/2}},
    \]
    where we applied~\eqref{eq:2nd-mom-met} and part ${\rm (b)}$ of Proposition~\ref{prop:2nd-mom}. We now fix an $\epsilon>0$ and pass to the limit as $n\to\infty$ to obtain    \begin{equation}\label{eq:liminf}
    \liminf_{n\to\infty}\mathbb{P}_{\rm pl}[|S_g|\ge 1]\ge (1-\epsilon\sqrt{6})^{3/2}.
        \end{equation}
    Note that~\eqref{eq:liminf} holds for all $\epsilon>0$ and that the left hand side does not depend on $\epsilon$. So, sending $\epsilon\to 0$, we get 
      \begin{equation*}
    \liminf_{n\to\infty}\mathbb{P}_{\rm pl}[|S_g|\ge 1]\ge 1.
        \end{equation*}
        As we trivially have $\limsup_{n\to\infty} \mathbb{P}_{\rm pl}[|S_g|\ge 1]\le 1$, we thus conclude
        \[
        \lim_{n\to\infty}\mathbb{P}_{\rm pl}[|S_g|\ge 1]=1,
        \]
        which precisely implies that $\min_{\bs \in \Sigma_n,\bs\ne \pm \sigma^*}H(\bs)\le g2^{-n}$ w.h.p. Therefore, it suffices to establish Proposition~\ref{prop:2nd-mom}.
\begin{proof}[Proof of Proposition~\ref{prop:2nd-mom}]
Set 
\begin{equation}\label{eq:J-of-eps}
    J(\epsilon) = \left[\frac{n(1-\epsilon)}{2},\frac{n(1+\epsilon)}{2}\right] \cap \mathbb{Z} \quad\text{and}\quad J'(\epsilon) = \left([1,n-1]\setminus \left[\frac{n(1-\epsilon)}{2},\frac{n(1+\epsilon)}{2}\right]\right)\cap \mathbb{Z}.
\end{equation}
   \paragraph{Part {\rm (a)}.} Note that for any $k\in\mathbb{N}$, there exists $\binom{n}{k}$ choices for $\bs$ such that $d_H(\bs,\bs^*)=k$. So,
    \begin{equation}\label{eq:first-mom}            \mathbb{E}_{\rm pl}|S_g(\epsilon)| = \sum_{k\in J(\epsilon)}\binom{n}{k} \mathbb{P}_k [H(\bs)\le g(n)2^{-n}\mid H(\bs^*)\le 3^{-n}],
    \end{equation}
    where $\mathbb{P}_k$ highlights the dependence of probability term on $k=d_H(\bs,\bs^*)$. Now,
    \begin{align}
        \mathbb{P}_k [H(\bs)\le g(n)2^{-n}\mid H(\bs^*)\le 3^{-n}]&=\frac{ \mathbb{P}_k [H(\bs)\le g(n)2^{-n},H(\bs^*)\le 3^{-n}]}{ \mathbb{P} [H(\bs^*)\le 3^{-n}]} \nonumber \\
        &\ge \mathbb{P}[H(\bs)\le g2^{-n}]\label{eq:gaussian-comparison} \\
        &=\frac{2g2^{-n}}{\sqrt{2\pi}}(1+o_n(1)) \label{eq:lemma-pax}.
    \end{align}
    Above,~\eqref{eq:gaussian-comparison} uses Theorem~\ref{thm:royen}, and~\eqref{eq:lemma-pax} uses Lemma~\ref{lemma:gaussproblemma} with $z=g2^{-n}$. Inserting~\eqref{eq:lemma-pax} into~\eqref{eq:first-mom}, we thus have
    \[
    \mathbb{E}_{\rm pl}|S_g(\epsilon)|\ge \sum_{k\in J(\epsilon)} \binom{n}{k}\frac{2g 2^{-n}}{\sqrt{2\pi}}(1+o_n(1)) \ge \frac{2g}{\sqrt{2\pi}}(1+o_n(1)),
    \]
    where we appealed to Lemma~\ref{lemma:sum-of-bin} to obtain
    \[
    \sum_{k\in J(\epsilon)} \binom{n}{k} \ge 2^n -2 - \sum_{k\in J'(\epsilon)}\binom{n}{k} \ge 2^n - 2\cdot 2^{nh_b(1/2-\epsilon/2)} = 2^n(1+o_n(1)).
    \]
\paragraph{Part {\rm (b)}.} We begin by writing
\begin{align*}
    |S_g(\epsilon)| = \sum_{\substack{\bs_1\in \Sigma_n \\ d_H(\bs_1,\bs^*)\in J(\epsilon)}}\ind \bigl\{H(\bs_1)\le g2^{-n}\bigr\} 
   \Rightarrow |S_g(\epsilon)|^2 = \sum_{\substack{\bs_1,\bs_2\in \Sigma_n \\ d_H(\bs_1,\bs^*)\in J(\epsilon) \\ d_H(\bs_2,\bs^*)\in J(\epsilon)}}\ind \bigl\{H(\bs_1)\le g2^{-n},H(\bs_2)\le g2^{-n}\bigr\}
   \end{align*}.
   Hence, we have
\begin{align}
 \mathbb{E}_{\rm pl}[|S_g(\epsilon)|^2]&=\sum_{\substack{\bs_1,\bs_2\in \Sigma_n \\ d_H(\bs_1,\bs^*)\in J(\epsilon) \\ d_H(\bs_2,\bs^*)\in J(\epsilon)}}\mathbb{P}_{\rm pl}\bigl[H(\bs_1)\le g2^{-n},H(\bs_2)\le g2^{-n}\bigr] \nonumber\\
 & = \sum_{\substack{\bs_1,\bs_2\in\Sigma_n\\d_H(\bs_1,\bs^*) \in J(\epsilon)\\ d_H(\bs_2,\bs^*)\in J(\epsilon)}} \mathbb{P}\bigl[H(\bs_1)\le g2^{-n},H(\bs_2)\le g2^{-n}\mid H(\bs^*)\le 3^{-n}\bigr].\label{eq:2nd-mom-auxil}
\end{align}
The sum appearing in~\eqref{eq:2nd-mom-auxil} runs over pairs $(\bs_1,\bs_2)\in\Sigma_n\times\Sigma_n$ with $d_H(\bs_1,\bs^*),d_H(\bs_2,\bs^*)\in J(\epsilon)$. We now split these pairs into three classes based on $d_H(\bs_1,\bs_2)$.
\begin{itemize}
    \item Let \[
    \mathcal{T}_1 = \bigl\{(\bs_1,\bs_2): \bs_2=\pm \bs_1,d_H(\bs_1,\bs^*)\in J(\epsilon),d_H(\bs_2,\bs^*)\in J(\epsilon)\bigr\}.
    \]
    Then,
    \[
    |\mathcal{T}_1| = 2\sum_{k\in J(\epsilon)}\binom{n}{k}.
    \]
    \item  Let
    \[
    \mathcal{T}_2 = \bigl\{(\bs_1,\bs_2): d_H(\bs_1,\bs_2)\in J'(\epsilon),d_H(\bs_1,\bs^*)\in J(\epsilon),d_H(\bs_2,\bs^*)\in J(\epsilon)\bigr\}.
    \]
    Note that having fixed a $\bs_1$, there exists $\sum_{k\in J'(\epsilon)}\binom{n}{k}$ choices for $\bs_2$ such that $(\bs_1,\bs_2)\in\mathcal{T}_2$. As
    \[
    \sum_{k\in J'(\epsilon)}\binom{n}{k}\le 2\exp_2\left(nh_b\left(\frac{1-\epsilon}{2}\right)\right)    \] 
    per Lemma~\ref{lemma:sum-of-bin}, we conclude
    \[
    |\mathcal{T}_2| = \exp_2\left(nh_b\left(\frac{1-\epsilon}{2}\right)+1\right)\underbrace{\sum_{k\in J(\epsilon)}\binom{n}{k}}_{\le 2^n}\le \exp_2\left(n+nh_b\left(\frac{1-\epsilon}{2}\right)+1\right).
    \]
    \item Let
    \[
    \mathcal{T}_3=\bigl\{(\bs_1,\bs_2):d_H(\bs_1,\bs_2)\in J(\epsilon),d_H(\bs_1,\bs^*)\in J(\epsilon),d_H(\bs_2,\bs^*)\in J(\epsilon)\bigr\}.
    \]
    Then we have
    \[
    |\mathcal{T}_3|\le \left(\sum_{k\in J'(\epsilon)}\binom{n}{k}\right)^2.
    \]
\end{itemize}
Our analysis below reveals that the dominant contribution to the second moment is due to $\mathcal{T}_3$: the contributions from $\mathcal{T}_1,\mathcal{T}_2$ are negligible. We now make this more precise. We have
\begin{align}
     \mathbb{E}_{\rm pl}[|S_g(\epsilon)^2] &= \underbrace{\sum_{(\bs_1,\bs_2)\in\mathcal{T}_1}\mathbb{P}\bigl[H(\bs_1)\le g2^{-n},H(\bs_2)\le g2^{-n}\mid H(\bs^*)\le 3^{-n}\bigr]}_{\Sigma_1} \nonumber \\
     &+ \underbrace{\sum_{(\bs_1,\bs_2)\in\mathcal{T}_2}\mathbb{P}\bigl[H(\bs_1)\le g2^{-n},H(\bs_2)\le g2^{-n}\mid H(\bs^*)\le 3^{-n}\bigr]}_{\Sigma_2}\nonumber \\
     &+\underbrace{\sum_{(\bs_1,\bs_2)\in\mathcal{T}_3}\mathbb{P}\bigl[H(\bs_1)\le g2^{-n},H(\bs_2)\le g2^{-n}\mid H(\bs^*)\le 3^{-n}\bigr]}_{\Sigma_3}.\label{eq:2nd-mom-auxil2}
\end{align}
\paragraph{Analysis of $\Sigma_1$ and $\Sigma_2$} Note that 
\begin{equation*}
    \Sigma_1 = 2\sum_{\bs_1:d_H(\bs_1,\bs^*)\in J(\epsilon)}\mathbb{P}\bigr[H(\bs_1)\le g2^{-n}\mid H(\bs^*)\le 3^{-n}\bigl]=2\mathbb{E}_{\rm pl}|S_g(\epsilon)|.
\end{equation*}
In particular, using Part {\rm (a)}, we get
\begin{equation}\label{eq:Sigma-1}
  \frac{\Sigma_1}{(\mathbb{E}_{\rm pl}|S_g(\epsilon)|)^2}   = o_n(1),
\end{equation}
as $g(n)=\omega_n(1)$. Next, we consider $\Sigma_2$. 
\begin{lemma}\label{lemma:prob-middle-term}
   \[\max_{(\bs_1,\bs_2)\in\mathcal{T}_2} \mathbb{P}\bigl[H(\bs_1)\le g2^{-n},H(\bs_2)\le g2^{-n}\mid H(\bs^*)\le 3^{-n}\bigr] \le (2\pi)^{-1}\cdot n^{\frac32}\cdot \left(2\cdot g2^{-n}\right)^2.
   \]
\end{lemma}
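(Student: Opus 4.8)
The plan is to pass to a ratio via Bayes' rule, bound the numerator by a crude Gaussian peak-density estimate, and bound the denominator by the explicit small-ball estimate of Lemma~\ref{lemma:gaussproblemma}(a). Fix a pair $(\bs_1,\bs_2)\in\mathcal{T}_2$ and set $\bs_3=\bs^*$. The signed quantities $Z_i\triangleq n^{-1/2}\ip{\bs_i}{X}$, $i=1,2,3$, form a centered trivariate Gaussian vector $\boldsymbol{Z}$ with covariance $\Sigma\in\R^{3\times3}$ given by $\Sigma_{ij}=n^{-1}\ip{\bs_i}{\bs_j}$, and for each $i$ the event $\{H(\bs_i)\le t\}$ is exactly $\{|Z_i|\le t\}$. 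Membership in $\mathcal{T}_2$ forces $d_H(\bs_1,\bs_2)\in J'(\epsilon)\subseteq\{1,\dots,n-1\}$ and $d_H(\bs_i,\bs^*)\in J(\epsilon)\subseteq\{1,\dots,n-1\}$ for $i=1,2$ (the last inclusion holds once $n(1-\epsilon)/2\ge1$); hence $\bs_1,\bs_2,\bs^*$ are pairwise non-equal up to sign, so Lemma~\ref{lemma:det-bd} applies and gives $|\Sigma|\ge n^{-3}$.

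By Bayes' rule,
\[
\mathbb{P}\bigl[H(\bs_1)\le g2^{-n},H(\bs_2)\le g2^{-n}\mid H(\bs^*)\le 3^{-n}\bigr]=\frac{\mathbb{P}\bigl[|Z_1|\le g2^{-n},\,|Z_2|\le g2^{-n},\,|Z_3|\le 3^{-n}\bigr]}{\mathbb{P}\bigl[|Z_3|\le 3^{-n}\bigr]}.
\]
For the numerator I apply Lemma~\ref{lemma:anti-con} with $m=3$ and $V=[-g2^{-n},g2^{-n}]^2\times[-3^{-n},3^{-n}]$, whose Lebesgue measure is $8\,(g2^{-n})^2\,3^{-n}$; combined with $|\Sigma|^{-1/2}\le n^{3/2}$ this bounds the numerator by $(2\pi)^{-3/2}n^{3/2}\cdot8(g2^{-n})^2\,3^{-n}$. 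For the denominator, since $3^{-n}=o_n(1)$, Lemma~\ref{lemma:gaussproblemma}(a) yields $\mathbb{P}[|Z_3|\le 3^{-n}]=\sqrt{2/\pi}\,3^{-n}\bigl(1+o_n(1)\bigr)$, a quantity independent of the chosen pair. Dividing and using the identity $8(2\pi)^{-3/2}/\sqrt{2/\pi}=2/\pi=4\cdot(2\pi)^{-1}$, the conditional probability is at most $\bigl(1+o_n(1)\bigr)(2\pi)^{-1}n^{3/2}(2g2^{-n})^2$; since this bound does not depend on $(\bs_1,\bs_2)$, taking the maximum over $\mathcal{T}_2$ finishes the proof.

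No step here is genuinely difficult — the numerator estimate is deliberately wasteful, controlling the trivariate Gaussian density by its peak value $(2\pi)^{-3/2}|\Sigma|^{-1/2}$. The only point needing care is verifying the non-degeneracy hypothesis of Lemma~\ref{lemma:det-bd}, namely that the $\mathcal{T}_2$ constraints prevent $\bs_1,\bs_2,\bs^*$ from being pairwise antipodal; this is immediate from the ranges of $d_H(\bs_1,\bs_2)$ and $d_H(\bs_i,\bs^*)$ imposed by $J'(\epsilon)$ and $J(\epsilon)$. One should also note that the displayed bound is to be read up to an implicit $\bigl(1+o_n(1)\bigr)$ factor originating from the small-ball estimate in the denominator; this is harmless, since in the intended application the $\mathcal{T}_2$-block of the second moment is shown to be negligible irrespective of the precise constant.
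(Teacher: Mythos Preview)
Your proof is correct and follows essentially the same route as the paper: Bayes' rule, Lemma~\ref{lemma:anti-con} with Lemma~\ref{lemma:det-bd} for the numerator, and Lemma~\ref{lemma:gaussproblemma}(a) for the denominator. Your explicit verification of the pairwise non-antipodality hypothesis of Lemma~\ref{lemma:det-bd} and your remark about the implicit $(1+o_n(1))$ factor are in fact more careful than the paper's own write-up, which silently absorbs that factor into the stated bound.
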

\begin{proof}[Proof of Lemma~\ref{lemma:prob-middle-term}]
Fix any $(\bs_1,\bs_2)\in\mathcal{T}_2$.
  We have
\begin{equation}\label{eq:bayes-auxil}
    \mathbb{P}\bigl[H(\bs_1)\le g2^{-n},H(\bs_2)\le g2^{-n}\mid H(\bs^*)\le 3^{-n}\bigr]=\frac{\mathbb{P}\bigl[H(\bs_1)\le g2^{-n},H(\bs_2)\le g2^{-n},H(\bs^*)\le 3^{-n}\bigr]}{\mathbb{P}\bigl[H(\bs^*)\le 3^{-n}\bigr]}. 
\end{equation}
Further, Lemma~\ref{lemma:gaussproblemma} yields $\mathbb{P}[H(\bs^*)\le 3^{-n}] = \frac{2}{\sqrt{2\pi}}3^{-n}(1+o_n(1))$. For the numerator, let $T\sim \cN(0,I_n)$, set $Z_i=n^{-\frac12}\ip{\bs_i}{T}$, $1\le i\le 3$, where $\bs_3=\bs^*$ for convenience. Note that, 
\begin{align}
   & \mathbb{P}\bigl[H(\bs_1)\le g2^{-n},H(\bs_2)\le g2^{-n},H(\bs^*)\le 3^{-n}\bigr] 
 \nonumber\\&= \mathbb{P}\bigl[(T_1,T_2,T_3)\in [-g2^{-n},g2^{-n}]\times [-g2^{-n},g2^{-n}]\times [-3^{-n},3^{-n}]\bigr].\label{eq:auxil-pb}
\end{align}
Further, $Z_1,Z_2,Z_{*}\sim \cN(0,1)$ form a multivariate normal random vector with covariance matrix $\Sigma$ where $\Sigma_{ij}=\frac1n\ip{\bs_i}{\bs_j}$  with $\bs_3 = \bs^*$. Note that $\bs_i\ne \pm \bs_j$ for $1\le i<j\le 3$, so Lemma~\ref{lemma:det-bd} is applicable. This yields
\begin{align*}
\mathbb{P}\bigl[(T_1,T_2,T_3)\in [-g2^{-n},g2^{-n}]\times [-g2^{-n},g2^{-n}]\times [-3^{-n},3^{-n}]\bigr]&\le (2\pi)^{-\frac32}|\Sigma|^{-\frac12}(2\cdot g 2^{-n})^2 \cdot (2\cdot 3^{-n}) \\ 
&\le (2\pi)^{-\frac32}n^{\frac32}(2\cdot g 2^{-n})^2 \cdot (2\cdot 3^{-n}),
\end{align*}
where we applied Lemma~\ref{lemma:anti-con} in the first line and Lemma~\ref{lemma:det-bd} in the second line. Hence, 
\[
\mathbb{P}\bigl[H(\bs_1)\le g2^{-n},H(\bs_2)\le g2^{-n}\mid H(\bs^*)\le 3^{-n}\bigr] \le (2\pi)^{-1}\cdot n^{\frac32}\cdot \left(2\cdot g2^{-n}\right)^2.\qedhere
\]
\end{proof}
\noindent Using Lemma~\ref{lemma:prob-middle-term}, we obtain
\begin{align*}
    \Sigma_2 &= \sum_{(\bs_1,\bs_2)\in\mathcal{T}_2}\mathbb{P}\bigl[H(\bs_1)\le g2^{-n},H(\bs_2)\le g2^{-n}\mid H(\bs^*)\le 3^{-n}\bigr] \\
    &\le |\mathcal{T}_2|\cdot (2\pi)^{-1}\cdot n^{\frac32}\cdot \left(2\cdot g2^{-n}\right)^2 \\  &\le \exp_2\left(n+nh_b\left(\frac{1-\epsilon}{2}\right)+1\right)\cdot (2\pi)^{-1}\cdot n^{\frac32}\cdot \left(2\cdot g2^{-n}\right)^2.
\end{align*}
Note that per Part {\rm (a)}, we have $\mathbb{E}_{\rm pl}|S_g(\epsilon)|\ge \frac{2g}{\sqrt{2\pi}}(1+o_n(1))$. So, 
\begin{equation}\label{eq:Sigma-2-auxil}
    \frac{\Sigma_2}{(\mathbb{E}_{\rm pl}|S_g(\epsilon)|)^2}\le \exp_2\left(-n+nh_b\left(\frac{1-\epsilon}{2}\right)+O(\log_2 n)\right) = 2^{-\Omega(n)}.
\end{equation}
\paragraph{Analysis of $\Sigma_3$} We now study $\Sigma_3$ appearing in~\eqref{eq:2nd-mom-auxil2}. 
\begin{lemma}\label{lemma:prob-last-term}
\[
\max_{(\bs_1,\bs_2)\in\mathcal{T}_3}  \mathbb{P}\bigl[H(\bs_1)\le g2^{-n},H(\bs_2)\le g2^{-n}|H(\bs^*)\le 3^{-n}\bigr]\le (1-\epsilon\sqrt{6})^{-\frac32}\left(\frac{2g2^{-n}}{\sqrt{2\pi}}\right)^2 (1+o_n(1)).
\]
\end{lemma}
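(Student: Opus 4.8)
The plan is to proceed exactly as in the proof of Lemma~\ref{lemma:prob-middle-term}, but to exploit that for pairs in $\mathcal{T}_3$ the relevant $3\times 3$ covariance matrix is much better conditioned than the worst case controlled by Lemma~\ref{lemma:det-bd}. Fix $(\bs_1,\bs_2)\in\mathcal{T}_3$. By definition of $\mathcal{T}_3$ all three of $d_H(\bs_1,\bs_2),d_H(\bs_1,\bs^*),d_H(\bs_2,\bs^*)$ lie in $J(\epsilon)$, so via the identity $n^{-1}\ip{\bs}{\bs'}=1-2d_H(\bs,\bs')/n$ the three normalized overlaps $\rho_{12}\triangleq n^{-1}\ip{\bs_1}{\bs_2}$, $\rho_{1*}\triangleq n^{-1}\ip{\bs_1}{\bs^*}$, $\rho_{2*}\triangleq n^{-1}\ip{\bs_2}{\bs^*}$ all lie in $[-\epsilon,\epsilon]$; in particular, for $\epsilon<1$ and $n$ large, $\bs_1,\bs_2,\bs^*$ are pairwise distinct and pairwise non-antipodal. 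Let $T\sim\cN(0,I_n)$ and set $Z_i=n^{-\frac12}\ip{\bs_i}{T}$ for $i=1,2$ and $Z_*=n^{-\frac12}\ip{\bs^*}{T}$; then $(Z_1,Z_2,Z_*)$ is a centered Gaussian vector with covariance the $3\times 3$ matrix $\Sigma$ having unit diagonal and off-diagonal entries $\rho_{12},\rho_{1*},\rho_{2*}$.

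Next I would write the conditional probability via Bayes' rule as in~\eqref{eq:bayes-auxil}: it equals $\mathbb{P}[|Z_1|\le g2^{-n},|Z_2|\le g2^{-n},|Z_*|\le 3^{-n}]$ divided by $\mathbb{P}[|Z_*|\le 3^{-n}]$. For the denominator, Lemma~\ref{lemma:gaussproblemma}(a) gives $\mathbb{P}[|Z_*|\le 3^{-n}]=\frac{2}{\sqrt{2\pi}}3^{-n}(1+o_n(1))$. For the numerator I would apply the Gaussian anti-concentration bound Lemma~\ref{lemma:anti-con} with $m=3$ and $V=[-g2^{-n},g2^{-n}]^2\times[-3^{-n},3^{-n}]$, obtaining the upper bound $(2\pi)^{-\frac32}|\Sigma|^{-\frac12}(2g2^{-n})^2(2\cdot 3^{-n})$. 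Dividing, the $3^{-n}$ factors cancel and one is left with $\frac{(2g2^{-n})^2}{2\pi|\Sigma|^{1/2}}(1+o_n(1))=\bigl(\tfrac{2g2^{-n}}{\sqrt{2\pi}}\bigr)^2|\Sigma|^{-\frac12}(1+o_n(1))$, so it only remains to show $|\Sigma|\ge(1-\epsilon\sqrt6)^3$.

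The key step — and the one producing the constant $\sqrt6$ — is this determinant lower bound. Write $\Sigma=I_3+E$ where $E$ has zero diagonal and off-diagonal entries $\rho_{12},\rho_{1*},\rho_{2*}\in[-\epsilon,\epsilon]$, so that $\|E\|_F^2=2(\rho_{12}^2+\rho_{1*}^2+\rho_{2*}^2)\le 6\epsilon^2$. Comparing $\Sigma$ with $I_3$ via the Hoffman--Wielandt inequality (Theorem~\ref{thm:wielandt-hoffman}), if $\lambda_1,\lambda_2,\lambda_3$ are the eigenvalues of $\Sigma$ then $\sum_{i=1}^3(\lambda_i-1)^2\le\|E\|_F^2\le 6\epsilon^2$, whence $\lambda_i\ge 1-\epsilon\sqrt6$ for each $i$. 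For $\epsilon$ small enough that $1-\epsilon\sqrt6>0$ (which is harmless, since in the outer argument of Proposition~\ref{prop:2nd-mom} one ultimately sends $\epsilon\to0$), each $\lambda_i>0$ and hence $|\Sigma|=\lambda_1\lambda_2\lambda_3\ge(1-\epsilon\sqrt6)^3$. Plugging this into the display of the previous paragraph gives the claimed bound. I do not expect a genuine obstacle here: the only points needing care are the bookkeeping of the single $o_n(1)$ factor (which originates solely from the denominator estimate via Lemma~\ref{lemma:gaussproblemma}(a)) and verifying that $\bs_1,\bs_2,\bs^*$ are truly pairwise distinct and non-antipodal so that $\Sigma$ is positive definite — but the Hoffman--Wielandt bound already delivers $\lambda_i\ge 1-\epsilon\sqrt6>0$, so positive definiteness comes for free.
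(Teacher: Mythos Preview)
Your proposal is correct and follows essentially the same approach as the paper: Bayes' rule to split the conditional probability, Lemma~\ref{lemma:gaussproblemma}(a) for the denominator, Lemma~\ref{lemma:anti-con} for the numerator, and then the determinant lower bound $|\Sigma|\ge(1-\epsilon\sqrt6)^3$ via writing $\Sigma=I_3+E$ with $\|E\|_F\le\epsilon\sqrt6$ and applying Hoffman--Wielandt (Theorem~\ref{thm:wielandt-hoffman}). The only difference is cosmetic ordering of the steps.
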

\begin{proof}[Proof of Lemma~\ref{lemma:prob-last-term}]
To that end, fix an arbitrary $(\bs_1,\bs_2)\in\mathcal{T}_3$ and set $\bs_3=\bs^*$ for convenience. In particular, $d_H(\bs_i,\bs_j)\in 
 J(\epsilon) = \left[\frac{n(1-\epsilon)}{2},\frac{n(1+\epsilon)}{2}\right]$ for $1\le i<j\le 3$. Notice that 
 \[
 \frac{1}{n}\ip{\bs_i}{\bs_j} = \frac{n-2d_H(\bs_i,\bs_j)}{n}\in[-\epsilon,\epsilon].
 \]
 Analogous to above, set $T\sim \cN(0,I_n)$ and $Z_i=n^{-\frac12}\ip{\bs_i}{T}\sim \cN(0,I_n)$. Note that $(Z_1,Z_2,Z_3)$ forms a multivariate normal random vector with covariance $\Sigma$, where $\Sigma_{ii}=1$ for $1\le i\le 3$ and $|\Sigma_{ij}|\le \epsilon$ for $1\le i\ne j\le 3$. Now let 
 $\Sigma=I_{3\times 3}+E$ where $E\in \R^{3\times 3}$ with 
 $E_{ii}=0$ and $|E_{ij}|\le \epsilon$. In particular, $\|E\|_F\le \epsilon\sqrt{6}$. Denoting by $\lambda_1,\lambda_2,\lambda_3$ the eigenvalues of $\Sigma$, we obtain, by using Theorem~\ref{thm:wielandt-hoffman}, that 
 \[
 |\lambda_i-1|\le \sqrt{\sum_{1\le i\le }(\lambda_i-1)^2} \le \|E\|_F \le \epsilon\sqrt{6}.
 \]
 So, $\min_{1\le i\le 3}\lambda_i\ge 1-\epsilon\sqrt{6}$ and therefore, $|\Sigma| = \prod_{i\le 3}\lambda_i\ge (1-\epsilon\sqrt{6})^3$. Next, notice that~\eqref{eq:bayes-auxil} and~\eqref{eq:auxil-pb} remain the same. Proceeding similarly, 
 \begin{align*}
     \mathbb{P}\bigl[H(\bs_1)\le g2^{-n},H(\bs_2)\le g2^{-n}|H(\bs^*)\le 3^{-n}\bigr]&=\frac{ \mathbb{P}\bigl[H(\bs_1)\le g2^{-n},H(\bs_2)\le g2^{-n},H(\bs^*)\le 3^{-n}\bigr]}{ \mathbb{P}\bigl[H(\bs^*)\le 3^{-n}\bigr]} \\
     &\le \frac{(2\pi)^{-\frac32}|\Sigma|^{-\frac12}(2\cdot g\cdot 2^{-n})^2(2\cdot 3^{-n})}{\frac{2}{\sqrt{2\pi}}3^{-n}(1+o_n(1))} \\
     &\le (1-\epsilon\sqrt{6})^{-\frac32}\left(\frac{2g2^{-n}}{\sqrt{2\pi}}\right)^2 (1+o_n(1)).
 \end{align*}
 This concludes the proof of Lemma~\ref{lemma:prob-last-term}.
\end{proof}
\noindent Endowed with Lemma~\ref{lemma:prob-last-term}, we obtain
 \begin{align*}
     \Sigma_3 &=\sum_{(\bs_1,\bs_2)\in\mathcal{T}_3}\mathbb{P}\bigl[H(\bs_1)\le g2^{-n},H(\bs_2)\le g2^{-n}\mid H(\bs^*)\le 3^{-n}\bigr] \\
     &\le |\mathcal{T}_3|\cdot (1-\epsilon\sqrt{6})^{-\frac32}\left(\frac{2g2^{-n}}{\sqrt{2\pi}}\right)^2 (1+o_n(1)) \\
     &=(1-\epsilon\sqrt{6})^{-\frac32}(1+o_n(1))\left(\frac{2g}{\sqrt{2\pi}}\right)^2.
  \end{align*}
  where we used the trivial bound $|\mathcal{T}_3|\le 2^{2n}$. Using part {\rm (a)}, we thus obtain
\begin{equation}\label{eq:Sigma-3-auxil}
      \frac{\Sigma_3}{(\mathbb{E}_{\rm pl}|S_g(\epsilon)|)^2}\le (1-\epsilon\sqrt{6})^{-\frac32}(1+o_n(1)).
  \end{equation}
  \paragraph{Combining the estimates on $\Sigma_1,\Sigma_2,\Sigma_3$.}
Recall that 
\[
\mathbb{E}_{\rm pl}[|S_g(\epsilon)|^2] \le \Sigma_1+\Sigma_2+\Sigma_3
\]
per~\eqref{eq:2nd-mom-auxil2}. Combining~\eqref{eq:Sigma-1},~\eqref{eq:Sigma-2-auxil} and~\eqref{eq:Sigma-3-auxil} we obtain
\begin{align*}
   \frac{(\mathbb{E}_{\rm pl}|S_g(\epsilon)|)^2}{\mathbb{E}_{\rm pl}[|S_g(\epsilon)|^2]}&\ge \frac{(\mathbb{E}_{\rm pl}|S_g(\epsilon)|)^2}{\Sigma_1+\Sigma_2+\Sigma_3} \\
   &\ge \frac{1}{o_n(1)+2^{-\Omega(n)}+(1+o_n(1))\bigl(1-\epsilon\sqrt{6}\bigr)^{-\frac32}} \\
   &=\frac{1}{o_n(1)+(1+o_n(1))\bigl(1-\epsilon\sqrt{6}\bigr)^{-\frac32}},
\end{align*}
completing the proof of Proposition~\ref{prop:2nd-mom}{\rm (b)}.
 \end{proof}
 \subsection{Proof of Theorem~\ref{thm:zeta-rho}}\label{sec:pf-zeta-rho}
Note that $H(\bs^*)=H(-\bs^*)$ from symmetry. Further, if $d_H(\bs,\bs^*) = \rho n$ then $d_H(\bs,-\bs^*)=(1-\rho)n$. For these reasons,  we assume without loss of generality that $\rho\le \frac12$. To that end, set 
\begin{equation}\label{eq:bar-rho}
    \bar{\rho} = 1-2\rho.
\end{equation}
Further, for any $h:\N\to\R^+$, define
\begin{equation}\label{eq:S-f-rho}
 S_{h,\rho} \triangleq \bigl\{\bs\in\Sigma_n:d_H(\bs,\bs^*)=\rho n,H(\bs)\le h(n)\sqrt{n}2^{-nh_b(\rho)}\bigr\}.
 \end{equation}
 Similar to the proof of Theorem~\ref{thm:ground-state}, we establish the lower bound and upper bound hold individually w.h.p.\,under the planted measure. The result then follows via a union bound.
 \subsubsection*{Proof of Lower Bound}
 We first fix an $f=o_n(1)$ and establish that
 \[
 \zeta(\rho) \ge f(n)\sqrt{n}2^{-nh_b(\rho)}
 \]
 w.h.p., where $\zeta(\rho)$ is defined in~\eqref{eq:zeta-rho}. We start with an auxiliary result.
\begin{proposition}\label{prop:rho-low}
We have
\[
\mathbb{E}_{\rm pl}|S_{f,\rho}| = \Theta\left(\frac{f(n)}{\sqrt{(1-\bar{\rho}^2)}}\right). 
\]
In particular, if $f=o_n(1)$ then
 $\mathbb{E}_{\rm pl}|S_{f,\rho}|=o_n(1)$.
 \end{proposition}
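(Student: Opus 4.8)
The plan is a routine first-moment computation. With the reduction $\rho\le\frac12$ already in force and $\bar\rho=1-2\rho$ as in~\eqref{eq:bar-rho}, I would first observe that there are exactly $\binom{n}{\rho n}$ partitions $\bs\in\Sigma_n$ with $d_H(\bs,\bs^*)=\rho n$, and that the planted probability $\mathbb{P}_{\rm pl}[H(\bs)\le z]$ is the same for each of them by symmetry of the planted law in the coordinates of $X$. Hence
\[
\mathbb{E}_{\rm pl}|S_{f,\rho}|=\binom{n}{\rho n}\,\mathbb{P}_{\rm pl}\bigl[H(\bs)\le z\bigr],\qquad z\triangleq f(n)\sqrt{n}\,2^{-nh_b(\rho)},
\]
for any fixed $\bs$ at Hamming distance $\rho n$ from $\bs^*$, so it remains to estimate the single-partition probability and to multiply by the binomial coefficient.

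For the probability, I would strip the conditioning with Bayes' rule,
\[
\mathbb{P}_{\rm pl}\bigl[H(\bs)\le z\bigr]=\frac{\mathbb{P}\bigl[H(\bs)\le z,\ H(\bs^*)\le 3^{-n}\bigr]}{\mathbb{P}\bigl[H(\bs^*)\le 3^{-n}\bigr]}.
\]
Under $X\sim\cN(0,I_n)$, the pair $\bigl(n^{-1/2}\ip{\bs}{X},\,n^{-1/2}\ip{\bs^*}{X}\bigr)$ is a centered bivariate Gaussian with unit variances and correlation $n^{-1}\ip{\bs}{\bs^*}=1-2\rho=\bar\rho$. Since $\rho\in(0,1)$ is fixed, $\bar\rho\in(-1,1)$ is fixed and $1-\bar\rho^2$ is a positive constant, while both $z$ and $3^{-n}$ are $o_n(1)$, so the hypotheses of Lemma~\ref{lemma:gaussproblemma} are met. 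Part (b) of that lemma, applied with $z_1=z$, $z_2=3^{-n}$ and correlation $\bar\rho$, gives $\mathbb{P}[H(\bs)\le z,\,H(\bs^*)\le 3^{-n}]=\frac{2z\cdot 3^{-n}}{\pi\sqrt{1-\bar\rho^2}}(1+o_n(1))$, and part (a) gives $\mathbb{P}[H(\bs^*)\le 3^{-n}]=\sqrt{2/\pi}\,3^{-n}(1+o_n(1))$. Dividing, $\mathbb{P}_{\rm pl}[H(\bs)\le z]=\Theta\!\bigl(z/\sqrt{1-\bar\rho^2}\bigr)=\Theta\!\bigl(f(n)\sqrt{n}\,2^{-nh_b(\rho)}/\sqrt{1-\bar\rho^2}\bigr)$.

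Finally I would insert the Stirling estimate $\binom{n}{\rho n}=\Theta\bigl(2^{nh_b(\rho)}/\sqrt{\rho(1-\rho)\,n}\bigr)$ from Lemma~\ref{lemma:binom-entropy}: the exponential factors $2^{nh_b(\rho)}$ and $2^{-nh_b(\rho)}$ cancel, the $\sqrt n$ in the threshold cancels the $1/\sqrt n$ coming from Stirling, and what survives is $f(n)$ times a constant depending only on $\rho$, which on collecting factors is of the stated form $\Theta\bigl(f(n)/\sqrt{1-\bar\rho^2}\bigr)$; in particular it is $o_n(1)$ whenever $f=o_n(1)$. There is no genuine obstacle here: the only points that require a little care are checking the applicability of Lemma~\ref{lemma:gaussproblemma}(b) — which is precisely where fixedness of $\rho$, hence $1-\bar\rho^2$ bounded away from $0$, enters — and bookkeeping the $\sqrt n$ factors so that the Stirling correction to the binomial coefficient is matched exactly by the $\sqrt n$ built into the definition of $S_{f,\rho}$ in~\eqref{eq:S-f-rho}.
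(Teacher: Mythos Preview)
Your proposal is correct and follows essentially the same approach as the paper: write the expectation as $\binom{n}{\rho n}$ times a single conditional probability, remove the conditioning via Bayes, identify the bivariate normal with correlation $\bar\rho$, apply Lemma~\ref{lemma:gaussproblemma}(a)--(b) to numerator and denominator, and finish with the Stirling estimate from Lemma~\ref{lemma:binom-entropy}. Your check that $(z_1^2+z_2^2)/\sqrt{1-\bar\rho^2}=o_n(1)$ because $\rho$ is fixed is exactly the care the paper takes, and your bookkeeping of the $\sqrt{n}$ factors is correct.
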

 \begin{proof}[Proof of Proposition~\ref{prop:rho-low}]
     Note that
     \begin{align*}
             \mathbb{E}_{\rm pl}|S_{f,\rho}| &= \sum_{\bs:d_H(\bs,\bs^*)=\rho n}\mathbb{P}\bigl[H(\bs)\le f(n)\sqrt{n}2^{-nh_b(\rho)}\mid H(\bs^*)\le 3^{-n}\bigr] \\
             &=\binom{n}{\rho n}\frac{\mathbb{P}\bigl[H(\bs)\le f(n)\sqrt{n}2^{-nh_b(\rho)}, H(\bs^*)\le 3^{-n}\bigr]}{\mathbb{P}\bigl[H(\bs^*)\le 3^{-n}\bigr]}.
          \end{align*}    
    As $d_H(\bs,\bs^*)=\rho n$, we have $\frac1n\ip{\bs}{\bs^*} =\bar{\rho}$. In particular, if $(T,T_{\bar{\rho}})$ is a centered bivariate normal random vector with covariance matrix $\begin{pmatrix}
        1&\bar{\rho} \\\bar{\rho}&1
    \end{pmatrix}$, then
    \begin{align*}
    \mathbb{P}_\rho\bigl[H(\bs)\le f(n)\sqrt{n}2^{-nh_b(\rho)}, H(\bs^*)\le 3^{-n}\bigr]&=\mathbb{P}\bigl[(T,T_{\bar{\rho}})\in [-f(n)\sqrt{n}2^{-nh_b(\rho)},f(n)\sqrt{n}2^{-nh_b(\rho)}]\times [-3^{-n},3^{-n}] \bigr]\\
    &=\frac{1}{2\pi\sqrt{1-\bar{\rho}^2}}\bigl(2f(n)\sqrt{n}2^{-nh_b(\rho)}\bigr)\cdot\bigl(2\cdot 3^{-n}\bigr)(1+o_n(1)),
        \end{align*}
        using Lemma~\ref{lemma:gaussproblemma}(b). 
    Furthermore, $\mathbb{P}[H(\bs^*)\le 3^{-n}] =\frac{1}{\sqrt{2\pi}}(2\cdot 3^{-n})(1+o_n(1))$ per Lemma~\ref{lemma:gaussproblemma}. Moreover, 
        \[
        \binom{n}{\rho n} = \Theta\left(\frac{2^{nh_b(\rho)}}{\sqrt{n}}\right)
        \]
        using Lemma~\ref{lemma:binom-entropy}.
        Combining these estimates, we find that
        \[
        \mathbb{E}_{\rm pl}|S_{f,\rho}| = \Theta\left(\frac{f(n)}{\sqrt{(1-\bar{\rho}^2)}}\right),
        \]
        which is indeed $o_n(1)$ as $\rho=O(1)$ and $f(n)=o_n(1)$. This establishes Proposition~\ref{prop:rho-low}.
 \end{proof}
 Using Markov's inequality, i.e. 
 \[
 \mathbb{P}_{\rm pl}[|S_{f,\rho}|\ge 1]\le \mathbb{E}_{\rm pl}|S_{f,\rho}| = o_n(1),
 \]
 we conclude that $S_{f,\rho}=\varnothing$ w.h.p.\,under the planted model. So,  w.h.p.
 \[
 \zeta(\rho)\ge f(n)\sqrt{n}2^{-nh_b(\rho)}.
 \]
 \subsubsection*{Proof of Upper Bound}
 We first establish an auxilary result to manipulate a certain sum of binomial coefficients. 
\begin{lemma}\label{lemma:sum-binom}
    Fix $p\le \frac12$ and  $\epsilon>0$ sufficiently small. Then, for all large enough $n$,
    \[
    \sum_{k=(p(1-p)-\epsilon)n}^{(p(1-p)+\epsilon)n}\binom{pn}{k}\binom{(1-p)n}{k} = \binom{n}{pn}\left(1-2^{-n\Theta_\epsilon(\epsilon^2)}\right).
    \]
    Here, $\Theta_\epsilon(\epsilon^2)$ is a positive term that depends only on $\epsilon$ and $p$ and remains constant as $n\to\infty$.
\end{lemma}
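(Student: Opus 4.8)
The plan is to start from the exact combinatorial identity
\[
\sum_{k=0}^{pn}\binom{pn}{k}\binom{(1-p)n}{k} \;=\; \binom{n}{pn},
\]
which follows from Vandermonde's convolution after rewriting $\binom{pn}{k}=\binom{pn}{pn-k}$. Hence it suffices to control the \emph{tail} contribution
\[
\Delta \;\triangleq\; \sum_{k:\,|k-p(1-p)n|>\epsilon n}\binom{pn}{k}\binom{(1-p)n}{k}
\]
and show $\Delta\le \binom{n}{pn}\,2^{-n\Theta_\epsilon(\epsilon^2)}$. The claimed equality then follows because the window sum equals $\binom{n}{pn}-\Delta$, while conversely a single boundary term, say $k=\lceil(p(1-p)+\epsilon)n\rceil$, already contributes at least $\binom{n}{pn}\,2^{-\Theta(n\epsilon^2)}$ to $\Delta$, so the exponent is genuinely $\Theta_\epsilon(\epsilon^2)$.

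For the upper bound on $\Delta$, write $k=xn$ with $x\in(0,p)$ — the summand vanishes outside this range since $p\le\frac12$ — and apply Lemma~\ref{lemma:binom-entropy} to get $\binom{pn}{xn}\binom{(1-p)n}{xn}=\exp_2\bigl(n\,\phi(x)+O(\log n)\bigr)$, where $\phi(x)\triangleq p\,h_b(x/p)+(1-p)\,h_b(x/(1-p))$. A direct computation gives $\phi'(x)=\log_2\frac{(p-x)(1-p-x)}{x^2}$, which vanishes exactly when $(p-x)(1-p-x)=x^2$, i.e.\ at the unique critical point $x^\star=p(1-p)\in(0,p)$, with $\phi(x^\star)=h_b(p)$ — consistent with $\binom{n}{pn}=\Theta\bigl(2^{nh_b(p)}/\sqrt n\bigr)$ from Lemma~\ref{lemma:binom-entropy}. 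Differentiating again, $\phi''(x)=-\frac{1}{\ln 2}\bigl(\frac{p}{x(p-x)}+\frac{1-p}{x(1-p-x)}\bigr)<0$ on $(0,p)$, so $\phi$ is strictly concave with global maximum at $x^\star$.

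Since $\phi''$ is bounded away from $0$ on a fixed compact neighbourhood of $x^\star$ inside $(0,p)$, a Taylor expansion at $x^\star$ together with the fact that the gap $h_b(p)-\phi(x)$ is nondecreasing in $|x-x^\star|$ (by concavity) yields a constant $c_0=c_0(p)>0$ such that, for all $\epsilon$ below a $p$-dependent threshold,
\[
\phi(x)\;\le\; h_b(p)-c_0\,\epsilon^2 \qquad\text{whenever } |x-x^\star|\ge \epsilon .
\]
Bounding $\Delta$ by $n$ times its largest term and using $\binom{n}{pn}\ge \exp_2\bigl(nh_b(p)-O(\log n)\bigr)$ from Lemma~\ref{lemma:binom-entropy}, we get
\[
\frac{\Delta}{\binom{n}{pn}}\;\le\; n\cdot\exp_2\bigl(n(h_b(p)-c_0\epsilon^2)+O(\log n)\bigr)\cdot\exp_2\bigl(-nh_b(p)+O(\log n)\bigr)\;=\;\exp_2\bigl(-nc_0\epsilon^2+O(\log n)\bigr),
\]
which is at most $2^{-n\Theta_\epsilon(\epsilon^2)}$ for all large $n$. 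I expect the only genuinely delicate step to be the uniform quadratic gap estimate for $\phi$: one must verify that $c_0$ can be chosen depending on $p$ only (not on $\epsilon$), and that the bound also holds for $x$ far from $x^\star$, where it is cleanest to invoke concavity to reduce to the worst case $|x-x^\star|=\epsilon$ rather than a purely local Taylor argument. Everything else reduces to routine applications of the entropy estimates in Lemma~\ref{lemma:binom-entropy} and the Vandermonde identity.
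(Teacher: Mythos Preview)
Your approach is essentially identical to the paper's: both start from the Vandermonde identity $\sum_k \binom{pn}{k}\binom{(1-p)n}{k}=\binom{n}{pn}$, reduce to bounding the tail, introduce the exponent function $\phi(x)=p\,h_b(x/p)+(1-p)\,h_b(x/(1-p))$, show it is maximized uniquely at $x^\star=p(1-p)$ with value $h_b(p)$, and then use a second-order expansion at $x^\star$ to obtain the $\epsilon^2$ gap. The paper derives the constant explicitly as $\frac{\log_2 e}{2p^2(1-p)^2}$ via a direct Taylor expansion of $h_b$ around $p$, whereas you package the same computation as ``$\phi''$ bounded away from zero near $x^\star$ plus concavity to handle the far region''; these are the same argument in slightly different dress, and your concavity reduction to the boundary case $|x-x^\star|=\epsilon$ is exactly the right way to make the quadratic gap uniform.
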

\begin{proof}[Proof of Lemma~\ref{lemma:sum-binom}]
    First, we show that
\begin{equation}\label{eq:auxil1}
            \sum_{0\le k\le p n}\binom{pn}{k} \binom{(1-p)n}{k} = \binom{n}{pn}.
        \end{equation}
To see this, notice first that 
\[
\sum_{0\le k\le p n}\binom{p n}{k}\binom{(1-p)n}{p n-k} = 
\sum_{0\le k\le p n}\binom{p n}{p n-k}\binom{(1-p)n}{p n-k} = \sum_{0\le k\le p n} \binom{p n}{k}\binom{(1-p)n}{k}.
\]
So, it suffices to establish
\begin{equation}\label{eq:establish-THIS}
    \sum_{0\le k\le p n}\binom{p n}{k}\binom{(1-p)n}{p n-k} = \binom{n}{p n}.
\end{equation}
To show~\eqref{eq:establish-THIS}, consider the number of ways of choosing a group $G$ of $pn$ people out of $n$ people.  Splitting $n$ people into arbitrary sets $S_1,S_2$ with $|S_1|=pn$, we have $|G\cap S_1|=k$, where $0\le k\le pn$. So, a double counting argument yields~\eqref{eq:establish-THIS}.

Next, recalling Lemma~\ref{lemma:binom-entropy}, it thus suffices to show
\begin{equation}\label{eq:to-prove-auxil}
    \sum_{k\le (p(1-p)-\epsilon)n}\binom{pn}{k}\binom{(1-p)n}{k} +\sum_{k\le (p(1-p)+\epsilon)n}\binom{pn}{k} \binom{(1-p)n}{k}\le \exp_2\left(nh_b(p)- n\Theta_\epsilon(\epsilon^2)\right).
\end{equation}
Note that if $1\le k\le pn-1$ then $k(pn-k)\ge pn-1$ trivially. Further, for $1\le pn$, we have $k((1-p)n-k)\ge (1-p)n-1$. Now, suppose 
\[
n\ge \frac{\pi}{p(\pi-1)} \quad\text{and}\quad n\ge \frac{\pi}{(1-p)(\pi-1)}.
\]
Applying Lemma~\ref{lemma:binom-entropy}, we thus obtain that if $1\le k\le pn-1$ then
\[
\binom{p n}{k} \le \sqrt{\frac{pn}{\pi k(n-k)}}\exp_2\left(pn h_b\left(\frac{k}{p n}\right)\right)\le \sqrt{\frac{p n}{\pi(pn-1)}}\exp_2\left(pn h_b\left(\frac{k}{p n}\right)\right)\le \exp_2\left(pn h_b\left(\frac{k}{p n}\right)\right).
\]
Similarly, 
\[
\binom{(1-p)n}{k}\le \sqrt{\frac{(1-p)n}{\pi ((1-p)n-1)}}\exp_2\left((1-p)nh_b\left(\frac{k}{(1-p)n}\right)\right)\le \exp_2\left((1-p)nh_b\left(\frac{k}{(1-p)n}\right)\right).
\]
So, letting $k=\alpha n$, we arrive at
\[
\binom{pn}{k}\binom{(1-p)n}{k}\le \exp_2\left(pnh\left(\frac{\alpha}{p}\right)+(1-p)nh\left(\frac{\alpha}{1-p}\right)\right).
\]
Set
\[
\varphi(\alpha) \triangleq ph\left(\frac{\alpha}{p}\right)+(1-p)h\left(\frac{\alpha}{1-p}\right),
\]
and recall that $\frac{d}{dp}h(p) = \log_2 e \cdot \ln \frac{1-p}{p}$. Using this,
\begin{align*}
    \varphi'(\alpha) &= \log_2 e \left(\ln\frac{1-\alpha/p}{\alpha/p}+\ln\frac{1-\alpha/(1-p)}{\alpha/(1-p)}\right)\\
    &=\log_2 e\cdot \ln\left(\frac{p(1-p)}{\alpha^2}-\frac1\alpha+1\right).
\end{align*}
From here, we immediately obtain that $\varphi'(\alpha)>0$ for $\alpha<p(1-p)$, $\varphi'(p(1-p))=0$ and $\varphi'(\alpha)<0$ for $\alpha>p(1-p)$. Hence, 
\begin{align}
\max_{k\le p(1-p)n-\epsilon n}\binom{pn}{k}\binom{(1-p)n}{k} &= \binom{pn}{(p(1-p)-\epsilon)n} \binom{(1-p)n}{(p(1-p)-\epsilon)n},\label{eq:importante1} \\
\max_{pn\ge k\ge p(1-p)n+\epsilon n} \binom{pn}{k}\binom{(1-p)n}{k} &= \binom{pn}{(p(1-p)+\epsilon)n} \binom{(1-p)n}{(p(1-p)+\epsilon)n}.\label{eq:importante2}
\end{align}
Equipped with~\eqref{eq:importante1} and~\eqref{eq:importante2}, we obtain
\begin{align}
    & \sum_{k\le (p(1-p)-\epsilon)n}\binom{pn}{k}\binom{(1-p)n}{k} +\sum_{k\le (p(1-p)+\epsilon)n}\binom{pn}{k} \binom{(1-p)n}{k} \nonumber\\ 
    &\le  n^{O(1)}\binom{pn}{(p(1-p)-\epsilon)n} \binom{(1-p)n}{(p(1-p)-\epsilon)n} + n^{O(1)}\binom{pn}{(p(1-p)+\epsilon)n} \binom{(1-p)n}{(p(1-p)+\epsilon)n} \nonumber\\
    &=\exp_2\left(pnh_b\left(1-p-\frac{\epsilon}{p}\right)+(1-p)nh_b\left(p-\frac{\epsilon}{1-p}\right)+O(\log_2 n)\right) \nonumber\\
    &+\exp_2\left(pnh_b\left(1-p+\frac{\epsilon}{p}\right)+(1-p)n h_b\left(p+\frac{\epsilon}{1-p}\right)+O(\log_2 n)\right),\label{eq:penultimate}
\end{align}
where we used once again Lemma~\ref{lemma:binom-entropy}.

Next, note that $h'(p) = \log_2 e\cdot \ln \frac{1-p}{p}$ and $h''(p) = -\frac{\log_2 e}{p(1-p)}$. So, by a Taylor expansion around $p$,
\begin{align*}
   ph_b\left(1-p-\frac{\epsilon}{p}\right)+(1-p)h_b\left(p-\frac{\epsilon}{1-p}\right)&=ph_b\left(p+\frac{\epsilon}{p}\right)+(1-p)h_b\left(p-\frac{\epsilon}{1-p}\right) \\&=h_b(p) + \sum_{n\ge 1}\frac{h^{(n)}(p)}{n!}\left(p\left(\frac{\epsilon}{p}\right)^n +(1-p)\left(-\frac{\epsilon}{1-p}\right)^n\right) \\
    &=h_b(p)-\frac{\log_2 e}{2p^2(1-p)^2}\epsilon^2 +O_\epsilon(\epsilon^3).
\end{align*}
Similarly, 
\begin{align*}
    ph_b\left(1-p+\frac{\epsilon}{p}\right)+(1-p)h_b\left(p+\frac{\epsilon}{1-p}\right)&=ph_b\left(p-\frac{\epsilon}{p}\right)+(1-p)h_b\left(p+\frac{\epsilon}{1-p}\right) \\
    &=h_b(p)-\frac{\log_2 e}{2p^2(1-p)^2}\epsilon^2 +O_\epsilon(\epsilon^3).
\end{align*}
Returning to~\eqref{eq:penultimate}, we thus conclude
\[
\sum_{k\le (p(1-p)-\epsilon)n}\binom{pn}{k}\binom{(1-p)n}{k} +\sum_{k\le (p(1-p)+\epsilon)n}\binom{pn}{k} \binom{(1-p)n}{k} \le \exp_2\left(nh_b(p)-n\Theta_\epsilon(\epsilon^2)\right),
\]
where $\Theta_\epsilon(\epsilon^2)$ term is positive and depends only on $p$ and $\epsilon$. This establishes Lemma~\ref{lemma:sum-binom}.
\end{proof}
We next establish the following proposition.
\begin{proposition}\label{prop:rho-2nd-mom}
     Let $g=\omega(1)$ and set
     \[
     \lambda(\rho)\triangleq \frac{\bar{\rho}^2+2-\bar{\rho}\sqrt{\bar{\rho}^2+8}}{2},
     \]
     where we recall that $\bar{\rho}=1-2\rho$ and that $\rho\le \frac12$. Then, the following holds for any $\epsilon>0$ small enough.
     \begin{itemize}
         \item[(a)] \begin{align*}
   \mathbb{E}_{\rm pl}|S_{g,\rho}|&=\binom{n}{\rho n} \frac{1+o_n(1)}{\sqrt{2\pi(1-\bar{\rho}^2)}}\Bigl(2g(n)\sqrt{n}2^{-nh_b(\rho)}\Bigr) =\Omega\left(\frac{g}{\sqrt{1-\bar{\rho}^2}}\right)=\omega(1).
                  \end{align*}

         \item[(b)] \[\frac{(\mathbb{E}_{\rm pl}|S_{g,\rho}|)^2}{\mathbb{E}_{\rm pl}[|S_{g,\rho}|^2]}\ge \frac{1}{o_n(1)+(1+o_n(1))\left(1-\epsilon/\lambda(\rho)\right)^{-\frac32}}.
         \]
     \end{itemize}
 \end{proposition}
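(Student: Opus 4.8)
\emph{Proof proposal.} The argument parallels that of Proposition~\ref{prop:2nd-mom}, the two new ingredients being the combinatorial identity and concentration estimate of Lemma~\ref{lemma:sum-binom} and an explicit spectral computation for a $3\times 3$ covariance matrix. Part~${\rm (a)}$ is a verbatim repetition of the computation in Proposition~\ref{prop:rho-low}, now with $f$ replaced by $g=\omega_n(1)$: writing $\mathbb{E}_{\rm pl}|S_{g,\rho}| = \binom{n}{\rho n}\,\mathbb{P}_{\rm pl}\bigl[H(\bs)\le g(n)\sqrt{n}2^{-nh_b(\rho)}\bigr]$, removing the conditioning by Bayes' rule, and plugging in Lemma~\ref{lemma:gaussproblemma}${\rm (b)}$ for the bivariate Gaussian probability (with correlation $\bar{\rho}=1-2\rho$), Lemma~\ref{lemma:gaussproblemma}${\rm (a)}$ for $\mathbb{P}[H(\bs^*)\le 3^{-n}]$, and Lemma~\ref{lemma:binom-entropy} for $\binom{n}{\rho n}=\Theta(2^{nh_b(\rho)}/\sqrt{n})$, one reads off the stated asymptotic and hence $\mathbb{E}_{\rm pl}|S_{g,\rho}|=\Theta(g/\sqrt{1-\bar{\rho}^2})=\omega_n(1)$.

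For part~${\rm (b)}$ I would expand
\[
\mathbb{E}_{\rm pl}|S_{g,\rho}|^2=\sum_{\substack{\bs_1,\bs_2\in\Sigma_n\\ d_H(\bs_1,\bs^*)=d_H(\bs_2,\bs^*)=\rho n}}\mathbb{P}_{\rm pl}\bigl[H(\bs_1)\le g\sqrt{n}2^{-nh_b(\rho)},\,H(\bs_2)\le g\sqrt{n}2^{-nh_b(\rho)}\bigr],
\]
and split the pairs by the overlap $n^{-1}\ip{\bs_1}{\bs_2}$. Parametrizing by $A=\{i:\bs_1(i)\ne\bs^*(i)\}$ and $B=\{i:\bs_2(i)\ne\bs^*(i)\}$, both of size $\rho n$, one has $n^{-1}\ip{\bs_1}{\bs_2}=1-4\rho+4|A\cap B|/n$, so the "typical" value $|A\cap B|\approx\rho^2 n$ corresponds to $n^{-1}\ip{\bs_1}{\bs_2}\approx(1-2\rho)^2=\bar{\rho}^2$. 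Fix a small $\epsilon>0$ and set $\mathcal{T}_1=\{(\bs_1,\bs_2):\bs_2=\pm\bs_1\}$, $\mathcal{T}_3=\{(\bs_1,\bs_2):\bs_2\ne\pm\bs_1,\ |A\cap B|/n\in[\rho^2-\epsilon,\rho^2+\epsilon]\}$, and $\mathcal{T}_2$ the rest; write the sum as $\Sigma_1+\Sigma_2+\Sigma_3$ accordingly. One has $\Sigma_1\le 2\,\mathbb{E}_{\rm pl}|S_{g,\rho}|$, so $\Sigma_1/(\mathbb{E}_{\rm pl}|S_{g,\rho}|)^2=o_n(1)$ by part~${\rm (a)}$. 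For $\Sigma_2$ I would bound each term crudely: since $\bs_1,\bs_2,\bs^*$ are pairwise non-collinear once $\epsilon$ is small, Lemma~\ref{lemma:det-bd} gives $|\Sigma|\ge n^{-3}$ for the covariance $\Sigma$ of $(H(\bs_1),H(\bs_2),H(\bs^*))$, and Lemma~\ref{lemma:anti-con} together with Lemma~\ref{lemma:gaussproblemma}${\rm (a)}$ for the denominator yields each conditional probability $\le (2\pi)^{-1}n^{3/2}(2g\sqrt{n}2^{-nh_b(\rho)})^2$; then $|\mathcal{T}_2|=\binom{n}{\rho n}\cdot\#\{B:\,|A\cap B|/n\notin[\rho^2\pm\epsilon]\}$, and Lemma~\ref{lemma:sum-binom} (with $p=\rho$, after the change of variable $j=|A\cap B|=\rho n-k$, which turns the window $k\in[\rho(1-\rho)n\pm\epsilon n]$ into $|A\cap B|/n\in[\rho^2\pm\epsilon]$) bounds this count by $\binom{n}{\rho n}2^{-n\Theta_\epsilon(\epsilon^2)}$. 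Combining, $\Sigma_2/(\mathbb{E}_{\rm pl}|S_{g,\rho}|)^2\le 2^{-\Omega(n)}$.

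The heart of the matter is $\Sigma_3$. For $(\bs_1,\bs_2)\in\mathcal{T}_3$ the covariance of $(H(\bs_1),H(\bs_2),H(\bs^*))$ is $\Sigma=\Sigma_0(\rho)+E$, where $\Sigma_0(\rho)$ has unit diagonal, $(\Sigma_0)_{12}=\bar{\rho}^2$, $(\Sigma_0)_{13}=(\Sigma_0)_{23}=\bar{\rho}$, and $E$ perturbs only the $(1,2)$ and $(2,1)$ entries, by at most $4\epsilon$, so $\|E\|_F=O(\epsilon)$. A direct computation gives $|\Sigma_0(\rho)|=(1-\bar{\rho}^2)^2$, and diagonalizing $\Sigma_0(\rho)$ (the vector $(1,-1,0)$ is an eigenvector with eigenvalue $1-\bar{\rho}^2$, and the remaining two eigenvalues are the roots $\tfrac12\bigl(2+\bar{\rho}^2\pm\bar{\rho}\sqrt{\bar{\rho}^2+8}\bigr)$ of $\lambda^2-(2+\bar{\rho}^2)\lambda+(1-\bar{\rho}^2)$) one checks that the least eigenvalue of $\Sigma_0(\rho)$ equals exactly $\lambda(\rho)$. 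By the Hoffman--Wielandt theorem (Theorem~\ref{thm:wielandt-hoffman}) every eigenvalue of $\Sigma$ exceeds $\lambda(\rho)-O(\epsilon)\ge\lambda(\rho)\bigl(1-O(\epsilon)/\lambda(\rho)\bigr)$, hence $|\Sigma|\ge(1-\bar{\rho}^2)^2\bigl(1-O(\epsilon)/\lambda(\rho)\bigr)^3$, uniformly over $\mathcal{T}_3$. Since both thresholds $g\sqrt{n}2^{-nh_b(\rho)}$ and $3^{-n}$ are exponentially small and $\Sigma$ is uniformly well conditioned on $\mathcal{T}_3$, the Gaussian density is $1+o_n(1)$ over the corresponding box, so that
\begin{align*}
\mathbb{P}_{\rm pl}\bigl[H(\bs_1)\le g\sqrt{n}2^{-nh_b(\rho)},\,H(\bs_2)\le g\sqrt{n}2^{-nh_b(\rho)}\bigr]&=\frac{2\bigl(g\sqrt{n}2^{-nh_b(\rho)}\bigr)^2}{\pi\,|\Sigma|^{1/2}}\bigl(1+o_n(1)\bigr)\\
&\le\frac{2\bigl(g\sqrt{n}2^{-nh_b(\rho)}\bigr)^2}{\pi(1-\bar{\rho}^2)}\bigl(1-O(\epsilon)/\lambda(\rho)\bigr)^{-3/2}\bigl(1+o_n(1)\bigr).
\end{align*}
Multiplying by $|\mathcal{T}_3|\le\binom{n}{\rho n}^2$ and dividing by $(\mathbb{E}_{\rm pl}|S_{g,\rho}|)^2=\binom{n}{\rho n}^2\cdot\frac{2(g\sqrt{n}2^{-nh_b(\rho)})^2}{\pi(1-\bar{\rho}^2)}(1+o_n(1))$ from part~${\rm (a)}$ cancels all $\rho$-dependent prefactors, leaving $\Sigma_3/(\mathbb{E}_{\rm pl}|S_{g,\rho}|)^2\le (1+o_n(1))\bigl(1-\epsilon/\lambda(\rho)\bigr)^{-3/2}$ after rescaling the window width so the $O(\epsilon)$ becomes $\epsilon$. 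Summing the three bounds and using $\mathbb{E}_{\rm pl}|S_{g,\rho}|^2\le\Sigma_1+\Sigma_2+\Sigma_3$ yields part~${\rm (b)}$. I expect the $\Sigma_3$ step to be the main obstacle: one has to identify the typical $\bs_1$--$\bs_2$ overlap, compute $|\Sigma_0(\rho)|$ and $\lambda_{\min}(\Sigma_0(\rho))=\lambda(\rho)$ in closed form, and transfer this to the perturbed matrices via Hoffman--Wielandt; the control of $\Sigma_2$, by contrast, is entirely powered by Lemma~\ref{lemma:sum-binom}, which is already in hand.
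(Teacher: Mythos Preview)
Your proposal is correct and follows the paper's proof essentially line for line: the same three-way split of pairs by their $\bs_1$--$\bs_2$ overlap, the same use of Lemma~\ref{lemma:sum-binom} (via the substitution $k=\rho n-|A\cap B|$) to show $|\mathcal{T}_2|\le\binom{n}{\rho n}^2 2^{-n\Theta_\epsilon(\epsilon^2)}$, the same crude Lemma~\ref{lemma:det-bd}/Lemma~\ref{lemma:anti-con} bound on $\mathcal{T}_2$-probabilities, and the same Hoffman--Wielandt perturbation of the explicit spectrum of $\Sigma_0(\rho)$ on $\mathcal{T}_3$. Your eigenvector observation $(1,-1,0)\mapsto(1-\bar{\rho}^2)$ is slightly slicker than the paper's direct characteristic-polynomial computation, and your ``rescale $\epsilon$'' remark correctly absorbs the constant $4\sqrt{2}$ that the paper leaves in its final display.
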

 Before proving Proposition~\ref{prop:rho-2nd-mom}, we first show how it yields the upper bound in Theorem~\ref{thm:zeta-rho}. Using \emph{second moment method},~\eqref{eq:2nd-mom-met}, we have
 \[
 \mathbb{P}_{\rm pl}[|S_{g,\rho}|\ge 1]\ge \frac{(\mathbb{E}_{\rm pl}|S_{g,\rho}|)^2}{\mathbb{E}_{\rm pl}[|S_{g,\rho}|^2]}\ge \frac{1}{o_n(1)+(1+o_n(1))\left(1-\epsilon/\lambda(\rho)\right)^{-\frac32}}.
 \]
 So, 
 \[
\liminf_{n\to\infty}\mathbb{P}_{\rm pl}[|S_{g,\rho}|\ge 1]\ge  \left(1-\epsilon/\lambda(\rho)\right)^{\frac32}.
 \]
 Noting that the LHS is independent of $\epsilon$ and the inequality above is valid for all small enough $\epsilon>0$, we send $\epsilon\to 0$ to obtain
 \[
\liminf_{n\to\infty}\mathbb{P}_{\rm pl}[|S_{g,\rho}|\ge 1]\ge 1.
 \]
 Since we trivially have $\limsup_{n\to\infty}\mathbb{P}_{\rm pl}[|S_{g,\rho}|\ge 1]\le 1$, we conclude
 \[
 \lim_{n\to\infty}\mathbb{P}_{\rm pl}[|S_{g,\rho}|\ge 1] = 1,
 \]
 i.e. 
 \[
 \zeta(\rho) = \min_{\bs:d_H(\bs,\bs^*)=\rho n}H(\bs)\le g(n)\sqrt{n}2^{-nh_b(\rho)}
 \]
 w.h.p. Hence, it suffices to prove Proposition~\ref{prop:rho-2nd-mom}.
 \begin{proof}[Proof of Proposition~\ref{prop:rho-2nd-mom}]
     Part {\rm (a)} follows immediately by modifying the proof of Proposition~\ref{prop:rho-low}. So, it suffices to prove part {\rm (b)}. We begin by writing
    \begin{equation}\label{eq:2nd-mom-rho}
              \mathbb{E}_{\rm pl}\bigl[\bigl|S_{g,\rho}\bigr|^2\bigr] = \sum_{\substack{\bs_1,\bs_2\in\Sigma_n \\ d_H(\bs_1,\bs^*)=\rho n\\ d_H(\bs_2,\bs^*)=\rho n}} \mathbb{P}\bigl[H(\bs_1)\le g(n)\sqrt{n}2^{-nh_b(\rho)},H(\bs_2)\le g(n)\sqrt{n}2^{-nh_b(\rho)}\mid H(\bs^*)\le 3^{-n}\bigr].
          \end{equation}
     Note that there are $\binom{n}{\rho n}$ choices for $\bs_1$ with $d_H(\bs_1,\bs^*)=\rho n$. Fix any such $\bs_1$, let $I=\{i\in[n]:\bs_1(i)\ne \bs^*(i)\}$ and $I^c=[n]\setminus I$. Then $|I|=\rho n$ and $|I^c|=(1-\rho)n$. We now bound the number of ways of choosing a $\bs_2$. Suppose $k=\{i\in I:\bs_1(i)\ne \bs_2(i)\}$ and $\ell = \{i\in I^c:\bs_1(i)\ne \bs_2(i)\}$. Observe that
     \[
    \rho n =  d_H(\bs_2,\bs^*) = (\rho n -k)+\ell  \implies k=\ell.
     \]
     So, the number of ways of choosing any such $\bs_2$ is
     \begin{equation}\label{eq:coun-for-bs2}
       \binom{n}{\rho n} =   \sum_{0\le k\le \rho n}\binom{\rho n}{k}\binom{(1-\rho)n}{k},
     \end{equation}
     where $d_H(\bs_1,\bs_2)=2k$. Note that as $\rho \le \frac12$, $\binom{(1-\rho)n}{k}>0$ for any $0\le k\le \rho n$. 
     
     Similar to the proof of Proposition~\ref{prop:2nd-mom}, we now split the pairs $(\bs_1,\bs_2)$ appearing in~\eqref{eq:2nd-mom-rho} into three disjoint classes. To that end, fix an $\epsilon>0$ small enough.
     \begin{itemize}
    \item Let 
        \[
        \mathcal{T}_1 = \bigl\{(\bs_1,\bs_2):d_H(\bs_1,\bs^*)=d_H(\bs_2,\bs^*)=\rho n,\bs_2=\pm \bs^*\bigr\}.
        \]
        In particular, $|\mathcal{T}_1|\le 2 \binom{n}{\rho n}$.
        \item For
        \[
        I(\rho,\epsilon) = [2,2(\rho(1-\rho)-\epsilon)n]\cup [2(\rho(1-\rho)+\epsilon)n,n-1],
        \]
        let
        \[
         \mathcal{T}_2 = \bigl\{(\bs_1,\bs_2):d_H(\bs_1,\bs^*)=d_H(\bs_2,\bs^*)=\rho n,d_H(\bs_1,\bs_2)\in I(\rho,\epsilon)\bigr\}.
        \]
        In particular, using Lemma~\ref{lemma:sum-binom} and the identity in~\eqref{eq:coun-for-bs2}, we have 
        \begin{align}
                |\mathcal{T}_2|&\le \binom{n}{\rho n}\left(\sum_{0\le k\le n(\rho(1-\rho)-\epsilon)}\binom{\rho n}{k}\binom{(1-\rho)n}{k} + \sum_{n(\rho(1-\rho)+\epsilon)\le k\le n}\binom{\rho n}{k}\binom{(1-\rho)n}{k}\right)\nonumber \\
                &\le \binom{n}{\rho n}^2 2^{-n\Theta_\epsilon(\epsilon^2)}.\label{eq:T-2-card}
                \end{align}
                \item Finally, let
                \[
                J(\rho,\epsilon) = \bigl[2\bigl(\rho(1-\rho)-\epsilon\bigr)n,2\bigl(\rho(1-\rho)+\epsilon\bigr)n\bigr],
                \]
                and set
                \[
                 \mathcal{T}_3= \bigl\{(\bs_1,\bs_2):d_H(\bs_1,\bs^*)=d_H(\bs_2,\bs^*)=\rho n,d_H(\bs_1,\bs_2)\in J(\rho,\epsilon)\bigr\}.
                \]
                Note that 
                \[
                |\mathcal{T}_3|\le \binom{n}{\rho n}^2
                \]
                trivially.
     \end{itemize}
     Next, we decompose the sum in~\eqref{eq:2nd-mom-rho} as follows:
     \begin{align}
         \mathbb{E}_{\rm pl}\bigl[\bigl|S_{g,\rho}\bigr|^2\bigr]&= \underbrace{\sum_{(\bs_1,\bs_2)\in\mathcal{T}_1} \mathbb{P}\bigl[H(\bs_1)\le g(n)\sqrt{n}2^{-nh_b(\rho)},H(\bs_2)\le g(n)\sqrt{n}2^{-nh_b(\rho)}\mid H(\bs^*)\le 3^{-n}\bigr]}_{\Sigma_1}\nonumber\\ 
         &+\underbrace{\sum_{(\bs_1,\bs_2)\in\mathcal{T}_2} \mathbb{P}\bigl[H(\bs_1)\le g(n)\sqrt{n}2^{-nh_b(\rho)},H(\bs_2)\le g(n)\sqrt{n}2^{-nh_b(\rho)}\mid H(\bs^*)\le 3^{-n}\bigr]}_{\Sigma_2}\nonumber \\
&+\underbrace{\sum_{(\bs_1,\bs_2)\in\mathcal{T}_3} \mathbb{P}\bigl[H(\bs_1)\le g(n)\sqrt{n}2^{-nh_b(\rho)},H(\bs_2)\le g(n)\sqrt{n}2^{-nh_b(\rho)}\mid H(\bs^*)\le 3^{-n}\bigr]}_{\Sigma_3}\label{eq:2nd-mom-rho-up}.
     \end{align}
     \paragraph{Analysis of $\Sigma_1$ and $\Sigma_2$.} We immediately have
     \[
\Sigma_1=2\sum_{\bs_1:d_H(\bs_1,\bs^*)=\rho n}\mathbb{P}\bigl[H(\bs_1)\le g(n)\sqrt{n}2^{-nh_b(\rho)}\mid H(\bs^*)\le 3^{-n}\bigr]= 2\mathbb{E}_{\rm pl}|S_{g,\rho}|.
     \]
     In particular, using part {\rm (a)} of Proposition~\ref{prop:rho-2nd-mom}, we get
     \begin{equation}
         \label{eq:sigma-1-rho-up}
         \frac{\Sigma_1}{(\mathbb{E}_{\rm pl}|S_{g,\rho}|)^2} = o_n(1).
     \end{equation}
     We now analyze $\Sigma_2$. To that end, fix any $(\bs_1,\bs_2)\in \mathcal{T}_2$. Note that from the definition of $I(\rho,\epsilon)$, $\bs_2\ne \pm \bs_1$. Further, as $\rho\ne 0,1$, we also have $\bs^*\ne \pm \bs_1$ and $\bs^*\ne \pm \bs_2$. Hence, a straightforward modification of Lemma~\ref{lemma:prob-middle-term} immediately yields
     \begin{align}
     &\mathbb{P}\bigl[H(\bs_1)\le g(n)\sqrt{n}2^{-nh_b(\rho)},H(\bs_2)\le g(n)\sqrt{n}2^{-nh_b(\rho)}\mid H(\bs^*)\le 3^{-n}\bigr] \nonumber\\
     &\le (2\pi)^{-1}\cdot n^{\frac32}\cdot \left(2\cdot g(n)\sqrt{n}2^{-nh_b(\rho)}\right)^2.\label{eq:prob-middle-term}
          \end{align}
           Further, $\binom{n}{\rho n} = \Theta(n^{-\frac12}2^{nh_b(\rho)})$ per Lemma~\ref{lemma:binom-entropy}. Combining~\eqref{eq:T-2-card} and~\eqref{eq:prob-middle-term}, we obtain
\begin{align*}
    \Sigma_2&\le |\mathcal{T}_2|\cdot (2\pi)^{-1}\cdot n^{\frac32}\cdot \left(2\cdot g(n)\sqrt{n}2^{-nh_b(\rho)}\right)^2 \\
    &\le 2^{-n\Theta_\epsilon(\epsilon^2)}n^{\frac32}\cdot \left(\frac{2g(n)}{\sqrt{2\pi}}\right)^2.
\end{align*}
As $\mathbb{E}_{\rm pl}|S_{g,\rho}| = \frac{2g(n)}{\sqrt{2\pi(1-\bar{\rho}^2)}}(1+o_n(1))$ per part {\rm (a)} and $\bar{\rho}=1-2\rho = O(1)$, we obtain
\begin{equation}\label{eq:Sigma2-for-rho}
    \frac{\Sigma_2}{(\mathbb{E}_{\rm pl}|S_{g,\rho}|)^2}\le \exp_2\bigl(-n\Theta_\epsilon(\epsilon^2)+O(\log_2 n)\bigr) = 2^{-\Theta(n)}.
\end{equation}
\paragraph{Analysis of $\Sigma_3$.}
Fix any $(\bs_1,\bs_2)\in\mathcal{T}_3$. We first estimate the probability term. 
\begin{lemma}\label{lemma:pho-Sigma3-pb}
    Set
    \[
    \lambda(\rho) = \frac{\bar{\rho}^2 +2 - \bar{\rho}\sqrt{\bar{\rho}^2+8}}{2},\quad\text{where}\quad \bar{\rho}=1-2\rho.
    \]
    Then, 
    \begin{align*}
    &\mathbb{P}\bigl[H(\bs_1)\le g(n)\sqrt{n}2^{-nh_b(\rho)},H(\bs_1)\le g(n)\sqrt{n}2^{-nh_b(\rho)}\mid H(\bs^*)\le 3^{-n}\bigr]\\
    &\le (2\pi)^{-1}(1-\bar{\rho}^2)^{-1}\left(1-\frac{4\epsilon\sqrt{2}}{\lambda(\rho)}\right)^{-\frac32}\Bigl(2g(n)\sqrt{n}2^{-nh_b(\rho)}\Bigr)^2.
    \end{align*}
\end{lemma}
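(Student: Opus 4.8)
The plan is to follow the proof of Lemma~\ref{lemma:prob-last-term} almost verbatim, the only genuinely new point being a sharper lower bound on a $3\times 3$ determinant that now carries a nontrivial ``planted'' structure. Set $\bs_3=\bs^*$, $a=g(n)\sqrt n\,2^{-nh_b(\rho)}$ and $b=3^{-n}$. First I would remove the conditioning by Bayes' rule, exactly as in \eqref{eq:bayes-auxil}, and use Lemma~\ref{lemma:gaussproblemma}(a) to write the denominator as $\mathbb{P}[H(\bs^*)\le b]=\tfrac{2}{\sqrt{2\pi}}b\,(1+o_n(1))$. For the numerator, let $T\sim\cN(0,I_n)$ and $Z_i=n^{-1/2}\ip{\bs_i}{T}$; then $(Z_1,Z_2,Z_3)$ is a centered Gaussian vector with covariance $\Sigma$ having unit diagonal, with $\Sigma_{13}=\Sigma_{23}=\bar\rho=1-2\rho$ (since $d_H(\bs_i,\bs^*)=\rho n$), and with $\Sigma_{12}=n^{-1}\ip{\bs_1}{\bs_2}=1-2d_H(\bs_1,\bs_2)/n$, which lies in $[\bar\rho^2-4\epsilon,\ \bar\rho^2+4\epsilon]$ because $(\bs_1,\bs_2)\in\mathcal{T}_3$ forces $d_H(\bs_1,\bs_2)\in J(\rho,\epsilon)$ and $1-4\rho(1-\rho)=\bar\rho^2$. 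By Lemma~\ref{lemma:anti-con} applied to $V=[-a,a]^2\times[-b,b]$, the numerator is at most $(2\pi)^{-3/2}|\Sigma|^{-1/2}(2a)^2(2b)$.

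The heart of the argument is the determinant bound $|\Sigma|\ge(1-\bar\rho^2)^2\bigl(1-\tfrac{4\epsilon\sqrt2}{\lambda(\rho)}\bigr)^3$. Write $\Sigma=\Sigma_0+E$, where $\Sigma_0$ is $\Sigma$ with its $(1,2)$ and $(2,1)$ entries reset to $\bar\rho^2$; then $E$ has only those two nonzero entries, each of modulus $\le 4\epsilon$, so $\|E\|_F\le 4\epsilon\sqrt2$. A direct computation gives $|\Sigma_0|=1-2\bar\rho^2+\bar\rho^4=(1-\bar\rho^2)^2$, and, exploiting the $1\leftrightarrow 2$ symmetry of $\Sigma_0$ (the vector $(1,-1,0)$ is an eigenvector with eigenvalue $1-\bar\rho^2$, while on $\mathrm{span}\{(1,1,0),(0,0,1)\}$ the matrix acts through the symmetric block $\left(\begin{smallmatrix}1+\bar\rho^2&\sqrt2\,\bar\rho\\\sqrt2\,\bar\rho&1\end{smallmatrix}\right)$), the three eigenvalues of $\Sigma_0$ are $1-\bar\rho^2$ and $\tfrac{2+\bar\rho^2\pm\bar\rho\sqrt{\bar\rho^2+8}}{2}$. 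Since $3\bar\rho\le\sqrt{\bar\rho^2+8}$ for $|\bar\rho|\le 1$, the smallest of the three is exactly $\lambda(\rho)=\tfrac{2+\bar\rho^2-\bar\rho\sqrt{\bar\rho^2+8}}{2}$. Hoffman--Wielandt (Theorem~\ref{thm:wielandt-hoffman}) then yields, comparing sorted spectra, $\lambda_i(\Sigma)\ge\lambda_i(\Sigma_0)-4\epsilon\sqrt2\ge\lambda_i(\Sigma_0)\bigl(1-\tfrac{4\epsilon\sqrt2}{\lambda(\rho)}\bigr)$ for each $i$ (using $\lambda_i(\Sigma_0)\ge\lambda(\rho)$, and $\epsilon$ small enough that the factor is positive); multiplying over $i$ and using $\prod_i\lambda_i(\Sigma_0)=(1-\bar\rho^2)^2$ gives the claimed bound, hence $|\Sigma|^{-1/2}\le(1-\bar\rho^2)^{-1}\bigl(1-\tfrac{4\epsilon\sqrt2}{\lambda(\rho)}\bigr)^{-3/2}$.

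Combining the two estimates, the conditional probability is at most
\[
\frac{(2\pi)^{-3/2}(1-\bar\rho^2)^{-1}\bigl(1-\tfrac{4\epsilon\sqrt2}{\lambda(\rho)}\bigr)^{-3/2}(2a)^2(2b)}{\tfrac{2}{\sqrt{2\pi}}\,b\,(1+o_n(1))}=(2\pi)^{-1}(1-\bar\rho^2)^{-1}\Bigl(1-\tfrac{4\epsilon\sqrt2}{\lambda(\rho)}\Bigr)^{-3/2}(2a)^2(1+o_n(1)),
\]
which, with $a=g(n)\sqrt n\,2^{-nh_b(\rho)}$, is precisely the asserted inequality (the $1+o_n(1)$ factor being harmless). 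I expect the only mildly delicate step to be the eigenvalue bookkeeping in the determinant bound: verifying that $\lambda(\rho)$ is genuinely the smallest eigenvalue of $\Sigma_0$, and checking that the passage $\prod_i\bigl(1-\tfrac{4\epsilon\sqrt2}{\lambda_i(\Sigma_0)}\bigr)\ge\bigl(1-\tfrac{4\epsilon\sqrt2}{\lambda(\rho)}\bigr)^3$ runs in the correct direction. (As a sanity check one can also compute $|\Sigma|=(1-\bar\rho^2)^2-(\Sigma_{12}-\bar\rho^2)^2\ge(1-\bar\rho^2)^2-16\epsilon^2$ directly, which is consistent with the above.) Everything else is a routine repeat of the computations in the proof of Lemma~\ref{lemma:prob-last-term}.
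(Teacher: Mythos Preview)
Your proposal is correct and follows essentially the same approach as the paper's proof: remove the conditioning via Bayes' rule, bound the numerator by Lemma~\ref{lemma:anti-con}, write $\Sigma=\Sigma'+E$ with $\|E\|_F\le 4\epsilon\sqrt{2}$, and combine Hoffman--Wielandt with the exact spectrum of $\Sigma'$ to get $|\Sigma|\ge(1-\bar\rho^2)^2\bigl(1-\tfrac{4\epsilon\sqrt2}{\lambda(\rho)}\bigr)^3$. The only cosmetic difference is that you diagonalize $\Sigma'$ via the $1\leftrightarrow2$ symmetry (the eigenvector $(1,-1,0)$ and the residual $2\times2$ block), whereas the paper brute-forces the characteristic polynomial; your extra direct determinant sanity check $|\Sigma|=(1-\bar\rho^2)^2-(\Sigma_{12}-\bar\rho^2)^2$ is a nice addition not in the paper.
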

\begin{proof}[Proof of Lemma~\ref{lemma:pho-Sigma3-pb}]
    Similar to the proof of Lemma~\ref{lemma:prob-middle-term}, introduce a $T\sim \cN(0,I_n)$ and set $Z_i=n^{-\frac12}\ip{\bs_i}{T}$ where $\bs_3 = \bs^*$ for convenience. Further, denote by $\Sigma$ the covariance matrix of multivariate normal random vector $(Z_1,Z_2,Z_3)$. Applying  Lemma~\ref{lemma:gaussproblemma} as in the proof of Lemma~\ref{lemma:prob-middle-term}, we get
    \begin{align}
    &\mathbb{P}\bigl[H(\bs_1)\le g(n)\sqrt{n}2^{-nh_b(\rho)},H(\bs_1)\le g(n)\sqrt{n}2^{-nh_b(\rho)}\mid H(\bs^*)\le 3^{-n}\bigr] \nonumber \\ &\le (2\pi)^{-1}|\Sigma|^{-\frac12}\Bigl(2g(n)\sqrt{n}2^{-nh_b(\rho)}\Bigr)^2 \label{eq:temp1}.
        \end{align}
        We now study $|\Sigma|$. To that end, we have $\Sigma_{ii}=1$ for $1\le i\le 3$. Moreover, as $d_H(\bs_1,\bs^*)=d_H(\bs_2,\bs^*)=\rho n$, we have
        \[
    \frac{1}{n}\ip{\bs_1}{\bs^*} = 
    \frac{1}{n}\ip{\bs_1}{\bs^*} = 1-2\rho = \bar{\rho}.
        \]
        Now, as $(\bs_1,\bs_2)\in\mathcal{T}_3$, we have
        \[
d_H(\bs_1,\bs_2)\in\bigl[2n\bigl(\rho(1-\rho)-\epsilon\bigr),2n\bigl(\rho(1-\rho)+\epsilon\bigr)\bigr].
        \]
        Hence, 
        \[
        \frac1n\ip{\bs_1}{\bs_2} =\bar{\rho}^2 +\Delta,\quad\text{where}\quad -4\epsilon\le \Delta\le 4\epsilon.
        \]
        Set $\Sigma = \Sigma'+E$ where
        \[
        \Sigma' = \begin{pmatrix}
            1 &\bar{\rho}^2 & \bar{\rho} \\
            \bar{\rho}^2 & 1 & \bar{\rho} \\
            \bar{\rho} & \bar{\rho} & 1
        \end{pmatrix}\quad\text{and}\quad E = \begin{pmatrix}
            0 & \Delta & 0 \\ \Delta & 0 & 0 \\ 0 &0 & 0
        \end{pmatrix}.
        \]
        We now find the eigenvalues of $\Sigma'$. First,
        \[
        |\lambda I_{3\times 3}-\Sigma'| = |-\lambda^3 + 3\lambda^2 -3\lambda + (\lambda+1) \bar{\rho}^4 +2\bar{\rho}^2(\lambda-1)+1|.
        \]
        Solving $|\lambda I-\Sigma'|=0$, we find that the eigenvalues of $\Sigma'$ are 
        \[
        \left\{1-\bar{\rho}^2,\frac{\bar{\rho}^2 +2 +\bar{\rho}\sqrt{\bar{\rho}^2+8}}{2},\frac{\bar{\rho}^2 +2 - \bar{\rho}\sqrt{\bar{\rho}^2+8}}{2}\right\}.
        \]
        It is easy to verify, via a simple algebra, that
        \[
        1-\bar{\rho}^2 \ge \frac{\bar{\rho}^2+2 -\bar{\rho}\sqrt{\bar{\rho}^2+8}}{2}
        \]
         as $\bar{\rho}\le 1$. 
        In particular, $\lambda(\rho)$ is the smallest eigenvalue of $\Sigma'$ and $|\Sigma'| = (1-\bar{\rho}^2)^2$. Now, denote the eigenvalues of $\Sigma'$ by $\lambda_1\ge \lambda_2\ge \lambda_3= \lambda(\rho)$ and that of $\Sigma$ by $\mu_1\ge \mu_2\ge \mu_3$.        Using Hoffman-Wieland Theorem, Theorem~\ref{thm:wielandt-hoffman}, we obtain
        \[
        |\mu_i-\lambda_i|\le \sqrt{\sum_{i\le 3}(\mu_i-\lambda_i)^2}\le \|E\|_F\le 4\epsilon\sqrt{2}.
        \]
        So, $\mu_i\ge \lambda_i-4\epsilon\sqrt{2}>0$, provided $\epsilon>0$ is small enough. Hence,
        \begin{align}
                |\Sigma| = \mu_1\mu_2\mu_3 &\ge \prod_{i\le 3}(\lambda_i - 4\epsilon\sqrt{2})\nonumber \\
                &\ge \lambda_1\lambda_2\lambda_3 \prod_{i\le 3}\left(1-\frac{4\epsilon\sqrt{2}}{\lambda_i}\right) \nonumber\\
                &\ge (1-\bar{\rho}^2)^2\left(1-\frac{4\epsilon\sqrt{2}}{\lambda(\rho)}\right)^3,\label{eq:det-auxil}
                \end{align}
using $|\Sigma'|=\lambda_1\lambda_2\lambda_3=(1-\bar{\rho}^2)^2$. Inserting~\eqref{eq:det-auxil} into~\eqref{eq:temp1} immediately yields Lemma~\ref{lemma:pho-Sigma3-pb}.
\end{proof}
Using Lemma~\ref{lemma:pho-Sigma3-pb}, the fact $|\mathcal{T}_3|\le \binom{n}{\rho n}^2$ and part {\rm (a)}, we conclude 
\begin{align*}
\Sigma_3 &\le \underbrace{\binom{n}{\rho n}^2 (2\pi)^{-1}(1-\bar{\rho}^2)^{-1}\Bigl(2g(n)\sqrt{n}2^{-nh_b(\rho)}\Bigr)^2 }_{=(\mathbb{E}_{\rm pl}|S_{g,\rho}|)^2(1+o_n(1))}\left(1-\frac{4\epsilon\sqrt{2}}{\lambda(\rho)}\right)^{-\frac32}.
\end{align*}
Hence,
\begin{equation}\label{eq:rho-Sigma3}
    \frac{\Sigma_3}{(\mathbb{E}_{\rm pl}|S_{g,\rho}|)^2}\le (1+o_n(1))\left(1-\frac{4\epsilon\sqrt{2}}{\lambda(\rho)}\right)^{-\frac32}.
\end{equation}
Combining~\eqref{eq:2nd-mom-rho-up},~\eqref{eq:sigma-1-rho-up},~\eqref{eq:Sigma2-for-rho} and~\eqref{eq:rho-Sigma3}, we immediately conclude 
\[
\frac{(\mathbb{E}_{\rm pl}|S_{g,\rho}|)^2}{\mathbb{E}_{\rm pl}[|S_{g,\rho}|]^2}\ge \frac{1}{o_n(1) + (1+o_n(1))\left(1-\frac{4\epsilon\sqrt{2}}{\lambda(\rho)}\right)^{-3/2}},
\]
yielding Proposition~\ref{prop:rho-2nd-mom}.
 \end{proof}
 \subsection{Proof of Theorem~\ref{thm:ogp-planted}}\label{sec:ogp-planted}
 As we mentioned, the proof is based on a simple application of the first moment method.
 \subsubsection*{Part ${\rm (a)}$}
Fix any $\epsilon>0$. We show $\mathbb{E}_{\rm pl}|S^*(\beta n,\epsilon n)| = 2^{-\Theta(n)}$ for a suitable $\beta\in(0,1/2]$. The conclusion then follows via Markov's inequality. To that end, we have
\begin{align*}
    \mathbb{E}_{\rm pl}|S^*(\beta n,\epsilon n)| &= \sum_{\bs:0<d_H(\bs,\bs^*)\le \beta n} \mathbb{P}[H(\bs)\le 2^{-\epsilon n}\mid H(\bs^*)\le 3^{-n}].
\end{align*}
For any $\bs\ne \pm \bs^*$, $\rho\triangleq n^{-1}\ip{\bs}{\bs^*}\le 1-\frac{2}{n}$. So, 
\begin{align}
    \mathbb{P}[H(\bs)\le 2^{-\epsilon n}\mid H(\bs^*)\le 3^{-n}]&=\frac{\mathbb{P}[H(\bs)\le 2^{-\epsilon n}, H(\bs^*)\le 3^{-n}]}{\mathbb{P}[H(\bs^*)\le 3^{-n}]}\nonumber\\
    &\le \frac{\sqrt{n}}{\sqrt{2\pi}}2^{-\epsilon n}(1+o_n(1))\label{eq:use-later},
\end{align}
using bivariate normal probability and Lemma~\ref{lemma:gaussproblemma}. Moreover, using Lemma~\ref{lemma:sum-of-bin}
\[
\bigl|\bigl\{\bs:0<d_H(\bs,\bs^*)\le \beta n\bigr\}\bigr|\le \sum_{k\le \beta n}\binom{n}{k}\le 2^{nh_b(\beta)}.
\]
So, 
\[
\mathbb{E}_{\rm pl}|S^*(\beta n,\epsilon n)| \le \exp_2\bigl(-n\bigl(\epsilon-h_b(\beta)\bigr) + O(\log n)\bigr),
\]
which is indeed $\exp_2(-\Theta(n))$ for any $\beta$ with $h_b(\beta)<\epsilon$.
\subsubsection*{Part ${\rm (b)}$}
The case $E=\Theta(n)$ is covered by Part ${\rm (a)}$. So, suppose
\[
\Omega(\log n)\le E \le o(n).
\]
In particular, we assume that there is a constant $C>\frac32$ and an $n_0\in\mathbb{N}$ such that $E\ge C\log_2 n$ for all $n\ge n_0$. We now show that for a suitable $d$, $\mathbb{E}_{\rm pl}|S^*(d,E)| = 2^{-\Theta(E)}$ and finish by applying Markov's inequality.

To that end, the argument in Part ${\rm (a)}$ immediately yields
\[
\mathbb{P}\bigl[H(\bs)\le 2^{-E}\mid H(\bs^*)\le 3^{-n}\bigr] \le  \frac{\sqrt{n}}{\sqrt{2\pi}}2^{-E}(1+o_n(1)).
\]
Furthermore, if $d=o(n)$, then using Lemma~\ref{lemma:binom-sublinear} as well as the fact that $k\mapsto\binom{n}{k}$ is increasing on $0\le k\le n/2$, we obtain that
\[
\log_2\bigl|\bigl\{\bs:0<d_H(\bs,\bs^*)\le d\bigr\}\bigr|\le \log_2\left(\sum_{1\le k\le d}\binom{n}{k} \right)\le (1+o_n(1))d\log_2\frac{n}{d} + \log_2 d.
\]
As $d\le n$, we trivially have $\log_2 d\le \log_2 n$. So, we obtain
\[
\mathbb{E}_{\rm pl}|S^*(d,E)|\le \exp_2\left(-E + (1+o_n(1))d\log_2\frac{n}{d} +\frac32\log_2 n +O(1)\right).
\]
Now, fix a $c<1$ and let
\[
d=\frac{cE}{\log_2(n/E)}.
\]
Observe that as $E=o(n)$, we have that $n/E=\omega(1)$. So,
\begin{align*}
    d\log_2\frac{n}{d} &= \frac{cE}{\log_2(n/E)}\log_2\left(\frac{n}{cE}\log_2\left(\frac{n}{E}\right)\right) \\
    &=\frac{cE}{\log_2(n/E)}\left(\log_2\left(\frac{n}{E}\right)+\log_2\log_2\left(\frac{n}{E}\right) + \log_2\frac1c\right) \\
    &=cE(1+o_n(1)),
\end{align*}
where we used the fact 
\[
\log_2\log_2\left(\frac{n}{E}\right)+\log_2\frac1c = o\left(\log_2\left(\frac{n}{E}\right)\right).
\]
We choose $c>0$ such that $C(1-c)>\frac32$. Such a $c>0$ indeed exists as $C>\frac32$. With this choice, we indeed have
\[
\mathbb{E}_{\rm pl}|S^*(d,E)| = \exp_2\bigl(-\Theta(E)\bigr),
\]
completing the proof.
\subsection{Proof of Theorem~\ref{thm:m-ogp-planted-npp}}\label{sec:proof-m-ogp}
Our proof is based on the first moment method. Fix an $\epsilon>0$ and an $\mathcal{I}\subset [0,\pi/2]$ such that:
\begin{itemize}
    \item $|\mathcal{I}|\le 2^{cn}$ for some $c>0$ to be tuned.
    \item $\min_{\tau \in \mathcal{I},\tau\ne 0}\tau\ge 2^{-\delta n}$ for some $\delta<\epsilon$, again to be tuned.
\end{itemize}
For some $0<\eta<\beta<1$ and $m\in\N$ to be tuned, define
\[
\mathcal{F}(m,\beta,\eta) = \bigl\{(\bs_1,\dots,\bs_m):\bs_i\ne \pm\bs^*,\beta-\eta\le n^{-1}\ip{\bs_i}{\bs_j}\le \beta,1\le i<j\le m\bigr\}.
\]
Then
\begin{equation}\label{eq:main-RV}
    \bigl|\mathcal{S}(m,\beta,\eta,\epsilon n,\mathcal{I})\bigr| = \sum_{(\bs_1,\dots,\bs_m)\in\mathcal{F}(m,\beta,\eta)}\ind\Bigl\{\exists \tau_i\in\mathcal{I}:H(\bs_i,Y_i(\tau_i))\le 2^{-\epsilon n}\Bigr\},
\end{equation}
where
\begin{equation}\label{eq:Y_i-tau}
    Y_i(\tau_i) = \cos(\tau_i)X_0 + \sin(\tau_i)X_i\in \R^n,\quad 1\le i\le m.
\end{equation}
We establish that w.h.p.\,under the planted measure,
\[
\mathcal{S}(m,\beta,\eta,\epsilon n,\mathcal{I})=\varnothing.
\]
\paragraph{Counting Estimate} We first control $|\mathcal{F}(m,\beta,\eta)|$. To that end, there exists $2^n-2$ choices for $\bs_1\ne \pm \bs^*$. Next, for any fixed $\bs_1$, there exists $\binom{n}{n\frac{1-\rho}{2}}$ choices for $\bs_i$ with $n^{-1}\ip{\bs_1}{\bs_i}=\rho$. Hence, there are 
\[
\sum_{\substack{\rho : \rho n\in\mathbb{N} \\ \beta-\eta\le\rho\le \beta}}\binom{n}{n\frac{1-\rho}{2}}
\]
choices for $\bs_i$ with $n^{-1}\ip{\bs_1}{\bs_i}\in[\beta-\eta,\beta]$. So, 
\begin{align}
    |\mathcal{F}(m,\beta,\eta)|&\le (2^n-2)\left(\sum_{\substack{\rho : \rho n\in\mathbb{N} \\ \beta-\eta\le\rho\le \beta}}\binom{n}{n\frac{1-\rho}{2}}\right)^{m-1} \nonumber \\
    &\le 2^n\left(n^{O(1)}\binom{n}{n\frac{1-\beta+\eta}{2}}\right)^m\nonumber \\
    &\le \exp_2\left(n+mnh_b\left(\frac{1-\beta+\eta}{2}\right)+O(m\log n)\right) \label{eq:stirr},
\end{align}
where we applied Lemma~\ref{lemma:binom-entropy} to obtain~\eqref{eq:stirr}.
\paragraph{Probability Estimate} We first highlight the planted measure. Let $X_0,\dots,X_m\sim \cN(0,I_n)$ be i.i.d. For $\bs^*\in\Sigma_n$, and any event $\mathcal{E}$, define the planted measure
\[
\mathbb{P}_{\rm pl}[\mathcal{E}] =\mathbb{P}\left[\mathcal{E}\Big\lvert \max_{0\le j\le m}H(\bs^*,X_j)\le 3^{-n}\right].
\]
We next control the probability term, 
\[
\mathbb{P}_{\rm pl}\left[\exists \tau_1,\dots,\tau_m\in\mathcal{I}:\max_{1\le i\le m}H\bigl(\bs_i,Y_i(\tau_i)\bigr)\le 2^{-\epsilon n}\right],
\]
where $\bs_i\ne \pm\bs^*$, $1\le i\le m$.
\begin{lemma}\label{lemma:prob-bd-ensemble-ogp}
Let $\mathcal{I}\subset[0,\pi/2]$ with $|\mathcal{I}|\le 2^{cn}$ and $\min_{0\ne \tau\in\mathcal{I}}\tau \ge 2^{-\delta n}$ for some $\delta<\epsilon$. Then,
   \[
\max_{\tau_1,\dots,\tau_m\in\mathcal{I}}\mathbb{P}_{\rm pl}\left[\max_{1\le i\le m}H\bigl(\bs_i,Y_i(\tau_i)\bigr)\le 2^{-\epsilon n}\right]\le \exp_2\Bigl(-n(\epsilon-\delta)m +n\log_2 3+O(m\log n)\Bigr).
   \] 
   In particular,
   \begin{align*}
       &\mathbb{P}_{\rm pl}\left[\exists \tau_1,\dots,\tau_m\in\mathcal{I}:\max_{1\le i\le m}H\bigl(\bs_i,Y_i(\tau_i)\bigr)\le 2^{-\epsilon n}\right]\\
       &\le \exp_2\Bigl(-n(\epsilon-\delta)m+n\log_2 3 +cnm+O(m\log n)\Bigr).
          \end{align*}

\end{lemma}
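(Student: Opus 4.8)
The plan is to bound the probability in Lemma~\ref{lemma:prob-bd-ensemble-ogp} by first removing the conditioning defining $\mathbb{P}_{\rm pl}$ and then analyzing the resulting unconditioned Gaussian event. Fix $\tau_1,\dots,\tau_m\in\mathcal{I}$. By the definition of the planted measure,
\[
\mathbb{P}_{\rm pl}\left[\max_{1\le i\le m}H(\bs_i,Y_i(\tau_i))\le 2^{-\epsilon n}\right] = \frac{\mathbb{P}\left[\max_i H(\bs_i,Y_i(\tau_i))\le 2^{-\epsilon n},\ \max_{0\le j\le m}H(\bs^*,X_j)\le 3^{-n}\right]}{\mathbb{P}\left[\max_{0\le j\le m}H(\bs^*,X_j)\le 3^{-n}\right]}.
\]
The denominator is easy: since the $X_j$ are i.i.d.\ and $H(\bs^*,X_j)\sim|\cN(0,1)|$, it equals $\mathbb{P}[|Z|\le 3^{-n}]^{m+1}$, which by Lemma~\ref{lemma:gaussproblemma}(a) is $\exp_2(-n(m+1)\log_2 3 + O(m))$. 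So the bulk of the work is upper-bounding the numerator, which I would do by simply dropping the requirements $H(\bs^*,X_j)\le 3^{-n}$ for $j\ge 1$ and keeping only $j=0$ and the $m$ near-optimality constraints; this costs nothing in an upper bound.

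The core step is then to bound $\mathbb{P}\left[\max_i H(\bs_i,Y_i(\tau_i))\le 2^{-\epsilon n},\ H(\bs^*,X_0)\le 3^{-n}\right]$, an event on the $m+1$ jointly Gaussian quantities $W_i \triangleq n^{-1/2}\ip{\bs_i}{Y_i(\tau_i)}$ for $1\le i\le m$ and $W_0\triangleq n^{-1/2}\ip{\bs^*}{X_0}$. Each $W_i$ is standard normal (since $\|Y_i(\tau_i)\|$ has the right normalization: $\cos^2\tau_i+\sin^2\tau_i=1$), and the vector $(W_0,W_1,\dots,W_m)$ is centered Gaussian with some covariance $\Sigma$. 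I would bound the probability that $|W_0|\le 3^{-n}$ and $|W_i|\le 2^{-\epsilon n}$ for all $i$ using Lemma~\ref{lemma:anti-con}: it is at most $(2\pi)^{-(m+1)/2}|\Sigma|^{-1/2}\cdot(2\cdot 3^{-n})\cdot(2\cdot 2^{-\epsilon n})^m$. This gives the desired $3^{-n}$ and $2^{-\epsilon n m}$ factors directly; the remaining task is to lower-bound $|\Sigma|$, i.e.\ to show $|\Sigma|\ge 2^{-O(m\log n)}$ or at worst $2^{-2\delta n m + O(m\log n)}$ — this is exactly where the hypothesis $\min_{0\ne\tau\in\mathcal{I}}\tau\ge 2^{-\delta n}$ enters, and it is the main obstacle.

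To control $|\Sigma|$, I would expand $W_i = \cos(\tau_i)\, n^{-1/2}\ip{\bs_i}{X_0} + \sin(\tau_i)\, n^{-1/2}\ip{\bs_i}{X_i}$; since $X_0,\dots,X_m$ are independent, $\Sigma$ has a clean block structure. Writing $a_i = n^{-1/2}\ip{\bs_i}{X_0}$-type correlations: the correlation between $W_i$ and $W_j$ ($i\ne j$, both $\ge 1$) is $\cos\tau_i\cos\tau_j\cdot n^{-1}\ip{\bs_i}{\bs_j}$, the correlation between $W_i$ and $W_0$ is $\cos\tau_i\cdot n^{-1}\ip{\bs_i}{\bs^*}$, and all diagonal entries are $1$. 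Because $\tau_i\ge 2^{-\delta n}$ when $\tau_i\ne 0$, we have $\sin^2\tau_i \ge \tau_i^2/4 \ge 2^{-2\delta n}/4$ by Lemma~\ref{lemma:trigo}(a), so the "fresh randomness" contribution $\sin\tau_i\cdot n^{-1/2}\ip{\bs_i}{X_i}$ to each $W_i$ is non-degenerate of scale at least $2^{-\delta n}$. Concretely, conditioning on $X_0$ first: given $X_0$, the $W_i$ are independent with variances $\sin^2\tau_i\ge 2^{-2\delta n}/4$, so $|\Sigma| = \mathrm{Var}(W_0\mid \text{nothing})\cdot\prod_i \sin^2\tau_i \ge 2^{-2\delta n m}/4^m$ when all $\tau_i\ne 0$ (and if some $\tau_i=0$ the corresponding $W_i=n^{-1/2}\ip{\bs_i}{X_0}$ is still handled, with the integer-lattice determinant bound of Lemma~\ref{lemma:det-bd}-style reasoning giving $|\Sigma|\ge n^{-O(m)}$ for those coordinates since the $\bs_i,\bs^*$ are distinct up to sign). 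Plugging $|\Sigma|^{-1/2}\le 2^{\delta n m + O(m)}$ (or $n^{O(m)}$) into the Lemma~\ref{lemma:anti-con} bound yields $\exp_2(-\epsilon n m + \delta n m + n\log_2 3 + O(m\log n))$ for the numerator-over-denominator ratio after the denominator's $\exp_2(-(m+1)n\log_2 3)$ is divided out — wait, more carefully: numerator $\le \exp_2(-\epsilon n m + \delta n m - n\log_2 3 + O(m\log n))$ and denominator $= \exp_2(-(m+1)n\log_2 3 + O(m))$, so the ratio is $\exp_2(-\epsilon n m + \delta n m + m n\log_2 3 + O(m\log n))$. This is weaker than claimed; to get the stated bound one must instead keep \emph{all} $m+1$ constraints $H(\bs^*,X_j)\le 3^{-n}$ in the numerator so they cancel the denominator up to the single leftover $3^{-n}$ from $X_0$, which I'd arrange by noting the near-optimality events and the planting events on $X_1,\dots,X_m$ are positively correlated via Royen's inequality (Theorem~\ref{thm:royen}) or handled by a direct conditional argument. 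Finally, the "in particular" statement follows by a union bound over the at most $|\mathcal{I}|^m\le 2^{cnm}$ choices of $(\tau_1,\dots,\tau_m)$, contributing the additive $cnm$ in the exponent. The hardest part is the bookkeeping of how the planting conditioning on $X_1,\dots,X_m$ cancels against the denominator while simultaneously extracting the $\prod\sin^2\tau_i$ lower bound on the determinant; a careful sequential conditioning (condition on $X_0$, then handle each $(\bs_i, X_i)$ pair, using that the planting event on $X_i$ and the near-optimality event on $W_i$ both live in the $(X_i)$-randomness) is the cleanest route.
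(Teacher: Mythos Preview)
Your final approach—condition on $X_0$, then for each $i$ with $\tau_i\ne 0$ bound the pair of events $\{H(\bs_i,Y_i(\tau_i))\le 2^{-\epsilon n}\}$ and $\{H(\bs^*,X_i)\le 3^{-n}\}$ jointly in the $X_i$-randomness—is exactly the paper's route, and the union bound over $\mathcal{I}^m$ for the ``in particular'' is also what the paper does. Two of your intermediate suggestions are wrong, however. Royen's inequality (Theorem~\ref{thm:royen}) goes the wrong way: it gives $\mu(K\cap L)\ge\mu(K)\mu(L)$, a \emph{lower} bound on the numerator, so it cannot help upper-bound the conditional probability. And the identity $|\Sigma|=\mathrm{Var}(W_0)\cdot\prod_i\sin^2\tau_i$ is false in general, since the $\cos(\tau_i)\,n^{-1/2}\ip{\bs_i}{X_0}$ pieces of the $W_i$ are correlated with $W_0$ and with each other through $X_0$; fortunately you do not need this formula once you switch to the conditional argument.

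For the record, the paper executes the conditioning as follows. Split $[m]$ into $I=\{i:\tau_i=0\}$ and $I^c$, drop only the planting constraint on $X_0$ from the numerator, and factor
\[
\mathbb{E}\Bigl[I_E\prod_{i\in I}I_i\prod_{i\in I^c}\bar I_i\Bigr]=\prod_{i\in I}\mathbb{P}[E_i]\cdot\mathbb{E}_{X_0}\Bigl[I_E\prod_{i\in I^c}\mathbb{E}_{X_i}[\bar I_i\mid X_0]\Bigr],
\]
where $E$ is the joint near-optimality event for $i\in I$ (a function of $X_0$ only), $E_i=\{H(\bs^*,X_i)\le 3^{-n}\}$, and $F_i$ is the joint near-optimality/planting event for $i\in I^c$. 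For $i\in I^c$, $\mathbb{E}_{X_i}[\bar I_i\mid X_0]$ is a bivariate-normal probability on $\bigl(n^{-1/2}\ip{\bs_i}{X_i},\,n^{-1/2}\ip{\bs^*}{X_i}\bigr)$ with the first coordinate confined to a shifted interval of width $2\cdot 2^{-\epsilon n}/\sin(\tau_i)\le 4\cdot 2^{-(\epsilon-\delta)n}$; Lemma~\ref{lemma:anti-con} gives the bound $O(\sqrt{n})\,2^{-(\epsilon-\delta)n}3^{-n}$ uniformly in $X_0$. The term $\mathbb{E}[I_E]$ is handled by Lemma~\ref{lemma:anti-con} with the integer-lattice bound $|\Sigma|\ge n^{-|I|}$ (after checking $\Sigma\succ 0$ via Hoffman--Wielandt and the choice $\eta<(1-\beta)/m$). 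The $m$ factors of $3^{-n}$ coming from $i\in I$ and $i\in I^c$ then cancel the denominator up to a single residual $n\log_2 3$, yielding the stated bound.
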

\begin{proof}[Proof of Lemma~\ref{lemma:prob-bd-ensemble-ogp}]
    Fix an $\tau_1,\dots,\tau_m\in\mathcal{I}$, let $I = \{i\in [m]:\tau_i=0\}$ and $I^c=[m]\setminus I$, where $[m]=\{1,2,\dots,m\}$. We begin by writing
   \begin{align}\label{eq:cond-bd}
       \mathbb{P}_{\rm pl}\left[\max_{1\le i\le m}H\bigl(\bs_i,Y_i(\tau_i)\bigr)\le 2^{-\epsilon n}\right]&=\frac{ \mathbb{P}\bigl[\max_{1\le i\le m}H\bigl(\bs_i,Y_i(\tau_i)\bigr)\le 2^{-\epsilon n},\max_{0\le j\le m}H(\bs^*,X_j)\le 3^{-n}\bigr]}{\mathbb{P}\bigl[\max_{0\le j\le m}H(\bs^*,X_j)\le 3^{-n}\bigr]}
   \end{align}
   and proceed to control the numerator and the denominator. Note that $\mathbb{P}[|Z|\le 3^{-n}]\le (1/\sqrt{2\pi})(2\cdot 3^{-n})(1+o_n(1))$ per Lemma~\ref{lemma:gaussproblemma}. So, we can bound denominator by
   \begin{equation}\label{eq:denom}
          \mathbb{P}\left[\max_{0\le j\le m}H(\bs^*,X_j)\le 3^{-n}\right] = \mathbb{P}\bigl[H(\bs^*,X_0)\le 3^{-n}\bigr]^{m+1} = \exp_2\bigl(-n(m+1)\log_2 3 +O(m)\bigr),
   \end{equation}
   where we used the fact that $X_i$'s are i.i.d.\,unconditionally. The numerator, on the other hand, is more involved. Introduce the events 
   \begin{align*}
       E &= \left\{\max_{i\in I}H\bigl(\bs_i,Y_i(\tau_i)\bigr)\le 2^{-\epsilon n}\right\} \\
              E_i &= \Bigl\{H\bigl(\bs^*,X_i\bigr)\le 3^{-n}\Bigr\},\quad i\in I \\
       F_i &= \Bigl\{H\bigl(\bs_i,Y_i(\tau_i)\bigr)\le 2^{-\epsilon n},H\bigl(\bs^*,X_i\bigr)\le 3^{-n}\Bigr\},\quad i\in I^c.
   \end{align*}
   Further, define the associated indicator random variables $I_E=\ind\{E\}$, $I_i=\ind\{E_i\}$, $i\in I$, and $\bar{I}_i = \ind\{F_i\}$, $i\in I^c$. Observe that $I_E$ is a function of $X_0$ only, $I_i$ is a function of $X_i$ only ($i\in I$), and $\bar{I}_i$ is a function of $X_0$ and $X_i$ ($i\in I^c$). The key observation is, 
   \begin{align}
       &\mathbb{P}\left[\max_{1\le i\le m}H\bigl(\bs_i,Y_i(\tau_i)\bigr)\le 2^{-\epsilon n},\max_{0\le j\le m}H(\bs^*,X_j)\le 3^{-n}\right] \nonumber\\
       &\le \mathbb{P}\left[\max_{1\le i\le m}H\bigl(\bs_i,Y_i(\tau_i)\bigr)\le 2^{-\epsilon n},\max_{1\le j\le m}H(\bs^*,X_j)\le 3^{-n}\right]\nonumber \\
       &=\mathbb{E}\left[I_E\cdot \prod_{i\in I}I_i \cdot \prod_{i\in I^c}\bar{I}_i\right] \label{eq:num1}.
      \end{align}
      
Equipped with this, we have
\begin{align}
    \mathbb{E}\left[I_E\cdot \prod_{i\in I}I_i \cdot \prod_{i\in I^c}\bar{I}_i\right]&=\prod_{i\in I}\mathbb{P}[E_i] \cdot \mathbb{E}\left[I_E\cdot \prod_{i\in I^c}\bar{I}_i\right] \label{eq:used-indep} \\
    &=\prod_{i\in I}\mathbb{P}[E_i]\cdot \mathbb{E}\left[\mathbb{E}\left[I_E\cdot \prod_{i\in I^c}\bar{I}_i\Big\lvert X_0\right]\right] \label{eq:tower-prop} \\
    &=\prod_{i\in I}\mathbb{P}[E_i]\cdot \mathbb{E}_{X_0}\left[I_E\cdot \prod_{i\in I^c}\mathbb{E}_{X_i}\left[\bar{I}_i\Big\lvert X_0\right]\right] .\label{eq:take-away}
\end{align}
   We justify these lines as follows. Note that $I_i,i\in i$ are independent of $I_E$ and $\bar{I}_i,i\in I^c$, yielding~\eqref{eq:used-indep}. Equation~\eqref{eq:tower-prop} follows from the tower property of conditional expectations, whereas~\eqref{eq:take-away} uses the facts that (a) $I_E$ is a function of $X_0$ only and (b) conditional on $X_0$, $\bar{I}_i$, $i\in I^c$, are independent.

   Using Lemma~\ref{lemma:gaussproblemma} as above, we immediately get
   \begin{equation}\label{eq:num2}
       \prod_{i\in I}\mathbb{P}[E_i] = \exp_2\bigl(-n|I|\log_2 3+O(m)\bigr).
   \end{equation}
   Next, we fix an $i\in I^c$ and proceed to control $\mathbb{E}[\bar{I}_i\mid X_0]$, where the expectation is taken with respect to $X_i$ only, where $i\in I^c$. Set $Z_i = n^{-\frac12}\ip{\bs_i}{X_i}$ and $Z_i^* = n^{-\frac12}\ip{\bs^*}{X_i}$. Note that $(Z_i,Z_i^*)$ is a centered bivariate normal random vector with parameter $\rho =\frac1n\ip{\bs_i}{\bs^*}\le \frac{n-2}{n}$. With these, we have
       \begin{align}
   \mathbb{E}[\bar{I}_i\mid X_0]  &= \mathbb{P}\bigl[H(\bs_i,Y_i(\tau_i))\le 2^{-\epsilon n},H(\bs^*,X_i)\le 3^{-n}\mid X_0\bigr] \label{eq:def}\\
&=\mathbb{P}\left[Z_i \in \left[\frac{-2^{-\epsilon n}-\cos(\tau_i)\ip{\bs_i}{X_0}}{\sin(\tau_i)},\frac{2^{-\epsilon n}-\cos(\tau_i)\ip{\bs_i}{X_0}}{\sin(\tau_i)}\right], Z_i^*\in[-3^{-n},3^{-n}]\right]\label{eq:def2} \\
&\le \frac{\sqrt{n}(1+o_n(1))}{4\pi}\cdot \left(\frac{2\cdot 2^{-\epsilon n}}{\sin(\tau_i)}\right)\cdot\left(2\cdot 3^{-n}\right)\label{eq:bivar-norm}\\
&\le \frac{4\sqrt{n}2^{-(\epsilon-\delta) n}3^{-n}}{\pi} \label{eq:sine-identity}.
   \end{align}
  We now justify these lines. \eqref{eq:def} and~\eqref{eq:def2} are due to definitions of indicator $\mathcal{I}_i$ and $Y_i(\tau)$ per~\eqref{eq:Y_i-tau}.~\eqref{eq:bivar-norm} follows from applying Lemma~\ref{lemma:anti-con} to the bivariate normal $(Z_i,Z_i^*)$ with parameter $\rho\le 1-\frac2n$, so that $1-\rho^2 = (1-\rho)(1+\rho)\ge \frac2n$. Finally~\eqref{eq:sine-identity} holds via  Lemma~\ref{lemma:trigo}${\rm (a)}$: we have $\sin(\tau_i)\ge \tau_i/2\ge 2^{-\delta n-1}$, $\min_{0\ne \tau \in \mathcal{I}}\tau\ge 2^{-\delta n}$.
 Note that the right hand side of~\eqref{eq:sine-identity} is independent of $X_i$. So, for any $i$, 
  \begin{equation}\label{eq:num3}
      \prod_{i\in I^c}\mathbb{E}[\bar{I}_i\mid X_0] \le \exp_2\Bigl(-n(\epsilon -\delta)|I^c| -n\log_2 3 |I^c| +O(\log n)\Bigr),
  \end{equation}
  almost surely. We lastly control $\mathbb{E}[I_E]$. Set $\mathcal{Z}_i = n^{-\frac12}\ip{\bs_i}{X_0}$, $i\in I$. Note that $(\mathcal{Z}_i:i\in I)$ is a centered multivariate normal random vector, denote by  $\Sigma\in\R^{|I|\times |I|}$ its covariance, where
  \[
  \Sigma_{ij} = \frac1n\ip{\bs_i}{\bs_j}\in[\beta-\eta,\beta],\quad i,j\in I.
  \]
  Next, applying Lemma~\ref{lemma:anti-con} with $V=[-2^{-\epsilon n},2^{-\epsilon n}]^m$, we get
 \[
 \mathbb{E}[I_E] = \mathbb{P}\left[\max_{i\in I}|\mathcal{Z}_i|\le 2^{-\epsilon n}\right]\le (2\pi)^{-\frac{|I|}{2}}|\Sigma|^{-\frac12} (2\cdot 2^{-\epsilon n})^{|I|}.
 \] 
  Write $\Sigma=\Sigma'+E$ where $\Sigma'_{ii}=1$ and $\Sigma'_{ij}=\Sigma'_{ji}=\beta$ for $i\ne j$. In particular, (a) $\Sigma'=(1-\beta)I+\beta \boldsymbol{1}_{|I|\times |I|}$ and (b) $E$ is such that $E_{ii}=0$ and $-\eta\le E_{ij}=E_{ji}\le 0$ for $i<j$, so that $\|E\|_F< |I|\eta\le \eta m$. Further, the spectrum of $\Sigma'$ consists of the eigenvalue $1-\beta$ with multiplicity $|I|-1$ and the eigenvalue $1-\beta+\beta|I|$ with multiplicity one. So, Theorem~\ref{thm:wielandt-hoffman} yields that provided $\|E\|_2<1-\beta$, $\Sigma$ is invertible. As $\|E\|_2\le\|E\|_F<m\eta$, it suffices to have $\eta<\frac{1-\beta}{m}$ for $\Sigma$ to be invertible. This will be satisfied by our eventual choice of $m,\beta,\eta$. Under these, $\Sigma\succ 0$. Moreover, $n\Sigma$ is integer-valued, so that $|n\Sigma|\ge 1$. Thus, $|\Sigma|\ge n^{-|I|}\ge n^{-m}$ as $|I|\le m$.  Combining these facts, we conclude
  \begin{equation}\label{eq:num4}
      \mathbb{E}[I_E] \le \exp_2\bigl(-\epsilon n|I|+O(m\log n)\bigr).
  \end{equation}

  We are now ready to bound the numerator appearing in~\eqref{eq:cond-bd}. Combining~\eqref{eq:num1},~\eqref{eq:num2},~\eqref{eq:num3} and~\eqref{eq:num4} and using the fact $|I|+|I^c|=m$, we obtain
  \begin{align}
     & \mathbb{P}\left[\max_{1\le i\le m}H\bigl(\bs_i,Y_i(\tau_i)\bigr)\le 2^{-\epsilon n},\max_{0\le j\le m}H(\bs^*,X_j)\le 3^{-n}\right] \nonumber \\
     &\le \exp_2\Bigl(-n(\epsilon-\delta)|I|^c -n\epsilon|I|-nm\log_2 3+O(m\log n)\Bigr).\label{eq:num5}
  \end{align}
  Finally, combining~\eqref{eq:cond-bd},~\eqref{eq:denom} and~\eqref{eq:num5}, and recalling $\tau_i$ are arbitrary, we obtain
  \begin{align*}
      \mathbb{P}_{\rm pl}\left[\max_{1\le i\le m}H\bigl(\bs_i,Y_i(\tau_i)\bigr)\le 2^{-\epsilon n}\right]&\le\exp_2\Bigl(-n(\epsilon-\delta)|I|^c -n\epsilon|I|+n\log_2 3+O(m\log n)\Bigr) \\
      &\le \exp_2\Bigl(-n(\epsilon-\delta)(|I|+|I^c|)+n\log_2 3 +O(m\log n)\Bigr)\\
      &=\exp_2\bigl(-n(\epsilon-\delta)m +n\log_2 3+O(m\log n)\bigr).
    \end{align*}
    From here, Lemma~\ref{lemma:prob-bd-ensemble-ogp} follows directly by taking a union bound over all $\tau_1,\dots,\tau_m\in\mathcal{I}$.
  \end{proof}
Equipped with Lemma~\ref{lemma:prob-bd-ensemble-ogp}, we are ready to estimate the expectation.
\paragraph{Estimating the Expectation} Returning to~\eqref{eq:main-RV}, we have
\begin{align*}
    \mathbb{E}_{\rm pl}\bigl|\mathcal{S}(m,\beta,\eta,\epsilon n,\mathcal{I})\bigr|&=\sum_{(\bs_1,\dots,\bs_m)\in\mathcal{F}(m,\beta,\eta)}\mathbb{P}_{\rm pl}\Bigl[\exists \tau_1,\dots,\tau_m\in \mathcal{I}:H\bigl(\bs_i,Y_i(\tau_i)\bigr)\le 2^{-\epsilon n}\Bigr] \\
    &\le \exp_2\left(n(1+\log_2 3)+mnh_b\left(\frac{1-\beta+\eta}{2}\right)-n(\epsilon-\delta)m + cnm+ O(m\log n)\right)
\end{align*}
using the counting bound~\eqref{eq:stirr} and Lemma~\ref{lemma:prob-bd-ensemble-ogp}. We now prescribe the parameters. Set
\begin{equation}\label{eq:m-and-c}
m = \frac{8(1+\log_2 3)}{\epsilon-\delta}\quad\text{and}\quad c=\frac{\epsilon-\delta}{2}.
\end{equation}
We then choose $\beta,\eta$ such that
\begin{equation}\label{eq:BETA_ETA_1}
    \eta = \frac{1-\beta}{2m}
\end{equation}
and 
\begin{equation}\label{eq:BETA_ETA}
    h_b\left(\frac{1-\beta+\eta}{2}\right)=\frac{\epsilon-\delta}{4}<\frac{\epsilon}{4}
\end{equation}
Note that such a choice (for $\beta$) is indeed possible as 
\[
\varphi(\beta):\beta\mapsto h_b\left(\frac{1-\beta}{2} + \frac{1-\beta}{4m}\right)
\]
is continuous with $\lim_{\beta\to 1}\varphi(\beta)=0$. With these choices of parameters, we immediately get
\[
\mathbb{E}_{\rm pl}|\mathcal{S}(m,\beta,\eta,\epsilon n,\mathcal{I}) \le \exp_2\bigl(-\Theta(n)\bigr),
\]
so that applying Markov's inequality we have
\[
\mathbb{P}_{\rm pl}\bigl[\bigl|\mathcal{S}(m,\beta,\eta,\epsilon n,\mathcal{I})\bigr|\ge 1 \bigr]\le \mathbb{E}_{\rm pl}|\mathcal{S}(m,\beta,\eta,\epsilon n,\mathcal{I}) \le \exp_2\bigl(-\Theta(n)\bigr),
\]
establishing that
\[
\mathbb{P}_{\rm pl}\bigl[\mathcal{S}(m,\beta,\eta,\epsilon n,\mathcal{I}) = \varnothing\bigr]\ge 1-2^{-\Theta(n)}.
\]
We now complete the proof of Theorem~\ref{thm:m-ogp-planted-npp} by establishing that w.h.p.
\[
\mathcal{S}^*(m,\beta,\eta,\epsilon n,\mathcal{I})=\varnothing,
\]
where we remind the reader that for $(\bs_1,\dots,\bs_m)\in \mathcal{S}^*(m,\beta,\eta,\epsilon n,\mathcal{I})$, $\bs_i=\pm \bs^*$ is allowed.
\begin{lemma}\label{lemma:rest-of-set}
    We have
    \[
    \mathbb{P}_{\rm pl}\Bigl[\mathcal{S}^*(m,\beta,\eta,\epsilon n,\mathcal{I})\setminus \mathcal{S}(m,\beta,\eta,\epsilon n,\mathcal{I})\ne\varnothing\Bigr]\le 2^{-\Theta(n)}.
    \]
\end{lemma}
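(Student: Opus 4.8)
The plan is to bound $\mathbb{E}_{\rm pl}\bigl|\mathcal{S}^*(m,\beta,\eta,\epsilon n,\mathcal{I})\setminus\mathcal{S}(m,\beta,\eta,\epsilon n,\mathcal{I})\bigr|$ and then invoke Markov's inequality, re-using the first moment computation already carried out for $\mathcal{S}$. First I would observe that a tuple $(\bs_1,\dots,\bs_m)$ lies in the difference set only if $\bs_{i_0}\in\{\bs^*,-\bs^*\}$ for at least one coordinate $i_0\in[m]$, and moreover that \emph{at most one} coordinate can equal $\pm\bs^*$: if two did, their pairwise overlap would be in $\{-1,1\}$, which is disjoint from $[\beta-\eta,\beta]$ for any $0<\eta<\beta<1$. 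Hence, after a union bound over the $2m$ choices of the distinguished coordinate and its sign — and using the symmetry $\bs^*\mapsto-\bs^*$ together with the exchangeability of $X_1,\dots,X_m$ — it suffices to bound $\mathbb{E}_{\rm pl}|\mathcal{A}|$, where $\mathcal{A}$ is the set of $(\bs_1,\dots,\bs_m)$ with $\bs_m=\bs^*$, $\bs_j\ne\pm\bs^*$ for $j\le m-1$, pairwise overlaps $n^{-1}\ip{\bs_i}{\bs_j}\in[\beta-\eta,\beta]$ for $1\le i<j\le m$, and $\max_{i\le m}H(\bs_i,Y_i(\tau_i))\le 2^{-\epsilon n}$ for some $\tau_1,\dots,\tau_m\in\mathcal{I}$.

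Next I would estimate $\mathbb{E}_{\rm pl}|\mathcal{A}|$ via the same counting/probability split used for $\mathcal{S}$. For the counting, the pair $(j,m)$ forces $n^{-1}\ip{\bs_j}{\bs^*}\in[\beta-\eta,\beta]$, so each of $\bs_1,\dots,\bs_{m-1}$ has at most $n^{O(1)}\binom{n}{n(1-\beta+\eta)/2}$ admissible values, giving $|\mathcal{A}|\le\bigl(n^{O(1)}\binom{n}{n(1-\beta+\eta)/2}\bigr)^{m-1}$, i.e.\ exactly the bound~\eqref{eq:stirr} with $m-1$ in place of $m$. For the probability, I would drop the $i=m$ requirement (which only enlarges the event), noting in passing that it was in any case automatically satisfied under the planted measure when $\epsilon<\log_2 3$, since $H(\bs^*,Y_m(\tau_m))\le(\cos\tau_m+\sin\tau_m)\,3^{-n}\le\sqrt2\cdot3^{-n}\le 2^{-\epsilon n}$; the near-optimality of $\bs_1,\dots,\bs_{m-1}$ is then controlled by running the proof of Lemma~\ref{lemma:prob-bd-ensemble-ogp} verbatim with $m-1$ correlated solutions instead of $m$. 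The only place $m$ enters the covariance analysis there is the invertibility requirement $\eta<\tfrac{1-\beta}{m}$, which holds a fortiori since $\eta=\tfrac{1-\beta}{2m}$ by~\eqref{eq:BETA_ETA_1}; this yields, after the union bound over the $\le 2^{cn(m-1)}$ tuples $(\tau_1,\dots,\tau_{m-1})$, a probability at most $\exp_2\bigl(-n(\epsilon-\delta)(m-1)+n\log_2 3+cn(m-1)+O(m\log n)\bigr)$.

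Combining the two estimates and plugging in $c=\tfrac{\epsilon-\delta}{2}$ and $h_b\bigl(\tfrac{1-\beta+\eta}{2}\bigr)=\tfrac{\epsilon-\delta}{4}$ from~\eqref{eq:m-and-c}--\eqref{eq:BETA_ETA} would give $\mathbb{E}_{\rm pl}|\mathcal{A}|\le\exp_2\bigl(n\log_2 3-(m-1)n\tfrac{\epsilon-\delta}{4}+O(m\log n)\bigr)$, which is $\exp_2(-\Theta(n))$ because the choice $m=\tfrac{8(1+\log_2 3)}{\epsilon-\delta}$ carries a factor-of-two slack: $(m-1)\tfrac{\epsilon-\delta}{4}=2(1+\log_2 3)-\tfrac{\epsilon-\delta}{4}>\log_2 3$, say whenever $\epsilon-\delta<8$, which covers the regime of interest (for $\epsilon>1$ the sets $\mathcal{S},\mathcal{S}^*$ are empty w.h.p.\ by Theorem~\ref{thm:ground-state}, so there is nothing to prove). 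Markov's inequality together with $m=O(1)$ then finishes the argument. I expect the only nonroutine points to be the observation that no two coordinates of a tuple in $\mathcal{S}^*\setminus\mathcal{S}$ can equal $\pm\bs^*$ — which is what keeps the union bound of size $O(m)$ — and the bookkeeping check that the Gram-matrix anti-concentration step inside Lemma~\ref{lemma:prob-bd-ensemble-ogp} survives the decrement from $m$ to $m-1$ solutions; neither is a genuine difficulty.
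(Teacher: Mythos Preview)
Your argument is correct, but the paper takes a shorter route. Instead of retaining the remaining $m-1$ non-planted coordinates and rerunning the full first-moment machinery of Lemma~\ref{lemma:prob-bd-ensemble-ogp} at the reduced index count, the paper observes that the existence of any tuple in $\mathcal{S}^*\setminus\mathcal{S}$ already forces a \emph{single} $\bs\in\Sigma_n$ with $n^{-1}\ip{\bs}{\bs^*}\in[\beta-\eta,\beta]$ and $H(\bs,Y(\tau))\le 2^{-\epsilon n}$ for some $\tau\in\mathcal{I}$ (namely, any coordinate adjacent to the distinguished $\pm\bs^*$). It then bounds the expected number of such $\bs$ directly: the counting contributes at most $\exp_2\bigl(n(\epsilon-\delta)/4+O(\log n)\bigr)$ from~\eqref{eq:BETA_ETA}, and the probability is handled by an explicit $3\times 3$ covariance computation (the determinant is $1-\mathcal{O}^2$, bounded away from zero since $\beta<1$), yielding $\exp_2\bigl(-(\epsilon+\delta)n/2\bigr)$ after the union bound over $\mathcal{I}$. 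This avoids revisiting the conditioning argument in Lemma~\ref{lemma:prob-bd-ensemble-ogp} entirely. Your approach, by contrast, systematically recycles that lemma, which is heavier but has the virtue of making the dependence on $m$ transparent and showing that the parameter choice~\eqref{eq:m-and-c} has slack to spare. One small slip: your bound $|\mathcal{A}|\le\bigl(n^{O(1)}\binom{n}{n(1-\beta+\eta)/2}\bigr)^{m-1}$ is correct, but it is not ``exactly~\eqref{eq:stirr} with $m-1$ in place of $m$''---that bound carries an extra $2^n$ from the unconstrained first coordinate, which you (rightly) do not have since $\bs_m=\bs^*$ constrains every $\bs_j$. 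Your final exponent computation uses the correct version, so the argument stands.
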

Note that assuming Lemma~\ref{lemma:rest-of-set}, we are indeed done as
\begin{align*}
&\mathbb{P}_{\rm pl}\Bigl[\mathcal{S}^*(m,\beta,\eta,\epsilon n,\mathcal{I})\ne\varnothing\Bigr] \\
&\le \mathbb{P}_{\rm pl}\Bigl[\mathcal{S}(m,\beta,\eta,\epsilon n,\mathcal{I})\ne\varnothing\Bigr] + \mathbb{P}_{\rm pl}\Bigl[\mathcal{S}^*(m,\beta,\eta,\epsilon n,\mathcal{I})\setminus \mathcal{S}(m,\beta,\eta,\epsilon n,\mathcal{I})\ne\varnothing\Bigr]\\
&\le 2^{-\Theta(n)}.
\end{align*}
So, it suffices to establish Lemma~\ref{lemma:rest-of-set}.
\begin{proof}[Proof of Lemma~\ref{lemma:rest-of-set}]
Observe that
\begin{align*} 
&\Bigl\{\mathcal{S}^*(m,\beta,\eta,\epsilon n,\mathcal{I})\setminus \mathcal{S}(m,\beta,\eta,\epsilon n,\mathcal{I})\ne\varnothing\Bigr\}\\
&\subseteq \Bigl\{\exists i\in[m],\bs_i\in\Sigma_n, \tau_i\in\mathcal{I}:H(\bs_i,Y_i(\tau_i))\le 2^{-\epsilon n},\beta-\eta\le n^{-1}\ip{\bs_i}{\bs^*}\le \beta\Bigr\}.
\end{align*}
Next, set
\begin{equation}\label{eq:Set-s-hat}\widehat{\mathcal{S}}\triangleq\sum_{\substack{\bs\in\Sigma_n\\ \beta-\eta \le n^{-1}\ip{\bs}{\bs^*}\le \beta}}\ind\Bigl\{\exists \tau\in\mathcal{I}:H(\bs,Y(\tau))\le 2^{-\epsilon n}\Bigr\},
\end{equation}
where $Y=\cos(\tau)X + \sin(\tau)X'$ for $X,X'\sim \cN(0,I_n)$ i.i.d. Clearly,
\[
\mathbb{P}_{\rm pl}\Bigl[\exists i\in[m],\bs_i\in\Sigma_n, \tau_i\in\mathcal{I}:H(\bs_i,Y_i(\tau_i))\le 2^{-\epsilon n},\beta-\eta\le n^{-1}\ip{\bs}{\bs^*}\le \beta\Bigr]\le \mathbb{P}_{\rm pl}\Bigl[\widehat{\mathcal{S}}\ne\varnothing\Bigr].
\]
So, it suffices to verify that
\begin{equation}\label{eq:to-prove}
    \mathbb{P}_{\rm pl}\Bigl[\widehat{\mathcal{S}}\ne\varnothing\Bigr] = \mathbb{P}\Bigl[\widehat{\mathcal{S}}\ne\varnothing\Big\lvert H(\bs^*,X)\le 3^{-n},H(\bs^*,X')\le 3^{-n}\Bigr] \le 2^{-\Theta(n)}.
\end{equation}
Once again, we show $\mathbb{E}_{\rm pl}\bigl|\widehat{\mathcal{S}}\bigr|\le 2^{-\Theta(n)}$ and conclude by applying Markov's inequality. 
\paragraph{Counting term} With $\beta$ and $\eta$ chosen as in~\eqref{eq:BETA_ETA_1} and~\eqref{eq:BETA_ETA}, we have
\begin{equation}\label{eq:cnt}
\Bigl|\Bigl\{\bs\in\Sigma_n:\beta-\eta\le n^{-1}\ip{\bs}{\bs^*}\le \beta\Bigr\}\Bigr|\le n^{O(1)}\binom{n}{n\frac{1-\beta+\eta}{2}} < \exp_2\left(\frac{(\epsilon-\delta)n}{4}+O(\log n)\right).
\end{equation}
\paragraph{Probability term} Fix any $\bs\in\Sigma_n$ with 
\[
\Overlap\triangleq \frac1n\ip{\bs}{\bs^*}\in[\beta-\eta,\beta],
\]
and a $\tau\in\mathcal{I}$. Further, set
\[
Z_1(\Overlap) = \frac{1}{\sqrt{n}}\ip{\bs}{Y(\tau)}, \quad Z_2(\Overlap)=\frac{1}{\sqrt{n}}\ip{\bs^*}{X},\quad\text{and}\quad Z_3(\Overlap)=\frac{1}{\sqrt{n}}\ip{\bs^*}{X'}.
\]
Note that unconditionally, $Z_i(\Overlap)\sim \cN(0,1)$, $1\le i\le 3$, and that $(Z_i(\Overlap):1\le i\le 3)$ is a multivariate normal random vector with covariance matrix
\[
\Sigma_\Overlap =\begin{pmatrix}
    1 & \cos(\tau)\Overlap & \sin(\tau)\Overlap \\
    \cos(\tau)\Overlap & 1 & 0 \\
    \sin(\tau)\Overlap & 0 & 1
\end{pmatrix}.
\]
Moreover,
\[
|\Sigma_\Overlap| = 1-\Overlap^2>0
\]
as $\beta<1$. So,
\begin{align*}
&\mathbb{P}\Bigl[H\bigl(\bs,Y(\tau)\bigr)\le 2^{-\epsilon n}\Big\lvert H(\bs^*,X)\le 3^{-n},H(\bs^*,X')\le 3^{-n}\Bigr]\\&= \frac{\mathbb{P}\Bigl[H\bigl(\bs,Y(\tau)\bigr)\le 2^{-\epsilon n},H(\bs^*,X)\le 3^{-n},H(\bs^*,X')\le 3^{-n}\Bigr]}{\mathbb{P}\Bigl[ H(\bs^*,X)\le 3^{-n},H(\bs^*,X')\le 3^{-n}\Bigr]}\\
&\le \frac{(2\pi)^{-\frac32}|\Sigma_\Overlap|^{-\frac12}(2\cdot 3^{-n})^2 \cdot (2\cdot 2^{-\epsilon n})}{(2\pi)^{-1}(2\cdot 3^{-n})^2 (1+o_n(1))}\\
&=\frac{2+o_n(1)}{\sqrt{2\pi(1-\Overlap^2)}}2^{-\epsilon n} = \exp_2\bigl(-\epsilon n +O(1)\bigr).
\end{align*}
As $\tau$ is arbitrary, we obtain by taking a union bound over $\tau\in\mathcal{I}$ that
\begin{equation}\label{eq:cnt2}
    \max_{\bs:\beta-\eta\le n^{-1}\ip{\bs}{\bs^*}\le \beta} \mathbb{P}_{\rm pl}\Bigl[\exists \tau\in\mathcal{I}:H\bigl(\bs,Y(\tau)\bigr)\le 2^{-\epsilon n}\Bigr]\le \exp_2\bigl(-\epsilon n + cn\bigr) = \exp_2\left(-\frac{\epsilon+\delta}{2}n\right),
\end{equation}
where we recalled $|\mathcal{I}|\le 2^{cn}$ for $c$ defined in~\eqref{eq:m-and-c}. Combining~\eqref{eq:cnt} and~\eqref{eq:cnt2}, we get
\begin{align*}
        \mathbb{E}_{\rm pl}\bigl|\widehat{\mathcal{S}}\bigr|&\le \exp_2\left(\frac{\epsilon -\delta}{4}n -\frac{\epsilon+\delta}{2}n +O(\log n)\right) \\
        &=\exp_2\bigl(-\Theta(n)\bigr).
\end{align*}
So, by Markov's inequality,
\[
\mathbb{P}_{\rm pl}\bigl[\bigl|\widehat{\mathcal{S}}\bigr|\ge 1\bigr]\le \mathbb{E}_{\rm pl}\bigl|\widehat{\mathcal{S}}\bigr|\le 2^{-\Theta(n)}.
\]
This completes the proof of Lemma~\ref{lemma:rest-of-set}.
\end{proof}
\subsection{Proof of Theorem~\ref{thm:stable-hardness}}\label{sec:stable-hardness}
Our proof follows an outline similar to those of~\cite[Theorem~3.2]{gamarnik2021algorithmic} and~\cite[Theorem~3.2]{gamarnik2022algorithms}. It is based, in particular, on the Ramsey Theory from extremal combinatorics and a contradiction argument. 
\subsubsection*{Auxiliary Results from Ramsey Theory}
We record two auxiliary results from Ramsey Theory. The first one is an upper bound on the ``two-color Ramsey numbers".
\begin{theorem}\label{thm:2color-ramsey}
    Fix $k,\ell\in\mathbb{N}$, and denote by $R(k,\ell)$ the smallest $n\in\mathbb{N}$ such that any red/blue coloring of the edges of $K_n$ necessarily contains either a red $K_k$ or a blue $K_\ell$. Then,
    \[
    R(k,\ell)\le \binom{k+\ell-2}{k-1} = \binom{k+\ell-2}{\ell-1}.
    \]
    In particular when $k=\ell=M$ for some $M\in\mathbb{N}$, 
    \[
    R(M,M) \le \binom{2M-2}{M-1}<4^M.
    \]
\end{theorem}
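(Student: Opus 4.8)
The plan is to prove Theorem~\ref{thm:2color-ramsey} by induction on $k+\ell$ using the classical Erd\H{o}s--Szekeres recurrence, then extract the diagonal bound as a one-line corollary. First I would handle the base cases: for $k=1$ (symmetrically $\ell=1$) one has $R(1,\ell)=1$ since a single vertex is vacuously a monochromatic $K_1$, and this matches $\binom{\ell-1}{0}=1$; equivalently one may start the induction at $R(2,\ell)=\ell=\binom{\ell}{1}$ together with its mirror image $R(k,2)=k$. So the claimed inequality holds whenever $k=1$ or $\ell=1$.

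Next I would establish the recurrence $R(k,\ell)\le R(k-1,\ell)+R(k,\ell-1)$ valid for $k,\ell\ge 2$. Let $n=R(k-1,\ell)+R(k,\ell-1)$ and fix an arbitrary red/blue coloring of the edges of $K_n$. Pick a vertex $v$ and partition the remaining $n-1$ vertices into the set $A$ of endpoints of red edges at $v$ and the set $B$ of endpoints of blue edges at $v$, so $|A|+|B|=n-1=R(k-1,\ell)+R(k,\ell-1)-1$. By pigeonhole, either $|A|\ge R(k-1,\ell)$ or $|B|\ge R(k,\ell-1)$. In the first case the induced coloring on $A$ contains either a blue $K_\ell$, and we are done, or a red $K_{k-1}$; since every edge from $v$ into $A$ is red, this red $K_{k-1}$ together with $v$ forms a red $K_k$. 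The second case is identical after swapping the two colors and the roles of $k$ and $\ell$. Hence any coloring of $K_n$ contains a red $K_k$ or a blue $K_\ell$, giving the recurrence.

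Then I would check that the binomial bound satisfies the same recurrence and base cases, so that $R(k,\ell)\le\binom{k+\ell-2}{k-1}$ follows by induction on $k+\ell$. The inductive step is exactly Pascal's rule:
\[
\binom{k+\ell-2}{k-1}=\binom{(k-1)+\ell-2}{(k-1)-1}+\binom{k+(\ell-1)-2}{k-1},
\]
and by the inductive hypothesis the two summands dominate $R(k-1,\ell)$ and $R(k,\ell-1)$ respectively, whose sum dominates $R(k,\ell)$ by the previous paragraph. The identity $\binom{k+\ell-2}{k-1}=\binom{k+\ell-2}{\ell-1}$ is the usual symmetry of binomial coefficients. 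Finally, for the diagonal case $k=\ell=M$ I would simply bound
\[
\binom{2M-2}{M-1}\le\sum_{j=0}^{2M-2}\binom{2M-2}{j}=2^{2M-2}=4^{M-1}<4^{M},
\]
which yields the stated strict inequality $R(M,M)<4^M$.

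There is no real obstacle here: this is the textbook Erd\H{o}s--Szekeres argument, and the only care needed is bookkeeping --- ensuring the base cases are phrased consistently with the convention for $\binom{n}{k}$, and that the recurrence is invoked only in the range $k,\ell\ge 2$ where both terms on the right-hand side are defined and the pigeonhole split is nonvacuous.
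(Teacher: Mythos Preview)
Your proposal is correct and is precisely the classical Erd\H{o}s--Szekeres argument. The paper does not give its own proof of this theorem; it simply records it as a folklore result and cites \cite[Theorem~6.6]{gamarnik2021algorithmic}, so your write-up is exactly the kind of proof the reference would supply.
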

Theorem~\ref{thm:2color-ramsey} is a folklore result in Ramsey Theory, see e.g.~\cite[Theorem~6.6]{gamarnik2021algorithmic} for a proof. 

The second auxiliary result is an upper bound on the ``multicolor Ramsey numbers". 
\begin{theorem}\label{thm:Ramsey-multicolor}
    For $q,m\in\mathbb{N}$, denote by $R_q(m)$ the smallest positive integer $n$ such that any $q$-coloring of the edges of $K_n$ necessarily contains a monochromatic $K_m$. Then,
    \[
    R_q(m)\le q^{qm}.
    \]
\end{theorem}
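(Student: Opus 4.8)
The plan is to prove Theorem~\ref{thm:Ramsey-multicolor} by the classical iterated-pigeonhole argument: a greedy extraction of a ``forward-monochromatic'' vertex sequence, followed by a second pigeonhole on the colours. Throughout I implicitly assume $q\ge 2$, which is the only regime relevant for the application (and the one in which the bound holds). Fix $n=q^{qm}$ and an arbitrary $q$-colouring $\varphi$ of the edges of the complete graph $K_n$; it suffices to exhibit a monochromatic $K_m$.

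\textbf{Step 1 (greedy extraction).} I would build nested vertex subsets $V_0\supseteq V_1\supseteq\cdots$ together with vertices $v_i$ and colours $c_i$ as follows. Set $V_0=V(K_n)$. Given a nonempty $V_{i-1}$, pick any $v_i\in V_{i-1}$; the $|V_{i-1}|-1$ edges joining $v_i$ to $V_{i-1}\setminus\{v_i\}$ carry at most $q$ colours, so by pigeonhole some colour $c_i$ appears on at least $(|V_{i-1}|-1)/q$ of them. Let $V_i$ be the set of the corresponding endpoints, so that $V_i\subseteq V_{i-1}\setminus\{v_i\}$ and every edge from $v_i$ into $V_i$ has colour $c_i$.

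\textbf{Step 2 (size bookkeeping).} From $|V_i|\ge (|V_{i-1}|-1)/q$ and $|V_0|=n$, an easy induction gives $|V_i|\ge n/q^{i}-\frac{1}{q-1}\ge q^{\,qm-i}-1$, where $q\ge 2$ and $n=q^{qm}$ are used in the last step. In particular $|V_{i-1}|\ge q-1\ge 1$ for every $i\le qm$, so the construction does not terminate before producing $v_1,\dots,v_{qm}$ and $c_1,\dots,c_{qm}$.

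\textbf{Step 3 (second pigeonhole and monochromaticity).} Among the $qm$ colours $c_1,\dots,c_{qm}$, which take at most $q$ values, some colour $c$ occurs at least $m$ times; say $c_{i_1}=\cdots=c_{i_m}=c$ with $i_1<\cdots<i_m$. For $j<k$ we have $i_k-1\ge i_j$, hence $v_{i_k}\in V_{i_k-1}\subseteq V_{i_j}$, so the edge $\{v_{i_j},v_{i_k}\}$ has colour $c_{i_j}=c$. Thus $\{v_{i_1},\dots,v_{i_m}\}$ induces a monochromatic $K_m$, and therefore $R_q(m)\le n=q^{qm}$. The only point requiring genuine care is the size bookkeeping in Step 2: one must check that the cumulative additive loss $\sum_{j\ge 1}q^{-j}\le\frac{1}{q-1}$ incurred by the recursion is negligible, so that $q^{qm}$ vertices really do survive $qm$ greedy rounds; this is precisely where $q\ge 2$ enters (the estimate, and the statement itself, degenerates at $q=1$, where in fact $R_1(m)=m$, but that case never arises in our use of the theorem).
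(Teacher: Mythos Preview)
Your proof is correct and is precisely the neighborhood-chasing argument of Erd\H{o}s and Szekeres that the paper cites (the paper does not write out a proof, only a pointer to~\cite{erdos1935combinatorial} and~\cite{conlon2015recent}). Your caveat that the bound as stated requires $q\ge 2$ is accurate and harmless for the application, where $q=Q$ is a large constant.
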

Theorem~\ref{thm:Ramsey-multicolor} can be established by modifying the \emph{neighborhood-chasing} argument of Erd{\"o}s and Szekeres~\cite{erdos1935combinatorial}, see~\cite[Page~6]{conlon2015recent} for further discussion.

Equipped with Theorems~\ref{thm:2color-ramsey}-\ref{thm:Ramsey-multicolor}, we are ready to start proving Theorem~\ref{thm:stable-hardness}. 
\subsubsection*{Proof of Theorem~\ref{thm:stable-hardness}}
We begin by reminding the reader the notation
\[
H(\bs,X) = \frac{1}{\sqrt{n}}\bigl|\ip{\bs}{X}\bigr|.
\]
Fix an $\epsilon>0$. In what follows, $E=\epsilon n$. We establish that there exists no randomized algorithm $\A:\R^n\times \Omega\to\Sigma_n$ which is
\[
\bigl(E,p_f,p_{\rm st},p_\ell,\rho,L\bigr)-\text{stable}
\]
for the p-\texttt{NPP} in the sense of Definition~\ref{def:stable-algs}. We argue by contradiction. Suppose that such an $\A$ exists.
\paragraph{Parameter Choice} For $E=\epsilon n$, let $m\in\mathbb{N}$, $0<\eta<\beta<1$ be the parameters prescribed by the $m$-OGP result, Theorem~\ref{thm:m-ogp-planted-npp}. In particular, an inspection of the proof of Theorem~\ref{thm:m-ogp-planted-npp} reveals that
\begin{equation}\label{eq:beta-eta-relation}
    \eta = \frac{1-\beta}{2m}\quad\text{and}\quad h_b\left(\frac{1-\beta+\eta}{2}\right) <\frac{\epsilon}{4}.
\end{equation}
We first set
\begin{equation}\label{eq:param-f}
    f=C_1n \quad \text{where}\quad C_1 = \frac{\eta^2}{1600}.
\end{equation}
We then define auxiliary parameters $Q$ and $T$, where
\begin{equation}\label{eq:param-Q-and-T}
    Q=\frac{40C_2\pi\sqrt{L}}{\eta}\quad\text{and}\quad T = \exp_2\left(2^{4mQ\log_2 Q}\right)
\end{equation}
with  $C_2$ being a sufficiently large constant (see Lemma~\ref{lemma:planted-con} below). We finally set
\begin{equation}\label{eq:probs-and-rho}
p_f = \frac{3^{-n}}{108QT\sqrt{\pi}},\quad p_{\rm st} = \frac{\pi}{648Q^2T},\quad p_\ell = \frac{1}{54T},\quad\text{and}\quad \rho = \cos\left(\frac{\pi}{2Q}\right).
\end{equation}
\paragraph{Reduction to Deterministic Algorithms} We first reduce the proof to the case $\A$ is deterministic. That is, we establish that randomness does not improve the performance of stable algorithms.
\begin{lemma}\label{lemma:not-importante}
    Let $\A:\R^n\times \Omega\to \Sigma_n$ be a randomized algorithm that is $(E,p_f,p_{\rm st},p_\ell,\rho,f,L)$-stable for the p-\texttt{NPP} in the sense of Definition~\ref{def:stable-algs}. Then, there exists a deterministic algorithm $\A^*:\R^n\to\Sigma_n$ that is $(E,6p_f,6p_{\rm st},6p_\ell,\rho,f,L)$-stable.
\end{lemma}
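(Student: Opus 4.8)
The plan is the standard derandomization argument: average over the internal randomness $\omega$ and then fix a good realization. For $C\in\{1,\sqrt 2\}$ write $q_f(\omega,C)=\mathbb{P}_X[H(\A(X,\omega),X)>2^{-E}\mid H(\bs^*,X)\le C\cdot 3^{-n}]$, $q_{\rm st}(\omega,C)=\mathbb{P}_{X,Y}[d_H(\A(X,\omega),\A(Y,\omega))>f+L\|X-Y\|_2^2\mid H(\bs^*,X),H(\bs^*,Y)\le C\cdot 3^{-n}]$ (with $X,Y\distr\cN(0,I_n)$, $\mathbb{E}[XY^T]=\rho I_n$ as in the Stability clause of Definition~\ref{def:stable-algs}), and $q_\ell(\omega)=\mathbb{P}_X[\A(X,\omega)=\pm\bs^*\mid H(\bs^*,X)\le 3^{-n}]$. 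Each of these five functions of $\omega$ is an integral of a jointly measurable indicator against a fixed conditional law, hence measurable, so Fubini applies below.

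First I would use Fubini together with the hypothesis that $\A$ is $(E,p_f,p_{\rm st},p_\ell,\rho,f,L)$-stable: the $\min_{C}$ in the Success and Stability clauses forces $\mathbb{E}_\omega[q_f(\omega,C)]\le p_f$ and $\mathbb{E}_\omega[q_{\rm st}(\omega,C)]\le p_{\rm st}$ for \emph{both} $C\in\{1,\sqrt 2\}$, while the Anti-Concentration clause gives $\mathbb{E}_\omega[q_\ell(\omega)]\le p_\ell$. Next, apply Markov's inequality to each of these five nonnegative $\omega$-random variables: $\mathbb{P}_\omega[q_f(\omega,C)>6p_f]<1/6$ and $\mathbb{P}_\omega[q_{\rm st}(\omega,C)>6p_{\rm st}]<1/6$ for each of the two values of $C$, and $\mathbb{P}_\omega[q_\ell(\omega)>6p_\ell]<1/6$. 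A union bound over these five events has total probability $<5/6<1$, so there exists $\omega^*\in\Omega$ at which all five quantities are simultaneously at most $6p_f$, $6p_{\rm st}$, and $6p_\ell$ respectively.

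Finally, set $\A^*(\cdot)\triangleq\A(\cdot,\omega^*)$, a deterministic map $\R^n\to\Sigma_n$. For a deterministic algorithm the only randomness in Definition~\ref{def:stable-algs} is the input $X$ (and its coupled perturbation $Y$), and the choice of $\omega^*$ is precisely the assertion that $\A^*$ meets the Success, Stability, and Anti-Concentration clauses with failure parameters $6p_f$, $6p_{\rm st}$, $6p_\ell$ and the \emph{same} $E,\rho,f,L$ (these latter parameters occur only inside the probability events, not as failure bounds). Hence $\A^*$ is $(E,6p_f,6p_{\rm st},6p_\ell,\rho,f,L)$-stable, which is the claim.

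There is essentially no analytic difficulty here; the only points requiring care are bookkeeping ones: one must use the \emph{same} $\omega^*$ for all three clauses and for both conditioning scales $C\in\{1,\sqrt 2\}$, which is why the inflation factor $6$ is chosen strictly larger than the number $5$ of events being union-bounded rather than something smaller, and one should check joint measurability so that the Fubini and Markov steps are legitimate.
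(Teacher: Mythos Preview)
Your proof is correct and follows essentially the same approach as the paper: both apply Markov's inequality with factor $6$ to each of the five conditional failure probabilities (success and stability for $C\in\{1,\sqrt 2\}$, plus anti-concentration) and then intersect the good $\omega$-sets; the paper does this via iterated inclusion--exclusion while you use a direct union bound, but the arithmetic is identical. One cosmetic point: Markov gives $\mathbb{P}_\omega[q_f(\omega,C)>6p_f]\le 1/6$ rather than strict inequality, so the union bound yields $\le 5/6$, but since $5/6<1$ the conclusion is unaffected.
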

\begin{proof}[Proof of Lemma~\ref{lemma:not-importante}]
    Let $C=1$. For any $\omega\in \Omega$, define the event
    \[
    \mathcal{E}_1(\omega)\triangleq \Bigl\{H\bigl(\A(X,\omega),X\bigr)\le 2^{-E}\Bigr\},\quad\text{where}\quad X\in\R^n\quad\text{and}\quad \A(X)\in\Sigma_n.
    \]
    Observe that 
\begin{align*}
    \mathbb{P}_{\rm pl,\omega}\Bigl[H\bigl(\A(X,\omega),X\bigr)> 2^{-E}\Bigr]&=\mathbb{E}_\omega\Bigl[\mathbb{P}_{\rm pl}\bigl[\mathcal{E}_1(\omega)^c\bigr]\Bigr]\le p_f,
\end{align*}
where we perceive $\mathbb{P}_{\rm pl}[\mathcal{E}_1^c(\omega)]$ as a random variable with sole source of randomness $\omega\in\Omega$ with 
\[
\mathbb{P}_{\rm pl}[\mathcal{E}] = \mathbb{P}\bigl[\mathcal{E} \big\lvert H(\bs^*,X)\le 3^{-n}\bigr].
\]
for any event $\mathcal{E}$. Using Markov's inequality, 
\[
\mathbb{P}_\omega\bigl[\mathbb{P}_{\rm pl}\bigl[\mathcal{E}_1(\omega)^c\bigr]\ge 6p_f\bigr]\le \frac{\mathbb{E}_\omega\bigl[\mathbb{P}_{\rm pl}\bigl[\mathcal{E}_1(\omega)^c \bigr]\bigr]}{6p_f}\le \frac16.
\]
So, $\mathbb{P}[\Omega_1]\ge \frac56$, where
\[
\Omega_1=\Bigl\{\omega\in\Omega:\mathbb{P}\bigl[\mathcal{E}_1(\omega)^c\big\lvert H(\bs^*,X)\le 3^{-n}\bigr]<6p_f\Bigr\}.
\]
Next, define 
\[
\mathcal{E}_2(\omega)=\Bigl\{d_H\bigl(\A(X,\omega),\A(Y,\omega)\bigr)\le f+L\|X-Y\|_2\Bigr\},
\]
where $X,Y\sim \cN(0,I_n)$ with $\mathbb{E}[X_iY_i] = \rho$ for $1\le i\le n$ and $\mathbb{E}[X_iY_j]=0$ for $i\ne j$. 
Then the exact same argument as above yields $\mathbb{P}[\Omega_2]\ge \frac56$, where
\[
\Omega_2 = \Bigl\{\omega\in\Omega:\mathbb{P}\bigl[\mathcal{E}_2(\omega)^c\big\lvert H(\bs^*,X)\le 3^{-n},H(\bs^*,Y)\le 3^{-n}\bigr]<6p_{\rm st}\Bigr\}.
\]
Also letting 
\[
\mathcal{E}_3(\omega) = \bigl\{\A(X,\omega)\in\{\pm \bs^*\}\bigr\}
\]
we obtain $\mathbb{P}[
\widetilde{\Omega}]\ge \frac56$, where
\[
\widetilde{\Omega}=\Bigl\{\omega\in\Omega:\mathbb{P}\bigl[\mathcal{E}_3(\omega)^c\big\lvert H(\bs^*,X)\le 3^{-n}\bigr]<6p_\ell\Bigr\}.
\]
Next, set $C=\sqrt{2}$ and analogously define
\begin{align*}
    \overline{\Omega}_1&=\Bigl\{\omega\in\Omega:\mathbb{P}\bigl[\mathcal{E}_1(\omega)^c\big\lvert H(\bs^*,X)\le \sqrt{2}\cdot 3^{-n}\bigr]<6p_f\Bigr\},\\
    \overline{\Omega}_2&=\Bigl\{\omega\in\Omega:\mathbb{P}\bigl[\mathcal{E}_2(\omega)^c\big\lvert H(\bs^*,X)\le \sqrt{2}\cdot 3^{-n},H(\bs^*,Y)\le \sqrt{2}\cdot 3^{-n}\bigr]<6p_{\rm st}\Bigr\}.
\end{align*}
Then the same reasoning yields $\mathbb{P}_\omega[\overline{\Omega}_1]\ge \frac56$ and $\mathbb{P}_\omega[\overline{\Omega}_2]\ge\frac56$.

Now, we have
\[
\mathbb{P}_\omega[\Omega_1\cap \Omega_2]=\mathbb{P}_\omega[\Omega_1]+\mathbb{P}_\omega[\Omega_2] - \mathbb{P}_\omega[\Omega_1\cup\Omega_2]\ge 2\cdot \frac56-1 = \frac23,
\]
where we used $\mathbb{P}_\omega[\Omega_1\cup \Omega_2]\le 1$. Similarly, $\mathbb{P}_\omega[\overline{\Omega}_1\cap \overline{\Omega}_2]\ge \frac23$. Applying the same idea, this time to $\Omega_1\cap \Omega_2$ and $\overline{\Omega}_1\cap \overline{\Omega}_2$, we find that
\[
\mathbb{P}_\omega\bigl[\Omega_1\cap \Omega_2\cap \overline{\Omega}_1\cap \overline{\Omega}_2\bigr]\ge 2\cdot \frac23-1 = \frac13.
\]
Finally, we apply the same logic to the sets $\Omega_1\cap \Omega_2\cap \overline{\Omega}_1\cap \overline{\Omega}_2$ and $\widetilde{\Omega}$ to find 
\[
\mathbb{P}_\omega\bigl[\Omega_1\cap \Omega_2\cap \overline{\Omega}_1\cap \overline{\Omega}_2\cap \widetilde{\Omega}\bigr]\ge \frac13+\frac56 - 1 = \frac16>0.
\]
So, \[
\Omega_1\cap\Omega_2\cap \overline{\Omega}_1\cap \overline{\Omega}_2\cap \widetilde{\Omega}\ne\varnothing.
\]
Now, take any `seed'
\[
\omega^*\in \Omega_1\cap\Omega_2\cap \overline{\Omega}_1\cap \overline{\Omega}_2\cap\widetilde{\Omega}
\]
and set $\A^*(\cdot)=\A(\cdot,\omega^*)$. It is then clear that $\A^*$ is $(E,6p_f,6p_{\rm st},6p_\ell,\rho,L)$-stable, establishing Lemma~\ref{lemma:not-importante}.
\end{proof}
In the remainder of the proof, we will focus on the deterministic algorithm $\A^*$ appearing in Lemma~\ref{lemma:not-importante} that is  $(E,6p_f,6p_{\rm st},6p_\ell,\rho,L)$-stable.
\paragraph{Chaos Event.} The next auxiliary result is the \emph{chaos event}, where we focus on $m$-tuples $\bs_1,\dots,\bs_m\in\Sigma_n\setminus\{\pm \bs^*\}$ with $\max_{1\le i\le m} H(\bs_i,X_i)\le 2^{-\epsilon n}$. 
\begin{lemma}\label{lemma:chaos}
Let $X_i\sim\cN(0,I_n)$, $1\le i\le m$ be i.i.d.\,Fix an $\epsilon>0$ and a $\bs^*\in\Sigma_n$. Let $\eta^*$ be such that $h_b(\eta^*/2)=\epsilon/2$. Then for all sufficiently large  $m$ and $n$,
    \[
\mathbb{P}\Bigl[\mathcal{S}\bigl(m,1,\eta^*,\epsilon n,\{\pi/2\}\bigr)\ne\varnothing\Big\lvert \max_{1\le j\le m}H(\bs^*,X_j)\le 3^{-n}\Bigr]\le 2^{-\Omega(n)},
    \]
    where $\mathcal{S}$ is the set appearing in Definition~\ref{def:overlap-set}.
\end{lemma}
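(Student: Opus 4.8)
\textbf{Proof plan for Lemma~\ref{lemma:chaos}.} The plan is to run a first moment computation on the cardinality of $\mathcal{S}\bigl(m,1,\eta^*,\epsilon n,\{\pi/2\}\bigr)$ under the planted measure, exactly parallelling the proof of Theorem~\ref{thm:m-ogp-planted-npp} but with the pairwise overlap window now being $[1-\eta^*,1]$ (i.e. the tuples are all \emph{near-parallel}) rather than $[\beta-\eta,\beta]$. Since $\mathcal{I}=\{\pi/2\}$, the interpolation drops out: for each $i$ we simply have $Y_i(\pi/2)=X_i$, so the near-optimality condition is just $H(\bs_i,X_i)\le 2^{-\epsilon n}$ with $X_1,\dots,X_m$ i.i.d.\ (and conditioned, via the planted measure, on $\max_j H(\bs^*,X_j)\le 3^{-n}$). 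First I would write
\[
\mathbb{E}_{\rm pl}\bigl|\mathcal{S}(m,1,\eta^*,\epsilon n,\{\pi/2\})\bigr| = \sum_{(\bs_1,\dots,\bs_m)}\mathbb{P}_{\rm pl}\Bigl[\max_{1\le i\le m}H(\bs_i,X_i)\le 2^{-\epsilon n}\Bigr],
\]
the sum running over $m$-tuples with $\bs_i\ne\pm\bs^*$ and $1-\eta^*\le n^{-1}\ip{\bs_i}{\bs_j}\le 1$ for all $i<j$.

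For the counting term, fixing $\bs_1\ne\pm\bs^*$ (at most $2^n$ choices) and then each subsequent $\bs_i$ within Hamming distance $\le \eta^* n/2$ of $\bs_1$, Lemma~\ref{lemma:sum-of-bin} and Lemma~\ref{lemma:binom-entropy} give at most $\exp_2\bigl(n + (m-1)n h_b(\eta^*/2) + O(m\log n)\bigr) = \exp_2\bigl(n + (m-1)\tfrac{\epsilon}{2}n + O(m\log n)\bigr)$ tuples, using $h_b(\eta^*/2)=\epsilon/2$. For the probability term, because each $\bs_i$ sees only its own independent copy $X_i$, the conditional joint probability factorizes over $i$: conditioning on $\max_j H(\bs^*,X_j)\le 3^{-n}$ keeps the $X_i$ independent, and for each $i$ one has, with $Z_i = n^{-1/2}\ip{\bs_i}{X_i}$ and $Z_i^* = n^{-1/2}\ip{\bs^*}{X_i}$ a centered bivariate normal with correlation $\rho_i = n^{-1}\ip{\bs_i}{\bs^*}\le 1-2/n$,
\[
\mathbb{P}\bigl[H(\bs_i,X_i)\le 2^{-\epsilon n}\mid H(\bs^*,X_i)\le 3^{-n}\bigr] = \frac{\mathbb{P}[|Z_i|\le 2^{-\epsilon n},|Z_i^*|\le 3^{-n}]}{\mathbb{P}[|Z_i^*|\le 3^{-n}]} \le \frac{\sqrt{n}}{\sqrt{2\pi}}2^{-\epsilon n}(1+o_n(1))
\]
via Lemma~\ref{lemma:anti-con} (or directly Lemma~\ref{lemma:gaussproblemma}, with $1-\rho_i^2\ge 2/n$) in the numerator and Lemma~\ref{lemma:gaussproblemma}(a) in the denominator. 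Taking the product over $i\in[m]$ gives $\mathbb{P}_{\rm pl}[\cdots]\le \exp_2\bigl(-\epsilon n m + O(m\log n)\bigr)$. Multiplying by the counting bound yields $\mathbb{E}_{\rm pl}|\mathcal{S}| \le \exp_2\bigl(n + (m-1)\tfrac{\epsilon}{2}n - \epsilon n m + O(m\log n)\bigr)$; for $m$ a sufficiently large constant (so that the $-\epsilon m + \tfrac{\epsilon}{2}(m-1) + 1$ coefficient of $n$ is strictly negative — $m > 2/\epsilon + 2$ suffices) this is $\exp_2(-\Omega(n))$, and Markov's inequality closes the argument.

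The only genuinely delicate point is the same one that arises in the proof of Theorem~\ref{thm:m-ogp-planted-npp}: justifying that conditioning on the rare event $\{\max_{1\le j\le m} H(\bs^*,X_j)\le 3^{-n}\}$ does not spoil the factorization. This is handled exactly as in Lemma~\ref{lemma:prob-bd-ensemble-ogp}: the event is a product event $\bigcap_j\{H(\bs^*,X_j)\le 3^{-n}\}$ over the independent coordinates $X_j$, so $\mathbb{P}_{\rm pl}[\max_i H(\bs_i,X_i)\le 2^{-\epsilon n}] = \prod_i \mathbb{P}[H(\bs_i,X_i)\le 2^{-\epsilon n}\mid H(\bs^*,X_i)\le 3^{-n}]$, and each factor is the bivariate Gaussian ratio bounded above. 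I do not expect any serious obstacle beyond bookkeeping; the near-parallel overlap regime actually makes the counting term \emph{smaller} than in the equidistant case, so no new idea is needed relative to the $m$-OGP proof. (A minor remark: the hypothesis of the lemma writes the constraint as $h_b(\eta^*/2)=\epsilon/2$, and one should double-check the exponent arithmetic gives a net negative coefficient — it does for all large constant $m$, which is all that is claimed.)
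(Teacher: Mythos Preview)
Your proposal is correct and matches the paper's own proof essentially line for line: the paper also runs a first moment computation, bounds the number of admissible tuples by $\exp_2\bigl(n+mnh_b(\eta^*/2)+O(\log n)\bigr)$, factorizes the conditional probability over the independent $X_j$'s using the bivariate Gaussian bound~\eqref{eq:use-later} to get $\exp_2(-\epsilon mn+O(m\log n))$, and finishes via Markov's inequality once $m$ is a large enough constant. The only cosmetic difference is that the paper writes $m$ rather than your (slightly sharper) $m-1$ in the counting exponent, which does not affect the conclusion.
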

\begin{proof}[Proof of Lemma~\ref{lemma:chaos}]
    The proof is very similar to, and in fact much easier than, that of Theorem~\ref{thm:m-ogp-planted-npp}. So, using the exact same notation, we only highlight the necessary changes. Once again, the argument is based on Markov's inequality. Fix any $\bs_1,\dots,\bs_m\in\Sigma_n\setminus\{\pm\bs^*\}$ with
    \[
    1-\eta^*\le \frac1n\ip{\bs_i}{\bs_j}\le 1,\quad 1\le i<j\le m,
    \]
    where $\eta^*$ is such that
    \[
    h_b\left(\frac{\eta^*}{2}\right)=\frac{\epsilon}{2}.
    \] Using the bound leading to~\eqref{eq:stirr} we find that
    \[
    |\mathcal{F}(m,1,\eta^*)|\le \exp_2\left(n+mnh_b\left(\frac{\eta^*}{2}\right)+O(\log n)\right).
    \]
   Next, for any fixed $\bs_1,\dots,\bs_m\in\Sigma_n\setminus \{\bs^*\}$, we find 
   \begin{align*}
          \mathbb{P}\Bigl[\max_{1\le j\le m}H(\bs_j,X_j)\le 2^{-\epsilon n}\Big\lvert \max_{1\le j\le m}H(\bs^*,X_j)\le 3^{-n}\Bigr] &= \prod_{1\le j\le m} \mathbb{P}\bigl[H(\bs_j,X_j)\le 2^{-\epsilon n}\big\lvert H(\bs^*,X_j)\le 3^{-n}\bigr] \\
          &\le \exp_2\bigl(-\epsilon mn+O(m\log n)\bigr),
      \end{align*}
      where we used~\eqref{eq:use-later}. So, 
      \begin{align*}
\mathbb{E}\left[\bigl|\mathcal{S}\bigl(1,\eta^*,\epsilon n,\{\pi/2\}\bigr)\bigr|\,\,\Big\lvert \max_{1\le j\le m}H(\bs^*,X_j)\le 3^{-n}\right]&\le \exp_2\left(n+mnh_b\left(\frac{\eta^*}{2}\right)-\epsilon mn +O(m\log n)\right)\\
&=\exp_2\left(n-\frac{\epsilon mn}{2}+O(m\log n)\right)\\
&=\exp_2\bigl(-\Theta(n)\bigr)
            \end{align*}
           for all sufficiently large $m$. As
           \begin{align*}
&\mathbb{P}\left[\bigl|\mathcal{S}\bigl(1,\eta^*,\epsilon n,\{\pi/2\}\bigr)\bigr|\ge 1\Big\lvert \max_{1\le j\le m}H(\bs^*,X_j)\le 3^{-n}\right]\\
&\le \mathbb{E}\left[\bigl|\mathcal{S}\bigl(1,\eta^*,\epsilon n,\{\pi/2\}\bigr)\bigr|\,\,\Big\lvert \max_{1\le j\le m}H(\bs^*,X_j)\le 3^{-n}\right]=\exp_2\bigl(-\Theta(n)\bigr)
           \end{align*}
           by Markov's inequality, we complete the proof of Lemma~\ref{lemma:chaos}. 
        \end{proof}
An inspection of the parameters in Theorem~\ref{thm:m-ogp-planted-npp} reveals that $\eta<\beta$ and
\[
h_b\left(\frac{1-\beta+\eta}{2}\right)<\frac{\epsilon}{4}<\frac{\epsilon}{2} = h_b\left(\frac{\eta^*}{2}\right),
\]
so that $    1-\eta^*<\beta-\eta
$. In particular, w.h.p.\,for any $\bs_1,\dots,\bs_m\in\Sigma_n\setminus\{\pm \bs^*\}$ with $\max_{1\le i \le m}H(\bs_i,X_i)\le 2^{-\epsilon n}$, 
there exists $1\le i<j\le m$ such that
\begin{equation}\label{eq:chaos}
\frac1n\ip{\bs_i}{\bs_j}<1-\eta^*<\beta-\eta.
\end{equation}
\paragraph{Planted Measure} To set the stage, fix a $\bs^*\in\Sigma_n$. We now describe the \emph{planted measure}. Let $X_i\sim \cN(0,I_n)$, $0\le i\le T$ be i.i.d.\,random vectors. For any event $\mathcal{E}$,  set
\begin{equation}\label{eq:planted-measure}
\mathbb{P}_{\rm pl}[\mathcal{E}] = \mathbb{P}_{X_0,\dots,X_T}\left[\mathcal{E}\Big\lvert \max_{0\le j\le T}H(\bs^*,X_j)\le 3^{-n}\right].
\end{equation}
In what follows, probabilities are taken with respect to the planted measure~\eqref{eq:planted-measure}. 

\paragraph{Construction of Interpolation Paths} Our proof uses an \emph{interpolation} and a \emph{discretization} argument. Construct the `interpolation paths' per~\eqref{eq:Y_i-tau}, repeated below for convenience:
\begin{equation}\label{eq:interpol-path}
    Y_i(\tau) = \cos(\tau)X_0 + \sin(\tau)X_i \in \R^n,\quad 1\le i\le T,\quad \tau\in[0,\pi/2].
\end{equation}
We next discretize $[0,\pi/2]$. Set $\tau_i = \frac{\pi}{2Q}i$, $0\le i\le Q$. Then, 
\[
0=\tau_0<\tau_1<\cdots<\tau_Q=\frac{\pi}{2}
\]
partition $[0,\pi/2]$ into $Q$ intervals $[\tau_i,\tau_{i+1}]$, $0\le i\le Q-1$, each of length $\frac{\pi}{2Q}$.  Apply $\A^*$ to each $Y_i(\tau_k)$ to obtain partitions
\begin{equation}\label{eq:sigma-i-tau}
\bs_i(\tau_k)=\A^*\bigl(Y_i(\tau_k)\bigr)\in\Sigma_n,\quad 1\le i\le T,\quad 0\le k\le Q,
\end{equation}
and denote their pairwise overlaps by $\Overlap^{(ij)}(\tau_k)$, where
\begin{equation}\label{eq:overlap-i-tau}
    \Overlap^{(ij)}(\tau_k) \triangleq \frac1n \ip{\bs_i(\tau_k)}{\bs_j(\tau_k)}\in[-1,1],\quad 1\le i<j\le T,\quad 0\le k\le Q.
\end{equation}
Observe that 
\begin{equation}\label{eq:overlaps-initial}
    \Overlap^{(ij)}(\tau_0)=1,\quad 1\le i<j\le T.
\end{equation}
\paragraph{Output of Algorithm} Next, using the anti-concentration property of $\mathcal{A}^*$ appearing in Definition~\ref{def:stable-algs}, we establish the following.
\begin{lemma}\label{lemma:alg-anticon}
    \[
    \mathbb{P}_{\rm pl}\bigl[\mathcal{E}_o\bigr]\ge 1-6Tp_\ell, \quad\text{where}\quad \mathcal{E}_o = \Bigl\{\A^*\bigl(\bs_i(\tau_Q)\bigr)\ne\pm \bs^*,\forall i\in[T]\Bigr\}
    \]
\end{lemma}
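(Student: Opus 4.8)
\textbf{Proof plan for Lemma~\ref{lemma:alg-anticon}.}
The plan is to apply the anti-concentration guarantee of $\A^*$ pointwise to each endpoint $Y_i(\tau_Q)$ and then take a union bound over $i\in[T]$. First I would observe that for each fixed $i\in[T]$, since $\tau_Q=\pi/2$, we have $Y_i(\tau_Q)=\sin(\pi/2)X_i=X_i$, which is distributed (unconditionally) as $\cN(0,I_n)$. Hence $\bs_i(\tau_Q)=\A^*(Y_i(\tau_Q))=\A^*(X_i)$, and the anti-concentration property from Definition~\ref{def:stable-algs} (with the deterministic algorithm $\A^*$ which, by Lemma~\ref{lemma:not-importante}, is $(E,6p_f,6p_{\rm st},6p_\ell,\rho,f,L)$-stable) yields
\[
\mathbb{P}_{X_i}\bigl[\A^*(X_i)=\pm\bs^*\bigm\lvert H(\bs^*,X_i)\le 3^{-n}\bigr]\le 6p_\ell.
\]

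The one subtlety is that the lemma is stated under the \emph{planted measure}~\eqref{eq:planted-measure}, which conditions on $\max_{0\le j\le T}H(\bs^*,X_j)\le 3^{-n}$, whereas the anti-concentration bound conditions only on $H(\bs^*,X_i)\le 3^{-n}$. The key point is that $\bs_i(\tau_Q)=\A^*(X_i)$ depends only on $X_i$, and the event $\{H(\bs^*,X_i)\le 3^{-n}\}$ also depends only on $X_i$; since $X_0,\dots,X_T$ are i.i.d.\ unconditionally, conditioning additionally on the other coordinates $\{H(\bs^*,X_j)\le 3^{-n}:j\ne i\}$ does not change the conditional law of $X_i$ given $\{H(\bs^*,X_i)\le 3^{-n}\}$. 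Therefore
\[
\mathbb{P}_{\rm pl}\bigl[\A^*(\bs_i(\tau_Q))=\pm\bs^*\bigr]=\mathbb{P}_{X_i}\bigl[\A^*(X_i)=\pm\bs^*\bigm\lvert H(\bs^*,X_i)\le 3^{-n}\bigr]\le 6p_\ell
\]
for each $i\in[T]$. (Here I read $\A^*(\bs_i(\tau_Q))$ in the statement as shorthand for the event $\bs_i(\tau_Q)=\A^*(Y_i(\tau_Q))\in\{\pm\bs^*\}$; in either reading the argument is the same, since it only concerns whether the output of $\A^*$ on input $X_i$ equals $\pm\bs^*$.)

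Finally I would apply a union bound over the $T$ coordinates:
\[
\mathbb{P}_{\rm pl}\bigl[\mathcal{E}_o^c\bigr]=\mathbb{P}_{\rm pl}\Bigl[\exists\, i\in[T]:\bs_i(\tau_Q)=\pm\bs^*\Bigr]\le\sum_{i=1}^{T}\mathbb{P}_{\rm pl}\bigl[\bs_i(\tau_Q)=\pm\bs^*\bigr]\le 6Tp_\ell,
\]
which gives $\mathbb{P}_{\rm pl}[\mathcal{E}_o]\ge 1-6Tp_\ell$ as claimed. I do not expect a genuine obstacle here; the only thing requiring care is the change-of-conditioning step (factorization of the planted measure over the independent coordinates $X_i$), which is exactly the type of argument flagged in the paper's discussion of change-of-measure issues, but in this instance it is clean because the relevant event and the relevant part of the conditioning both depend on a single coordinate $X_i$.
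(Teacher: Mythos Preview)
Your proposal is correct and follows essentially the same approach as the paper: apply the anti-concentration guarantee of $\A^*$ (with parameter $6p_\ell$ via Lemma~\ref{lemma:not-importante}) to each $i\in[T]$ and take a union bound. In fact you are more explicit than the paper about why the planted measure~\eqref{eq:planted-measure} reduces to conditioning on $H(\bs^*,X_i)\le 3^{-n}$ alone (using that $\bs_i(\tau_Q)=\A^*(X_i)$ depends only on $X_i$ and that the $X_j$ are i.i.d.), a step the paper leaves implicit.
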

\begin{proof}[Proof of Lemma~\ref{lemma:alg-anticon}]
    Fix any $i\in [T]$. Note that per Lemma~\ref{lemma:not-importante}, $\mathbb{P}_{\rm pl}\bigl[\A^*\bigl(\bs_i(\tau_Q)\bigr)\in\{\pm \bs^*\}\bigr]\le 6p_\ell$. So, taking a union bound over $1\le i\le T$, we obtain
    \[
    \mathbb{P}_{\rm pl}\Bigl[\exists i\in[T]: \A^*\bigl(\bs_i(\tau_Q)\bigr)\in\{\pm \bs^*\}\Bigr]\le 6Tp_\ell,
    \]
    establishing Lemma~\ref{lemma:alg-anticon}.
\end{proof}
\paragraph{Stability of Overlaps} The most crucial component of our proof is the stability of overlaps: we show that
\[
\Bigl|\Overlap^{(ij)}(\tau_k) - \Overlap^{(ij)}(\tau_{k+1})\Bigr|
\]
is small for all $1\le i<j\le T$ and all $0\le k\le Q-1$. 

To that end, we establish several auxiliary results. The first is a concentration result. 
\begin{lemma}\label{lemma:planted-con}
    For any sufficiently large $C_2>0$,
    \[
    \mathbb{P}_{\rm pl}[\mathcal{E}_{\rm con}]\ge 1-(T+1)e^{-\Omega(n)}, \quad\text{where}\quad \mathcal{E}_{\rm con} = \left\{\max_{0\le i\le T}\|X_i\|_2 \le C_2\sqrt{n}\right\}
    \]
\end{lemma}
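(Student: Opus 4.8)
The plan is to bound the probability of the complementary event by a union bound over the $T+1$ Gaussian vectors $X_0,\dots,X_T$, reducing the statement to a single concentration estimate for $\|X_i\|_2$ together with a lower bound on the normalizing constant $\mathbb{P}[H(\bs^*,X_i)\le 3^{-n}]$. First I would record that, unconditionally, $\|X_i\|_2^2 \sim \chi^2_n$, so that by a standard Gaussian/chi-square concentration inequality (e.g.\,Laurent--Massart, or a direct sub-exponential Bernstein bound), $\mathbb{P}\bigl[\|X_i\|_2 > t\sqrt{n}\bigr] \le e^{-c(t)n}$ for every fixed $t>1$, with $c(t)>0$. Choosing $C_2$ to be any constant strictly larger than $1$ already makes this $e^{-\Omega(n)}$; the statement ``for any sufficiently large $C_2>0$'' is then immediate (and the phrasing is chosen because later applications, such as Lemma~\ref{lemma:pho-Sigma3-pb}-type arguments and the Lipschitz estimates on the interpolation paths, require $C_2$ large enough to absorb other constants).

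The one subtlety is that $\mathcal{E}_{\rm con}$ must hold under the \emph{planted} measure $\mathbb{P}_{\rm pl}$ of~\eqref{eq:planted-measure}, i.e.\,conditional on $\max_{0\le j\le T}H(\bs^*,X_j)\le 3^{-n}$, not under the i.i.d.\,Gaussian law. Here I would simply write, for each fixed $i$,
\[
\mathbb{P}_{\rm pl}\bigl[\|X_i\|_2 > C_2\sqrt{n}\bigr] = \frac{\mathbb{P}\bigl[\|X_i\|_2>C_2\sqrt n,\; H(\bs^*,X_i)\le 3^{-n}\bigr]}{\mathbb{P}\bigl[H(\bs^*,X_i)\le 3^{-n}\bigr]} \le \frac{\mathbb{P}\bigl[\|X_i\|_2>C_2\sqrt n\bigr]}{\mathbb{P}\bigl[H(\bs^*,X_i)\le 3^{-n}\bigr]},
\]
using independence of the $X_j$'s to drop the conditioning on the other coordinates $j\ne i$ and monotonicity to drop the event on $X_i$ itself in the numerator. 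By Lemma~\ref{lemma:gaussproblemma}(a), the denominator equals $\tfrac{2}{\sqrt{2\pi}}3^{-n}(1+o_n(1))$, so the ratio is at most $e^{-c(C_2)n}\cdot 3^{n}\cdot O(1)$. Taking $C_2$ large enough that $c(C_2) > \log_2 3 \cdot \ln 2$ (equivalently $e^{-c(C_2)n}3^n = e^{-\Omega(n)}$) makes this $e^{-\Omega(n)}$; this is precisely where ``$C_2$ sufficiently large'' is used.

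Finally I would take a union bound over $0\le i\le T$: since $T$ is a constant in $n$ (by~\eqref{eq:param-Q-and-T} and the fact that $m,\eta,Q$ are all $O(1)$), we get
\[
\mathbb{P}_{\rm pl}\bigl[\mathcal{E}_{\rm con}^c\bigr] \le \sum_{i=0}^{T}\mathbb{P}_{\rm pl}\bigl[\|X_i\|_2>C_2\sqrt n\bigr] \le (T+1)e^{-\Omega(n)},
\]
which is the claimed bound. The only ``obstacle'' worth flagging is the $3^n$ blow-up from dividing by the rare-event probability $\mathbb{P}[H(\bs^*,X_i)\le 3^{-n}] = \Theta(3^{-n})$: one must verify that the Gaussian tail $e^{-c(C_2)n}$ decays fast enough to beat it, which it does for $C_2$ a large enough absolute constant. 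Everything else is routine chi-square concentration plus a union bound, so I would keep the write-up to a few lines.
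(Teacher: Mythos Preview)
Your proposal is correct and follows essentially the same approach as the paper: Bernstein/chi-square concentration for the unconditional tail $\mathbb{P}[\|X_i\|_2>C_2\sqrt n]$, the change-of-measure bound dividing by $\mathbb{P}[H(\bs^*,X_i)\le 3^{-n}]=\Theta(3^{-n})$ after dropping the other coordinates by independence, the observation that $C_2$ must be large enough for the Gaussian tail to beat the $3^n$ factor, and a union bound over $0\le i\le T$. The paper even writes out the explicit threshold $C_2>\sqrt{\log_2 3/c+1}$, matching your condition $c(C_2)>\ln 3$.
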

\begin{proof}[Proof of Lemma~\ref{lemma:planted-con}]
      Fix any $0\le i\le T$. Applying Bernstein's inequality as in~\cite[Theorem~3.1.1]{vershynin2010introduction}, we obtain that for some absolute $c>0$ and any $t\ge 0$,
    \begin{equation}\label{eq:berns}
\mathbb{P}\left[\left|\frac1n\sum_{1\le u\le n}X_i(u)^2-1\right|\ge t\right]\le \exp\left(-cn\min\{t,t^2\}\right).
        \end{equation}
Note that the probability is taken with respect to $X_i\sim \cN(0,I_n)$ with no conditioning. Now, 
    \begin{align}
       \mathbb{P}_{\rm pl}\bigl[\|X_i\|_2>C_2\sqrt{n}\bigr]&=\mathbb{P}\Bigl[\|X_i\|_2>C_2\sqrt{n}\Big\lvert H(\bs^*,X_i)\le 3^{-n}\Bigr] \label{eq:cond-independence} \\
&\le \frac{\mathbb{P}\bigl[\|X_i\|_2>C_2\sqrt{n}\bigr]}{\mathbb{P}[H(\bs^*,X_i)\le 3^{-n}]} \label{eq:up-bd} \\
&\le \exp_2\left(-cn\left(C_2^2-1\right)+n\log_2 3 +O(1)\right)\label{eq:pax-bern}\\
&=e^{-\Theta(n)}\label{eq:C_2-large}
    \end{align}
    provided $C_2>0$ is sufficiently large. Here,~\eqref{eq:cond-independence} follows from the independence of $X_i$ from $X_j$, $j\ne i$,~\eqref{eq:up-bd} follows from trivial upper bound \[
\mathbb{P}\bigl[\|X_i\|_2>C_2\sqrt{n},H(\bs^*,X_i)\le 3^{-n}\bigr]\le \mathbb{P}\bigl[\|X_i\|_2>C_2\sqrt{n}\bigr],
\]
\eqref{eq:pax-bern} follows by combining~\eqref{eq:berns} with Lemma~\ref{lemma:gaussproblemma}, and finally~\eqref{eq:C_2-large} holds if $C_2>0$ is large enough. In particular any  
\[
C_2>\sqrt{\frac{\log_2 3}{c}+1}
\]
works. Thus,~\eqref{eq:C_2-large} together with a union bound over $0\le i\le T$ yields Lemma~\ref{lemma:planted-con}.
\end{proof}
The next result extends the stability guarantee of $\A^*$ to interpolated instances.
\begin{lemma}\label{lemma:inter-oracle}
    Fix any $1\le i\le T$ and $0\le k\le Q-1$ and let
    \[
    \mathcal{E}_{ik}\triangleq \Bigl\{d_H\bigl(\bs_i(\tau_k),\bs_i(\tau_{k+1})\bigr)\le C_1 n+L\|Y_i(\tau_k)-Y_i(\tau_{k+1})\|_2^2\Bigr\}.
    \]
    Then, for all large enough $n$,
    \[
    \mathbb{P}_{\rm pl}\bigl[\mathcal{E}_h\bigr]\ge 1-\frac{72Q^2 Tp_{\rm st}}{\pi}, \quad\text{where}\quad  \mathcal{E}_h = \bigcap_{1\le i\le T}\bigcap_{0\le k\le Q-1}\mathcal{E}_{ik}.
    \]
\end{lemma}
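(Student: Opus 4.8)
The plan is to bound $\mathbb{P}_{\rm pl}[\mathcal{E}_h^c]$ by a union bound over the $TQ$ pairs $(i,k)$, for each of which I will transfer the Stability clause of Definition~\ref{def:stable-algs} for the deterministic $(E,6p_f,6p_{\rm st},6p_\ell,\rho,f,L)$-stable algorithm $\A^*$ from Lemma~\ref{lemma:not-importante} onto the planted measure via a change of measure. The starting observation is that for each fixed $(i,k)$ the two interpolated inputs $Y_i(\tau_k)$ and $Y_i(\tau_{k+1})$ are each marginally $\cN(0,I_n)$ and have cross-covariance $\mathbb{E}\bigl[Y_i(\tau_k)Y_i(\tau_{k+1})^T\bigr] = \cos(\tau_{k+1}-\tau_k)\,I_n = \cos(\pi/(2Q))\,I_n = \rho\,I_n$, i.e.\ exactly the joint law entering the Stability clause. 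Moreover, on the planted event $\{\max_{0\le j\le T}H(\bs^*,X_j)\le 3^{-n}\}$ one has, for every $\tau\in[0,\pi/2]$,
\[
H\bigl(\bs^*,Y_i(\tau)\bigr)\le \cos\tau\, H(\bs^*,X_0)+\sin\tau\, H(\bs^*,X_i)\le (\cos\tau+\sin\tau)\,3^{-n}\le \sqrt 2\cdot 3^{-n},
\]
so the pair falls inside the conditioning event at level $C=\sqrt 2$. Since $f=C_1 n$ in the definition of $\mathcal{E}_{ik}$, applying the ($C=\sqrt 2$)-version of the stability guarantee to $(X,Y)=(Y_i(\tau_k),Y_i(\tau_{k+1}))$ gives, for all large $n$,
\[
\mathbb{P}\bigl[\mathcal{E}_{ik}^c\mid A_{ik}\bigr]\le 6p_{\rm st},\qquad A_{ik}\triangleq\Bigl\{H\bigl(\bs^*,Y_i(\tau_k)\bigr)\le \sqrt 2\cdot 3^{-n},\ H\bigl(\bs^*,Y_i(\tau_{k+1})\bigr)\le \sqrt 2\cdot 3^{-n}\Bigr\}.
\]

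Next I would pass from conditioning on $A_{ik}$ to conditioning on the planted event. Since $\mathcal{E}_{ik}^c$ and $A_{ik}$ depend only on $(X_0,X_i)$ while $\{X_j:j\notin\{0,i\}\}$ are independent of $(X_0,X_i)$, conditioning on $\{\max_{0\le j\le T}H(\bs^*,X_j)\le 3^{-n}\}$ is the same as conditioning on the two-instance event $B_{0i}\triangleq\{H(\bs^*,X_0)\le 3^{-n},\ H(\bs^*,X_i)\le 3^{-n}\}$, as the ``remaining'' factors cancel. Because $B_{0i}\subseteq A_{ik}$,
\[
\mathbb{P}_{\rm pl}\bigl[\mathcal{E}_{ik}^c\bigr]=\mathbb{P}\bigl[\mathcal{E}_{ik}^c\mid B_{0i}\bigr]\le \mathbb{P}\bigl[\mathcal{E}_{ik}^c\mid A_{ik}\bigr]\cdot\frac{\mathbb{P}[A_{ik}]}{\mathbb{P}[B_{0i}]}\le 6p_{\rm st}\cdot\frac{\mathbb{P}[A_{ik}]}{\mathbb{P}[B_{0i}]}.
\]
Lemma~\ref{lemma:gaussproblemma} then evaluates the ratio: by part (a) and independence, $\mathbb{P}[B_{0i}]=\bigl(2\cdot 3^{-n}/\sqrt{2\pi}\bigr)^2(1+o_n(1))$, while $\bigl(n^{-1/2}\ip{\bs^*}{Y_i(\tau_k)},n^{-1/2}\ip{\bs^*}{Y_i(\tau_{k+1})}\bigr)$ is standard bivariate normal with correlation $\rho$, so part (b) gives $\mathbb{P}[A_{ik}]=\tfrac{2(\sqrt 2\cdot 3^{-n})^2}{\pi\sqrt{1-\rho^2}}(1+o_n(1))$. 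Hence $\mathbb{P}[A_{ik}]/\mathbb{P}[B_{0i}]=\tfrac{2}{\sqrt{1-\rho^2}}(1+o_n(1))=\tfrac{2}{\sin(\pi/(2Q))}(1+o_n(1))<\tfrac{8Q}{\pi}$ for large $n$, using $1-\rho^2=\sin^2(\pi/(2Q))$ and Lemma~\ref{lemma:trigo}(a). Therefore $\mathbb{P}_{\rm pl}[\mathcal{E}_{ik}^c]\le \tfrac{48Qp_{\rm st}}{\pi}(1+o_n(1))\le\tfrac{72Qp_{\rm st}}{\pi}$ for $n$ large, and a union bound over the $TQ$ pairs $(i,k)$ yields $\mathbb{P}_{\rm pl}[\mathcal{E}_h^c]\le \tfrac{72Q^2Tp_{\rm st}}{\pi}$, which is the assertion.

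The one delicate point — and precisely the reason Definition~\ref{def:stable-algs} insists on the stability guarantee at both levels $3^{-n}$ and $\sqrt 2\cdot 3^{-n}$ — is the change of measure in the second paragraph: the Stability clause only controls $\A^*$ under a two-instance conditioning at level $\sqrt 2\cdot 3^{-n}$ for $\cN(0,I_n)$ inputs of correlation $\rho$, whereas $\mathbb{P}_{\rm pl}$ conditions on all $T+1$ vectors at level $3^{-n}$. The reduction goes through because the relevant functionals depend on only $(X_0,X_i)$ and the other $X_j$'s are independent, and because the planted event is contained in $A_{ik}$; the price is the bounded multiplicative factor $\mathbb{P}[A_{ik}]/\mathbb{P}[B_{0i}]=\Theta(Q)=O(1)$. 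Everything else is routine bookkeeping of the $(1+o_n(1))$ factors from Lemma~\ref{lemma:gaussproblemma} and checking $48(1+o_n(1))\le 72$ for $n$ large.
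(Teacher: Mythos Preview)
Your proof is correct and follows essentially the same route as the paper's own argument: reduce the planted conditioning to the two-instance event $B_{0i}$ by independence, use the containment $B_{0i}\subseteq A_{ik}$ to change measure, apply the $C=\sqrt{2}$ stability guarantee of $\A^*$ to bound $\mathbb{P}[\mathcal{E}_{ik}^c\mid A_{ik}]\le 6p_{\rm st}$, evaluate the ratio $\mathbb{P}[A_{ik}]/\mathbb{P}[B_{0i}]=(2+o_n(1))/\sin(\pi/(2Q))$ via Lemma~\ref{lemma:gaussproblemma}, bound $\sin(\pi/(2Q))\ge \pi/(4Q)$, and take a union bound over the $TQ$ pairs. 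The only cosmetic discrepancy is that the paper absorbs the $(1+o_n(1))$ by writing $2+o_n(1)\le 3$ directly, whereas you carry it to the end and bound $48(1+o_n(1))\le 72$; both are fine.
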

\begin{proof}[Proof of Lemma~\ref{lemma:inter-oracle}]
    The proof is based on a change of measure argument. First, fix any $1\le i\le T$ and $0\le k\le Q-1$ and recall $Y_i(\tau)$ from~\eqref{eq:interpol-path}. In particular, treating $Y_i$ as column vectors, we have
    \[
\mathbb{E}\bigl[Y_i(\tau_k)Y_i(\tau_{k+1})^T\bigr] = \cos(\tau_k-\tau_{k+1})I_n = \cos\left(\frac{\pi}{2Q}\right)I_n.
    \]
    Next, note that as $X_0$ and $X_i$ are unconditionally independent,
    \begin{equation}\label{eq:com-1}
    \mathbb{P}\left[H(\bs^*,X_0)\le 3^{-n},H(\bs^*,X_i)\le 3^{-n}\right] = \frac{1}{2\pi}\left(2\cdot 3^{-n}\right)^2(1+o_n(1)).
    \end{equation}
    Further, unconditionally
    \begin{equation}\label{eq:com-2}
\mathbb{P}\left[H(\bs^*,Y_i(\tau_k))\le \sqrt{2}\cdot 3^{-n},H(\bs^*,Y_i(\tau_{k+1}))\le \sqrt{2}\cdot 3^{-n}\right] = \frac{1}{2\pi \sin\frac{\pi}{2Q}}\left(2\sqrt{2}\cdot 3^{-n}\right)^2 (1+o_n(1)).
    \end{equation}
    This follows by observing that
     $\frac{1}{\sqrt{n}}\ip{\bs^*}{Y_i(\tau_k)}\sim \cN(0,1),\frac{1}{\sqrt{n}}\ip{\bs^*}{Y_i(\tau_{k+1})}\sim \cN(0,1)$ form a bivariate normal with parameter $\rho = \cos\left(\frac{\pi}{2Q}\right)$ and applying Lemma~\ref{lemma:gaussproblemma}(b). So, we obtain the change of measure
    \begin{equation}\label{eq:com3}
\frac{\mathbb{P}\left[H(\bs^*,Y_i(\tau_k))\le \sqrt{2}\cdot 3^{-n},H(\bs^*,Y_i(\tau_{k+1}))\le \sqrt{2}\cdot 3^{-n}\right]}{ \mathbb{P}\left[H(\bs^*,X_0)\le 3^{-n},H(\bs^*,X_i)\le 3^{-n}\right]} = 
\frac{2+o_n(1)}{\sin\left(\frac{\pi}{2Q}\right)}
    \end{equation}
    
    We are now ready to apply change of measure argument.
    \begin{align}
        \mathbb{P}_{\rm pl}\bigl[\mathcal{E}_{ik}^c\bigr] &= \mathbb{P}\left[\mathcal{E}_{ik}^c\big\lvert H(\bs^*,X_0)\le 3^{-n},H(\bs^*,X_i)\le 3^{-n}\right] \label{eq:c-ind} \\
&=\frac{\mathbb{P}\bigl[\mathcal{E}_{ik}^c,H(\bs^*,X_0)\le 3^{-n},H(\bs^*,X_i)\le 3^{-n}\bigr]}{\mathbb{P}\bigl[H(\bs^*,X_0)\le 3^{-n},H(\bs^*,X_i)\le 3^{-n}\bigr]}\nonumber \\
&\le \frac{\mathbb{P}\bigl[\mathcal{E}_{ik}^c,H(\bs^*,Y_i(\tau_k))\le \sqrt{2}\cdot 3^{-n},H(\bs^*,Y_i(\tau_{k+1}))\le \sqrt{2}\cdot 3^{-n}\bigr]}{\mathbb{P}\bigl[H(\bs^*,Y_i(\tau_k))\le \sqrt{2}\cdot 3^{-n},H(\bs^*,Y_i(\tau_{k+1}))\le \sqrt{2}\cdot 3^{-n}\bigr]}\cdot \frac{2+o_n(1)}{\sin\left(\frac{\pi}{2Q}\right)} \label{eq:com} \\
&=\mathbb{P}\Bigl[\mathcal{E}_{ik}^c\Big\lvert H(\bs^*,Y_i(\tau_k))\le \sqrt{2}\cdot 3^{-n},H(\bs^*,Y_i(\tau_{k+1}))\le \sqrt{2}\cdot 3^{-n}\Bigr]\frac{2+o_n(1)}{\sin\left(\frac{\pi}{2Q}\right)}\nonumber \\
&\le 6p_{\rm st}\cdot \frac{3}{\pi/(4Q)} = \frac{72Q p_{\rm st}}{\pi},\label{eq:sine-ineq}
    \end{align}
    for every large enough $n$.    Here,~\eqref{eq:c-ind} follows from the fact $\mathcal{E}_{ik}$ depends only on $X_0,X_i$ which are independent from $X_j,j\notin \{0,i\}$. Next, if $H(\bs^*,X_0)\le 3^{-n}$ and $H(\bs^*,X_i)\le 3^{-n}$ then for any $\tau\in[0,\pi/2]$,
    \[
    H(\bs^*,Y_i(\tau)) \le \cos(\tau)H(\bs^*,X_0)+\sin(\tau)H(\bs^*,X_i)\le 3^{-n}\left(\cos(\tau)+\sin(\tau)\right)\le \sqrt{2}\cdot 3^{-n},
    \]
    using triangle inequality and Cauchy-Schwarz inequality, $\cos(\tau)+\sin(\tau)\le \sqrt{2(\cos^2(\tau)+\sin^2(\tau))}=\sqrt{2}$. This fact, together with~\eqref{eq:com3} yields~\eqref{eq:com}. Lastly, invoking the fact that $\A^*$ is $(E,6p_f,6p_{\rm st},6p_\ell,\rho,f,L)$-stable per Lemma~\ref{lemma:not-importante} and Definition~\ref{def:stable-algs}, we obtain
    \[
\mathbb{P}\Bigl[\mathcal{E}_{ik}^c\Big\lvert H(\bs^*,Y_i(\tau_k))\le \sqrt{2}\cdot 3^{-n},H(\bs^*,Y_i(\tau_{k+1}))\le \sqrt{2}\cdot 3^{-n}\Bigr]\le 6p_{\rm st}.
    \]
    This, together with the fact $\sin\frac{\pi}{2Q} \ge \frac{\pi}{4Q}$ per Lemma~\ref{lemma:trigo}${\rm (b)}$ yields~\eqref{eq:sine-ineq}.

    Finally, using~\eqref{eq:sine-ineq} and a union bound over $1\le i\le T$ and $0\le k\le Q-1$, we get
    \[
    \mathbb{P}_{\rm pl}\bigl[\mathcal{E}_h^c\bigr]\le \frac{72Q^2Tp_{\rm st}}{\pi},
    \]
    which completes the proof of Lemma~\ref{lemma:inter-oracle}.
\end{proof}

\begin{lemma}\label{lemma:overlaps-stable}
    We have 
    \[
    \mathbb{P}_{\rm pl}[\mathcal{E}_{\rm st}]\ge 1-(T+1)e^{-\Omega(n)} - \frac{72Q^2 T p_{\rm st}}{\pi}
    \]
    where
    \[
    \mathcal{E}_{\rm st} = \left\{\max_{1\le i<j\le T}\max_{0\le k\le Q-1} \Bigl|\Overlap^{(ij)}(\tau_k) - \Overlap^{(ij)}(\tau_{k+1})\Bigr|\le 4\sqrt{C_1}+\frac{4C_2\pi\sqrt{L}}{Q}\right\}.
    \]
\end{lemma}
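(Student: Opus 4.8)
\emph{Proof plan.} The plan is to condition on the intersection of the two high-probability events $\mathcal{E}_{\rm con}$ (Lemma~\ref{lemma:planted-con}) and $\mathcal{E}_h$ (Lemma~\ref{lemma:inter-oracle}) and then convert the per-path stability of $\A^*$ into stability of the pairwise overlaps through a triangle-inequality / Hamming-distance estimate. The starting observation is elementary: flipping a single coordinate of a $\pm1$ vector changes its inner product with any fixed $\pm1$ vector by at most $2$ in absolute value. Hence, for any $1\le i<j\le T$ and $0\le k\le Q-1$, writing
\[
\Overlap^{(ij)}(\tau_k)-\Overlap^{(ij)}(\tau_{k+1}) = \tfrac1n\ip{\bs_i(\tau_k)}{\bs_j(\tau_k)-\bs_j(\tau_{k+1})} + \tfrac1n\ip{\bs_i(\tau_k)-\bs_i(\tau_{k+1})}{\bs_j(\tau_{k+1})},
\]
I would deduce
\[
\bigl|\Overlap^{(ij)}(\tau_k)-\Overlap^{(ij)}(\tau_{k+1})\bigr|\le \tfrac2n\Bigl(d_H\bigl(\bs_i(\tau_k),\bs_i(\tau_{k+1})\bigr)+d_H\bigl(\bs_j(\tau_k),\bs_j(\tau_{k+1})\bigr)\Bigr).
\]

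Next, on $\mathcal{E}_h$ each Hamming distance on the right is at most $C_1 n + L\|Y_i(\tau_k)-Y_i(\tau_{k+1})\|_2^2$. To control the path displacement, note $Y_i(\tau_k)-Y_i(\tau_{k+1}) = (\cos\tau_k-\cos\tau_{k+1})X_0 + (\sin\tau_k-\sin\tau_{k+1})X_i$; Lemma~\ref{lemma:trigo}${\rm (b)}$ gives $|\cos\tau_k-\cos\tau_{k+1}|,|\sin\tau_k-\sin\tau_{k+1}|\le |\tau_k-\tau_{k+1}| = \tfrac{\pi}{2Q}$, so by the triangle inequality and $\mathcal{E}_{\rm con}$ (on which $\|X_j\|_2\le C_2\sqrt n$ for all $j$) one gets $\|Y_i(\tau_k)-Y_i(\tau_{k+1})\|_2\le \tfrac{\pi}{2Q}(\|X_0\|_2+\|X_i\|_2)\le \tfrac{\pi C_2\sqrt n}{Q}$, hence $\|Y_i(\tau_k)-Y_i(\tau_{k+1})\|_2^2\le \pi^2 C_2^2 n/Q^2$. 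Combining, on $\mathcal{E}_{\rm con}\cap\mathcal{E}_h$ every overlap increment is bounded by $\tfrac2n\cdot 2\bigl(C_1 n + L\pi^2 C_2^2 n/Q^2\bigr) = 4C_1 + 4L\pi^2 C_2^2/Q^2$.

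Finally I would simplify the constant to match the stated form. Since $Q = 40 C_2\pi\sqrt L/\eta$ we have $L\pi^2 C_2^2/Q^2 = (\eta/40)\cdot C_2\pi\sqrt L/Q \le C_2\pi\sqrt L/Q$ because $\eta<1$, and since $C_1 = \eta^2/1600 < 1$ we have $C_1\le\sqrt{C_1}$; therefore $4C_1 + 4L\pi^2 C_2^2/Q^2 \le 4\sqrt{C_1} + 4C_2\pi\sqrt L/Q$. The probability bound then follows from a union bound over the complements of $\mathcal{E}_{\rm con}$ and $\mathcal{E}_h$, using $\mathbb{P}_{\rm pl}[\mathcal{E}_{\rm con}]\ge 1-(T+1)e^{-\Omega(n)}$ and $\mathbb{P}_{\rm pl}[\mathcal{E}_h]\ge 1-\tfrac{72Q^2Tp_{\rm st}}{\pi}$. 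No substantial obstacle arises here beyond bookkeeping; the only point worth a moment's care is that the quadratic-in-$1/Q$ error term is absorbed into the linear one precisely because of the prescribed choice of $Q$, and that $C_1<1$ is what permits replacing it by the cleaner (if weaker) $\sqrt{C_1}$.
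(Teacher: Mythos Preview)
Your proposal is correct and follows the same overall structure as the paper's proof: condition on $\mathcal{E}_{\rm con}\cap\mathcal{E}_h$, bound the path displacement $\|Y_i(\tau_k)-Y_i(\tau_{k+1})\|_2$, and telescope the overlap difference into two Hamming-distance terms. The one technical difference is in how the overlap increment is controlled from the Hamming distances. The paper passes through the $\ell_2$ norm, using $\|\bs-\bs'\|_2=2\sqrt{d_H(\bs,\bs')}$ together with Cauchy--Schwarz and the subadditivity $\sqrt{u+v}\le\sqrt u+\sqrt v$, which produces the bound $4\sqrt{C_1}+4C_2\pi\sqrt{L}/Q$ directly. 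You instead use the sharper elementary inequality $\tfrac1n|\ip{\bs}{\bs'-\bs''}|\le \tfrac{2}{n}d_H(\bs',\bs'')$, obtaining the (numerically tighter) bound $4C_1+4L\pi^2C_2^2/Q^2$, and then invoke the specific choices $C_1=\eta^2/1600<1$ and $Q=40C_2\pi\sqrt L/\eta$ to relax it to the stated form. Both routes are valid; the paper's is parameter-agnostic and matches the statement without the final bookkeeping, while yours is more elementary but relies on the prescribed constants to close.
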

\begin{proof}[Proof of Lemma~\ref{lemma:overlaps-stable}]
For $\mathcal{E}_{\rm con}$ per Lemma~\ref{lemma:planted-con} and $\mathcal{E}_h$ per Lemma~\ref{lemma:inter-oracle}, we will establish that 
\[
\mathcal{E}_{\rm con} \cap \mathcal{E}_h \subset \mathcal{E}_{\rm st},
\]
from which Lemma~\ref{lemma:overlaps-stable} follows via a union bound. In the remainder of the proof of Lemma~\ref{lemma:overlaps-stable}, assume we operate on $\mathcal{E}_{\rm con}\cap \mathcal{E}_h$. Fix any $1\le i\le T$ and $1\le k\le Q-1$. We have
\begin{align}
    \bigl\|Y_i(\tau_k)-Y_i(\tau_{k+1})\bigr\|_2 &\le \bigl|\cos(\tau_k)-\cos(\tau_{k+1})\bigr|\|X_0\|_2 + \bigl|\sin(\tau_k)-\sin(\tau_{k+1})\bigr|\|X_i\|_2\label{eq:triangleq} \\
    &\le |\tau_{k+1}-\tau_k|\bigl(\|X_0\|_2+\|X_i\|_2\bigr) \label{eq:trigo} \\
    &\le \frac{\pi C_2}{Q}\sqrt{n}\label{eq:last},
\end{align}
where~\eqref{eq:triangleq} follows from triangle inequality applied on interpolation paths per~\eqref{eq:interpol-path},~\eqref{eq:trigo} follows from Lemma~\ref{lemma:trigo}${\rm (b)}$ and the fact $|\tau_{k+1}-\tau_k| =\frac{\pi}{2Q}$ and finally~\eqref{eq:last} follows from the fact that on $\mathcal{E}_{\rm con}$, $\max_{0\le i\le T}\|X_i\|_2\le C_2\sqrt{n}$.

Next, for any $\bs,\bs'\in\Sigma_n$, observe that $\|\bs-\bs'\|_2 = 2\sqrt{d_H(\bs,\bs')}$. So, for any $1\le i\le T$ and $0\le k\le Q-1$, 
\begin{align}
    \bigl\|\bs_i(\tau_k)-\bs_i(\tau_{k+1})\bigr\|_2 & = 2\sqrt{d_H\bigl(\bs_i(\tau_k),\bs_i(\tau_{k+1})\bigr)}\nonumber \\
    &\le 2\sqrt{C_1 n+L\|Y_i(\tau_k)-Y_i(\tau_{k+1})\|_2^2}\label{eq:def-alg} \\
    &\le \sqrt{n}\left(2\sqrt{C_1 }+\frac{2C_2\pi \sqrt{L}}{Q}\right)\label{eq:triv-bd},
\end{align}
where~\eqref{eq:def-alg} follows from~\eqref{eq:last} and the fact that on $\mathcal{E}_h$, 
\[
d_H\bigl(\bs_i(\tau_k),\bs_i(\tau_{k+1})\bigr)\le C_1n + L\|Y_i(\tau_k)-Y_i(\tau_{k+1})\|_2^2,
\]
and~\eqref{eq:triv-bd} follows from the trivial inequality $\sqrt{u+v}\le \sqrt{u}+\sqrt{v}$ valid for all $u,v\ge 0$. 

We are now ready to show $\mathcal{E}_{\rm con}\cap \mathcal{E}_h\subset \mathcal{E}_{\rm st}$. We have
\begin{align}
    \Bigl|\Overlap^{(ij)}(\tau_k) - \Overlap^{(ij)}(\tau_{k+1})\Bigr|&=\frac1n\Bigl|\ip{\bs_i(\tau_k)}{\bs_j(\tau_{k})} - \ip{\bs_i(\tau_{k+1})}{\bs_j(\tau_{k+1})}\Bigr| \nonumber \\
    &\le \frac1n\Bigl(\Bigl|\ip{\bs_i(\tau_k)-\bs_i(\tau_{k+1})}{\bs_j(\tau_{k})}\Bigr|+\Bigl|\ip{\bs_i(\tau_{k+1})}{\bs_j(\tau_k)-\bs_j(\tau_{k+1})}\Bigr|\Bigr)\label{eq:this-is-triangleq} \\
    &\le \frac{1}{\sqrt{n}}\Bigl(\bigl\|\bs_i(\tau_k)-\bs_i(\tau_{k+1})\bigr\|_2+\bigl\|\bs_j(\tau_k)-\bs_j(\tau_{k+1})\bigr\|_2\Bigr) \label{eq:CSE}\\
    &\le 4\sqrt{C_1}+\frac{4C_2\pi\sqrt{L}}{Q}.\label{eq:lasttt}
\end{align}
Here,~\eqref{eq:this-is-triangleq} follows from triangle inequality,~\eqref{eq:CSE} follows from Cauchy-Schwarz inequality and the fact $\|\bs\|_2=\sqrt{n}$ for any $\bs\in\Sigma_n$, and lastly~\eqref{eq:lasttt} follows from~\eqref{eq:triv-bd}. As $1\le i<j\le T$ and $0\le k\le Q-1$ are arbitrary, we thus conclude that on $\mathcal{E}_{\rm con}\cap \mathcal{E}_h$, we indeed have
\[
\max_{1\le i<j\le T}\max_{0\le k\le Q-1}\Bigl|\Overlap^{(ij)}(\tau_k) - \Overlap^{(ij)}(\tau_{k+1})\Bigr|\le 4\sqrt{C_1}+\frac{4C_2\pi\sqrt{L}}{Q},
\]
so that $\mathcal{E}_{\rm con}\cap \mathcal{E}_h\subset \mathcal{E}_{\rm st}$. This completes the proof of Lemma~\ref{lemma:overlaps-stable}.
\end{proof}
\paragraph{Success along Each Path} We next study the event $\A^*$ is successful along each path. 
\begin{lemma}\label{lemma:success}
    Fix any $1\le i\le T$ and $0\le k\le Q$ and let
    \[
    \mathcal{E}_{\rm s,ik}\triangleq \Bigl\{H\bigl(\bs_i(\tau_k),Y_i(\tau_k)\bigr)\le 2^{-\epsilon n}\Bigr\}.
    \]
    Then, 
    \[
    \mathbb{P}_{\rm pl}\bigl[\mathcal{E}_{\rm suc}\bigr]\ge 1-12\cdot 3^n\sqrt{\pi}QTp_f \quad\text{where}\quad \mathcal{E}_{\rm suc} = \bigcap_{1\le i\le T}\bigcap_{0\le k\le Q-1}\mathcal{E}_{\rm s,ik}.
    \]
\end{lemma}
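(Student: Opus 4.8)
The plan is to prove the per-path, per-gridpoint estimate $\mathbb{P}_{\rm pl}\bigl[\mathcal{E}_{\rm s,ik}^c\bigr]\le 12\sqrt{\pi}\,3^n p_f$ for each fixed $1\le i\le T$ and $0\le k\le Q-1$, and then take a union bound over the $QT$ such pairs. The per-gridpoint bound is a change-of-measure argument, completely parallel to the proof of Lemma~\ref{lemma:inter-oracle}, the crucial difference being that here the measure change costs a multiplicative factor of order $3^n$ rather than $O(1)$.

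First I would observe that $\mathcal{E}_{\rm s,ik}$ is a function of $(X_0,X_i)$ alone: it depends on $X_i$ only through $Y_i(\tau_k)=\cos(\tau_k)X_0+\sin(\tau_k)X_i$ and on $\bs_i(\tau_k)=\A^*(Y_i(\tau_k))$. Hence by independence of $\{X_j:j\notin\{0,i\}\}$ from $(X_0,X_i)$ we may drop the extra conditioning, $\mathbb{P}_{\rm pl}\bigl[\mathcal{E}_{\rm s,ik}^c\bigr]=\mathbb{P}\bigl[\mathcal{E}_{\rm s,ik}^c\mid H(\bs^*,X_0)\le 3^{-n},\,H(\bs^*,X_i)\le 3^{-n}\bigr]$. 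Writing this as a ratio, the denominator equals $\mathbb{P}\bigl[H(\bs^*,X_0)\le 3^{-n}\bigr]^2=\tfrac{2}{\pi}3^{-2n}(1+o_n(1))$ by independence and Lemma~\ref{lemma:gaussproblemma}(a). For the numerator, the key geometric fact, exactly as in the proof of Lemma~\ref{lemma:inter-oracle} (triangle inequality together with $\cos\tau+\sin\tau\le\sqrt2$), is the inclusion $\{H(\bs^*,X_0)\le 3^{-n}\}\cap\{H(\bs^*,X_i)\le 3^{-n}\}\subseteq\{H(\bs^*,Y_i(\tau_k))\le\sqrt2\cdot 3^{-n}\}$; therefore the numerator is at most $\mathbb{P}\bigl[\mathcal{E}_{\rm s,ik}^c\mid H(\bs^*,Y_i(\tau_k))\le\sqrt2\cdot 3^{-n}\bigr]\cdot\mathbb{P}\bigl[H(\bs^*,Y_i(\tau_k))\le\sqrt2\cdot 3^{-n}\bigr]$. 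Since $Y_i(\tau_k)\sim\cN(0,I_n)$ exactly (a normalized combination of the independent Gaussians $X_0,X_i$), the success clause of Definition~\ref{def:stable-algs} with $C=\sqrt2$, applied to the deterministic $(E,6p_f,6p_{\rm st},6p_\ell,\rho,f,L)$-stable algorithm $\A^*$ from Lemma~\ref{lemma:not-importante} with $E=\epsilon n$, bounds the first factor by $6p_f$, while Lemma~\ref{lemma:gaussproblemma}(a) gives $\mathbb{P}\bigl[H(\bs^*,Y_i(\tau_k))\le\sqrt2\cdot 3^{-n}\bigr]=\tfrac{2}{\sqrt\pi}3^{-n}(1+o_n(1))$ for the second. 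Dividing, $\mathbb{P}_{\rm pl}\bigl[\mathcal{E}_{\rm s,ik}^c\bigr]\le 6\sqrt\pi\,3^n p_f(1+o_n(1))\le 12\sqrt\pi\,3^n p_f$ for all large $n$.

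A union bound over the $QT$ pairs $(i,k)$, $1\le i\le T$, $0\le k\le Q-1$, then gives $\mathbb{P}_{\rm pl}\bigl[\mathcal{E}_{\rm suc}^c\bigr]\le 12\cdot 3^n\sqrt\pi\,QT p_f$, equivalently $\mathbb{P}_{\rm pl}\bigl[\mathcal{E}_{\rm suc}\bigr]\ge 1-12\cdot 3^n\sqrt\pi\,QT p_f$, as claimed. (For $k=0$ one has $Y_i(\tau_0)=X_0$, so those events are all equal and the union bound over $i$ is wasteful there, but this only helps.)

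The genuinely delicate point, and the reason Theorem~\ref{thm:stable-hardness} only rules out algorithms with exponentially small failure probability, is exactly the $3^n$ inflation here: the planted conditioning event for the pair $(X_0,X_i)$ has probability $\Theta(3^{-2n})$, whereas the relaxed event $\{H(\bs^*,Y_i(\tau_k))\le\sqrt2\cdot 3^{-n}\}$ on which the success guarantee of $\A^*$ can be invoked has probability only $\Theta(3^{-n})$, so passing from one to the other necessarily costs a factor $\Theta(3^n)$. This loss is intrinsic to the approach; it is precisely why the hypothesis $p_f=\tfrac{3^{-n}}{108QT\sqrt\pi}$ is built into the statement of Theorem~\ref{thm:stable-hardness}, making the effective bound $12\sqrt\pi\,3^n QT p_f$ of size $O(1)$ (indeed $\le\tfrac19$), small enough to feed into the subsequent interpolation and Ramsey-theoretic contradiction.
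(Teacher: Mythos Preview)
Your proposal is correct and follows essentially the same route as the paper: reduce to the pair $(X_0,X_i)$ by independence, use the inclusion $\{H(\bs^*,X_0)\le 3^{-n}\}\cap\{H(\bs^*,X_i)\le 3^{-n}\}\subset\{H(\bs^*,Y_i(\tau_k))\le\sqrt{2}\cdot 3^{-n}\}$ to change measure at cost $3^n\sqrt{\pi}(1+o_n(1))$, invoke the success clause of $\A^*$ with $C=\sqrt{2}$ to pick up the factor $6p_f$, and union-bound over the $QT$ pairs $(i,k)$. Your explicit arithmetic for the change-of-measure ratio and your closing remarks on why the $3^n$ loss is unavoidable here match the paper's discussion exactly.
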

\begin{proof}[Proof of Lemma~\ref{lemma:success}]
Our proof is, agiann, based on a change of measure argument similar to Lemma~\ref{lemma:inter-oracle}. 

First, fix any $1\le i\le T$ and $0\le k\le Q-1$. Then
   \begin{equation}\label{eq:com5}
\frac{\mathbb{P}\bigl[H(\bs^*,Y_i(\tau_k))\le \sqrt{2}\cdot 3^{-n}\bigr]}{\mathbb{P}\bigl[H(\bs^*,X_0)\le 3^{-n},H(\bs^*,X_i)\le 3^{-n}\bigr]} = 3^n\sqrt{\pi}(1+o_n(1)),
     \end{equation} 
     via Lemma~\ref{lemma:gaussproblemma}. Equipped with this, we have for all sufficiently large $n$,
     \begin{align}
         \mathbb{P}_{\rm pl}[\mathcal{E}_{s,ik}^c]&=\mathbb{P}\bigl[\mathcal{E}_{s,ik}^c\big\lvert H(\bs^*,X_0)\le 3^{-n},H(\bs^*,X_i)\le 3^{-n}\bigr] \label{eq:indeppp}\\
         &=\frac{\mathbb{P}\bigl[\mathcal{E}_{s,ik}^c, H(\bs^*,X_0)\le 3^{-n},H(\bs^*,X_i)\le 3^{-n}\bigr]}{\mathbb{P}\bigl[H(\bs^*,X_0)\le 3^{-n},H(\bs^*,X_i)\le 3^{-n}\bigr]}\nonumber \\
         &\le \frac{\mathbb{P}\bigl[\mathcal{E}_{s,ik}^c, H(\bs^*,Y_i(\tau_k)\le\sqrt{2}\cdot 3^{-n}\bigr]}{\mathbb{P}\bigl[H(\bs^*,X_0)\le 3^{-n},H(\bs^*,X_i)\le 3^{-n}\bigr]}\label{eq:trian} \\
         &\le 3^n\sqrt{\pi}(1+o_n(1))\mathbb{P}\bigl[\mathcal{E}_{s,ik}^c\big\lvert H(\bs^*,Y_i(\tau_k))\le \sqrt{2}\cdot 3^{-n}\bigr]\label{eq:succc1} \\
         &\le 3^n\sqrt{\pi}12p_f\label{eq:succ}.
     \end{align}
     Here,~\eqref{eq:indeppp} follows from the fact $\mathcal{E}_{s,ik}^c$ depends only on $X_0$ and $X_i$ and is in particular independent of $X_j$, $j\notin\{0,i\}$; \eqref{eq:trian} follows from the fact that if $H(\bs^*,X_0)\le 3^{-n}$ and $H(\bs^*,X_i)\le 3^{-n}$ then by triangle inequality (as in the step~\eqref{eq:com} in the proof of Lemma~\ref{lemma:inter-oracle}) $H(\bs^*,Y_i(\tau_k))\le \sqrt{2}\cdot 3^{-n}$; \eqref{eq:succc1} follows from~\eqref{eq:com5}; and finally,~\eqref{eq:succ} follows from the fact that $Y_i(\tau_k)\sim \cN(0,I_n)$ (unconditionally) and the fact $\A^*$ is $(E,6p_f,6p_{\rm st},6p_\ell,\rho,f,L)$-stable. 

Finally, taking a union bound over all $1\le i\le T$ and $0\le k\le Q$ via~\eqref{eq:succ}, we establish Lemma~\ref{lemma:success}.
\end{proof}
The rest of the proof is (nearly) identical to that of~\cite[Theorem~3.2]{gamarnik2022algorithms}; it is included herein for completeness.
\paragraph{Putting Everything Together} We now fix any $A\subset [T]$ with $|A|=m$. Define by $S_A$ the set of all $(\bs_i:i\in A)$ such that $\bs_i\ne \pm \bs^*$, 
\begin{itemize}
    \item $\displaystyle \max_{i\in A}H\bigl(\bs_i,Y_i(\pi/2)\bigr)\le 2^{-\epsilon n}$,
    \item $1-\eta^*\le n^{-1}\ip{\bs_i}{\bs_j}\le 1$, for every $i,j\in A$, $i\ne j$.
\end{itemize}
Set $\mathcal{E}_A = \{S_A\ne \varnothing\}$. Namely, $\mathcal{E}_A$ is nothing but the chaos event per Lemma~\ref{lemma:chaos} with indices restricted to $A$. Applying Lemma~\ref{lemma:chaos}, we immediately obtain $\mathbb{P}_{\rm pl}[\mathcal{E}_A] \le \exp_2\bigl(-\Theta(n)\bigr)$. So,
\begin{equation}\label{eq:chaos-event}
    \mathbb{P}_{\rm pl}[\mathcal{E}_{\rm ch}]\ge 1-\binom{T}{m}2^{-\Theta(n)}=1-2^{-\Theta(n)},\quad\text{where}\quad \mathcal{E}_{\rm ch} = \bigcap_{\substack{A:A\subset T, |A|=m}}\mathcal{E}_A^c.
\end{equation}
Above, we used the fact that $T,m=O(1)$ as $n\to\infty$ (per~\eqref{eq:param-Q-and-T}), so that $\binom{T}{m}=O(1)$ as well. Next define
\begin{equation}\label{eq:MAIN-EVENT}
   \mathcal{E}_{\rm main} = \mathcal{E}_o \cap \mathcal{E}_{\rm st}\cap \mathcal{E}_{\rm suc} \cap \mathcal{E}_{\rm ch},
\end{equation}
where $\mathcal{E}_o$ is defined per Lemma~\ref{lemma:alg-anticon}, $\mathcal{E}_{\rm st}$ is defined per Lemma~\ref{lemma:overlaps-stable}, $\mathcal{E}_{\rm suc}$ is defined per Lemma~\ref{lemma:success}, and $\mathcal{E}_{\rm ch}$ is defined per~\eqref{eq:chaos-event}. So, 
\begin{align}
    \mathbb{P}_{\rm pl}\bigl[\mathcal{E}_{\rm main}\bigr]&\ge 1-\mathbb{P}_{\rm pl}\bigl[\mathcal{E}_o^c\bigr]-\mathbb{P}_{\rm pl}\bigl[\mathcal{E}_{\rm st}^c\bigr]-\mathbb{P}_{\rm pl}\bigl[\mathcal{E}_{\rm suc}^c\bigr]-\mathbb{P}_{\rm pl}\bigl[\mathcal{E}_{\rm ch}^c\bigr] \label{eq:this-is-ubd}\\
    &\ge 1-6Tp_\ell - (T+1)e^{-\Omega(n)} - \frac{72Q^2 Tp_{\rm st}}{\pi}-12\cdot 3^n\sqrt{\pi}QTp_f -2^{-\Theta(n)} \label{eq:this-is-lemmas}\\
    &\ge \frac23 - 2^{-\Theta(n)}\label{eq:penul},
\end{align}
where~\eqref{eq:this-is-ubd} follows by taking a union bound in~\eqref{eq:MAIN-EVENT},~\eqref{eq:this-is-lemmas} follows by combining Lemmas~\ref{lemma:alg-anticon},~\ref{lemma:overlaps-stable},~\ref{lemma:success}, and~\eqref{eq:chaos-event}, and~\eqref{eq:penul} follows from inserting the values for $p_\ell,p_{\rm st}$ and $p_f$ per~\eqref{eq:probs-and-rho}. In the remainder of the proof, we operate on the event $\mathcal{E}_{\rm main}$.

Now, inserting the values of $C_1$ per~\eqref{eq:param-f} and $Q$ per~\eqref{eq:param-Q-and-T} while recalling $\mathcal{E}_{\rm main}\subset \mathcal{E}_{\rm st}$, we obtain
\begin{equation}\label{eq:overlap-eta-over-5}
    \max_{1\le i<j\le T}\max_{0\le k\le Q-1}\Bigl|\Overlap^{(ij)}(\tau_k)-\Overlap^{(ij)}(\tau_{k+1})\Bigr|\le \frac{\eta}{5}.
\end{equation}
We next fix any $A\subset[T]$ with $|A|=m$ and establish the following, verbatim from~\cite[Proposition~6.15]{gamarnik2022algorithms}
\begin{proposition}\label{prop:trap}
    For any $A\subset[T]$ with $|A|=m$, there exists distinct $i_A,j_A\in A$ and a $\tau_A\in\{\tau_1,\dots,\tau_Q\}$ such that for $\bar{\eta} = \frac{\eta}{100}$, 
    \[
    \Overlap^{(i_A,j_A)}(\tau_A) \in \bigl(\beta-\eta+3\bar{\eta},\beta-3\bar{\eta}\bigr)\subsetneq (\beta-\eta,\beta).
    \]
    \end{proposition}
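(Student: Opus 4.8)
\textbf{Proof proposal for Proposition~\ref{prop:trap}.}
The plan is to exploit the contrast between two facts established above: on the event $\mathcal{E}_{\rm main}$, the overlaps $\Overlap^{(ij)}(\tau_k)$ start at $1$ (by~\eqref{eq:overlaps-initial}), and by the chaos event $\mathcal{E}_{\rm ch}$ they must at the terminal time $\tau_Q=\pi/2$ drop below $1-\eta^*<\beta-\eta$ for \emph{some} pair $i,j\in A$ (since $\mathcal{A}^*$ is successful along each path by $\mathcal{E}_{\rm suc}$, producing an $m$-tuple of near-optima of $Y_i(\pi/2)\distr\cN(0,I_n)$, and these cannot all have pairwise overlap $\ge 1-\eta^*$). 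So first I would fix $A$, pick a pair $i_A,j_A\in A$ witnessing $\Overlap^{(i_A,j_A)}(\tau_Q)<1-\eta^*$, and consider the sequence $k\mapsto \Overlap^{(i_A,j_A)}(\tau_k)$ for $k=0,1,\dots,Q$. This sequence begins at $1$ and ends strictly below $\beta-\eta$.

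Next I would apply the discrete intermediate-value argument: since consecutive increments satisfy $|\Overlap^{(i_A,j_A)}(\tau_k)-\Overlap^{(i_A,j_A)}(\tau_{k+1})|\le \eta/5$ by~\eqref{eq:overlap-eta-over-5}, and $\bar\eta=\eta/100$ so that $\eta/5 = 20\bar\eta < \eta - 6\bar\eta$, the sequence cannot jump over the window $(\beta-\eta+3\bar\eta,\beta-3\bar\eta)$, which has width $\eta-6\bar\eta > \eta/5$. Concretely: let $k^*$ be the last index with $\Overlap^{(i_A,j_A)}(\tau_{k^*})\ge \beta-3\bar\eta$; such a $k^*$ exists since the sequence starts at $1\ge\beta-3\bar\eta$, and $k^*<Q$ since the sequence ends below $\beta-\eta<\beta-3\bar\eta$. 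Then $\Overlap^{(i_A,j_A)}(\tau_{k^*+1})<\beta-3\bar\eta$ by maximality, while $\Overlap^{(i_A,j_A)}(\tau_{k^*+1})\ge \Overlap^{(i_A,j_A)}(\tau_{k^*})-\eta/5\ge (\beta-3\bar\eta)-\eta/5 > \beta-\eta+3\bar\eta$, using $3\bar\eta+\eta/5+3\bar\eta = 6\bar\eta+\eta/5 < \eta$. Taking $\tau_A=\tau_{k^*+1}$ gives the claimed membership $\Overlap^{(i_A,j_A)}(\tau_A)\in(\beta-\eta+3\bar\eta,\beta-3\bar\eta)$, and the final strict inclusion in $(\beta-\eta,\beta)$ is immediate.

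One technical point I would need to nail down is why $\mathcal{E}_{\rm ch}$ forces the overlap below $1-\eta^*$ at the endpoint: on $\mathcal{E}_{\rm suc}$ we have $H(\bs_i(\tau_Q),Y_i(\tau_Q))\le 2^{-\epsilon n}$ for all $i\in A$, and on $\mathcal{E}_o$ we have $\bs_i(\tau_Q)\ne\pm\bs^*$; since $Y_i(\tau_Q)=X_i$ are (after conditioning) i.i.d.\ subject to $H(\bs^*,X_i)\le 3^{-n}$, the tuple $(\bs_i(\tau_Q):i\in A)$ would lie in the set $S_A$ of Lemma~\ref{lemma:chaos} were all pairwise overlaps at least $1-\eta^*$; since $\mathcal{E}_A^c$ holds, this is impossible, so some pair has overlap strictly below $1-\eta^*$. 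The main obstacle I anticipate is purely bookkeeping: keeping the chain of inequalities among $\eta$, $\bar\eta=\eta/100$, the step bound $\eta/5$, and the window endpoints $\beta-\eta+3\bar\eta$, $\beta-3\bar\eta$ all consistent, and making sure the indices $i_A,j_A$ chosen from the chaos event are the same pair tracked through the interpolation (which is fine since the overlap process is defined for every pair simultaneously). There is no probabilistic content left here — all randomness was absorbed into $\mathbb{P}_{\rm pl}[\mathcal{E}_{\rm main}]\ge 2/3 - 2^{-\Theta(n)}$ — so the proposition is a deterministic statement about any realization in $\mathcal{E}_{\rm main}$.
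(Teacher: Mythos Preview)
Your proposal is correct and follows essentially the same argument as the paper's proof: use $\mathcal{E}_o$ and $\mathcal{E}_{\rm suc}$ to ensure the tuple $(\bs_i(\tau_Q):i\in A)$ satisfies the hypotheses of the chaos set $S_A$, invoke $\mathcal{E}_A^c$ to find a pair with terminal overlap below $1-\eta^*<\beta-\eta$, and then run the discrete intermediate-value argument using the step bound $\eta/5$ from~\eqref{eq:overlap-eta-over-5} to trap some $\tau_{k^*+1}$ in the window $(\beta-\eta+3\bar\eta,\beta-3\bar\eta)$. Your direct inequality $\Overlap^{(i_A,j_A)}(\tau_{k^*+1})\ge(\beta-3\bar\eta)-\eta/5>\beta-\eta+3\bar\eta$ is just the contrapositive of the paper's contradiction step, and you are slightly more explicit than the paper in spelling out that $\mathcal{E}_{\rm suc}$ is needed to place the algorithm's outputs into the scope of $\mathcal{E}_{\rm ch}$.
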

    \begin{proof}[Proof of Proposition~\ref{prop:trap}]
        The proof is nearly identical to that of~\cite[Proposition~6.15]{gamarnik2022algorithms}, with an extra step. Recall that
$\mathcal{E}_{\rm main}\subset \mathcal{E}_{\rm ch}\cap \mathcal{E}_o$. Using $\mathcal{E}_{\rm main}\subset \mathcal{E}_o$, we have that $\bs_i(\tau_Q)\ne \pm\bs^*$, $1\le i\le T$. Now, applying $\mathcal{E}_{\rm ch}$ to the $m$-tuple $\bs_1(\tau_Q),\dots,\bs_m(\tau_Q)$, we obtain that there exists distinct $i_A,j_A\in A$ such that
        \[
        \Overlap^{(i_A,j_A)}(\tau_Q) = \frac1n\ip{\bs_{i_A}(\tau_Q)}{\bs_{j_A}(\tau_Q)} \le 1-\eta^*<\beta-\eta,
        \]
        where we utilized~\eqref{eq:chaos}. We now show that there exists $k'\in\{1,2,\dots,Q\}$ such that
        \[
        \Overlap^{(i_A,j_A)}(\tau_{k'})\in (\beta-\eta+3\bar{\eta},\beta-3\bar{\eta}), \quad\text{where}\quad \bar{\eta}= \frac{\eta}{100}.
        \]
        Let $K_0$ be the last index for which $\Overlap^{(i_A,j_A)}(\tau_{K_0})\ge \beta-3\bar{\eta}$. Note that such a $K_0$ indeed exists as (a) $\Overlap^{(i_A,j_A)}(\tau_0)=1$ per~\eqref{eq:overlaps-initial} and (b) $\Overlap^{(i_A,j_A)}(\tau_Q)<\beta-\eta$. We claim \[
        \Overlap^{(i_A,j_A)}(\tau_{K_0+1})\in(\beta-\eta+3\bar{\eta},\beta-3\bar{\eta}).
        \]
        Assume the contrary. Then, we necessarily have
        \[
        \Overlap^{(i_A,j_A)}(\tau_{K_0+1})\le \beta-\eta+3\bar{\eta},
        \]
        so that
        \[
        \Overlap^{(i_A,j_A)}(\tau_{K_0}) -  \Overlap^{(i_A,j_A)}(\tau_{K_0+1})\ge \eta - 6\bar{\eta} = \frac{47}{50}\eta>\frac{\eta}{5},
        \]
        contradicting the bound~\eqref{eq:overlap-eta-over-5}. Setting $\tau_A = \tau_{K_0+1}$, we complete the proof of Proposition~\ref{prop:trap}.
    \end{proof}
    \paragraph{Construction of a Graph and Applying Ramsey Theory} We now construct an appropriate graph $\G=(V,E)$ satisfying the following:
    \begin{itemize}
        \item We have $V=[T]$, where vertex $i$ corresponds to the interpolation path $Y_i(\cdot)$ per~\eqref{eq:interpol-path}. 
        \item For any $1\le i<j\le T$, $(i,j)\in E$ iff there exists a $t\in\{1,2,\dots,Q\}$ such that
        \[
        \Overlap^{(ij)}(\tau_t) \in(\beta-\eta,\beta).
        \]
    \end{itemize}
    In particular, $\G$ has exactly $T$ vertices and certain edges. Next, we construct a $Q$-coloring of the edges of $\G$. Specifically, color $(i,j)\in E$ with color $t\in\{1,2,\dots,Q\}$ where $t$ is the first time such that
    \[
    \Overlap^{(ij)}(\tau_t) \in(\beta-\eta,\beta).
    \]
    We next make an important observation about $\G$. Fix any $A\subset V=[T]$ with $|A|=m$. Using Proposition~\ref{prop:trap}, we find that there exists distinct $i_A,j_A\in A$ such that $(i_A,j_A)\in E$. In particular, the largest independent set of $\G$ is of size at most $m-1$, $\alpha(\G)\le m-1$.

    Equipped with the observation above, we establish the following.
    \begin{proposition}\label{prop:mc-clique}
     $\G=(V,E)$ constructed above contains a monochromatic $m$-clique $K_m$.
    \end{proposition}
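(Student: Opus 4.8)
The plan is to use the multicolor Ramsey theorem (Theorem~\ref{thm:Ramsey-multicolor}) together with the bound $\alpha(\G)\le m-1$ just established. First I would recall that $\G=(V,E)$ is a graph on $T=\exp_2(2^{4mQ\log_2 Q})$ vertices whose edges are colored with $Q$ colors, and that by the observation preceding the proposition, every $m$-subset $A\subseteq V$ contains at least one edge of $\G$; equivalently, $\alpha(\G)\le m-1$. The goal is to produce a monochromatic clique $K_m$, i.e.\ $m$ vertices all of whose pairwise edges are present in $\G$ and share a common color.

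The key step is the following ``missing-edges'' trick, exactly as in~\cite[Proof of Proposition~6.16]{gamarnik2022algorithms}: introduce an auxiliary $(Q+1)$-coloring of the edges of the \emph{complete} graph $K_T$ on vertex set $V$. For each pair $\{i,j\}$, if $(i,j)\in E$, give it its color in $\{1,\dots,Q\}$ from the $Q$-coloring of $\G$; if $(i,j)\notin E$, give it the brand-new color $Q+1$. Now apply Theorem~\ref{thm:Ramsey-multicolor} with $q=Q+1$ colors: since
\[
T=\exp_2\bigl(2^{4mQ\log_2 Q}\bigr)\ge (Q+1)^{(Q+1)m}\ge R_{Q+1}(m),
\]
the complete graph $K_T$ with this $(Q+1)$-coloring contains a monochromatic clique on $m$ vertices, say on vertex set $A$ with $|A|=m$. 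I would verify the inequality $T\ge (Q+1)^{(Q+1)m}$ directly: taking $\log_2$ of the right-hand side gives $(Q+1)m\log_2(Q+1)$, and for $Q\ge 2$ (which holds since $Q=\frac{40C_2\pi\sqrt{L}}{\eta}$ with $C_2$ large) one easily has $(Q+1)m\log_2(Q+1)\le 2^{4mQ\log_2 Q}=\log_2 T$, so $R_{Q+1}(m)\le (Q+1)^{(Q+1)m}\le T$ as needed.

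Finally I would rule out the possibility that the monochromatic clique on $A$ has color $Q+1$: if it did, then \emph{no} pair within $A$ would be an edge of $\G$, i.e.\ $A$ would be an independent set of $\G$ of size $m$, contradicting $\alpha(\G)\le m-1$. Hence the common color of the clique on $A$ lies in $\{1,\dots,Q\}$, which means all $\binom{m}{2}$ pairs of $A$ are genuine edges of $\G$; that is, $\G$ restricted to $A$ is a clique $K_m$ all of whose edges received the same color in the original $Q$-coloring of $\G$. This is precisely a monochromatic $m$-clique in $\G$, completing the proof of Proposition~\ref{prop:mc-clique}. I do not anticipate a genuine obstacle here; the only point requiring a little care is the numerical verification that $T\ge R_{Q+1}(m)$, which is why the constants in~\eqref{eq:param-Q-and-T} were chosen with the double-exponential $T=\exp_2(2^{4mQ\log_2 Q})$ — the $4mQ\log_2 Q$ in the exponent comfortably dominates $(Q+1)m\log_2(Q+1)$.
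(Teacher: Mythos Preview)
Your proof is correct, and it takes a genuinely different route from the paper's own argument. The paper proceeds in two stages: it first sets $M=Q^{mQ}$, uses the two-color Ramsey bound (Theorem~\ref{thm:2color-ramsey}) together with $\alpha(\G)\le m-1<M$ to extract a clique $K_M\subseteq\G$, and then applies the multicolor bound (Theorem~\ref{thm:Ramsey-multicolor}) with $q=Q$ inside that $K_M$ to find a monochromatic $K_m$. Your approach collapses these two steps into one by coloring the non-edges of $\G$ with an extra color $Q+1$ and invoking Theorem~\ref{thm:Ramsey-multicolor} directly on $K_T$ with $q=Q+1$; the independence bound $\alpha(\G)\le m-1$ is then used only at the end to exclude the new color. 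Your route is shorter and avoids Theorem~\ref{thm:2color-ramsey} altogether, while the paper's two-step version makes slightly more transparent \emph{why} the double-exponential choice of $T$ is natural (it must dominate $R_2(M,M)$ for $M=R_Q(m)$). Both arguments sit comfortably under the chosen $T=\exp_2(2^{4mQ\log_2 Q})$; your verification that $(Q+1)m\log_2(Q+1)\le 2^{4mQ\log_2 Q}$ is straightforward since $2^{4mQ\log_2 Q}=Q^{4mQ}$ dwarfs the polynomial quantity $(Q+1)m\log_2(Q+1)$ for $Q\ge 2$.
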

    \begin{proof}[Proof of Proposition~\ref{prop:mc-clique}]
        First, per~\eqref{eq:param-Q-and-T}, we have 
        \[
        |V| = T = \exp_2\left(2^{4mQ\log_2 Q}\right).
        \]
        Set
        \begin{equation}\label{eq:M}
            M=Q^{mQ} = 2^{mQ\log_2 Q}.
        \end{equation}
        We now observe that
        \[
        T = \exp_2\left(M^4\right)>2^{2M}>\binom{2M-2}{M-1}\ge R_2(M,M),
        \]
        where the last inequality is due to~Theorem~\ref{thm:2color-ramsey}. Recall from the definition of Ramsey numbers that any graph with at least $R_2(M,M)$ vertices contains either an independent set size $M$ or a clique of size $M$. Since $\alpha(\G)\le m-1<M$, we find that $\G$ necessarily contains a $K_M$, where each edge of $K_M$ is colored with one of $Q$ colors. 

        Next, as 
        \[
        M=Q^{Qm}\ge R_q(m)
        \]
        per~\eqref{eq:M} and Theorem~\ref{thm:Ramsey-multicolor} in that order, we obtain that $K_M$ further contains a monochromatic $K_m$. Since this monochromatic $K_m$ is a subgraph of $\G$, we establish Proposition~\ref{prop:mc-clique}.
            \end{proof}
        Equipped with Proposition~\ref{prop:mc-clique}, we now conclude the proof of Theorem~\ref{thm:stable-hardness}. Take the monochromatic $K_m$ extracted from $\G$ via Proposition~\ref{prop:mc-clique}. There exists an $m$-tuple $1\le i_1<i_2,\cdots<i_m\le T$ and a time $t\in\{1,2,\dots,Q\}$ such that
        \[
        \Overlap^{(i_u,i_v)}(\tau_t) \in (\beta-\eta,\beta),\quad 1\le u<v\le m.
        \]
        Set 
        \[
        \widetilde{\bs}_u \triangleq 
 \bs_{i_u}(\tau_t) =\A^*\bigl(Y_{i_u}(\tau_t)\bigr), \quad 1\le u\le m.
        \]
        Observe that:
        \begin{itemize}
            \item Due to the fact $\mathcal{E}_{\rm main}\subset \mathcal{E}_{\rm suc}$, we have
            \[
            \max_{1\le u\le m}H\Bigl(\widetilde{\bs}_u, Y_{i_u}(\tau_t)\Bigr)\le 2^{-\epsilon n}.
            \]
            \item For $1\le u<u'\le m$, 
            \[
            \beta-\eta<\frac1n\ip{\widetilde{\bs}_u}{\widetilde{\bs}_{u'}}< \beta.
            \]
        \end{itemize}
        Having fixed $\zeta = \{i_1,\dots,i_m\}$, we find that $\mathcal{S}_\zeta\triangleq \mathcal{S}(m,\beta,\eta,\epsilon n,\mathcal{I})$ per Definition~\ref{def:overlap-set} with indices from $\zeta$ and $\mathcal{I}=\{\tau_0,\dots,\tau_Q\}$ is non-empty. Namely, 
        \[
        \mathbb{P}_{\rm pl}\bigl[\exists \zeta\subset[T]:|\zeta|=m,\mathcal{S}_\zeta\ne\varnothing\bigr]\ge \mathbb{P}_{\rm pl}\bigl[\mathcal{E}_{\rm main}\bigr]\ge \frac23-2^{-\Theta(n)},
        \]
        where the last inequality is due to~\eqref{eq:penul}. On the other hand, the $m$-OGP result, Theorem~\ref{thm:m-ogp-planted-npp}, yields that
        \[
        \mathbb{P}_{\rm pl}\bigl[\exists \zeta\subset[T]:|\zeta|=m,\mathcal{S}_\zeta\ne\varnothing\bigr]\le \binom{T}{m}2^{-\Theta(n)} = 2^{-\Theta(n)}
        \]
        as $\binom{T}{m}=O(1)$. So, we obtain
        \[
        2^{-\Theta(n)}\ge \frac23-2^{-\Theta(n)},
        \]
        which is a clear contradiction for $n$ sufficiently large. This completes the proof of Theorem~\ref{thm:stable-hardness}.
      \subsubsection*{Acknowledgments}
    The author wishes to thank David Gamarnik and Muhammed Emre \c{S}ahino\u{g}lu for valuable discussions during the initial stages of this work, as well as Cindy Rush for a valuable discussion during which she suggested a different version of planting. The research of the author is supported by Columbia University, with the Distinguished Postdoctoral Fellowship in Statistics.
    \bibliographystyle{amsalpha}
\bibliography{bibliography2}

\end{document}